\newtheorem{thm}{Theorem}[section]
\newtheorem{mainthm}{Theorem}
\newtheorem{theorem}[thm]{Theorem}
\newtheorem{lemma}[thm]{Lemma}
\newtheorem{cor}[thm]{Corollary}
\newtheorem{claim}{Claim}[thm]
\newtheorem{prop}[thm]{Proposition}
\newtheorem{fact}[thm]{Fact}
\theoremstyle{definition}
\newtheorem{defn}[thm]{Definition}
\theoremstyle{remark}
\newtheorem{remark}[thm]{Remark}
\newcommand\s{\subseteq}
\newcommand\sq{\sqsubseteq}
\newcommand\br{\blacktriangleright}
\DeclareMathOperator\ord{Ord}
\DeclareMathOperator\h{ht}
\DeclareMathOperator\U{U}
\DeclareMathOperator\diam{diam}
\DeclareMathOperator\ubd{{\sf unbounded}}
\DeclareMathOperator\onto{{\sf onto}}
\newcommand\half{\frac{1}{2}}
\newcommand\nmeetarrow{\mathrel{\overset{\wedge}{\longarrownot\longrightarrow}}}
\newcommand\sect[1]{\hypersetup{linkcolor=MidnightBlue}\textcolor{MidnightBlue}{Section~\ref{#1}}\hypersetup{linkcolor=black}}
\renewcommand\restriction{\mathbin\upharpoonright}
\renewcommand\mid{\mathrel{|}\allowbreak}
\newcommand*\axiomfont[1]{\textsf{\textup{#1}}}
\newcommand\stp{\axiomfont{STP}}
\newcommand\zfc{\axiomfont{ZFC}}
\newcommand\gch{\axiomfont{GCH}}
\newcommand\pfa{\axiomfont{PFA}}
\newcommand\ch{\axiomfont{CH}}
\newcommand\ns{\textup{NS}}
\newcommand\bd{\textup{bd}}
\newcommand\last[2]{\eth_{#1,#2}}
\newcommand\lex{\mathrm{lex}}
\DeclareMathOperator{\reg}{Reg}
\DeclareMathOperator{\cf}{cf}
\DeclareMathOperator{\dom}{dom}
\DeclareMathOperator{\im}{Im}
\DeclareMathOperator{\otp}{otp}
\DeclareMathOperator{\acc}{acc}
\DeclareMathOperator{\cg}{CG}
\DeclareMathOperator{\nacc}{nacc}
\DeclareMathOperator{\Tr}{Tr}
\DeclareMathOperator{\tr}{tr}
\DeclareMathOperator{\ssup}{ssup}
\setlist[enumerate,1]{label={(\roman*)}}
\newenvironment{why}[1][Proof]{\proof[#1]\mbox{}}{\endproof}
\title[Walks and higher Aronszajn lines]{Walks on uncountable ordinals\\and non-structure theorems\\for higher Aronszajn lines}
\author{Tanmay Inamdar}
\address{Department of Mathematics, Ben-Gurion University of the Negev, P.O.B. 653, Be’er Sheva, 84105 Israel}
\email{tci.math@protonmail.com}
\author{Assaf Rinot}
\address{Department of Mathematics, Bar-Ilan University, Ramat-Gan 5290002, Israel.}
\urladdr{http://www.assafrinot.com}
\date{Preliminary preprint as of October 11, 2024. More results are upcoming. For the latest version, visit \textsf{http://p.assafrinot.com/71}.}
\keywords{ZFC combinatorics, walks on ordinals, club guessing, partition relations for trees, vanishing levels, special Aronszajn tree, Countryman line.}
\subjclass[2010]{Primary 03E02, 03E75, 06A05. Secondary 03E05}
\begin{document}
\maketitle
\begin{abstract}
It is proved that if there is an $\aleph_2$-Aronszajn line,
then there is one that does not contain an $\aleph_2$-Countryman line.
This solves a problem of Moore and stands in a sharp contrast with his Basis Theorem for linear orders of size $\aleph_1$.

The proof combines walks on ordinals,
club guessing, strong colourings of three different types, and a bit of finite combinatorics.
This and further non-structure theorems for Aronszajn lines and trees are established for successors of regulars, successors of singulars, as well as inaccessibles.
\end{abstract}

\section{Introduction}

\subsection{Classifying uncountable structures}

One of the great successes of set theory is the development of consistent axioms asserting that the universe is saturated enough to the extent that objects that may be added by forcing must already exist.
A primary example is the \emph{Proper Forcing Axiom} ($\pfa$) that entails plenty of \emph{rough classification} results for structures of size $\aleph_1$, the first uncountable cardinal.
Recall that by `rough classification' we mean results obtained from the following template: fixing a class of structures $\mathfrak C$, we then consider some natural ordering $\prec$ on $\mathfrak C$ or some natural equivalence relation $\sim$ weaker than equality, and then study global properties of $(\mathfrak C, \prec)$ or $(\mathfrak C, \sim)$ such as the existence of universal and minimal elements, of a small basis, well quasi-orderedness, a small number of inequivalent elements etc. This general programme is the subject of Todor{\v c}evi{\' c}'s ICM survey \cite{MR1648055}, which contains several examples as well as several promising conjectures many of which have been resolved in the intervening years.
An update and a discussion of these solutions can be found in Moore's ICM survey \cite{MR2827783}.

We mention here a few of these consequences of $\pfa$:\footnote{All the relevant definitions are given in the inner sections. See Page~\pageref{indexpage} for an index.}
\begin{enumerate}
\item every two $\aleph_1$-dense sets of reals are isomorphic, and any set of reals of size $\aleph_1$ forms a one element basis for the class of the uncountable separable linear orders \cite{MR317934};
\item every two $\aleph_1$-Aronszajn trees are isomorphic on a club \cite{Sh:114},
and a classification of $\aleph_1$-Aronzajn trees up to another notion of equivalence \cite{MR2802589};
\item there are only five cofinal types of directed sets of cardinality $\aleph_1$ \cite{MR792822};
\item a similar classification of transitive relations on $\aleph_1$ \cite{MR1407459}
and various results for partially ordered sets \cite{MR1442306, MR1617909, MR1703188};
\item a rich rough classification for the class of Lipschitz trees \cite{MR2329763};
\item the class of $\aleph_1$-Aronszajn lines admits a basis of size two, consisting of a Countryman line and its reverse.
Consequently, the class of all uncountable linear orders admits a five element basis \cite{MR2199228};
\item there is a universal $\aleph_1$-Aronszajn line \cite{MR2480566};
\item the class of $\aleph_1$-Aronszajn lines is well-quasi-ordered under the relation of order embeddability \cite{MR2822417},
and every $\aleph_1$-Aronszajn line has a finite one-dimensional Ramsey degree \cite{MR4568265};
\item additional results for the class of wide $\aleph_1$-Aronszajn trees \cite{Sh:1186}.
\end{enumerate}

A few remarks are in order here.
The first is that such classification results are relevant even to those agnostic to forcing axioms since a classification result as above often leads to the isolation of a list of \emph{critical members} of a class, and the consistency result can be interpreted as showing the completeness of this list. The reader is referred to \cite{MR1648055} for further discussion of this approach.

Second, the above results at $\aleph_1$
often mimic the situation at $\aleph_0$, where various positive rough classification results can be obtained for the analogous classes, often as a consequence of Ramsey's Theorem. As a concrete example we mention that $\{(\mathbb N,{<}),(\mathbb N,{>})\}$ forms a basis for the class of all infinite linear orders.
This advances a thesis that, assuming forcing axioms, various classes of objects of size $\aleph_1$ are \emph{tame} in some sense.

Third, the above results should be contrasted with classical as well as more modern results that the cardinality of the continuum, $\mathfrak c$, behaves markedly differently, falling into the `wild' category.
For example, work of Sierpi{\'n}ski \cite{sierpinski1932probleme}, Dushnik-Miller \cite{MR1919}, Ginsburg \cite{MR53994, MR62808, MR70683} shows that considering the class of linear suborders of $\mathbb R$ of size $\mathfrak c$, any basis for it
must be as large as possible, that is, of size $2^{\mathfrak c}$; that it is not well-founded etc. (see \cite{MR661296} for details).
Another example for directed sets can be found in \cite{MR792822}. It follows that $\ch$ (the continuum hypothesis, asserting that $\mathfrak c=\aleph_1$) is incompatible with these positive classification results at $\aleph_1$.

Parallel to the development of forcing and the applications of $\pfa$ in the 1980s, there was also a breakthrough in infinite combinatorics with Todor{\v c}evi{\'c}'s introduction of the method of \emph{walks on ordinals} \cite{TodActa}.
Whereas the main application of this method was to show the ultimate failure of the Ramsey theorem at the level of $\aleph_1$ (a result earlier proved from $\ch$ \cite{MR202613}),
the deeper application was
a gallery of canonical examples of quite a few critical objects that had previously been constructed in the study of various rough classification problems for $\aleph_1$ (this is showcased in \cite{MR2355670}).
As well, this method and the ensuing ideas have led to some anti-classification results for structures of size $\aleph_1$
which were previously known assuming $\ch$,
see \cite{lspace} for two examples
and \cite{paper60} for a more recent one.

Ever since, these two approaches, forcing axioms and walks on ordinals have functioned as a pincer movement for deciding the structure theory of objects of size $\aleph_1$ and for delineating the exact boundary between order and chaos at the level of $\aleph_1$. As a concrete example, see \cite{MR716846} and \cite{lspace} for the interesting split between the $S$-space problem and its dual the $L$-space problem.

These successes at $\aleph_1$ and recent advances in forcing constructions at $\aleph_2$ \cite{MR3307877,MR3201836,MR3696074,MR3796288,MR3826541,MR3913157,MR4290492,MR4586835,benneria2023aronszajntreesmaximality}
have led to a hope that a similar classification theory for structures of size $\aleph_2$ may be consistently possible,
with an ultimate goal of finding a consistent higher analog of $\pfa$ which is similarly able to `tame' $\aleph_2$.
This has been the subject of numerous workshops in the last decade devoted in part or wholly to this programme,
as well as the motivation for the second author's ERC project of which this paper is the final piece.
To continue here the pincer analogy, we challenge these advances in forcing constructions at $\aleph_2$ by pushing from the side of walks on uncountable ordinals.

In order to explain the new ideas, it will be helpful to draw a distinction between ordinal walks conducted along an arbitrary $C$-sequence, and those constructed along a carefully chosen $C$-sequence.
At the level of $\aleph_1$ this distinction is rarely made and almost all constructions proceed along an arbitrary standard $C$-sequence.\footnote{\label{footnote2}Two notable exceptions are \cite{MR2213652,MR2444284}.
Both will play a role later in this paper.}
Above $\aleph_1$ however, the sensitivity to the choice of the $C$-sequence is fundamental
and the task of cooking up the right one has became an integral part of any application of higher walks.
Furthermore, the challenge that goes into this task increases gradually from $\aleph_1$ to successors of regulars, to successors of singulars, to non-Mahlo inaccessibles,
to Mahlos until we arrive at the final collapse somewhere between greatly Mahlos and weakly compacts.\footnote{See Remark~\ref{fact43} below.}

There are countless successful examples of applications of walks at higher cardinals
along a well-crafted $C$-sequence,
but a literature survey we conducted reveals that each time the crafting of the $C$-sequence is rather ad-hoc.
In this paper we shall work towards
identifying some features of $C$-sequences that allow for a smooth \emph{transfer of ideas} from the theory at $\aleph_1$ to higher cardinals.
These \emph{characteristics} of a $C$-sequence take the best possible values for any standard $C$-sequence on $\omega_1$,
and it will be shown that they may as well be secured at higher cardinals in quite a few natural scenarios.
As well,
it will be established that with respect to some other features, at the level of higher cardinals,
$\zfc$ entails better $C$-sequences compared to those at $\aleph_1$.
When put together this changes the balance dramatically, providing a clear advantage to walks over forcing axioms at these levels.
The key is that in the context of a triple $\kappa_0<\kappa_1<\kappa_2$ of infinite cardinals,
it is possible to study objects at $\kappa_2$ by contrasting how they project to $\kappa_0$ and to $\kappa_1$;
the discrepancy between the two projections together with the availability of pigeonhole-type stabilization features
are at the heart of many strong combinatorial constructions.
This $3$-cardinal constellation also plays a role in Shelah's club-guessing theorem
that goes into his famous $\zfc$ bound for the power of the first singular cardinal \cite{Sh:400}. For other examples, see \cite{Sh:922, paper15, MR4529976}.

As made clear by the examples given earlier, the class of linear orders of size $\aleph_1$ have shown themselves to be particularly pliable to classification results.
More generally, the rough classification problem for various classes of linear orders has served as an important proving ground for set-theoretic techniques (see \cite{MR416925, MR661296} for many examples).
For this reason, we take here the higher analog of Clause~(vi) above, Moore's Basis Theorem, as a test question for the new ideas in walks on ordinals which we introduce.
Bearing in mind that forcing axioms for $\aleph_2$ may potentially be compatible with both $\ch$ as well as a large continuum, the development here will not rely on any cardinal arithmetic assumptions.

\subsection{Non-structure theory for higher Aronszajn lines}
In the proceedings \cite{highandlowcontri} to a workshop at the American Institute of Mathematics in 2016 dedicated to the development of forcing axioms at $\aleph_2$ and beyond,
Moore asks whether it is consistent that every $\aleph_2$-Aronszajn line contains an $\aleph_2$-Countryman line.
To avoid trivialities, one needs to also assume that there are $\aleph_2$-Aronszajn lines,
as a theorem of Mitchell \cite{MR0313057} shows that this need not be the case.

An affirmative answer to Moore's question would have given a higher analog of his work \cite{MR2199228} assuming $\pfa$, the key step in the proof of his basis theorem. As well, a negative answer would have given a higher analog of his work \cite{MR2444284} assuming the axiom $\mho$. The latter result is an important inspiration for our work, so we now discuss it.

As mentioned in Footnote~\ref{footnote2}, unlike most applications of walks on ordinals at $\omega_1$
which are conducted against an arbitrary choice of an $\omega$-bounded $C$-sequence over $\omega_1$,
Moore showed that walking along a particular $\omega$-bounded $C$-sequence $\vec C$ over $\omega_1$ (whose existence is asserted by the axiom $\mho$)
ensures that the corresponding walks-on-ordinals $\aleph_1$-Aronszajn tree $T(\rho_0^{\vec C})$ admits an ordering that makes it into an $\aleph_1$-Aronszajn line that contains no $\aleph_1$-Countryman line.
The proof uses the observation that it is possible to force Martin's Axiom without killing $\mho$, together with the following fact.
\begin{fact}[{\cite[p.~79]{Sh:114} and \cite[Proposition~8.7]{MR2329763}}]\label{fact1} Suppose that Martin's Axiom holds. Let $T$ be an $\aleph_1$-Aronszajn tree
for which there is a colouring $c:T\rightarrow2$ such that for every uncountable $X\s T$,
for every $i<2$, there are $x\neq y$ in $X$ such that $c(x\wedge y)=i$.\footnote{Here, $x\wedge y$ denotes the meet of $x$ and $y$ in the tree $T$.}
Then there exists an ordering of $T$ that makes it into an $\aleph_1$-Aronszajn line that contains no $\aleph_1$-Countryman line.
\end{fact}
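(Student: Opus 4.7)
The plan is to use the coloring $c$ to define a linear order $<_L$ on $T$ making it an $\aleph_1$-Aronszajn line, and then to invoke MA to rule out any uncountable sub-line being Countryman.

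To define $<_L$, fix an auxiliary lex-order $<_0$ on $T$ obtained by linearly ordering, at each non-maximal node $z$, the set of its immediate successors; then declare $x<_L y$ iff either $x<_T y$, or else $x,y$ are $T$-incomparable with meet $z$ such that exactly one of the statements $c(z)=1$ and $y<_0 x$ holds. Thus $<_L$ agrees with $<_0$ on pairs whose meet has colour $0$ and with the reverse of $<_0$ on pairs whose meet has colour $1$. Transitivity follows from the observation that among any three pairwise $T$-incomparable nodes, two of the three pairwise meets coincide, so only one of the three $<_L$-comparisons is flipped relative to $<_0$. Standard lex-tree arguments, combined with the coloring hypothesis (which rules out uncountable monochromatic configurations), then show that $(T,<_L)$ is an $\aleph_1$-Aronszajn line.

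For the main step, suppose for contradiction that some uncountable $X\s T$ is Countryman under $<_L$, so that $X\times X$ with the product order has no uncountable antichain. The plan is to invoke MA to produce such an antichain, yielding the desired contradiction. Consider the forcing $\mathbb{P}$ whose conditions are finite product-antichains in $X\times X$, ordered by reverse inclusion. For each uncountable $Y\s X$, the set of conditions containing a pair from $Y\times Y$ is dense: given a finite antichain and any uncountable $Y$, the coloring hypothesis produces pairs inside $Y$ with either meet-colour, and by the definition of $<_L$ the choice of colour controls the $<_L$-orientation of the new pair, letting us pick one that preserves product-incomparability with each existing pair. Applying MA against the $\aleph_1$-many such dense sets yields an uncountable antichain in $X\times X$, contradicting the Countryman assumption.

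The principal obstacle is verifying that $\mathbb{P}$ satisfies the countable chain condition, and this must be done without presupposing the Countryman property of $X$. Given $\omega_1$ conditions, one applies the $\Delta$-system lemma to obtain an uncountable subfamily with a common root and pairwise-disjoint extensions, then seeks two conditions whose amalgamation is again an antichain, i.e., every cross-pair between the disjoint parts is product-incomparable. The Aronszajn tree structure of $T$ controls the distribution of the relevant cross-meets, and the coloring hypothesis is then applied to these cross-meets, possibly iteratively, to arrange the meet-colours into the pattern needed for amalgamation. This colour-stabilization pigeonhole within the $\Delta$-system reduction is the delicate combinatorial heart of the argument.
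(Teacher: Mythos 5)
The paper does not actually prove this statement; it is quoted as a Fact with citations to Abraham--Shelah and to \cite[Proposition~8.7]{MR2329763}, so I can only assess your argument on its own terms. Your first step (flipping a lexicographic order according to $c$ of the meet) is exactly the standard construction, recorded in the paper as Fact~\ref{flippedorder}; note that the colouring hypothesis is not needed to see that $(T,<_L)$ is Aronszajn, and your transitivity remark is slightly off (all three comparisons may be flipped when all three meets get colour $1$; that is still fine), but this part is essentially correct.

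The main step, however, rests on a forcing that is provably \emph{not} ccc under the very assumption you are trying to refute, so Martin's Axiom cannot be applied to it. If $(X,<_L)$ is Countryman, then $X\times X$ is a countable union of chains; hence \emph{every} uncountable subset of $X\times X$ contains an uncountable chain, and the singleton conditions drawn from such a chain are pairwise incompatible in your poset of finite product-antichains. So the ``delicate colour-stabilization pigeonhole'' you defer to the end cannot exist: no $\Delta$-system refinement will restore the ccc. This is not a repairable technicality but a structural obstruction --- ZFC already proves that the square of a Countryman line has no uncountable antichain, so no ccc forcing (and hence no instance of MA) can produce one while the Countryman hypothesis stands. (Your density claim is also dubious as stated: if $Y$ lies entirely $<_L$-above both coordinates of an existing pair, no pair from $Y\times Y$ can be added.) The correct shape of the argument is the opposite of yours: one uses the chain decomposition \emph{positively}, extracting from a suitable uncountable pairwise-disjoint $S'\subseteq X\times X$ an uncountable chain $S$, and then exhibiting two elements of $S$ whose first-coordinate meet and second-coordinate meet receive different colours with the appropriate lexicographic orientations --- precisely the two-dimensional configuration isolated in Definition~\ref{def12}(2) and exploited in Lemma~\ref{omitcm}. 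The whole point of invoking MA is to upgrade the one-dimensional hypothesis on $c$ to such a two-dimensional configuration (via ccc posets of finite homogeneous configurations or finite partial isomorphisms inside the tree), not to force an antichain into the square of a Countryman line.
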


In trying to generalise the above proof approach to $\aleph_2$, one bumps into three immediate problems.
The first is that $T(\rho_0^{\vec C})$ for an $\omega_1$-bounded $C$-sequence $\vec C$ over $\omega_2$ need not be an $\aleph_2$-tree.
The second is finding a higher analog of $\mho$ that supports some force-and-pull-back argument in the spirit of Fact~\ref{fact1}.
The third is that the proof in \cite{MR2444284} (alike many proofs involving walks at $\omega_1$)
uses the fact that for every two incompatible nodes $s,t$ of the tree $T(\rho_0^{\vec C})$,
their meet $s\wedge t$ is either trivial or admits a maximal element --- a feature that does not carry on to higher cardinals.

The easiest way to overcome the first problem is to assume $\ch$, as it is in fact equivalent
to the assertion that $T(\rho_0^{\vec C})$ is a special $\aleph_2$-Aronszajn tree
for \emph{every} choice of an $\omega_1$-bounded $C$-sequence $\vec C$ over $\omega_2$.
The second problem is also resolvable assuming $\ch$, by building on recent developments in the theory of club-guessing \cite[Corollary~4.18(1)]{paper46} and some more work.
The third problem is indifferent to cardinal arithmetic and the solution we found here is an adaption of the concept of \emph{vanishing levels} from the recent paper \cite{paper58}
and which takes the form of a new characteristic of $C$-sequences.

Taking $\ch$ as a base assumption is also aligned with Specker's classical theorem \cite{MR0039779},
and the more recent theorem of Todor{\v c}evi{\'c} \cite{transitivecolourings} that $\ch$ entails the existence of
$2^{\aleph_1}$ many pairwise far $\aleph_2$-Countryman lines (this answered another question of Moore \cite{highandlowcontri}).
However, in light of the equivalence mentioned in the previous paragraph, we find $\ch$ to be an overkill for our purpose.
After all, we do not need all $T(\rho_0^{\vec C})$'s to be an $\aleph_2$-tree --- we just need one (well, a good one).
By results of Jensen \cite{jen72} and Todor\v{c}evi\'c \cite{TodActa}, the existence of one $\omega_1$-bounded $C$-sequence $\vec C$ over $\omega_2$ for which $T(\rho_0^{\vec C})$ is an $\aleph_2$-tree
is equivalent to the existence of a special $\aleph_2$-Aronszajn line, which, unlike $\ch$, is intrinsically an assertion about the class of $\aleph_2$-Aronszajn lines.
More importantly, unlike $\ch$ that may easily be turned off by forcing,
deep results of Jensen \cite{jen72} and Mitchell \cite{MR0313057} show that the non-existence of a special $\aleph_2$-Aronszajn line has large cardinal strength of a Mahlo cardinal.

Once decided to give up on $\ch$, the proof approach of running a force-and-pull-back argument breaks down.
And looking forward to dealing also with successors of singulars,
it became evident that an alternative approach is needed.
For this,
we devised a higher dimensional variation of the colouring assertion of Fact~\ref{fact1}. For simplicity, we present here the special case of dimension two.\footnote{The general case is given in Definition~\ref{meetarrow}.}
\begin{defn}\label{def12}
For a regular uncountable cardinal $\kappa$, a lexicographically ordered $\kappa$-tree $\mathbf T=(T,{<_T},{<_{\lex}})$,
the partition relation $\mathbf T\nmeetarrow[\kappa]^2_\theta$
asserts the existence of a colouring $c:T\rightarrow\theta$ satisfying the two:
\begin{enumerate}[(1)]
\item for every pairwise disjoint $S\s T\times T$ of size $\kappa$,
for every $\tau<\theta$, there are $(x_0,x_1)\neq (y_0,y_1)$ in $S$ such that:
\begin{itemize}
\item $c(x_0\wedge y_0)=\tau=c(x_1\wedge y_1)$, and
\item for every $i<2$, $x_i<_{\lex}y_{i}$ iff $\h(x_i)<\h(y_i)$.
\end{itemize}
\item for every $T'\in[T]^\kappa$, there exists a pairwise disjoint $S'\s T'\times T'$ of size $\kappa$
such that for every $S\s S'$ of size $\kappa$,
for all $\tau_0,\tau_1<\theta$, there are $(x_0,x_1)\neq (y_0,y_1)$ in $S$ such that:
\begin{itemize}
\item $c(x_0\wedge y_0)=\tau_0$,
\item $c(x_1\wedge y_1)=\tau_1$, and
\item for every $i<2$, $x_i<_{\lex}y_{i}$ iff $\h(x_i)<\h(y_i)$.
\end{itemize}
\end{enumerate}
\end{defn}

It is not hard to verify that if $\mathbf T=(T,{<_T},{<_{\lex}})$ is a lexicographically ordered $\mu^+$-Aronszajn tree
for which $\mathbf T\nmeetarrow[\mu^+]^2_2$ holds, then some unorthodox lexicographic ordering turns $T$ into a $\mu^+$-Aronszajn line that contains no $\mu^+$-Countryman line.
More generally, we have the following consequences.
\begin{fact}\label{ffact13} Suppose $\kappa$ is a regular uncountable cardinal,
$\theta$ is an infinite cardinal, and $n\ge 2$.
Each of the following clauses implies the next:
\begin{enumerate}[(1)]
\item There exists a lexicographically ordered (resp.~special) $\kappa$-Aronszajn tree $\mathbf T$ such that
$\mathbf T\nmeetarrow[\kappa]^n_\theta$ holds;
\item There exists a $(\kappa,n)$-entangled sequence of $2^{\theta}$-many (resp.~special) $\kappa$-Aronszajn lines.
If $\kappa$ is a successor cardinal, then none of them contains a $\kappa$-Countryman line;
\item There are $2^{\theta}$-many pairwise far (resp.~special) $\kappa$-Aronszajn lines;
\item No basis for the class of $\kappa$-Aronszajn lines has size less than $2^\theta$.
\end{enumerate}
\end{fact}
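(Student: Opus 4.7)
The plan is to prove the three implications $(1)\Rightarrow(2)\Rightarrow(3)\Rightarrow(4)$ in turn. The main work is in $(1)\Rightarrow(2)$; the latter two should follow by unpacking the definitions of $(\kappa,n)$-entangledness, pairwise farness, and a basis.

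For $(1)\Rightarrow(2)$, fix $\mathbf T=(T,{<_T},{<_{\lex}})$ and a colouring $c:T\to\theta$ witnessing $\mathbf T\nmeetarrow[\kappa]^n_\theta$. For each $A\s\theta$, define a linear ordering $<_A$ on $T$ by flipping $<_{\lex}$ at meets whose colour lies in $A$: declare $x<_A y$ iff either $x<_T y$, or $x$ and $y$ are $<_T$-incomparable and ``$x<_{\lex}y$'' has the opposite truth value to ``$c(x\wedge y)\in A$''. Each $L_A:=(T,<_A)$ is then a $\kappa$-Aronszajn line realised on the underlying tree $\mathbf T$, so specialness of $\mathbf T$ passes to each $L_A$, and the assignment $A\mapsto L_A$ furnishes $2^\theta$-many such lines. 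Clause~(1) of Definition~\ref{def12} (in its dimension-$n$ form) should prevent any $L_A$ from containing a $\kappa$-Countryman sub-line: if $\kappa$ pairs from such a sub-line decomposed into fewer than $\kappa$ monotone chains, we would thin to a pairwise disjoint $S$ of size $\kappa$ and apply the partition relation with any $\tau$ attained by $c$ on enough meets to find two pairs whose $<_{\lex}$-directions and height-directions align in a way incompatible with the assumed monotonicity under~$<_A$. Clause~(2) should then yield $(\kappa,n)$-entangledness of $\{L_A:A\s\theta\}$: given $n$ distinct indices $A_0,\dots,A_{n-1}$, pick colour-prescriptions $\tau_0,\dots,\tau_{n-1}$ that distinguish the $A_i$'s on the relevant coordinates; the configuration delivered by (2) then realises any prescribed order-pattern simultaneously in each $L_{A_i}$, because the $<_A$-order of an incomparable pair $(x,y)$ is determined by $c(x\wedge y)$ together with $<_{\lex}$, which by the ``iff $\h(x)<\h(y)$'' clause is in turn determined by the heights.

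For $(2)\Rightarrow(3)$, the family produced above is pairwise far essentially because an order-embedding witnessing a common $\kappa$-suborder of two entangled lines would, after thinning to a pairwise disjoint pair-sequence of size $\kappa$, be contradicted by the dimension-$2$ case of entangledness, which forbids a prescribed constant order-direction on both coordinates. Specialness is preserved since it is a property of the underlying tree. For $(3)\Rightarrow(4)$, suppose toward a contradiction that $\{M_i:i<\lambda\}$ is a basis for the class of $\kappa$-Aronszajn lines with $\lambda<2^\theta$. For each of the $2^\theta$ pairwise far lines $L$ from clause~(3), choose $i(L)<\lambda$ with $M_{i(L)}$ order-embeddable into $L$. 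By pigeonhole, some single $M_i$ is selected for two distinct lines $L\ne L'$, so $M_i$ embeds into both, exhibiting a common $\kappa$-sub-order and contradicting farness.

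The main obstacle I anticipate is in $(1)\Rightarrow(2)$: matching the meet-based partition relation — formulated via the biconditional ``$x_i<_{\lex}y_i$ iff $\h(x_i)<\h(y_i)$'' in Definition~\ref{def12} — against the flipped orderings $<_A$, and ensuring that the resulting $n$-dimensional combinatorics delivers the full strength of $(\kappa,n)$-entangledness (as formally defined later in the paper) rather than some weaker variant. A secondary point to verify is that each $L_A$ really is an Aronszajn line (no $\kappa$-chains and no $\kappa$-sized separable interval), but this should be automatic from the Aronszajn/tree structure of $\mathbf T$ together with the fact that the flipping $<_A$ alters $<_{\lex}$ only among incomparable pairs.
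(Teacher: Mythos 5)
Your overall architecture --- the flipped orderings $<^A$ obtained from $c$ and $<_{\lex}$, followed by a Sierpi\'nski-style argument for $(2)\Rightarrow(3)\Rightarrow(4)$ --- is the same as the paper's, and your treatment of the last two implications is essentially correct. The problem is in $(1)\Rightarrow(2)$, where you have the roles of the two clauses of Definition~\ref{def12} exactly backwards, and both sub-arguments fail as stated. For Countryman-avoidance, Clause~(1) is useless: it colours \emph{all} meets with the \emph{same} $\tau$, so for a pair $(x_0,x_1)\neq(y_0,y_1)$ taken from an $\h$-rapid chain the flip is applied to both coordinates or to neither, and the resulting comparisons are perfectly consistent with the pairs lying on a common chain. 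What one actually needs is two coordinates whose meets receive colours on opposite sides of $A$ (one in $A$, one outside), and that is precisely what Clause~(2) provides via $\tau(0)\in A$, $\tau(1)\notin A$ --- applied to a $\kappa$-sized chain $S$ found \emph{inside} the distinguished family $S'$ attached to the putative Countryman subline $T'$, which is where the decomposition of ${}^2T'$ into fewer than $\kappa$ chains gets used.

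Symmetrically, your entangledness argument leans on per-coordinate colour prescriptions, i.e.\ on Clause~(2); but Clause~(2) only applies to $\kappa$-sized subsets of the particular $S'$ produced from a single $T'$, whereas the matrices in the definition of $(\kappa,n)$-entangledness are arbitrary injections and need not land inside any such $S'$. The only clause available for an arbitrary (after thinning, $\h$-rapid) family is Clause~(1), which forces a single colour $\tau$ on all coordinates, so the realizable order-patterns on $\langle L_{A_0},\dots,L_{A_{n-1}}\rangle$ are exactly those of the form $k\mapsto[\tau\in A_k]$. Consequently your family $\{L_A\mid A\s\theta\}$ is \emph{not} $(\kappa,n)$-entangled: for instance $L_\theta$ and $L_\emptyset$ are reverses of one another, so the constant pattern is never realized on the diagonal matrix, and no single $\tau$ can separate nested $A$'s. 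The fix, as in the paper, is to index the lines by a Hausdorff independent family $\langle A_i\mid i<2^\theta\rangle$ of subsets of $\theta$, so that for any $I\in[2^\theta]^n$ and any $c:I\rightarrow2$ there is one $\tau$ with $\tau\in A_i$ iff $c(i)=1$; independence also guarantees $\emptyset\subsetneq A_i\subsetneq\theta$, which your indexing violates at $A=\emptyset,\theta$ and which is exactly the hypothesis needed for the Countryman-avoidance step.
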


Our first main result reads as follows.
\begin{mainthm}\label{thma} Suppose that $\kappa=\mu^+$ for a regular uncountable cardinal $\mu$ that is non-ineffable.
Then all of the following are equivalent:
\begin{itemize}
\item There exists a special $\kappa$-Aronszajn tree;
\item There exists a $\mu$-bounded $C$-sequence $\vec C$ over $\kappa$ such that $T(\rho_0^{\vec C})$ is a special $\kappa$-Aronszajn tree
satisfying that $(T(\rho_0^{\vec C}),{\s},{<_{\lex}})\nmeetarrow[\kappa]^n_{\kappa}$ holds for every positive integer $n$;
\item Any basis for the class of special $\kappa$-Aronszajn lines has size $2^\kappa$.
\end{itemize}
\end{mainthm}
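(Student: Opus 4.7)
The plan is to establish the cyclic chain second clause $\Rightarrow$ third clause $\Rightarrow$ first clause $\Rightarrow$ second clause, with the last being the main content. For \emph{second $\Rightarrow$ third}, take $\theta := \kappa$ and $n := 2$ in Fact~\ref{ffact13}: the hypothesis provides the lexicographically ordered special $\kappa$-Aronszajn tree $(T(\rho_0^{\vec C}),{\s},{<_{\lex}})$ with $\nmeetarrow[\kappa]^2_{\kappa}$, which is precisely clause~(1) of that fact, so clause~(4) yields that no basis for special $\kappa$-Aronszajn lines has size less than $2^\kappa$; together with the trivial upper bound, this gives equality. For \emph{third $\Rightarrow$ first}, if no special $\kappa$-Aronszajn line existed then the empty set would be a basis of size $0 < 2^\kappa$, contradicting the third clause, so some special $\kappa$-Aronszajn line, and hence a special $\kappa$-Aronszajn tree, exists.

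For the main direction \emph{first $\Rightarrow$ second}, the first step is to invoke the higher-cardinal analogue of the Jensen--Todor\v{c}evi\'c correspondence recalled in the introduction to extract from any special $\kappa$-Aronszajn tree a $\mu$-bounded $C$-sequence $\vec C$ over $\kappa$ for which $T(\rho_0^{\vec C})$ is itself a special $\kappa$-Aronszajn tree. I would then refine $\vec C$ in two ways while preserving this specialness: first, using the non-ineffability of $\mu$ to secure a strong club-guessing feature along the cofinality-$\mu$ points of $\kappa$ in the spirit of \cite{paper46}; second, arranging a higher-cardinal adaptation of the vanishing-levels characteristic of \cite{paper58}.

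The substantive step is to use the refined $\vec C$ to define a single colouring $c : T(\rho_0^{\vec C}) \to \kappa$ simultaneously witnessing $(T(\rho_0^{\vec C}),{\s},{<_{\lex}}) \nmeetarrow[\kappa]^n_\kappa$ for \emph{every} positive integer $n$. I would let $c(\eta)$ be a function of the walk data at $\eta$---a last-step or trace value interacting with the club-guessing ladders---chosen generically enough that, for any pairwise-disjoint $\kappa$-sized $S \subseteq T \times T$ that is suitably generic with respect to $\vec C$, two columns of $S$ can be found whose coordinate-wise meets realize any prescribed colour tuple while respecting the lex-vs.-height compatibility. Part~(1) of Definition~\ref{def12} then handles a single prescribed colour, while part~(2) demands a resilient $S' \subseteq T' \times T'$ inside any $T' \in [T]^\kappa$ whose every $\kappa$-sized subfamily realizes \emph{all} colour tuples.

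The hardest step will be producing a single such $S'$ uniformly for every dimension $n$, since this forces the colouring together with the club-guessing to support arbitrarily many coordinate prescriptions without collision. Here the vanishing-levels characteristic should pay off: it lets one locate, inside any $T'$, a dense set of nodes whose walks interact predictably with the guessing data, so that a pigeonhole stabilization can be applied uniformly in the dimension. The $3$-cardinal perspective flagged in the introduction, playing off $\mu$ and its cofinal structure against $\kappa$, is what should make the club-guessing versatile enough to close the construction.
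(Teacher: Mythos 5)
Your overall architecture coincides with the paper's: the same cycle $(2)\Rightarrow(3)\Rightarrow(1)\Rightarrow(2)$, with $(2)\Rightarrow(3)$ via Fact~\ref{ffact13} (i.e.\ Lemma~\ref{getent} plus Proposition~\ref{entsier}), $(3)\Rightarrow(1)$ trivial, and $(1)\Rightarrow(2)$ obtained by first upgrading the special tree to a $\square^*_\mu$-sequence $\vec C$ whose vanishing-levels set covers a club and which carries a club-guessing feature extracted from the non-ineffability of $\mu$ (this is exactly Theorems~\ref{thm92} and~\ref{thm93}), and then running a walks-based colouring argument. So the skeleton is right. Two corrections to your framing, though: the partition relation $\nmeetarrow[\kappa]^n_\kappa$ only asserts, for each $n$, the existence of \emph{some} colouring, so there is no need for a single $c$ uniform in $n$ — and indeed the paper uses a different colouring for each $n$, since the finite projection maps $f_n:\mathbb Z\rightarrow\omega$ it relies on provably cannot be made uniform in $n$ (see the remark after Corollary~\ref{lemma51}). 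Likewise the "single $S'$ for all dimensions" worry is moot: $S'$ lives in ${}^nT'$ and is produced per $n$ by Lemma~\ref{lemma35}.

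The genuine gap is in the substantive step, which you describe only as choosing $c(\eta)$ "generically enough" from walk data. The difficulty the paper has to overcome, and which your sketch does not engage with, is that the colour must be computed from the meet $x\wedge y$ alone — an initial segment of a single fiber $\rho_{0\beta}$ — so the standard device for stretching a colouring from $\mu$ colours to $\kappa=\mu^+$ colours (composing with surjections $e_\beta:\mu\rightarrow\beta$) is unavailable; this is precisely why Theorem~\ref{thm84} is much harder than Theorem~\ref{thm52} and forces the colour to be encoded into the trace via a designated coordinate $\chi$ with $\psi(\chi)=\varkappa$. Moreover, realizing a prescribed colour tuple on all $n$ coordinates \emph{at a common level} $\zeta$ requires three concrete mechanisms you omit: the finite combinatorics of Section~\ref{sec32} (the maps $f_n$ built from the Chinese remainder theorem, composed with the map $\varphi:{}^{<\omega}\omega\rightarrow\mathbb Z$ of Lemma~\ref{lemma61}) to synchronize the $n$ walks at a single shift $i<m$; the elementary-submodel copying argument of Lemma~\ref{lemma42} to produce a matrix $\langle\alpha^k_i\rangle$ whose fibers split from $\langle\beta^k_i\rangle$ at one level with the correct $<_{\lex}$ orientation; and Lemma~\ref{lemma2444}, which uses $V(\vec C)$ to pin down $\Delta(\rho_{0\alpha},\rho_{0\beta})$ and makes the "$t^*$" reading of the colour well defined. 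Without identifying at least these ingredients, the central claim that such a $c$ exists is asserted rather than proved.
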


Since $\mu=\aleph_1$ is non-ineffable, the preceding together with Fact~\ref{ffact13}
yield a negative solution to Moore's question.
In fact considering higher analogues of Moore's question, we prove a separate theorem covering the missing case of ineffable cardinals as well,
together yielding our second main result.

\begin{mainthm}\label{corb} For every regular uncountable cardinal $\mu$,
if there is a $\mu^+$-Aronszajn line, then there is one without a $\mu^+$-Countryman subline.
\end{mainthm}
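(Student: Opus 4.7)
The plan is to reduce to Theorem~\ref{thma} in the non-ineffable case and invoke a companion theorem for the ineffable case. The argument will hinge on the dichotomy of whether any $\mu^+$-Countryman line exists at all in the universe.

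First, I would observe that if no $\mu^+$-Countryman line exists, the conclusion is immediate: the hypothesized $\mu^+$-Aronszajn line vacuously contains no $\mu^+$-Countryman subline, since any such subline would itself be a $\mu^+$-Countryman line.

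The substantive work lies in the remaining case, where a $\mu^+$-Countryman line exists. This produces a special $\mu^+$-Aronszajn tree, by taking the order tree of the Countryman line and specializing it via the chain decomposition of its square; consequently, the first clause of Theorem~\ref{thma} holds. Whenever $\mu$ is non-ineffable, Theorem~\ref{thma} then yields a $\mu$-bounded $C$-sequence $\vec C$ for which $\mathbf T := (T(\rho_0^{\vec C}), \s, <_\lex)$ is a special $\mu^+$-Aronszajn tree satisfying $\mathbf T \nmeetarrow[\mu^+]^n_{\mu^+}$ for every positive integer $n$. Restricting to $n=2$ and $\theta=2$, the observation recorded immediately after Definition~\ref{def12} upgrades this to an alternative lexicographic ordering of $T(\rho_0^{\vec C})$ that is a $\mu^+$-Aronszajn line with no $\mu^+$-Countryman subline, as desired.

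The main obstacle is the case where $\mu$ is ineffable: the strong stationary reflection inherent to ineffable cardinals threatens the $C$-sequence characteristics that power Theorem~\ref{thma}, so its full conclusion (the partition relation for every $n$ with $\theta=\mu^+$) may be unavailable in this setting. To circumvent this, I would expect a separate theorem to deliver just the dimension-two instance $\mathbf T \nmeetarrow[\mu^+]^2_2$ on some lex-ordered special $\mu^+$-Aronszajn tree --- which, by the same observation, already suffices to manufacture one $\mu^+$-Aronszajn line with no $\mu^+$-Countryman subline --- via ineffability-compatible tools, presumably bespoke $C$-sequences and weaker forms of club guessing designed to survive the reflection phenomena at $\mu^+$.
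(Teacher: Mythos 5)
Your reduction is sound in two of its three parts and matches the paper there: the vacuous case (no $\mu^+$-Countryman line at all) is handled exactly as in the paper, and the non-ineffable case is precisely the paper's argument --- a $\mu^+$-Countryman line yields a special $\mu^+$-Aronszajn tree (Remark~\ref{rmk214}), Theorem~\ref{thma} then supplies $T(\rho_0^{\vec C})\nmeetarrow[\mu^+]^{2}_{\mu^+}$, and Fact~\ref{flippedorder} together with Lemma~\ref{omitcm} (the formal version of the observation you cite after Definition~\ref{def12}) converts this into a special $\mu^+$-Aronszajn line with no $\mu^+$-Countryman subline.

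The ineffable case, however, is a genuine gap: you defer it entirely to a companion theorem whose existence you only ``expect,'' and your prediction of its form does not match what the paper actually proves. The paper does \emph{not} establish any instance of $\mathbf T\nmeetarrow[\mu^+]^{2}_{2}$ when $\mu$ is ineffable. The obstruction is concrete: the colouring constructions of Section~\ref{sec8} (Theorems \ref{thm52} and \ref{thm84}) require $\omega\in\Theta_1^\mu(\vec C\restriction A(\vec C),\cdot)$, and Theorem~\ref{thm93} secures this only for non-ineffable $\mu$ (via $\ubd(\ns_\mu,\omega)$, which fails at ineffables); at an ineffable one only gets $1\in\Theta_1^\mu(\cdot)$. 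The paper therefore abandons the partition-relation route altogether in this case and instead invokes Theorem~\ref{easyway}: a proof by contradiction which assumes every special $\mu^+$-Aronszajn line contains a $\mu^+$-Countryman line, and refutes this by combining the chain decompositions of the hypothesized Countryman sublines with an $\onto(\{\omega\},J^{\bd}[\omega_1],2)$-colouring on $\omega_1$, the weaker guessing $1\in\Theta_1^{\omega_1}(\vec C\restriction A(\vec C),\kappa)$, and the fact that $\mu^{\aleph_1}=\mu$ holds at an ineffable. This argument produces one bad Aronszajn line without ever exhibiting a colouring witnessing a negative partition relation, so your sketch would need to be replaced, not merely filled in, to complete the proof.
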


In particular, we see here a sharp distinction between the independence results \cite{MR2199228,MR2444284} for $\mu=\aleph_0$ and the $\zfc$ result of Theorem~B for all regular $\mu>\aleph_0$.
This is another validation of the asymptotic approach to infinite combinatorics (see the discussion in \cite{paper63}).

\medskip

As alluded to in Footnote~\ref{footnote2},
in \cite{MR2213652}, Todor\v{c}evi\'c showed it is consistent that for some $\omega$-bounded $C$-sequence $\vec C$ over $\omega_1$,
the $\aleph_1$-tree $T(\rho_1^{\vec C})$ is non-special.
In \cite[Question~2.2.18]{MR2355670}
he asked for a condition to put on $\vec C$ to ensure that $T(\rho_1^{\vec C})$ be special.
Here, we present such a condition that applies not only to $\omega_1$ but to all successors of regulars.
We also address the problem of when $T(\rho_2^{\vec C})$ is special.
As a corollary we obtain the following:

\begin{mainthm}\label{thmd} For every infinite cardinal $\mu$, the following are equivalent:
\begin{itemize}
\item There exists a special $\mu^+$-Aronszajn tree;
\item There exists a $\mu$-bounded $C$-sequence $\vec C$ over $\mu^+$ for which $T(\rho_0^{\vec C})$, $T(\rho_1^{\vec C})$, and $T(\rho_2^{\vec C})$
are all special $\mu^+$-Aronszajn trees.
\end{itemize}
\end{mainthm}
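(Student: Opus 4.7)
The reverse implication is immediate, since the asserted $\vec C$ itself witnesses that a special $\mu^+$-Aronszajn tree exists. For the forward implication, the plan is to start from the classical Jensen--Todor\v{c}evi\'c equivalence highlighted in the introduction: our hypothesis yields a $\mu$-bounded $C$-sequence $\vec C_0$ over $\mu^+$ whose $T(\rho_0^{\vec C_0})$ is already a special $\mu^+$-Aronszajn tree. The goal is then to \emph{refine} $\vec C_0$ into a single $\mu$-bounded $C$-sequence $\vec C$ all three of whose walks-trees are special.

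The refinement will be steered by two combinatorial characteristics of $\mu$-bounded $C$-sequences, say $(\star_1)$ and $(\star_2)$, respectively implying specialness of $T(\rho_1^{\vec C})$ and $T(\rho_2^{\vec C})$. These are exactly the conditions the paper promises to isolate while addressing Todor\v{c}evi\'c's Question~2.2.18 from \cite{MR2355670} and its $\rho_2$-analog. The plan is to formulate them as ``local'' conditions on the individual clubs $C_\alpha$ --- for instance, a bound on how often a single weight may recur along an initial segment of a walk, or a parity-type constraint on the lengths of walks --- chosen so that they are compatible with one another and with a specialising function $f:T(\rho_0^{\vec C_0})\to\mu$.

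With $(\star_1)$ and $(\star_2)$ in hand, the construction of $\vec C$ would proceed by interleaving into each $C_\alpha$ of $\vec C_0$ additional ordinals, selected with the help of $f$, so as to enforce both characteristics while preserving $\mu$-boundedness. Because the interleaving is coded by $f$, any two distinct nodes of $T(\rho_0^{\vec C})$ sitting on the same level will still be separated by $f$ downstairs, so specialness of $T(\rho_0^{\vec C})$ transfers from that of $T(\rho_0^{\vec C_0})$. All three desired clauses then hold of $\vec C$.

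The principal obstacle will be the simultaneous realizability of $(\star_1)$ and $(\star_2)$: a modification designed to suppress weight-repetitions in $\rho_1$ can easily lengthen walks and thereby spoil $\rho_2$, and conversely. The key will be to isolate the two as constraints on \emph{orthogonal} features of the walk (weights versus step-counts) and to perform the interleaving in two non-interfering strata of the ordinals added to $C_\alpha$. The three cases $\mu=\aleph_0$, $\mu$ regular uncountable, and $\mu$ singular will likely each demand their own adaptation --- in particular, at $\mu=\aleph_0$ Todor\v{c}evi\'c's consistency result \cite{MR2213652} shows that $(\star_1)$ cannot be automatic, so the starting $\vec C_0$ must already carry some extra information --- but the overall strategy should be the same in each case.
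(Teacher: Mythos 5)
Your proposal is a plan rather than a proof: both of its load-bearing components --- the sufficient condition $(\star_1)$ on $\vec C$ guaranteeing that $T(\rho_1^{\vec C})$ is special, and the construction of a $\mu$-bounded weakly coherent $C$-sequence realizing it --- are left as promissory notes, and these are exactly where all the work lies. For the record, the condition the paper isolates is: for club many $\delta\in E^{\mu^+}_\mu$, for every pair $\alpha<\beta$ of successive elements of $C_\delta$, $\otp(C_\beta\cap\alpha)>\otp(C_\delta\cap\alpha)+1$. Proving that this suffices uses the $A'(\vec C)$-coherence of $T(\rho_1)$ together with a specialization criterion for coherent trees (Lemma~\ref{lemma15}), and realizing the condition requires a genuine construction (interleaving a partition of a nonstationary cofinal subset of $E^{\mu^+}_\mu$ into the clubs, and then verifying that the modified sequence is still weakly coherent --- a nontrivial check your sketch does not address). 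Your observation that at $\mu=\aleph_0$ the condition cannot be automatic, by \cite{MR2213652}, is correct and matches the paper's motivation.

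Your principal announced obstacle --- the simultaneous realizability of $(\star_1)$ with a separate condition $(\star_2)$ for $\rho_2$ --- is a phantom problem. No $(\star_2)$ is needed: the paper proves (Lemma~\ref{lemma511}) that $T(\rho_0^{\vec C})$ is special if and only if $T(\rho_2^{\vec C})$ is, and both are automatically special for \emph{any} weakly coherent $\mu$-bounded $C$-sequence over $\mu^+$ (Todor\v{c}evi\'c's Theorem~6.1.14 in \cite{MR2355670}; alternatively Lemmas~\ref{lemma511} and~\ref{lemma57}). So once a single weakly coherent $\mu$-bounded $\vec C$ with $T(\rho_1^{\vec C})$ special is produced, the other two trees come for free; in particular your step of ``transferring'' specialness of $T(\rho_0)$ from $\vec C_0$ to the modified sequence via the specializing function $f$ --- which as stated is unjustified, since altering the clubs changes $T(\rho_0)$ wholesale --- is also unnecessary. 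The net effect is that your proposal front-loads a compatibility analysis that never has to happen, while omitting the one lemma (the $\rho_1$ criterion and its realization) that constitutes the actual content of the theorem.
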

This provides another example that walks on uncountable ordinals can be used to obtain canonical witnesses at cardinals beyond $\aleph_1$.

\subsection{Upcoming results}
As hinted at earlier on, our main goal is to enrich the theory of walks on uncountable ordinals in some generality.
Consequently, the results we prove are not restricted to successors of regular cardinals.
In the next version of this paper, by developing more machinery to construct $C$-sequences,
we shall demonstrate how similar corollaries can be also derived for a broader class of cardinals such as successors of singulars,
inaccessibles, and even Mahlo cardinals.
For instance, the three bullets of Theorem~\ref{thma} hold true for any successor $\kappa=\mu^+$ of a cardinal $\mu$ such that $\square_\mu$ holds.
The following is another sample result.
\begin{mainthm}\label{thmc} In G\"odel's constructible universe,
for every regular uncountable cardinal $\kappa$ that is non-subtle, if there exists a $\kappa$-Aronszajn tree,
then there exists a $C$-sequence $\vec C$ over $\kappa$ such that $T(\rho_0^{\vec C})$ is a $\kappa$-Aronszajn tree
for which $(T(\rho_0^{\vec C}),{\s},{<_{\lex}})\nmeetarrow[\kappa]^n_{\kappa}$ holds for every positive integer $n$.
Furthermore, if $\kappa$ is non-Mahlo, then $T(\rho_0^{\vec C})$ can be secured to be special.
\end{mainthm}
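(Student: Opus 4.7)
The plan is to transfer the argument of Theorem~\ref{thma} from successors of regulars to an arbitrary regular uncountable $\kappa$, substituting for the role of $\square_\mu$ a $C$-sequence built in $L$ by fine-structural means. I would organise the proof into three phases: construction of the $C$-sequence, verification of the partition relation, and the non-Mahlo specialisation.

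For phase one, since $\kappa$ is non-subtle it is in particular not weakly compact, so by Jensen \cite{jen72} both $\square(\kappa)$ and $\diamondsuit(\kappa)$ hold in $L$. Using the hypothesis that a $\kappa$-Aronszajn tree exists, I would invoke the Jensen--Todor\v cevi\'c equivalence \cite{TodActa} to obtain a $C$-sequence $\vec D$ over $\kappa$ for which $T(\rho_0^{\vec D})$ is a $\kappa$-tree, and arrange simultaneously that $\vec D$ witnesses $\square(\kappa)$. I would then postprocess $\vec D$ into a $C$-sequence $\vec C$ carrying the two characteristics isolated in the body of the paper: the vanishing-levels characteristic (an adaptation of \cite{paper58}) which guarantees that incompatible pairs of nodes in $T(\rho_0^{\vec C})$ admit a definable last meeting point, and a strong club-guessing characteristic suitable for injecting prescribed colours into trace sequences. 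The crucial use of $\kappa$ being non-subtle in $L$ is that it delivers an anti-subtlety sequence $\langle A_\alpha : \alpha<\kappa\rangle$ with $A_\alpha \s \alpha$ such that for every club $E\s\kappa$ some $\alpha<\beta$ in $E$ satisfy $A_\beta\cap\alpha\neq A_\alpha$; this principle is what permits one to prescribe controlled variation at the $\nacc$-points of the $C_\alpha$'s without sacrificing coherence or the $\kappa$-tree property.

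For phase two, with $\vec C$ in hand, I would adapt the colouring construction behind Theorem~\ref{thma} to produce, by induction on $n$, a colouring $c_n\colon T(\rho_0^{\vec C})\to\kappa$ witnessing $(T(\rho_0^{\vec C}),{\s},{<_\lex})\nmeetarrow[\kappa]^n_\kappa$. The base case $n=1$ would use an oscillation colouring built from $\rho_0^{\vec C}$; the inductive step amalgamates the $(n-1)$-dimensional colouring by a pigeonhole stabilisation on pairwise-disjoint $n$-tuples, which is exactly where the three-cardinal constellation highlighted in the introduction enters: comparing the projections of a node to two distinct initial segments of $\vec C$ supplies the degree of freedom that the club-guessing characteristic then converts into the desired meet-colour patterns. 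For phase three, if $\kappa$ is additionally non-Mahlo then the class of singular cardinals below $\kappa$ contains a club $E$, and one more postprocessing of $\vec C$ ensuring $\nacc(C_\alpha)\s E$ (for $\alpha$ of uncountable cofinality) enables the standard specialising map of \cite{TodActa} to specialise $T(\rho_0^{\vec C})$.

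The main obstacle is phase one: combining $\square(\kappa)$-style coherence, vanishing levels, strong club-guessing, and the $\kappa$-tree property into a single $C$-sequence. These features have been secured individually in the literature, but reconciling them at an inaccessible cardinal requires a delicate interleaving of the $\square(\kappa)$- and $\diamondsuit(\kappa)$-constructions driven by the anti-subtlety witness. Subtlety of $\kappa$ would obstruct precisely this interleaving by forcing any diamond-style guess at stage $\alpha$ to reflect coherently into the surrounding structure, destroying the variation that the colouring construction requires; the non-subtlety hypothesis in $L$ is exactly calibrated to block this obstruction without demanding that $\kappa$ fail to be Mahlo.
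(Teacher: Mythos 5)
A preliminary remark: the paper itself does not yet contain a proof of Theorem~\ref{thmc}; Section~\ref{canonicalsect} explicitly defers it, stating that the route is to produce, for inaccessibles, a $C$-sequence satisfying the hypotheses of Theorem~\ref{thm52} and then invoke that theorem. Measured against that blueprint and the machinery actually developed in the paper, your proposal has the right coarse shape (build a good $C$-sequence, run the colouring, specialise in the non-Mahlo case) but goes wrong at several load-bearing points. The most serious is phase two. The paper's colouring construction (Theorems~\ref{thm52} and~\ref{thm84}) is emphatically \emph{not} an induction on $n$: for each $n$ a colouring is built in one shot from the projection map $\varphi:{}^{<\omega}\omega\rightarrow\mathbb Z$ of Lemma~\ref{lemma61} together with the finite-combinatorial map $f_n:\mathbb Z\rightarrow\omega$ of Corollary~\ref{lemma51} (Chinese remainder theorem), fed into the club-guessing characteristic $\omega\in\Theta_1^\mu(\vec C\restriction A(\vec C),\cdot)$ via Lemmas~\ref{cor62} and~\ref{lemma35}. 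Your ``pigeonhole amalgamation of the $(n-1)$-dimensional colouring'' gives no mechanism for the simultaneity demanded by Definition~\ref{meetarrow}: clause~(2) requires that on \emph{every} $\kappa$-sized subfamily of a fixed $S'$ one can realise \emph{every} colour tuple $\tau\in{}^n\theta$ at meets of the \emph{same} height, and the remark after Corollary~\ref{lemma51} shows that no single projection map works for all $n$ at once, which is precisely why the finite combinatorics is calibrated to each $n$ separately. Without an analogue of Lemmas~\ref{lemma61} and~\ref{lemma42} your inductive step has no content.

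Three further points. First, insisting that the initial $C$-sequence witness $\square(\kappa)$ is in tension with the vanishing-levels characteristic you then want: for a coherent $\vec C$ and any $\alpha$ of uncountable cofinality, every $\delta\in\acc(C_\alpha)$ satisfies $C_\alpha\cap\delta=C_\delta$, so $\sup(\nacc(C_\delta)\cap\nacc(C_\alpha))=\delta$ and $\delta\notin V'(\vec C)$; since such $\delta$ form a stationary set, $V'(\vec C)$ cannot cover a club for a coherent sequence once $\kappa>\omega_1$. The paper works throughout with \emph{weak} coherence ($\square^*_{<}(\kappa)$), which is all that Fact~\ref{lemma41} and Theorem~\ref{thm52} need. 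Second, the role of non-subtlety is misattributed: it enters via the failure of the subtle tree property, which by Corollary~\ref{STP} (quoting \cite{paper58}) is exactly equivalent to the existence of a weakly coherent $\vec C$ with $V'(\vec C)$ covering a club; the colour-injection at $\nacc$-points is instead the job of the $\Theta_1^\mu$ club-guessing characteristic, which in $L$ one would extract from diamond-type principles as in Section~\ref{sec4}. Third, in phase three, pushing $\nacc(C_\alpha)$ into a club of singulars does not specialise anything; what is needed is lower regressiveness, i.e.\ $R'(\vec C)$ (or $R(\vec C)$) covering a club, so that Lemma~\ref{lemma57} and Corollary~\ref{cor74} make $T(\rho_2^{\vec C})$ special and Lemma~\ref{lemma511} transfers this to $T(\rho_0^{\vec C})$; non-Mahloness is what makes it possible to arrange $\otp(C_\delta)<\delta$ on a club, since regular $\delta<\kappa$ form a non-stationary set.
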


\subsection{Organization of this paper}
In \sect{sec2}, we provide preliminaries on trees and lines. We study two assertions involving colourings of Aronszajn trees,
one of which gives rise to an entangled sequence of Aronszajn lines none of which contains a Countryman line.
The proof of Fact~\ref{ffact13} will be found there.

In \sect{sec3}, we provide preliminaries on strong colourings and
prove the existence of various strong projection maps $f_n:\mathbb Z\rightarrow\mathbb N$ using a bit of finite combinatorics.
We also explain how the main results of this paper shed new light on an old question of Erd\H{o}s and Hajnal \cite{MR280381}.

In \sect{sec4}, we provide preliminaries on $C$-sequences, walks on ordinals, and club-guessing.
We introduce old and new characteristics of $C$-sequences, and of club-guessing.
The utility of them is briefly demonstrated in the section, whereas more substantial applications are deferred to the later sections.

In \sect{sec5}, we study the impact of the characteristics of $C$-sequences on the tree $T(\rho_0)$.
We use it to lift various features that are available for free at the level of $\omega_1$ to higher cardinals,
including a theorem of Mart{\'{\i}}nez-Ranero \cite{MR2995913}. We also establish a connection between $T(\rho_0)$ and $T(\rho_2)$
which this time lifts a theorem of Peng \cite{peng} from $\omega_1$ to higher cardinals.

In \sect{sec6}, we study the tree $T(\rho_1)$,
answering a question of Todor\v{c}evi\'c \cite{MR2355670}
not only at $\omega_1$ but at any successor of a regular cardinal.
The proof of Theorem~\ref{thmd} will be found there.

In \sect{sec7}, we study the impact of the characteristics of $C$-sequences on the tree $T(\rho_2)$.
Results of Peng \cite{peng}, Cox-L\"ucke \cite{MR3620068} and L\"ucke \cite{Lucke} are generalised.

In \sect{easysec}, we provide a sufficient condition for the existence of a special $\mu^+$-Aronszajn line with no $\mu^+$-Countryman subline.
We shall later show that this condition is satisfied for every $\mu$ that is ineffable.
This section introduces some of the ideas which will be elaborated on in the tree-colouring results of the next section.

In \sect{sec8}, we provide sufficient conditions (in the language of Section~\ref{sec4}) for $T(\rho_0)$ to satisfy the strong colouring assertion for trees studied in Section~\ref{sec2}.
This lifts the constructions from Moore's paper \cite{MR2444284} at the level of $\omega_1$ to higher cardinals.

In \sect{canonicalsect}, we study the consistency of the sufficient conditions presented in Section~\ref{easysec} and Section~\ref{sec8},
using much of the $\zfc$ results obtained in our previous papers \cite{paper47,paper53,paper46}.
Extensions of theorems of Jensen \cite{jen72}, Todor\v{c}evi\'c \cite{TodActa}, and Krueger \cite{MR3078820} will be given.

In \sect{sec10}, we derive corollaries arising from all the previous sections.
The proofs of Theorem~\ref{thma} and \ref{corb} will be found there.

\subsection{Notation and conventions}\label{nandc}
Throughout the paper, $\kappa$ denotes a regular uncountable cardinal,
$\mu$ denotes an infinite cardinal, $\lambda,\theta$ denote arbitrary cardinals,
and $n$ denotes a positive integer.
Let $\log(\mu)$ denote the least cardinal $\theta\le\mu$ such that $2^\theta\ge\mu$.
We write $[\kappa]^\theta$ for the collection of all $\theta$-sized subsets of $\kappa$.
Let $E^\kappa_\theta:=\{\alpha < \kappa \mid \cf(\alpha) = \theta\}$,
and define $E^\kappa_{\le \theta}$, $E^\kappa_{<\theta}$, $E^\kappa_{\ge \theta}$, $E^\kappa_{>\theta}$, $E^\kappa_{\neq\theta}$ analogously.
For $A,B$ sets of ordinals, we denote $A\circledast B:=\{(\alpha,\beta)\in A\times B\mid \alpha<\beta\}$

$\reg(\kappa)$ denotes the collection of all infinite regular cardinals below $\kappa$.
$H_\kappa$ denotes the collection of all sets of hereditary cardinality less than $\kappa$.
For any map $c:[\kappa]^2\rightarrow H_\kappa$ and $\alpha<\kappa$,
while $(\alpha,\alpha)\notin[\kappa]^2$, we extend the definition of $c$, and agree to let $c(\alpha,\alpha):=0$.
For a set of ordinals $A$, we write
$\ssup(A) := \sup\{\alpha + 1 \mid \alpha \in A\}$,
$\acc^+(A) := \{\alpha < \ssup(A) \mid \sup(A \cap \alpha) = \alpha > 0\}$,
and $\acc(A) := A \cap \acc^+(A)$, and $\nacc(A) := A \setminus \acc(A).$

For functions $f,g\in{}^{<\kappa}H_\kappa$,
we say that $f$ and $g$ are \emph{incomparable} iff $f\nsubseteq g$ and $g\nsubseteq f$.
Also let
$$\Delta(f,g):=\min\{\dom(f),\dom(g),\delta\mid \delta\in\dom(f)\cap\dom(g)\ \&\ f(\delta)\neq g(\delta)\},$$
so that $f$ and $g$ are incomparable iff $\Delta(f,g)<\min\{\dom(f),\dom(g)\}$.

For a family $S$ of sequences, and a map $f:\bigcup_{\sigma\in S}\im(\sigma)\rightarrow\ord$,
we say that $S$ is \emph{$f$-rapid}
iff for all $\sigma\neq\sigma'$ in $S$, either $\sup(f[\im(\sigma)])<\min(f[\im(\sigma')])$ or $\sup(f[\im(\sigma')])<\min(f[\im(\sigma)])$.
If we omit $f$, then we mean that $f:V\rightarrow\ord$ is the von Neumann rank map, i.e., $f(x)$ is the least ordinal $\alpha$ such that $x\in V_{\alpha+1}$.

For a sequence $\sigma$ of ordinals, we write $\sup(\sigma)$ for $\sup(\im(\sigma))$
and $\min(\sigma)$ for $\min(\im(\sigma)\cup\{0\})$.
Given a function $h:\kappa\rightarrow\omega$, and a sequence $\sigma\in{}^{<\omega}\kappa$,
we let $\sigma_h:=\sup(h\circ\sigma)$.

For a poset $(P,<_P)$ we let ${\le_P}:=\{(p,q)\in P\times P\mid(p<_Pq)\text{ or }(p=q)\}$,
and ${>_P}:=\{(q,p)\in P\times P\mid p<_Pq\}$;
the \emph{reverse} of $(P,<_P)$ is the poset $(P,{>_P})$.

\section{Trees and lines}\label{sec2}

A \emph{tree} is a partially ordered set $\mathbf T=(T,<_T)$ such that, for every $x\in T$,
the cone $x_\downarrow:=\{y\in T\mid y<_T x\}$ is well-ordered by $<_T$; its order type is denoted by $\h(x)$.
For any ordinal $\alpha$, the $\alpha^{\text{th}}$-level of the tree is the collection $T_\alpha:=\{ x\in T\mid \h(x)=\alpha\}$.
For a set of ordinals $A$, we write $T\restriction A$ for $\bigcup_{\alpha\in A}T_\alpha$.
The \emph{height} of the tree is the first ordinal $\alpha$ for which $T_\alpha=\emptyset$.
Given two distinct nodes $x,y$ in a tree $T$, we let $x\wedge y$ denote the highest node $z\in T$ to satisfy $z\le_T x$ and $z\le_T y$.
The tree is \emph{$\varsigma$-splitting} if each of its nodes admit at least $\varsigma$ many immediate successors.

A \emph{$\kappa$-tree} is a tree $\mathbf T$ of height $\kappa$ all of whose levels are of size less than $\kappa$.
A \emph{$\kappa$-Aronszajn tree} is a $\kappa$-tree with no chains of size $\kappa$.
A \emph{$\kappa$-Souslin tree} is a $\kappa$-Aronszajn tree with no antichains of size $\kappa$.

\begin{defn}[L\"ucke, \cite{Lucke}]\label{ascending}
Let $\mathbf T=(T,<_T)$ be a tree of height $\kappa$, and let $\theta$ be an infinite cardinal.
A \emph{$\theta$-ascending path} through $\mathbf T$ is a sequence $\vec f =\langle f_\gamma\mid \gamma<\kappa\rangle$ satisfying the following two:
\begin{enumerate}
\item for every $\gamma<\kappa$, $f_\gamma:\theta\rightarrow T_\gamma$;
\item for all $\gamma<\delta<\kappa$, there are $i,j<\theta$ such that $f_\gamma(i)<_T f_\delta(j)$.
\end{enumerate}
\end{defn}
\begin{defn}[Todor\v{c}evi\'c, {\cite[p.~266]{TodActa}}]\label{special}
A $\kappa$-tree $\mathbf T=(T,<_T)$ is \emph{special} iff there exists a map $f:T\rightarrow T$ satisfying the following:
\begin{itemize}
\item for every non-minimal $x\in T$, $f(x)<_T x$;
\item for every $z\in T$, $f^{-1}\{z\}$ is covered by less than $\kappa$ many antichains.
\end{itemize}
\end{defn}

\begin{remark} If $\mathbf T$ is a special $\kappa$-tree, then $\mathbf T$ is an Aronszajn tree that is not Souslin.
By \cite[Lemma~1.6]{Lucke}, if a special $\kappa$-tree admits a $\theta$-ascending path, then $\kappa=\mu^+$ for some cardinal $\mu$ of cofinality $\le\theta$.
So, at the level of inaccessible cardinals the concepts of Definitions \ref{ascending} and \ref{special} are mutually exclusive.
\end{remark}

\begin{defn}\label{good}
A \emph{good colouring} of a tree $\mathbf T=(T,<_T)$
is a function $c$ from $T\restriction D$ to the ordinals such that:
\begin{itemize}
\item $D$ is a club in the height of $\mathbf T$;
\item $c(x)<\h(x)$ for every $x\in T\restriction D$;
\item for every pair $x<_T y$ of nodes in $T\restriction D$, $c(x)\neq c(y)$.
\end{itemize}
\end{defn}

The next proposition is well-known. A proof of the backward implication may be found at the beginning of the proof of \cite[Theorem~4.4]{MR3078820}.

\begin{lemma}[folklore]\label{fact13} Suppose that $\mathbf T=(T,<_T)$ is a $\kappa$-tree. Then $\mathbf T$ is special iff there exists a good colouring of $\mathbf T$.
\end{lemma}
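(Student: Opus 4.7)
The plan is to prove the two directions of the equivalence separately. The backward direction is the one whose proof is referenced in the cited Krueger paper; both arguments are direct constructions in which the main work is coding.

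\emph{Special $\Rightarrow$ good colouring.} Fix a map $f:T\to T$ witnessing specialness, and for each $z\in T$ fix an antichain decomposition $f^{-1}\{z\}=\bigcup_{i<\kappa_z}A_{z,i}$ with $\kappa_z<\kappa$. By a standard closing-off argument, choose an injection $\iota:T\to\kappa$ and a club $D\subseteq\kappa$ such that every $\alpha\in D$ is infinite and satisfies (a) $\iota[T\restriction\alpha]\subseteq\alpha$, (b) $\kappa_z<\alpha$ for every $z\in T\restriction\alpha$, and (c) $\alpha$ is closed under the G\"odel pairing $\psi:\ord\times\ord\to\ord$. For each non-minimal $x\in T$ set $i_x:=\min\{i<\kappa_{f(x)}\mid x\in A_{f(x),i}\}$, and define $c:T\restriction D\to\ord$ by $c(x):=\psi(\iota(f(x)),i_x)$. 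Clauses (a)--(c) force $c(x)<\h(x)$. For $x<_T y$ in $T\restriction D$: if $f(x)\ne f(y)$, then $\iota(f(x))\ne\iota(f(y))$, so $c(x)\ne c(y)$ by injectivity of $\psi$; if instead $f(x)=f(y)=z$, then $x,y$ cannot coexist in any single antichain $A_{z,i}$, forcing $i_x\ne i_y$ and again $c(x)\ne c(y)$.

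\emph{Good colouring $\Rightarrow$ special.} Given a good colouring $c:T\restriction D\to\ord$, we may assume $0\in D\subseteq\acc(\kappa)$. Define $f:T\to T$ by setting $f(x):=x$ for minimal $x$; taking $f(x)$ to be the unique ancestor of $x$ at height $c(x)$ whenever $x\in T\restriction D$ is non-minimal; and taking $f(x)$ to be the unique ancestor of $x$ at height $\max(D\cap\h(x))$ otherwise. Fix $z\in T$ at level $\beta$. The set $\{x\in T\restriction D\mid f(x)=z\}$ is an antichain, since for $x<_T y$ both lying in it we would have $c(x)=\beta=c(y)$, contradicting the goodness of $c$. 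The remaining portion of $f^{-1}\{z\}$ is contained in $T\restriction(\beta,\beta')$, where $\beta':=\min(D\setminus(\beta+1))$; by the $\kappa$-tree property together with the regularity of $\kappa$, this set has cardinality less than $\kappa$ and hence admits a trivial cover by fewer than $\kappa$ singleton antichains. Combining the two parts, $f^{-1}\{z\}$ is covered by fewer than $\kappa$ antichains, as required.

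The only place where a genuine technical issue arises is in the forward direction, where the coding must simultaneously record an ancestor-node and an antichain index into an ordinal below $\h(x)$; this is precisely what forces the closure conditions (a)--(c) in the selection of $D$. Everything else is routine bookkeeping.
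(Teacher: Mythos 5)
Your proof is correct and follows essentially the same route as the paper's: the forward direction codes the pair $(f(x),\text{antichain index})$ into an ordinal below $\h(x)$ via a pairing function on a club of closure points, and the backward direction sends $x$ to its ancestor at the height determined by $c(x)$ (resp.\ at the last point of $D$ below $\h(x)$) and splits each preimage into an antichain, via the injectivity-on-chains clause of goodness, plus a set of size less than $\kappa$. The only quibble is the phrase ``$0\in D\subseteq\acc(\kappa)$'', which is literally impossible and, if taken to mean $0$ is adjoined to $D$, makes $f^{-1}\{z\}\cap(T\restriction D)$ contain $z$ itself when $z$ is minimal; this is harmless, since one extra singleton antichain absorbs it.
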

\begin{proof} $(\implies)$: Suppose that $\mathbf T$ is special, and let $f:T\rightarrow T$ be as in Definition~\ref{special}.
For each $z\in T$, let $\theta_z$ denote the least size of a family of antichains covering $f^{-1}\{z\}$,
so that $\theta_z<\kappa$.
Fix a bijection $\pi:T\times\kappa\leftrightarrow\kappa$,
and let $$D:=\{\delta<\kappa\mid \forall z\in T\restriction\delta\,(\pi[\{z\}\times \theta_z]\s\delta)\}.$$
It is clear that $D$ is closed. It is also unbounded in $\kappa$, since it covers the intersection of the following clubs:
\begin{itemize}
\item $D_0:=\{\delta<\kappa\mid \pi[(T\restriction\delta)\times\delta]=\delta\}$, and
\item $D_1:=\{\delta<\kappa\mid \forall z\in T\restriction\delta\,(\theta_z<\delta)\}$.
\end{itemize}

For each $z\in T$, as $f^{-1}\{z\}$ is covered by $\theta_z$ many antichains, we may fix a map $g_z:f^{-1}\{z\}\rightarrow\theta_z$
such that the preimage of any singleton is an antichain.
Finally, define a map $c:T\restriction D\rightarrow\kappa$ via:
$$c(x):=\pi(f(x),g_{f(x)}(x)).$$
The definition of $D$ makes it is clear that for every $x\in T\restriction D$, $c(x)<\h(x)$.
In addition, if $x<_Ty$ is a pair of nodes in $T\restriction D$, then either $f(x)\neq f(x)$,
or $g_{f(x)}(x)\neq g_{f(z)}(z)$, so that $c(x)\neq c(z)$.

$(\impliedby)$: Suppose that $c:T\restriction D\rightarrow\kappa$ is as in Definition~\ref{good}.
By possibly thinning out, we may assume that $D\s\acc(\kappa)$.
Define a map $f:T\rightarrow T$, as follows:
$$f(x):=\begin{cases}
x\restriction (c(x)+1);&\text{if }\h(x)\in D;\\
x\restriction \sup(D\cap\h(x)),&\text{otherwise}.
\end{cases}$$

It is clear that $f(x)=x$ for every minimal $x\in T$ and that $f(x)<_T x$ for every non-minimal $x\in T$.
Now, given $y\in T$, there are two cases to consider:

$\br$ If $\h(y)$ is a successor ordinal, say it is $\epsilon+1$,
then $f^{-1}\{y\}$ is a subset of $\{ x\in T\restriction D\mid c(x)=\epsilon\}$
which is an antichain.

$\br$ If $\h(y)$ is a limit ordinal, say it is $\epsilon$,
then $f^{-1}\{y\}$ is a subset of $\{ x\in T\mid \h(x)\le \min(D\setminus(\epsilon+1))\}$
which is a set of size less than $\kappa$.

Thus, in both cases $f^{-1}\{y\}$ is the union of less than $\kappa$ many antichains.
\end{proof}

\begin{fact}[Todor\v{c}evi\'c, {\cite[Theorems 13 and 14]{MR793235}}]\label{criticalcof} For a $\mu^+$-tree $\mathbf T=(T,<_T)$, the following are equivalent:
\begin{enumerate}[(1)]
\item $T$ is special;
\item There exist a club $D\s\mu^+$ and a map $f:T\restriction(E^{\mu^+}_{\cf(\mu)}\cap D)\rightarrow T$ such that:
\begin{itemize}
\item for every $x\in\dom(f)$, $f(x)<_Tx$;
\item for every $z\in T$, $f^{-1}\{z\}$ is covered by $\mu$ many antichains.
\end{itemize}
\item $T$ may be covered by $\mu$ many antichains.
\end{enumerate}
\end{fact}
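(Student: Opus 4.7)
\emph{Plan.} I will establish the cycle of implications (3)$\Rightarrow$(2)$\Rightarrow$(1)$\Rightarrow$(3). The implications (3)$\Rightarrow$(2) and (1)$\Rightarrow$(3) are elementary; (2)$\Rightarrow$(1) carries the actual content of the theorem.

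For (3)$\Rightarrow$(2), given $T=\bigcup_{i<\mu}A_i$ with each $A_i$ an antichain, take $D:=\mu^+$ and let $f:T\restriction E^{\mu^+}_{\cf(\mu)}\to T$ send each $x$ to any $y<_Tx$ (such $y$ exists since $\cf(\mu)>0$ forces $\h(x)>0$); each fibre $f^{-1}\{z\}\s T$ is then covered by $\{A_i\}_{i<\mu}$. For (1)$\Rightarrow$(3), fix $f$ witnessing (1) together with antichain decompositions $f^{-1}\{z\}=\bigcup_{i<\mu}A^z_i$, and let $i_x<\mu$ be the least $i$ with $x\in A^{f(x)}_i$. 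Since $f$ strictly decreases heights and ordinals are well-ordered, the sequence $x>_Tf(x)>_Tf(f(x))>\cdots$ terminates at a minimal node after some $\rho(x)<\omega$ steps. Fix a bijection $\pi:\mu\times\mu\times\omega\to\mu$ and define $c:T\to\mu$ recursively by $c(x):=\pi(0,0,0)$ for minimal $x$ and $c(x):=\pi(c(f(x)),i_x,\rho(x))$ otherwise. An induction on $\rho$ shows that $c$ is chain-injective: if $x<_Ty$ with $c(x)=c(y)$, then injectivity of $\pi$ yields $\rho(x)=\rho(y)$, $i_x=i_y$, and $c(f(x))=c(f(y))$; in the subcase $f(x)=f(y)$, both $x,y$ lie in the antichain $A^{f(x)}_{i_x}$, contradicting $x<_Ty$; in the subcase $f(x)\ne f(y)$, the nodes $f(x)$ and $f(y)$ are $<_T$-comparable (both on the branch $y_\downarrow$) with equal $c$-values at strictly smaller $\rho$-rank, contradicting the induction hypothesis. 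The $\mu$ fibres of $c$ then witness (3).

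The main direction (2)$\Rightarrow$(1) is where the critical cofinality enters. Given $f:T\restriction(E^{\mu^+}_{\cf(\mu)}\cap D)\to T$ as in the hypothesis, the plan is to extend $f$ to a regressive $f^*:T\to T$ while preserving the antichain-fibre property. The extension is trivial at minimal nodes, routine at successor heights (immediate predecessors), and immediate at limit heights $\alpha\in D$ with $\cf(\alpha)=\cf(\mu)$ (keep $f^*(x):=f(x)$); the delicate case is at limit heights $\alpha$ with $\cf(\alpha)\ne\cf(\mu)$ or $\alpha\notin D$. For such $\alpha$, the plan is to exploit the cofinality mismatch: pick a cofinal sequence $\langle\alpha_\xi\mid\xi<\cf(\alpha)\rangle$ in $\alpha\cap D$, consider the chain $\langle x\restriction\alpha_\xi\mid\xi<\cf(\alpha)\rangle$ below $x$, and apply a Fodor/pigeonhole argument to the already-defined heights $\h(f^*(x\restriction\alpha_\xi))$ — since $\cf(\alpha)\ne\cf(\mu)$ and there are fewer than $\mu^+$ many such heights below $\alpha$, some $\gamma_x<\alpha$ is hit cofinally often — to set $f^*(x):=x\restriction\gamma_x$ in such a way that the fibres $(f^*)^{-1}\{z\}$ remain coverable by $\mu$ antichains. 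I expect this extension at non-critical cofinality to be the main obstacle; it is precisely the insight of Todor\v{c}evi\'c in \cite[Theorems~13 and~14]{MR793235}, and its proof genuinely depends on the combinatorial interplay between $\cf(\mu)$ and the other cofinalities arising below $\mu^+$.
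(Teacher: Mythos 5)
The paper records this statement as a Fact with a citation to Todor\v{c}evi\'c and supplies no proof of its own, so your proposal has to stand on its own merits. The implications you actually carry out are fine: (3)$\Rightarrow$(2) is immediate, and your (1)$\Rightarrow$(3) — iterating $f$, letting $\rho(x)$ count the steps to a minimal node, and pairing $(c(f(x)),i_x,\rho(x))$ with an induction on $\rho$ — is correct. The problem is that the entire content of the Fact lives in the remaining implication, and there you offer only a plan that, as written, does not work.

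Concretely, at a limit level $\alpha$ with $\cf(\alpha)\neq\cf(\mu)$ you assert that, since there are fewer than $\mu^+$ many possible heights below $\alpha$, some $\gamma_x<\alpha$ is hit cofinally often by the heights $\h(f^*(x\restriction\alpha_\xi))$. This pigeonhole is false: each such height is merely below $\alpha_\xi$, so (already for $\cf(\alpha)=\omega$) the sequence of heights can be strictly increasing and cofinal in $\alpha$, in which case no value recurs cofinally and no $\gamma_x<\alpha$ exists; in general $\cf(\alpha)\le|\alpha|$, so counting alone never forces a repeated value, and the cofinality mismatch with $\cf(\mu)$ contributes nothing to this step. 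Even where some $\gamma_x$ could be extracted, you give no argument that the fibres of the extended map $f^*$ remain unions of $\mu$ antichains — you stipulate that the extension is to be performed ``in such a way that'' this holds, which is the conclusion rather than a construction — and you then explicitly defer to Todor\v{c}evi\'c for ``precisely the insight'' required. So the hard direction is not proved. For orientation: the standard argument isolates the hard step as (2)$\Rightarrow$(3), first showing that $T\restriction(E^{\mu^+}_{\cf(\mu)}\cap D)$ is a union of $\mu$ antichains and then, in a separate argument where the critical cofinality genuinely enters, that specializing the levels in $E^{\mu^+}_{\cf(\mu)}\cap D$ suffices to specialize all of $T$; neither step proceeds by a level-by-level extension of the regressive map.
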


A \emph{streamlined $\kappa$-tree} is a subset $T\s{}^{<\kappa}H_\kappa$
that is downward-closed, i.e, for every $t\in T$, $\{ t\restriction \alpha\mid \alpha<\kappa\}\s T$,
and such that $\mathbf T:=(T,{\s})$ is a $\kappa$-tree.
Note that in this case, $T_\alpha$ is nothing but $\{t\in T\mid \dom(t)=\alpha\}$.
Another feature of streamlined trees is that of being \emph{Hausdorff}, i.e., for every limit ordinal $\alpha$ and all $x,y\in T_\alpha$,
if $x_\downarrow=y_\downarrow$, then $x=y$. In particular, for two nodes $x,y$ of a streamlined tree $T$,
$x\wedge y\in\{x,y\}$ iff $x$ and $y$ are comparable. Indeed,
for two nodes $x,y$ of a streamlined tree $T$, $x\wedge y=x\restriction \Delta(x,y)$.

Out of convenience, we shall mostly be working with streamlined trees $T$,
and we shall identify them with the corresponding structure $\mathbf T:=(T,{\s})$,
asserting that $T$ (rather than $\mathbf T$) is a special $\kappa$-tree, admits a $\theta$-ascending path, etc.

\begin{defn}
For a subset $S\s\kappa$, a streamlined tree $T$ of height $\kappa$ is \emph{$S$-coherent} iff for every $\delta\in S$, for all $s,t\in T_\delta$,
$$\sup\{\gamma<\delta\mid s(\gamma)\neq t(\gamma)\}<\delta.$$
\end{defn}
\begin{lemma}\label{lemma15} Suppose that $T$ is a $E^{\mu^+}_{\cf(\mu)}$-coherent streamlined $\mu^+$-tree.
Suppose that $d:\{b_\delta\mid\delta\in \Delta\}\rightarrow\mu^+$ is a function, where:
\begin{itemize}
\item $\Delta=D\cap E^{\mu^+}_{\cf(\mu)}$ for some club $D$ in $\mu^+$;
\item for every $\delta\in \Delta$, $b_\delta\in T_\delta$ and $d(b_\delta)<\delta$;
\item $d$ is injective on chains.
\end{itemize}

Then $T$ is special.
\end{lemma}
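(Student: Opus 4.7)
The plan is to verify clause (2) of Fact~\ref{criticalcof}: namely, to produce a regressive map $f$ defined on $T\restriction \Delta$ whose point-preimages are each covered by $\mu$ antichains. Since we already have $\Delta=D\cap E^{\mu^+}_{\cf(\mu)}$ for a club $D$, the clubness requirement is automatic; the content of the proof will be in the construction of $f$ and in counting the antichains needed per fibre.

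First, for each $\delta\in\Delta$ and each $t\in T_\delta$, I will use $E^{\mu^+}_{\cf(\mu)}$-coherence to extract a \emph{merging level} $\alpha_t<\delta$ such that $t\restriction[\alpha_t,\delta)=b_\delta\restriction[\alpha_t,\delta)$. (Since $\delta$ is a limit ordinal, the sup of the disagreement set being $<\delta$ produces a strictly smaller $\alpha_t$; set $\alpha_t=0$ if the disagreement set is empty.) I will then set
$$\beta_t:=\max\{\alpha_t,\,d(b_\delta)+1\}<\delta\quad\text{and}\quad f(t):=t\restriction\beta_t\in T,$$
so $f(t)<_T t$ automatically. The role of the $\max$ is crucial: it guarantees simultaneously that $t$ and $b_\delta$ agree on $[\beta_t,\delta)$ and that $d(b_\delta)<\beta_t$.

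Next, fix $z\in T$ and set $\beta:=\dom(z)$. Since $T$ is a $\mu^+$-tree, $|T_\beta|\le\mu$, and since $z\in T$, $\beta<\mu^+$ and so $|\beta|\le\mu$. I will partition $f^{-1}\{z\}$ into at most $|T_\beta|\cdot|\beta|\le\mu$ pieces indexed by the pair $(b_\delta\restriction\beta,\,d(b_\delta))\in T_\beta\times\beta$, where $\delta=\dom(t)$, and argue that each piece is an antichain. Suppose that $t_0,t_1$ share the same index $(u,\eta)$ with $t_0<_T t_1$, and set $\delta_i=\dom(t_i)$ with $\delta_0<\delta_1$. From $\beta_{t_0}=\beta_{t_1}=\beta$, we have $\alpha_{t_0},\alpha_{t_1}\le\beta$, so both $t_i$ agree with their $b_{\delta_i}$ on $[\beta,\delta_i)$; combined with $t_0=t_1\restriction\delta_0$ this gives $b_{\delta_0}\restriction[\beta,\delta_0)=b_{\delta_1}\restriction[\beta,\delta_0)$. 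Together with $b_{\delta_0}\restriction\beta=u=b_{\delta_1}\restriction\beta$, this forces $b_{\delta_0}=b_{\delta_1}\restriction\delta_0$, i.e.\ $b_{\delta_0}<_T b_{\delta_1}$. But $d(b_{\delta_0})=\eta=d(b_{\delta_1})$, contradicting the injectivity of $d$ on chains.

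The main obstacle is not logical but bookkeeping: choosing $\beta_t$ so that the counting $|T_\beta|\cdot|\beta|\le\mu$ absorbs both the coherence parameter $\alpha_t$ \emph{and} the specialization parameter $d(b_\delta)$ while still letting the pair $(b_\delta\restriction\beta,d(b_\delta))$ pin down the witnessing chain in $\{b_\delta\}$. Once the $\max$-definition of $\beta_t$ is in hand, the coherence at level $\delta_0$ lets us propagate the agreement of the $t_i$'s with their respective $b_{\delta_i}$'s into a chain among the $b_\delta$'s, and the injectivity of $d$ on chains closes the argument.
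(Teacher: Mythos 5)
Your proof is correct and follows essentially the same route as the paper's: both reduce to Fact~\ref{criticalcof}(2), use $E^{\mu^+}_{\cf(\mu)}$-coherence to find the level past which $t$ merges with $b_\delta$, record the data $(b_\delta\restriction\beta,\,d(b_\delta))$, and show that two comparable nodes carrying the same data would force $b_{\delta_0}\subsetneq b_{\delta_1}$, contradicting injectivity of $d$ on chains. The only difference is bookkeeping: the paper packages all three parameters into a single ordinal via a pairing function to get a map that is outright injective on chains, whereas you keep the merging level in the tree-predecessor $f(t)=t\restriction\beta_t$ and spend the remaining parameters on a decomposition of each fibre into $\mu$ antichains.
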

\begin{proof} Before we start, we recall the following piece of notation;
for two nodes $s,t\in T$ with $\dom(s)\le\dom(t)$, we denote by $s*t$ the unique map with $\dom(t)$ satisfying:
$$(s*t)(\beta):=\begin{cases}s(\beta),&\text{if }\beta\in\dom(s);\\
t(\beta),&\text{otherwise.}\end{cases}$$

For every $\alpha<\mu^+$, let $\langle z_\alpha^i\mid i<\mu\rangle$ be some enumeration of $T_\alpha$.
For every $x\in T$, let $i(x):=\min\{i<\mu\mid x=z_{\dom(x)}^i\}$.
Fix a bijection $\pi:\mu^+\times\mu^+\times\mu\leftrightarrow\mu^+$.
Fix a subclub $D\s\{\gamma<\mu^+\mid \pi[\gamma\times\gamma\times\mu]=\gamma\}$ such that $D\cap E^{\mu^+}_{\cf(\mu)}\s \Delta$.
By Fact~\ref{criticalcof}, it suffices to define a map $f:T\restriction(E^{\mu^+}_{\cf(\mu)}\cap D)\rightarrow\mu^+$ such that:
\begin{itemize}
\item for every $x\in\dom(f)$, $f(x)<\dom(x)$;
\item $f$ is injective on chains.
\end{itemize}
Thus, for every $\gamma\in E^{\mu^+}_{\cf(\mu)}\cap D$, for every $x\in T_\gamma$,
find a large enough $\beta<\gamma$ such that
$$\{\alpha<\gamma\mid x(\alpha)\neq b_\gamma(\alpha)\}\s\beta,$$
and then let
$$f(x):=\pi(d(b_\gamma),\beta,i(b_\gamma\restriction\beta)).$$

To see that $f$ is injective over chains,
suppose that $x\subsetneq y$ is a pair of elements of $T\restriction(E^{\mu^+}_{\cf(\mu)}\cap D)$.
Towards a contradiction, suppose that $f(x)=f(y)$. Write $\gamma:=\dom(x)$ and $\delta:=\dom(y)$.
It follows that there exists some $\beta<\gamma$ such that:
\begin{itemize}
\item $b_\gamma=(b_\gamma\restriction\beta)*x$,
\item $b_\delta=(b_\delta\restriction\beta)*y$, and
\item $i(b_\gamma\restriction\beta)=i(b_\delta\restriction\beta)$,
\end{itemize}
so since $x= y\restriction\gamma$, it follows that $b_\gamma=b_\delta\restriction\gamma$. But then $d(b_\gamma)$ must be distinct from $d(b_\delta)$. This is a contradiction.
\end{proof}

We now present Definition~\ref{def12} in full generality.
\begin{defn}\label{meetarrow}
For a structure $\mathbf T=(T,{<},{\lhd})$ such that $(T,{<})$ is a tree and $(T,{\lhd})$ is a partially ordered set,
the partition relation $\mathbf T\nmeetarrow[\kappa]^{n,m}_\theta$
asserts the existence of a colouring $c:T\rightarrow\theta$ satisfying the two:
\begin{enumerate}[(1)]
\item for every $\h$-rapid $S\s {}^nT$ of size $\kappa$,
and every $\tau<\theta$, there are $(x_0,\ldots,x_{n-1})\neq (y_0,\ldots,y_{n-1})$ in $S$ such that the following three hold:
\begin{itemize}
\item for every $k<n$, $c(x_k\wedge y_k)=\tau$;
\item for every $k<n$, if $x_k\lhd y_{k}$ then $\h(x_k)<\h(y_k)$;
\item $|\{ \h(x_k\wedge y_k)\mid k<n\}|\le m$.
\end{itemize}
\item for every $T'\in[T]^\kappa$, there exists an $\h$-rapid $S'\s {}^nT'$ of size $\kappa$
such that for every $S\s S'$ of size $\kappa$,
and every $\tau\in{}^n\theta$, there are $(x_0,\ldots,x_{n-1})\neq (y_0,\ldots,y_{n-1})$ in $S$ such that the following three hold:
\begin{itemize}
\item for every $k<n$, $c(x_k\wedge y_k)=\tau(k)$;
\item for every $k<n$, if $x_k\lhd y_{k}$ then $\h(x_k)<\h(y_k)$;
\item $|\{ \h(x_k\wedge y_k)\mid k<n\}|\le m$.
\end{itemize}
\end{enumerate}

We write $\mathbf T\nmeetarrow[\kappa]^{n}_\theta$ for $\mathbf T\nmeetarrow[\kappa]^{n,n}_\theta$.
\end{defn}

A primary example of an ordering $\lhd$ to be used as the third component is given by the next definition.

\begin{defn}[Kleene-Brouwer lexicographic ordering]\label{lexo}\hfill
\begin{enumerate}
\item For an ordinal $\alpha$ and a linearly ordered set $(L,<_L)$, we define a linear ordering $<_{\lex}$ of ${}^{<\alpha}L$, as follows:

$\br$ If $f$ and $g$ are comparable, then $f<_{\lex}g$ iff $f\supseteq g$;

$\br$ Otherwise, consider $\Delta(f,g):=\min\{i\in\dom(f)\cap\dom(g)\mid f(i)\neq g(i)\}$
and let $f<_{\lex}g$ iff $f(i)<_L g(i)$.
\item For two ordinals $\alpha,\beta$, we let $<_{\lex}$ be the ordering of ${}^{<\alpha}\beta$
obtained from Clause~(i) where $\beta$ is equipped with the well-ordering of $\in$.
\item For three ordinals $\alpha,\beta,\gamma$, we let $<_{\lex}$ be the ordering of ${}^{<\alpha}({}^{<\beta}\gamma)$
obtained from Clause~(i) where ${}^{<\beta}\gamma$ is equipped with the linear ordering ${<_{\lex}}$ of Clause~(ii).
\item If $T\s{}^{<\kappa}H_\kappa$ is a streamlined tree to which the above definitions of $<_{\lex}$ do not apply (in particular, $T\nsubseteq{}^{<\kappa}\kappa$ and $T\nsubseteq{}^{<\kappa}({}^{<\kappa}\kappa)$),
then we let $<_{\lex}$ be the ordering of $T$
obtained from Clause~(i) where $H_\kappa$ is equipped with an arbitrary well-ordering.
\end{enumerate}
\end{defn}
\begin{defn}\label{meetarrow2} For a streamlined tree $T\s{}^{<\kappa}H_\kappa$, write $T\nmeetarrow[\kappa]^{n,m}_\theta$
for $\mathbf T\nmeetarrow[\kappa]^{n,m}_\theta$, where $\mathbf T:=(T,{\s},{<_\lex})$.
Define $T\nmeetarrow[\kappa]^{n}_\theta$ similarly.
\end{defn}

A standard argument shows that if $T$ is a streamlined prolific $\kappa$-Souslin tree,\footnote{The definition of a prolific tree may be found at \cite[Definition~2.5]{paper20}. The existence of a $\kappa$-Souslin tree
is equivalent to the existence of one which is streamlined and prolific.}
then any colouring $c:T\rightarrow\kappa$ that satisfies $c(z)=z(\max(\dom(z)))$ for every nonempty $z\in T\restriction\nacc(\kappa)$
witnesses $T\nmeetarrow[\kappa]^1_\kappa$.
We leave it for the reader to verify that this can be extended to show that if $T$ is $n$-free (see \cite[Definition~1.3]{paper20}), then said colouring moreover witnesses $T\nmeetarrow[\kappa]^{n,1}_\kappa$.
Note however that, in general, the relation $(T,{<},\emptyset)\nmeetarrow[\kappa]^1_2$ implies that
$(T,{<})$ has no chains of order-type $\kappa$, so that examples given by $\kappa$-Souslin trees are destructible by $\kappa$-cc notions of forcing.
In contrast, the examples constructed in Section~\ref{sec8} below will all be $\kappa$-cc indestructible.

\begin{defn}[Abraham-Rubin-Shelah, \cite{ARSh:153}]
Two linear orders $\mathbf L_0, \mathbf L_1$ of size $\kappa$ are \emph{far} (resp.~\emph{monotonically far}) iff
any linear order of size $\kappa$ which embeds into $\mathbf L_0$ does not embed into $\mathbf L_1$ (resp.~into $\mathbf L_1$ nor to its reverse).
\end{defn}
\begin{defn}[Shelah, \cite{Sh:462}]\label{entangledsequence}
A $\lambda$-sequence $\langle (L_i,{\lhd _i})\mid i<\lambda\rangle$ of linear orders is \emph{$(\kappa,n)$-entangled}
iff for every $I\in[\lambda]^n$, for every matrix $\langle x_{\alpha,i}\mid \alpha<\kappa, i\in I\rangle$
such that for every $i\in I$, $\alpha\mapsto x_{\alpha,i}$ is an injection from $\kappa$ to $L_i$,
and for every $c: I \rightarrow 2$,
there are $\alpha<\beta<\kappa$ such that for every $i\in I$, $x_{\alpha,i}\lhd_i x_{\beta,i}$ iff $c(i)=1$.
\end{defn}

\begin{prop}[essentially Sierpi{\'n}ski]\label{entsier} Suppose $\mathcal K$ is a class of $\kappa$-sized linear orders
and $\vec{\mathbf L}=\langle \mathbf L_i\mid i<\lambda\rangle$ is sequence of elements of $\mathcal K$. Then each clause implies the next one:
\begin{enumerate}[(1)]
\item $\vec{\mathbf L}$ is $(\kappa,2)$-entangled;
\item the elements of $\vec{\mathbf L}$ are pairwise monotonically far;
\item the elements of $\vec{\mathbf L}$ are pairwise far;
\item $\mathcal K$ admits no basis of size less than $\lambda$.
\end{enumerate}
\end{prop}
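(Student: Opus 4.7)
My plan is to prove the chain of implications $(1)\Rightarrow(2)\Rightarrow(3)\Rightarrow(4)$ in order, with the only non-trivial step being $(1)\Rightarrow(2)$.

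For $(1)\Rightarrow(2)$, I would argue by contraposition. Suppose $\mathbf L_i$ and $\mathbf L_j$ (for some $i\neq j$) are not monotonically far, as witnessed by a $\kappa$-sized linear order $(L,<_L)$ admitting order-preserving injections $\phi\colon L\to L_i$ and $\psi\colon L\to L_j$, where $\psi$ is either order-preserving or order-reversing (depending on whether $L$ embeds into $\mathbf L_j$ or into its reverse). Fix an \emph{arbitrary} enumeration $\langle y_\alpha\mid \alpha<\kappa\rangle$ of $L$ and define the matrix $x_{\alpha,i}:=\phi(y_\alpha)$ and $x_{\alpha,j}:=\psi(y_\alpha)$. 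This gives injections from $\kappa$ into $L_i$ and $L_j$ as required in Definition~\ref{entangledsequence}. Now apply $(\kappa,2)$-entangledness with $I=\{i,j\}$ and the colouring $c$ chosen so as to force a contradiction: take $c(i)=1$ and $c(j)=0$ in the first case, and $c(i)=c(j)=1$ in the reversed case. The resulting pair $\alpha<\beta$ would witness $y_\alpha<_L y_\beta$ via $\phi$ and simultaneously $y_\alpha\not<_L y_\beta$ via $\psi$, which is absurd.

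The implication $(2)\Rightarrow(3)$ is immediate, since monotonic farness literally strengthens farness by forbidding also embeddings into the reverse.

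For $(3)\Rightarrow(4)$, suppose toward a contradiction that $\mathcal B$ is a basis for $\mathcal K$ with $|\mathcal B|<\lambda$. For each $i<\lambda$, fix an $\mathbf B_i\in\mathcal B$ that embeds into $\mathbf L_i$; since $\mathbf B_i\in\mathcal K$ is a $\kappa$-sized linear order, by the pigeonhole principle some single $\mathbf B\in\mathcal B$ serves as $\mathbf B_i$ for two distinct indices $i\neq j$. Then $\mathbf B$ is a $\kappa$-sized linear order embedding into both $\mathbf L_i$ and $\mathbf L_j$, contradicting that they are far.

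The only step that requires genuine thought is $(1)\Rightarrow(2)$, and there the main pitfall to avoid is conflating the enumeration order on $\kappa$ with the $<_L$-order on $L$: the enumeration must be arbitrary so that the positions of $y_\alpha,y_\beta$ in $L$ are free to take either order, since this is precisely the freedom that the entangledness hypothesis exploits.
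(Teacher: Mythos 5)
Your proof is correct and essentially identical to the paper's: the paper phrases $(1)\Rightarrow(2)$ contrapositively too in effect (it fixes an embedding $f$ into $\mathbf L_i$ and shows no injection $g$ into $L_j$ can be order-preserving or order-reversing, invoking entangledness once with an injective colouring and once with a constant one), and $(2)\Rightarrow(3)$ and $(3)\Rightarrow(4)$ are handled exactly as you do. Your closing caveat about the enumeration is harmless but unnecessary — the argument works for any enumeration, since all that matters is that the same pair $y_\alpha,y_\beta$ is evaluated in both coordinates.
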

\begin{proof} $(1)\implies(2)$: Let $i\neq j$. Write $\mathbf L_i=\langle L_i,{\lhd_i})$ and $\mathbf L_j=\langle L_j,{\lhd_j})$.
Let $\mathbf K=(\kappa,\prec)$ be any linear order admitting an embedding from $\mathbf K$ to $\mathbf L_i$,
say $f:\kappa\rightarrow L_i$.
Now, let $g:\kappa\rightarrow L_j$ be any injection
and we shall show that $g$ does not constitute an embedding from
$\mathbf K$ to $\mathbf L_j$ nor to its reverse.

For every $\alpha<\kappa$, write $x_{\alpha,i}:=f(\alpha)$ and $x_{\alpha,j}:=g(\alpha)$. Set $I:=\{i,j\}$.
Then $\langle x_{\alpha,l}\mid \alpha<\kappa, l\in I\rangle$ is a matrix
such that for every $l\in I$, $\alpha\mapsto x_{\alpha,l}$ is an injection from $\kappa$ to $L_l$.
Invoking $(\kappa,2)$-entangled-ness with an injective map $c:I\rightarrow2$ yields $\alpha\neq\beta$ in $\kappa$
such that $x_{\alpha,i}\lhd_i x_{\beta,i}$ and $x_{\beta,j}\lhd_j x_{\alpha,j}$
which implies that $g$ does not embed $\mathbf K$ to $\mathbf L_j$.
Invoking $(\kappa,2)$-entangled-ness with a constant map $c:I\rightarrow2$ yields $\alpha\neq\beta$ in $\kappa$
such that $x_{\alpha,i}\lhd_i x_{\beta,i}$ and $x_{\alpha,j}\lhd_j x_{\beta,j}$
which implies that $g$ does not embed $\mathbf K$ to the reverse of $\mathbf L_j$.

$(2)\implies(3)$: This is trivial.

$(3)\implies(4)$: By the pigeonhole principle.
\end{proof}

\begin{defn}
A linear order $(L,{<_L})$ is a \emph{$\kappa$-Aronszajn line} iff all of the following hold:
\begin{itemize}
\item $|L| = \kappa$;
\item for every $X \in [L]^\kappa$, $(X, <_L)$ does not have a dense subset of size less than $\kappa$;
\item neither $(\kappa, \in)$ nor its reverse can be embedded in $(L,{<_L})$ in an order-preserving way.
\end{itemize}
\end{defn}

In \cite[\S3]{MR776625}, one finds a procedure that takes a line $\mathbf L$ and produces a corresponding \emph{partition tree} $\mathbf T$ which is $2$-splitting.
The partition tree of a $\kappa$-Aronszajn line is a $\kappa$-Aronszajn tree.
\begin{defn}
A linear order $\mathbf L$ is a \emph{special $\kappa$-Aronszajn line} iff its partition tree is a special $\kappa$-Aronszajn tree.
\end{defn}

The next fact follows from the content of \cite[\S2]{MR776625}
and is inspired by the proof of \cite[Proposition~8.7]{MR2329763}.
It deals with the dual procedure of transforming a tree into a line.
To stay focused, it is stated for streamlined trees and uses Definition~\ref{lexo} as a building block.

\begin{fact}\label{flippedorder} For a streamlined tree $T$,
a colouring $c:T\rightarrow\theta$ and a subset $A\s\theta$, define an ordering $<^A$ of $T$, by letting for all $s,t\in T$:
\begin{itemize}
\item[$\br$] If $c(s\wedge t)\in A$, then $s<^At$ iff $s<_{\lex}t$;
\item[$\br$] If $c(s\wedge t)\notin A$, then $s<^At$ iff $t<_{\lex}s$.
\end{itemize}

Then:
\begin{enumerate}
\item $<^A$ is a linear ordering of $T$;
\item If $T$ is a (resp.~special) $\kappa$-Aronszajn tree, then $(T,<^A)$ is a (resp.~special) $\kappa$-Aronszajn line.
\end{enumerate}

In particular (using $A:=\theta$), $(T,<_{\lex})$ is a $\kappa$-Aronszajn line.
\end{fact}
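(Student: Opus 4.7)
The proof plan divides into two parts.

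For Part (1), totality and irreflexivity of $<^A$ are immediate since $<^A$ is defined pairwise as either $<_{\lex}$ or its reverse, both strict linear orders. For transitivity, assume $s<^A t<^A u$ with $s,t,u\in T$ distinct, and invoke the basic tree identity: among the three pairwise meets $s\wedge t$, $s\wedge u$, $t\wedge u$, two coincide and equal the minimum $w$ in the tree order, the third lying weakly above $w$. Call the node common to the two smaller meets the \emph{odd node} $o\in\{s,t,u\}$ (in the degenerate case where all three meets coincide, skip this step). The crucial observation is that $c(w)$ controls the $<^A$-orientation for both pairs containing $o$, and $<_{\lex}$ places $o$ consistently on one side of the other two nodes, since those two agree past $w$. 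A short case analysis then gives: $s<^A u$ directly when $o=s$ or $o=u$; a contradiction (hence vacuous transitivity) when $o=t$; and the degenerate case reduces to transitivity of $<_{\lex}$ or its reverse restricted to $\{s,t,u\}$. Comparable triples, e.g.\ $s\subsetneq t$ giving $s\wedge t=s$, are absorbed into the same scheme, via the observation $s\wedge u = t\wedge u$ whenever $u$ is incomparable to $s$.

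For Part (2), assume $T$ is a $\kappa$-Aronszajn tree. The cardinality condition $|T|=\kappa$ is trivial. For the absence of dense subsets of size less than $\kappa$ in any $X\in[T]^\kappa$, and for the non-embeddability of $\kappa$ and its reverse into $(T,<^A)$, I would adapt the classical arguments for $(T,<_{\lex})$ in \cite[\S2]{MR776625}. The adaptation amounts to bookkeeping: since the $A$-twist only relabels left and right at each branching node $w$ (according as $c(w)\in A$), while the underlying tree structure is unchanged, a hypothetical $<^A$-monotone $\kappa$-sequence or a small dense subset in $(T,<^A)$ would propagate to the same tree-structural obstruction in $T$ that would follow from the analogous $<_{\lex}$ hypothesis. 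For the "special" variant, appeal to the partition tree construction of \cite[\S3]{MR776625}: the partition tree of $(T,<^A)$ is built from canonical convex $<^A$-cuts that correspond naturally to meet nodes of $T$, so that a specialization of $T$ transfers to one of this partition tree.

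The main obstacle is rigorously executing the no-$\kappa$-embedding argument for $<^A$. The naive Ramsey approach of passing to a monochromatic subset where $<^A$ agrees with $<_{\lex}$ or its reverse is unavailable for non-weakly-compact $\kappa$, so one must instead carry out the classical tree walk on the meets $t_i\wedge t_j$ of a hypothetical $<^A$-increasing $\kappa$-sequence directly, following the strategy of \cite[Proposition~8.7]{MR2329763}. The local nature of the flips means each thinning step that would ordinarily produce a $\kappa$-chain in $T$ (and thus the desired contradiction with Aronszajn-ness) still succeeds, albeit with additional case distinctions based on whether $c(t_i\wedge t_j)\in A$ at each relevant stage.
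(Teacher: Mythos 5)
Your proposal is correct and follows the route the paper itself points to: the paper gives no proof of this Fact, deferring to \cite[\S2]{MR776625} and \cite[Proposition~8.7]{MR2329763}, and your argument is the standard one from those sources. One remark that both justifies your ``bookkeeping'' claim in Part (2) and collapses most of its anticipated case distinctions: your transitivity analysis in Part (1) actually establishes that whenever $x<^Ay<^Az$ one has $x\wedge z=\min_{\le_T}\{x\wedge y,\,y\wedge z\}$ (the ``odd node $o=t$'' configuration being the only excluded one), which exhibits $<^A$ as itself a lexicographic-type ordering of $T$ in the general sense of \cite[\S2]{MR776625} --- a linear ordering of each branching set, reversed exactly at those nodes $w$ with $c(w)\notin A$. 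Consequently the non-embeddability of $(\kappa,\in)$ and of its reverse, the non-separability of $\kappa$-sized suborders, and the club-isomorphism of the partition tree of $(T,<^A)$ with $(T,{\s})$ are literal instances of the general theorems for such orderings, rather than adaptations requiring new case analysis at each thinning step.
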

\begin{remark} The two translation procedures commute in the sense that the partition tree of $(T,<^A)$ is club-isomorphic to the original tree $(T,{\s})$.
This is best possible since the two trees need not have the same arity.
\end{remark}

\begin{defn} Suppose $\mathbf L=(L,{<_L})$ is a linear order.
\begin{itemize}
\item For a nonzero ordinal $\zeta$, we say that $C\s{}^\zeta L$ is a \emph{chain} iff for all $c,d\in C$,
$$c(0)\le_L d(0)\iff\bigwedge_{i<\zeta}c(i)\le_L d(i);$$
\item $\mathbf L$ is a \emph{$\mu^+$-Countryman line} iff $|L| = \mu^+$ and ${}^2L$ can be decomposed into $\mu$ many chains.
\end{itemize}
\end{defn}
\begin{remark}\label{rmk214} If $\mathbf L$ is a $\mu^+$-Countryman line then it is a $\mu^+$-Aronszajn line,
and for every nonzero ordinal $\zeta$ such that $\mu^{|\zeta|}=\mu$, it is the case that ${}^\zeta L$ can be decomposed into $\mu$ many chains.
Furthermore, a $3$-splitting variation of the partition tree
machinery applied to $\mathbf L$ yields a $\mu^+$-Aronszajn tree whose collection of middle third nodes form a special $\mu^+$-Aronszajn tree.
\end{remark}

The proof of \cite[Proposition~8.7]{MR2329763}
makes it clear that if $T$ is a streamlined $\mu^+$-Aronszajn tree such that $(T,{\s},\emptyset)\nmeetarrow[\mu^+]^1_2$ holds,
then there is no basis for the class of $\mu^+$-Aronszajn lines
consisting of just one $\mu^+$-Countryman line and its reverse. The next two lemmas demonstrate the utility of higher dimensions.

\begin{lemma}\label{omitcm} Suppose that:
\begin{itemize}
\item $T$ is a streamlined $\mu^+$-tree;
\item $c:T\rightarrow\theta$ is a colouring witnessing $T\nmeetarrow[\mu^+]^2_\theta$;
\item $A\in\mathcal P(\theta)$ with $\emptyset\subsetneq A\subsetneq\theta$.
\end{itemize}
Then $(T,<^A)$ does not contain a $\mu^+$-Countryman line.
\end{lemma}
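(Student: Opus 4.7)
The plan is to argue by contradiction. Suppose some $L\in[T]^{\mu^+}$ is such that $(L,<^A{\restriction}L)$ is a $\mu^+$-Countryman line; fix a decomposition ${}^2L=\bigcup_{i<\mu}C_i$ into $\mu$ chains. The leverage point is the asymmetry built into $<^A$ by Fact~\ref{flippedorder}: on pairs whose meet has colour in $A$ the orderings $<^A$ and $<_\lex$ agree, whereas on pairs whose meet has colour in $\theta\setminus A$ they disagree. I shall feed these two kinds of colours into the partition relation simultaneously to produce a single pair inside some $C_i$ that violates the chain axiom.

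Concretely, using $\emptyset\subsetneq A\subsetneq\theta$, pick $\tau_0\in A$ and $\tau_1\in\theta\setminus A$. Apply Clause~(2) of the partition relation $T\nmeetarrow[\mu^+]^2_\theta$ with $T':=L$ to obtain an $\h$-rapid $S'\subseteq {}^2 L$ of size $\mu^+$. Since ${}^2 L=\bigcup_{i<\mu}C_i$, the pigeonhole principle yields some $i<\mu$ with $|S'\cap C_i|=\mu^+$; set $S:=S'\cap C_i$. Invoking Clause~(2) again, now for this $S$ with $\tau:=(\tau_0,\tau_1)\in{}^2\theta$, delivers distinct $(x_0,x_1),(y_0,y_1)\in S$ satisfying $c(x_k\wedge y_k)=\tau_k$ for each $k<2$ together with the height--lex compatibility clause. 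Combined with the $\h$-rapidity of $S$, which pins the heights of the two tuples to a uniform direction, this compatibility forces the lex comparisons on the two coordinates to point \emph{uniformly}: either $x_k<_\lex y_k$ for both $k<2$, or $y_k<_\lex x_k$ for both $k<2$.

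On the other hand, since $(x_0,x_1)$ and $(y_0,y_1)$ lie in the chain $C_i$, the chain axiom dictates $x_0<^A y_0\iff x_1<^A y_1$. Unpacking $<^A$ via $c(x_0\wedge y_0)=\tau_0\in A$ and $c(x_1\wedge y_1)=\tau_1\in\theta\setminus A$, this reads
$$x_0<_\lex y_0\iff y_1<_\lex x_1,$$
so the lex comparisons on the two coordinates must point in \emph{opposite} directions, contradicting the previous paragraph. The main obstacle I anticipate is invoking the height--lex compatibility clause in precisely the form needed to force uniformity of lex comparisons; the remainder is a routine pigeonhole step and an unpacking of the definition of $<^A$ from Fact~\ref{flippedorder}.
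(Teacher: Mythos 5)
Your proposal is correct and follows essentially the same route as the paper's proof: extract the $\h$-rapid $S'$ from Clause~(2) applied to the putative Countryman suborder, thin to a $\mu^+$-sized chain $S\subseteq S'$ by pigeonhole, hit $S$ with the colour pair $(\tau_0,\tau_1)\in A\times(\theta\setminus A)$, and play the uniform lexicographic direction (forced by $\h$-rapidity and the height--lex clause) against the sign flip that $<^A$ introduces on the second coordinate. No gaps.
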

\begin{proof} Suppose not, and fix $T'\in[T]^{\mu^+}$ such that $(T',<^A)$ is a $\mu^+$-Countryman line.
Appealing to Clause~(2) of Definition~\ref{meetarrow} with $T'$
we get a $\h$-rapid $S'\s {}^2T'$ of size $\mu^+$.
As $(T',<^A)$ is a $\mu^+$-Countryman line, $T'\times T'$ admits a decomposition into $\mu$-many chains (with respect to $<^A$),
which means that we may find an $S\s S'$ of size $\kappa$ that is a chain.
As $\emptyset\subsetneq A\subsetneq\theta$, let us fix $\tau:2\rightarrow\theta$ such that $\tau(0)\in A$ and $\tau(1)\notin A$.
Finally, by the choice of $c$, fix $(x_0,x_1)\neq (y_0,y_1)$ in $S$ such that for every $k<2$:
\begin{itemize}
\item $c(x_k\wedge y_k)=\tau(k)$, and
\item if $x_k<_{\lex}y_{k}$ then $\h(x_k)<\h(y_k)$.
\end{itemize}

Since $S$ is $\h$-rapid, we may assume that $\max\{\h(x_0),\allowbreak\h(x_1)\}<\min\{\h(y_0),\allowbreak\h(y_1)\}$.
Therefore, $x_0<_{\lex}y_0$ and $x_1<_{\lex}y_1$.
As $\tau(0)\in A$, it follows that $x_0<^Ay_0$.
As $\tau(1)\notin A$, it follows that $\neg(x_1<^Ay_1)$. This is a contradiction to the fact that $(x_0,x_1)$ and $(y_0,y_1)$ belong to the chain $S$.
\end{proof}

\begin{lemma}\label{getent}
Suppose $T$ is a streamlined $\kappa$-Aronszajn tree and $T\nmeetarrow[\kappa]^n_\theta$ holds with $\theta\ge\omega$ and $n\ge2$.
Then there is a sequence of $\kappa$-Aronszajn lines $\langle \mathbf L_i\mid i<2^\theta\rangle$
that is $(\kappa,n)$-entangled. Furthermore, for every $i<2^\theta$:
\begin{itemize}
\item If $T$ is special, then so is $\mathbf L_i$;
\item If $\kappa$ is a successor cardinal, then $\mathbf L_i$ contains no $\kappa$-Countryman line.
\end{itemize}
\end{lemma}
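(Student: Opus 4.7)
The plan is to parameterize the desired $2^\theta$ many lines by subsets of $\theta$ drawn from a Hausdorff-style independent family, and to exploit Clause~(1) of Definition~\ref{meetarrow}. Let $c: T \to \theta$ witness $T\nmeetarrow[\kappa]^n_\theta$. Since $\theta$ is infinite, fix an independent family $\{A_i : i < 2^\theta\} \s \mathcal P(\theta)$ in the sense that, for every finite $J \s 2^\theta$ and every $s: J \to 2$, there is $\tau < \theta$ with $\tau \in A_i$ iff $s(i) = 1$ for all $i \in J$; in particular each $A_i$ is a proper nonempty subset of $\theta$. For each $i < 2^\theta$ define $\mathbf{L}_i := (T, <^{A_i})$ via Fact~\ref{flippedorder}, which guarantees that each $\mathbf{L}_i$ is a $\kappa$-Aronszajn line, and a special one if $T$ is special. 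The no-Countryman-subline conclusion when $\kappa = \mu^+$ will follow from Lemma~\ref{omitcm} once one observes the trivial reduction $T\nmeetarrow[\kappa]^n_\theta \Rightarrow T\nmeetarrow[\kappa]^2_\theta$: pad any 2-tuple $(x_0, x_1)$ to $(x_0, x_1, x_1, \ldots, x_1)$ and project back, noting that this preserves the image (hence $\h$-rapidness) and that projection onto the first two coordinates is injective on any $\h$-rapid $n$-subset.

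For the $(\kappa, n)$-entangled property, fix $I = \{i_0, \ldots, i_{n-1}\} \in [2^\theta]^n$, a matrix $\langle x_{\alpha, i} : \alpha < \kappa,\ i \in I\rangle$ with each column an injection into $T$, and $c^*: I \to 2$. Define $y_\alpha := (x_{\alpha, i_0}, \ldots, x_{\alpha, i_{n-1}}) \in {}^n T$. Since $T$ is a $\kappa$-Aronszajn tree, each column has heights cofinal in $\kappa$, and a standard induction on $\xi < \kappa$ --- at each stage intersecting $n$ many co-$<\!\kappa$ subsets of $\kappa$ --- produces a $\kappa$-sized $A \s \kappa$ for which $\{y_\alpha : \alpha \in A\}$ is $\h$-rapid in ${}^n T$ and height-monotone: for $\alpha < \beta$ in $A$, $\max_{k<n} \h(x_{\alpha, i_k}) < \min_{k<n} \h(x_{\beta, i_k})$. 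By the independence of the family, pick $\tau < \theta$ with $\tau \in A_{i_k}$ iff $c^*(i_k) = 1$ for every $k < n$. Applying Clause~(1) of Definition~\ref{meetarrow} to $S := \{y_\alpha : \alpha \in A\}$ and $\tau$ yields $\alpha' \neq \beta'$ in $A$ such that for every $k$, $c(x_{\alpha', i_k} \wedge x_{\beta', i_k}) = \tau$ and $<_{\lex}$ agrees with $\h$ on coordinate~$k$. Relabelling so that $\alpha' < \beta'$ in $\kappa$, the height-monotonicity of $A$ forces $x_{\alpha', i_k} <_{\lex} x_{\beta', i_k}$ for every $k$, whence by the construction of $<^{A_{i_k}}$ in Fact~\ref{flippedorder}, $x_{\alpha', i_k} <^{A_{i_k}} x_{\beta', i_k}$ iff $\tau \in A_{i_k}$ iff $c^*(i_k) = 1$, as required.

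I expect no serious obstacle. The $\h$-rapid extraction is routine from the Aronszajn property (each level of size $<\kappa$) and the regularity of $\kappa$, together with the fact that an injection from $\kappa$ to $T$ cannot be absorbed below any bounded height. The device enabling only Clause~(1) (rather than the richer Clause~(2)) to suffice is the single colour $\tau$ furnished by the independent family, trading dimensional colour-flexibility for uniformity across coordinates; this is what forces all $n$ meets $x_{\alpha', i_k} \wedge x_{\beta', i_k}$ to receive the same colour $\tau$ while still toggling the $<^{A_{i_k}}$-direction independently on each coordinate via the independence of the $A_{i_k}$. The padding reduction for Lemma~\ref{omitcm} and the invocation of Fact~\ref{flippedorder} for the Aronszajn/special conclusions are both immediate.
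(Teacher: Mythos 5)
Your proof is correct and follows essentially the same route as the paper's: a Hausdorff independent family $\langle A_i\mid i<2^\theta\rangle$, the lines $(T,<^{A_i})$ via Fact~\ref{flippedorder}, Lemma~\ref{omitcm} for the no-Countryman clause, and Clause~(1) of Definition~\ref{meetarrow} applied to a thinned, $\h$-rapid, height-monotone matrix with the single colour $\tau$ supplied by independence. The only (harmless) difference is that you spell out the padding/projection reduction from $T\nmeetarrow[\kappa]^n_\theta$ to $T\nmeetarrow[\kappa]^2_\theta$, which the paper leaves implicit.
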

\begin{proof} Let $c$ witness $T\nmeetarrow[\kappa]^2_\theta$.
As $\theta$ is infinite, a theorem of Hausdorff \cite{hausdorff1936zwei} yields that we may fix an independent family $\vec A=\langle A_i\mid i<2^\theta\rangle$ of subsets of $\theta$.
For every $i<2^\theta$, consider the Aronszajn line $\mathbf L_i:=(T,<^{A_i})$.
By Fact~\ref{flippedorder}, if $T$ is special, then so is $\mathbf L_i$.
As $\vec A$ is an independent family, for every $i<2^\theta$, $\emptyset\subsetneq A_i\subsetneq\theta$,
so that Lemma~\ref{omitcm} applies.

Next, given $I\in[2^\theta]^n$ and $c:I\rightarrow 2$, by the choice of $\vec A$ we may fix a $\tau<\theta$ such that for every $i\in I$, $\tau\in A_i$ iff $c(i)=1$.
Then given a matrix $\langle x_{\alpha,i}\mid \alpha<\kappa, i\in I\rangle$
such that for every $i\in I$, $\alpha\mapsto x_{\alpha,i}$ is an injection from $\kappa$ to $T$,
the fact that $T$ is a $\kappa$-tree implies that the matrix can be thinned out to be $\h$-rapid,
and moreover, the map $\alpha\mapsto\h(x_{\alpha,i})$ is strictly increasing over $\kappa$ for each $i<2$,
so we pick $\alpha<\beta$ in $\kappa$ such that for every $i\in I$,
$c(x_{\alpha,i}\wedge x_{\beta,i})=\tau$ and $x_{\alpha,i}<_{\lex} x_{\beta,i}$.
It follows that for every $i\in I$, $c(i)=1$ iff $\tau\in A_i$ iff $x_{\alpha,i}<^{A_i}x_{\beta,i}$.
\end{proof}

Applications of the strong partition relation $T\nmeetarrow[\kappa]^{n,1}_\theta$ will appear in the next version of this paper.

\section{Colourings}\label{sec3}
\begin{defn}[Associated tree] \label{assoctree}
Given a colouring $c:[\kappa]^2\rightarrow\theta$:
\begin{itemize}
\item For every $\delta<\kappa$, denote by $c_\delta:\delta\rightarrow\theta$
the fiber map defined via $c_\delta(\gamma):=c(\gamma,\delta)$;
\item The associated tree $T(c)$ is the streamlined tree $\{ c_\delta\restriction\gamma\mid \gamma\le\delta<\kappa\}$.
\end{itemize}
\end{defn}

\begin{defn}[{\cite[Definition~2.1]{paper47}}]\label{uprinciple} Let $\mathcal A,J$ be subsets of $\mathcal P(\kappa)$.
\begin{itemize}
\item $\onto(\mathcal A,J,\theta)$ asserts the existence of a colouring $c:[\kappa]^2\rightarrow\theta$ satisfying that
for all $A\in\mathcal A$ and $B\in\mathcal P(\kappa)\setminus J$, there is an $\eta\in A$ such that $c[\{\eta\}\circledast B]=\theta$;
\item $\ubd(\mathcal A,J,\theta)$ asserts the existence of a colouring $c:[\kappa]^2\rightarrow\theta$ satisfying that
for all $A\in\mathcal A$ and $B\in\mathcal P(\kappa)\setminus J$, there is an $\eta\in A$ such that $\otp(c[\{\eta\}\circledast B])=\theta$.
\end{itemize}
\end{defn}
\begin{remark}
Typical values for $J$ are the ideal $J^{\bd}[\kappa]$ of bounded subsets of $\kappa$
and the ideal $\ns_\kappa$ of nonstationary subsets of $\kappa$. If we omit $\mathcal A$, then we mean that $\mathcal A=\{\kappa\}$.
\end{remark}

\begin{defn}[{\cite[Definition~1.2]{paper34}}]\label{uprinciple}
$\U(\kappa,\lambda,\theta,\chi)$ asserts the existence of a colouring $c:[\kappa]^2\rightarrow\theta$ such that
for every $\zeta<\chi$, every pairwise disjoint subfamily $\mathcal A\s[\kappa]^{\zeta}$ of size $\kappa$,
for every $\tau<\theta$, there exists $\mathcal B\s \mathcal A$ of size $\lambda$ such that $\min(c[a\times b])>\tau$ for all $a\neq b$ from $\mathcal B$.
\end{defn}

\begin{prop}[essentially {\cite[Corollary~6.3.3]{MR2355670}}]\label{switchtofibers}
Suppose that $c:[\kappa]^2\rightarrow\theta$ witnesses $\U(\kappa,\kappa,\theta,\chi)$
with $\omega\le\chi\le\cf(\theta)\le\theta<\kappa$.

For every $\zeta<\chi$, for every rapid $S\s{}^\zeta T(c)$ of size $\kappa$,
there exists a rapid $X\s {}^\zeta\kappa$ of size $\kappa$ such that
for all $\langle \alpha_\iota\mid \iota<\zeta\rangle\neq \langle \beta_\iota\mid \iota<\zeta\rangle$ in $X$,
there are $x\neq y$ in $S$ such that for every $\iota<\zeta$:
\begin{itemize}
\item $x(\iota)\s c_{\alpha_\iota}$ and $y(\iota)\s c_{\beta_\iota}$;
\item $x(\iota)$ and $y(\iota)$ are incomparable;
\item $\dom(x(\iota))<\dom(y(\iota))$ iff $\alpha_\iota<\beta_\iota$.
\end{itemize}
\end{prop}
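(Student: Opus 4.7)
My plan is three-stage: extract ordinal representatives from $S$, thin out using rapidness and pigeonhole, then apply $\U$ to secure incompatibility.

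Enumerate $S = \{x^\xi : \xi < \kappa\}$ and, for each $\xi$ and $\iota < \zeta$, set $\gamma^\xi_\iota := \dom(x^\xi(\iota))$; since $x^\xi(\iota) \in T(c)$ we may fix $\delta^\xi_\iota \ge \gamma^\xi_\iota$ with $x^\xi(\iota) \s c_{\delta^\xi_\iota}$. The proposed $X$ will be $\{\langle \delta^\xi_\iota : \iota < \zeta \rangle : \xi \in I^*\}$ for a cofinal $I^* \s \kappa$ of size $\kappa$ to be constructed. The first stage is a structural thinning: the rapidness of $S$ separates the von Neumann ranks of the nodes $x^\xi(\iota)$ across $\xi$'s, and since the rank of $c_\delta \restriction \gamma \in {}^{<\kappa} H_\kappa$ tracks $\gamma$ up to a correction bounded by $\theta < \kappa$, routine pigeonhole on the fewer than $\chi$ possible orderings of $\iota \mapsto \gamma^\xi_\iota$ yields a cofinal $I_0 \s \kappa$ of size $\kappa$ along which $\xi \mapsto \gamma^\xi_\iota$ is strictly increasing for each $\iota$, a fixed permutation of $\zeta$ governs the order type of $\iota \mapsto \gamma^\xi_\iota$ for every $\xi \in I_0$, and $\sup_\iota \gamma^{\xi_1}_\iota < \min_\iota \gamma^{\xi_2}_\iota$ for $\xi_1 < \xi_2 \in I_0$. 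By choosing successive elements past the supremum of all previously observed $\delta^{\xi'}_\iota$ (possible because $\gamma$ is unbounded in $\kappa$), refine to $I_1 \s I_0$ of size $\kappa$ so that $\sup_\iota \delta^{\xi_1}_\iota < \min_\iota \gamma^{\xi_2}_\iota$ for $\xi_1 < \xi_2 \in I_1$; a $\Delta$-system thinning (legitimate since $\zeta < \chi$) ensures $\{\im(\delta^\xi) : \xi \in I_1\}$ is pairwise disjoint. Setting $X_0 := \{\langle \delta^\xi_\iota \rangle_\iota : \xi \in I_1\}$, the rapidness in ${}^\zeta \kappa$ and the bi-implication $\dom(x^{\xi_1}(\iota)) < \dom(x^{\xi_2}(\iota))$ iff $\delta^{\xi_1}_\iota < \delta^{\xi_2}_\iota$ are then automatic.

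For the second stage, note that for $\xi_1 < \xi_2 \in I_1$ and $\iota < \zeta$ (so $\gamma^{\xi_1}_\iota < \gamma^{\xi_2}_\iota$), the incompatibility of $x^{\xi_1}(\iota)$ and $x^{\xi_2}(\iota)$ reduces to producing some $\eta < \gamma^{\xi_1}_\iota$ with $c(\eta, \delta^{\xi_1}_\iota) \ne c(\eta, \delta^{\xi_2}_\iota)$. I plan to build $I^* = \{\xi_j : j < \kappa\} \s I_1$ via transfinite recursion of length $\kappa$, at each stage $j$ selecting $\xi_j \in I_1$ past the previously chosen elements so that for every $j' < j$ and every $\iota, \iota' < \zeta$, the value $c(\delta^{\xi_{j'}}_{\iota'}, \delta^{\xi_j}_\iota)$ exceeds a pre-selected threshold $\tau_{j'} < \theta$, with the $\tau_{j'}$'s chosen along a cofinal sequence in $\theta$ (using $\chi \le \cf(\theta)$) to dominate all $c$-values observed at stage $j'$. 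The resulting $X := \{\langle \delta^\xi_\iota \rangle_\iota : \xi \in I^*\}$ yields incompatibility as follows: for $\xi_{j_1} < \xi_{j_2}$ in $I^*$ and $\iota < \zeta$, take any $j_0 < j_1$ and set $\eta := \delta^{\xi_{j_0}}_\iota$; by the domain separation from the first stage $\eta < \gamma^{\xi_{j_1}}_\iota$, and by the recursive bounds $c(\eta, \delta^{\xi_{j_2}}_\iota) > \tau_{j_0}$ exceeds the already-fixed value of $c(\eta, \delta^{\xi_{j_1}}_\iota)$, giving the required distinctness; the natural witnesses are then $x := x^{\xi_{j_1}}, y := x^{\xi_{j_2}}$.

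The main obstacle is verifying that the recursion in the second stage does not get stuck: at each stage $j$, the set of $\xi \in I_1$ failing some threshold condition must be non-cofinal in $\kappa$. If it were cofinal, pigeonholing on the offending triple $(j', \iota, \iota')$ yields a size-$\kappa$ pairwise disjoint subfamily of $\{\im(\delta^\xi)\}$ on which $c$ is bounded above at the fixed coordinate $\delta^{\xi_{j'}}_{\iota'}$ by $\tau_{j'}$; the contradiction is to be extracted by augmenting this family with the singleton $\{\delta^{\xi_{j'}}_{\iota'}\}$ and applying $\U(\kappa,\kappa,\theta,\chi)$ at parameter $\tau_{j'}$. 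A delicate point here is that $\U$ does not canonically force the distinguished singleton into its produced size-$\kappa$ subfamily, so one must iterate the application while exploiting the full strength of $\zeta < \chi$ and the regularity of $\kappa$ to conclude. This is the combinatorial heart of the proposition and parallels Todor\v{c}evi\'c's original treatment in \cite[Corollary~6.3.3]{MR2355670}.
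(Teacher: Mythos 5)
Your first stage (extracting ordinals $\delta^\xi_\iota$ with $x^\xi(\iota)\s c_{\delta^\xi_\iota}$ and thinning to pairwise disjoint, domain-separated blocks) is sound and matches the paper's setup. The second stage has a genuine gap, located exactly where you flag ``the combinatorial heart''. Your final step needs $c(\eta,\delta^{\xi_{j_1}}_\iota)\le\tau_{j_0}<c(\eta,\delta^{\xi_{j_2}}_\iota)$ for $\eta=\delta^{\xi_{j_0}}_\iota$; but $\tau_{j_0}$ is fixed at stage $j_0$, before $\xi_{j_1}$ exists, so it cannot have been chosen to dominate $c(\eta,\delta^{\xi_{j_1}}_\iota)$ --- and in fact your own stage-$j_1$ requirement forces $c(\eta,\delta^{\xi_{j_1}}_\iota)>\tau_{j_0}$. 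Both values are merely ``large'' relative to the same threshold, so nothing makes them distinct. If instead each new element is required to beat the supremum of all earlier thresholds, the recursion dies at stage $\cf(\theta)<\kappa$, since your thresholds are cofinal in $\theta$. On top of this, the claim that $\U$ can be applied against a distinguished fixed singleton is, as you concede, not established.

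The paper's proof avoids all of this with a \emph{single} threshold and a \emph{single} application of $\U$. For each $\gamma<\kappa$, rapidness lets one choose $\mathbf s_\gamma\in S$ and $x_\gamma\in{}^\zeta\kappa$ with $\gamma\in\bigcap_{s\in\mathbf s_\gamma}\dom(s)$ and $\mathbf s_\gamma(\iota)\s c_{x_\gamma(\iota)}$; the block's own index $\gamma$ is the common coordinate at which incompatibility will be witnessed. Since $\zeta<\chi\le\cf(\theta)$, the internal supremum $\sup_{\iota<\zeta}c(\gamma,x_\gamma(\iota))$ is below $\theta$, and since $\theta<\kappa$ it can be stabilized to a single $\tau<\theta$ on a $\kappa$-sized set of $\gamma$'s. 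Feeding the pairwise disjoint sets $a_\gamma:=\{\gamma\}\cup\im(x_\gamma)$ (each of size $<\chi$) to $\U(\kappa,\kappa,\theta,\chi)$ at colour $\tau$ yields $\Gamma'$ with $\min(c[a_\gamma\times a_\delta])>\tau$ for $\gamma<\delta$ in $\Gamma'$; then $c(\gamma,x_\gamma(\iota))\le\tau<c(\gamma,x_\delta(\iota))$, so $\mathbf s_\gamma(\iota)$ and $\mathbf s_\delta(\iota)$ differ at the coordinate $\gamma$, which lies in both domains. Placing the low coordinate $\gamma$ \emph{inside} the set $a_\gamma$ handed to $\U$ is precisely what dissolves your ``distinguished singleton'' obstacle, and the uniform $\tau$ is what your cofinal sequence of thresholds cannot replace.
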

\begin{proof} Let $\zeta$ and $S$ be as above.
As $S$ is rapid, for each $\gamma<\kappa$, we may pick $\mathbf{s}_\gamma\in S$ and $x_\gamma\in{}^\zeta\kappa$ such that:
\begin{itemize}
\item $\gamma\in\bigcap_{s\in \mathbf{s}_\gamma}\dom(s)$;
\item for every $\iota<\zeta$, $\mathbf{s}_\gamma(\iota)\s c_{ x_\gamma(\iota)}$.
\end{itemize}

Fix a stationary $\Gamma\s\kappa$ such that for every pair $\gamma<\delta$ of ordinals from $\Gamma$,
$\sup(x_\gamma)<\delta$. So letting $a_\gamma:= \{\gamma\}\cup\im (x_\gamma)$, it is the case that $\{a_\gamma\mid \gamma\in\Gamma\}$ is a pairwise disjoint subfamily of $[\kappa]^{<\chi}$ of size $\kappa$.
As $\chi\le\cf(\theta)\le\theta<\kappa$ and by possibly shrinking $\Gamma$, we may also assume the existence of some $\tau<\theta$ such that
$\sup\{c_{\beta}(\gamma)\mid \beta\in x_\gamma\}=\tau$ for every $\gamma\in\Gamma$.
As $c$ witnesses $\U(\kappa,\kappa,\theta,\chi)$, we may now find some $\Gamma'\in[\Gamma]^\kappa$
such that for every pair $\gamma<\delta$ of ordinals from $\Gamma'$,
$\min(c[a_\gamma\times a_\delta])>\tau$.
We claim that $X:=\{ x_\gamma\mid \gamma\in \Gamma'\}$ is as sought.
To see this, let $\langle \alpha_\iota\mid \iota<\zeta\rangle\neq \langle \beta_\iota\mid \iota<\zeta\rangle$ be given in $X$.
Fix $\gamma\neq \delta$ in $\Gamma'$ such that $\langle \alpha_\iota\mid \iota<\zeta\rangle=x_\gamma$ and $\langle \beta_\iota\mid \iota<\zeta\rangle=x_\delta$.
Without loss of generality, $\gamma<\delta$.
Then, for every $\iota<\zeta$, $$\mathbf{s}_\gamma(\iota)(\gamma)=c_{\alpha_\iota}(\gamma)\le \tau<c_{\beta_\iota}(\gamma)=\mathbf{s}_\delta(\iota)(\gamma),$$
so that $\Delta(\rho _{1\alpha_\iota}, \rho _{1\beta_\iota}) \leq \gamma < \dom(\mathbf s_\gamma(\iota))<\delta< \dom(\mathbf s_\delta(\iota))$.
\end{proof}

The next result is dual to \cite[Lemma~2.4(1)]{paper36}.
\begin{prop}\label{lemma44} Suppose that a colouring $c:[\kappa]^2\rightarrow\theta$ is a witness to $\U(\kappa,2,\theta,3)$,
with $\theta<\kappa$.
Then, for every $\Upsilon\in[\kappa]^{\kappa}$,
$$\sup\{\delta<\kappa\mid \sup\{ c(\delta,\upsilon)\mid \upsilon\in \Upsilon\setminus\delta\}<\theta\}<\kappa.$$
\end{prop}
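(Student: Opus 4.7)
The plan is to argue by contradiction. Fix $\Upsilon\in[\kappa]^\kappa$ and suppose that
$$X:=\{\delta<\kappa\mid \sup\{c(\delta,\upsilon)\mid \upsilon\in\Upsilon\setminus\delta\}<\theta\}$$
has size $\kappa$. For each $\delta\in X$ the sup is an ordinal $\tau_\delta<\theta$, and since $\theta<\kappa$ with $\kappa$ regular, the pigeonhole principle yields a single $\tau<\theta$ and an $X'\in[X]^\kappa$ such that $c(\delta,\upsilon)\le\tau$ whenever $\delta\in X'$ and $\upsilon\in\Upsilon$ with $\upsilon>\delta$.

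Next, I would extract a sufficiently spread-out family of pairs. Because both $X'$ and $\Upsilon$ are unbounded in the regular cardinal $\kappa$, the set of $\gamma<\kappa$ at which each of $X'$ and $\Upsilon$ is cofinal forms a club in $\kappa$. Using this, recursively construct ordinals $x_\xi\in X'$ and $y_\xi\in\Upsilon$ for $\xi<\kappa$ so that
$$\sup\{y_\eta\mid\eta<\xi\}<x_\xi<y_\xi.$$
Then $\mathcal{A}:=\{\{x_\xi,y_\xi\}\mid \xi<\kappa\}$ is a pairwise disjoint subfamily of $[\kappa]^2$ of size $\kappa$, and its members are totally interleaved: for $\xi<\eta$ we have $x_\xi<y_\xi<x_\eta<y_\eta$.

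Finally, I apply $\U(\kappa,2,\theta,3)$ (with $\zeta=2$) to $\mathcal{A}$ and the ordinal $\tau$ to obtain two indices $\xi<\eta$ with $\min(c[\{x_\xi,y_\xi\}\times\{x_\eta,y_\eta\}])>\tau$. In particular $c(x_\xi,y_\eta)>\tau$. But $x_\xi\in X'$ and $y_\eta\in\Upsilon$ with $y_\eta>x_\xi$, so by the choice of $X'$ and $\tau$ we have $c(x_\xi,y_\eta)\le\tau$, a contradiction.

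There is no real obstacle in this argument; the only nontrivial bookkeeping is the recursive construction of the interleaved disjoint pairs in the second step, which uses only the regularity of $\kappa$ and the fact that $X',\Upsilon\in[\kappa]^\kappa$. The essential point is that the principle $\U(\kappa,2,\theta,3)$, when fed a family of pairs drawn alternately from $X'$ and from $\Upsilon$, must produce some cross pair of the form (element of $X'$, element of $\Upsilon$ strictly above it), and this is precisely the type of pair that the defining property of $X'$ forbids from taking $c$-values exceeding $\tau$.
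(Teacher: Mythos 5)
Your proposal is correct and follows essentially the same route as the paper's proof: stabilize the supremum at a single $\tau$ by pigeonhole, build a $\kappa$-sized pairwise disjoint family of separated pairs $\{\delta,\upsilon_\delta\}$ with $\delta$ in the bad set and $\upsilon_\delta\in\Upsilon$ above it, and then apply $\U(\kappa,2,\theta,3)$ to a cross pair to reach a contradiction. The only cosmetic difference is that the paper takes $\upsilon_\delta:=\min(\Upsilon\setminus(\delta+1))$ and thins to a sparse $\Gamma$ rather than interleaving by recursion, which amounts to the same bookkeeping.
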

\begin{proof} Suppose not, and let $\Upsilon$ be a counterexample. By the pigeonhole principle, pick $\Delta\in[\kappa]^{\kappa}$ and some $\tau<\theta$ such that,
for every $\delta\in \Delta$,
$$\sup\{ c(\delta,\upsilon)\mid \upsilon\in \Upsilon\setminus\delta\}<\tau.$$
For each $\delta\in\Delta$, let $\upsilon_\delta:=\min(\Upsilon\setminus(\delta+1))$.
Fix a sparse enough $\Gamma\in[\Delta]^\kappa$ such that for every $(\gamma,\delta)\in[\Gamma]^2$,
$\upsilon_{\gamma}<\delta$. In particular, $\mathcal A:=\{ \{\delta,\upsilon_\delta\}\mid \delta\in\Gamma\}$ is a pairwise disjoint family of size $\kappa$.
As $c$ witnesses $\U(\kappa,2,\theta,3)$, we may now find $a,b\in\mathcal A$ with $\max(a)<\min(b)$ such that $\min(c[a\times b])\ge\tau$.
By the choice of $\Gamma$, we may find $(\gamma,\delta)\in[\Gamma]^2$ such that $a=\{\gamma,\upsilon_{\gamma}\}$ and $b=\{\delta,\upsilon_\delta\}$.
In particular, $\gamma<\delta<\upsilon_\delta$ and $c(\gamma,\upsilon_\delta)\ge\tau$, contradicting the facts $\gamma\in\Gamma\s\Delta$ and $\upsilon_\delta\in \Upsilon\setminus\gamma$.
\end{proof}

\subsection{A question of Erd\H{o}s and Hajnal}
Recall that $\kappa\nrightarrow[\kappa]^2_\theta$ asserts the existence of a colouring $c:[\kappa]^2\rightarrow\theta$
such that for every $A\in[\kappa]^\kappa$, $c``[A]^2=\theta$.

A subset $R$ of $\kappa$ is a rainbow set (with respect to $c$) if $c$ is injective over $[R]^2$.
A \emph{rainbow triangle} is a rainbow set of size $3$.
Question 68 of \cite{MR280381} asks whether $\gch$ gives rise to a witness to $\aleph_1\nrightarrow[\aleph_1]^2_3$ with no rainbow triangles.
An affirmative answer was given by Shelah in \cite[Theorem~1.1]{MR427073} who got from $2^\mu=\mu^+$ a witness to $\mu^+\nrightarrow[\mu^+]^2_\mu$ with no rainbow triangles.
As for the maximal possible number of colours, Todor\v{c}evi\'c proved \cite[Theorem~9.2]{MR631563} that if there exists a $\kappa$-Souslin tree,
then there exists a witness to $\kappa\nrightarrow[\kappa]^2_\kappa$ with no rainbow triangles,
and Komj\'ath \cite[Theorem~12]{MR4347570} proved that the existence of a witness to $\kappa\nrightarrow[\kappa]^2_\kappa$ with no rainbow triangles
entails the existence of a $\kappa$-Aronszajn tree.
Can Komj\'ath's theorem be improved to infer the existence of a $\kappa$-Souslin tree?
The results of this paper show that this is not case.
The point is that if $(\kappa,<_T)$ is a $\kappa$-tree such that $(\kappa,<_T,\emptyset)\nmeetarrow[\kappa]^1_\theta$ holds with some witnessing colouring $c:\kappa\rightarrow\theta$,
then the induced colouring $d:[\kappa]^2\rightarrow\theta$ defined via $d(\alpha,\beta):=c(\alpha\wedge\beta)$ is a witness to $\kappa\nrightarrow[\kappa]^2_\theta$ with no rainbow triangles.
By Theorem~\ref{thma}, the existence of a special $\aleph_2$-Aronszajn tree is equivalent to one such tree $T$ that is moreover streamlined and satisfies $T\nrightarrow[\omega_2]^1_{\omega_2}$.
By \cite{MR603771}, the existence of a special $\aleph_2$-Aronszajn tree does not imply the existence of an $\aleph_2$-Souslin tree.

\subsection{Projection maps}\label{sec32}
We are grateful to Chen Meiri for pointing out the following lemma.
\begin{lemma}\label{twocolours} Let $n$ be a positive integer.
There is a map $h_n:b\rightarrow2$
from some nonzero $b<\omega$ satisfying that for every $p:n\rightarrow\omega$,
for every $l<2$, there exists an $i<b$ such that $h_n((p(k)+i)~\mathrm{mod}~b)=l$ for every $k<n$.
\end{lemma}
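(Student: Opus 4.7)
The plan is to prove the lemma by the probabilistic method. First I would reformulate: viewing the ambient group as $\mathbb{Z}/b\mathbb{Z}$ with addition mod $b$, and replacing $p : n \to \omega$ by $R = \{p(k) \bmod b \mid k < n\}$, the desired conclusion becomes: for every nonempty $R \subseteq \mathbb{Z}/b\mathbb{Z}$ with $|R| \le n$ and every $l < 2$, some cyclic translate $R + i$ is entirely contained in $h_n^{-1}\{l\}$. So it suffices to exhibit $b \ge 1$ and $h_n : \mathbb{Z}/b\mathbb{Z} \to 2$ with this property for both colors simultaneously.

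Fix $b$ to be chosen later and let $h_n$ be drawn uniformly at random from all maps $\mathbb{Z}/b\mathbb{Z} \to 2$. For a fixed nonempty $R \subseteq \mathbb{Z}/b\mathbb{Z}$ with $m := |R| \le n$ and a fixed $l < 2$, write $E_{R,l}$ for the event that no translate of $R$ is monochromatic of colour $l$. The difficulty is that distinct translates of $R$ generally overlap, so their monochromaticity events are correlated. To bypass this, I would extract a large family of pairwise disjoint translates: two translates $R + i$ and $R + j$ are disjoint iff $j - i \notin R - R$, and $|R - R| \le n^2$, so a greedy procedure yields a family of at least $\lceil b/n^2\rceil$ pairwise disjoint translates. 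Their monochromaticity events are then independent, giving
\[
\Pr[E_{R,l}] \le (1 - 2^{-m})^{\lceil b/n^2 \rceil} \le \exp\!\left(-2^{-n} b / n^2\right).
\]

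There are fewer than $2b^n$ choices of $(R, l)$, so the union bound yields
\[
\Pr\Bigl[\,\bigcup_{R, l} E_{R,l}\,\Bigr] < 2 b^n \exp\!\left(-2^{-n} b / n^2\right),
\]
which is less than $1$ once $b$ is taken sufficiently large relative to $n$ (any $b$ of order $n^2 \cdot 2^n \cdot (n \log b + 1)$ suffices). Hence some $h_n$ avoids all the bad events, which yields the required coloring. The main obstacle is the dependence between overlapping translates, which is neatly resolved by the greedy extraction of pairwise disjoint translates; everything else is a routine union bound.
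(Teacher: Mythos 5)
Your argument is correct, but it takes a genuinely different route from the paper. The paper's proof is fully explicit: it fixes $n$ primes $q_0<\dots<q_{n-1}$ with $q_0>n$, sets $b:=\prod_{j<n}q_j$, colours $z$ with $0$ iff some $q_j$ divides $z$, and then uses the Chinese remainder theorem to choose a shift $i$ making every $p(k)+i$ divisible by some $q_j$ (for colour $0$) or coprime to all of them (for colour $1$); the hypothesis $q_0>n$ guarantees that each modulus $q_j$ has a residue class avoided by all $n$ values $-p(k)$. Your probabilistic argument replaces this by a random $2$-colouring of $\mathbb Z/b\mathbb Z$, and the one nontrivial idea — greedily extracting at least $\lceil b/n^2\rceil$ pairwise disjoint translates of $R$ so that the corresponding monochromaticity events become independent — is exactly what is needed to make the union bound close; the reduction from arbitrary $p:n\rightarrow\omega$ to the set $R$ of residues mod $b$ is also handled correctly, and $i$ may indeed be taken below $b$. (Only a cosmetic quibble: for $n=1$ the count of pairs $(R,l)$ is exactly $2b$, not fewer than $2b^n$; this does not affect the estimate.) What each approach buys: the paper's construction is deterministic and gives a concrete colouring that is reused nowhere else in the argument, so explicitness is not essential; your approach is non-constructive but arguably shorter, needs no number theory, and in fact yields a smaller admissible $b$ (of order $2^n\cdot n^{O(1)}$ versus roughly $n^n$ for the product of primes). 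Since the lemma only requires the existence of some finite $b$, either proof serves the application equally well.
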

\begin{proof} Let $\langle q_j\mid j<n\rangle$ be an increasing sequence of prime numbers such that $q_0>n$.
Set $b:=\prod_{j=0}^{n-1}q_{j}$,
and then define $h_n:b\rightarrow2$ by letting for every $z<\omega$,
$h_n(z):=0$ iff there exists a $j<n$ such that $z$ is divisible by $q_j$.

Now, given $p:n\rightarrow\omega$, we argue as follows.
\begin{itemize}
\item[$\br$] If $l=0$, then for each $j<n$, pick $i_j<q_j$ such that
$$(p(j)+i_j)\equiv 0\pmod{q_j}.$$
By the Chinese remainder theorem, we now find an $i<b$ such that
$$\bigwedge_{j<n}\left(i\equiv i_j\pmod{q_j}\right).$$
Then, for every $k<n$, we have that $j:=k$ satisfies
$$(p(k)+i)\equiv 0\pmod{q_j},$$
and hence $h_n(p(k)+i)=0$.
So $h_n(p(k)+i)=l$ for every $k<n$.

\item[$\br$] If $l=1$, then since for every $j<n$, the set
$$\{ i<q_j\mid \exists k<n\,[(p(k)+i)\equiv 0\pmod{q_j}]\}$$
has size no more than $n$,
and since $n<q_0\le q_j$, we may find an $i_j<q_j$ outside the above set.
By the Chinese remainder theorem, find an $i<b$ such that
$$\bigwedge_{j<n}\left(i\equiv i_j\pmod{q_j}\right).$$
Then, for every $k<n$, for every $j<n$,
$$(p(k)+i)\not\equiv 0\pmod{q_j},$$
and hence $h_n(p(k)+i)=1$.
So $h_n(p(k)+i)=l$ for every $k<n$.
\end{itemize}
This completes the proof.
\end{proof}

\begin{lemma}\label{intermediate} For every positive integer $n$,
there exists a function $g_n:\omega\rightarrow\omega$
satisfying that for all $l<\omega$,
there exists an $m<\omega$ such that for every $p:n\rightarrow\omega$,
there exists an $i<m$ such that $g_n(p(k)+i)=l$ for every $k<n$.
\end{lemma}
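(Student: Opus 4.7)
The plan is to iterate Lemma~\ref{twocolours} along the base-$b$ digit structure of $\omega$, where $b = b_n$ is the period produced by that lemma. Let $h_n:b\to 2$ be the associated colouring. Inspecting the construction of $h_n$ (which declares $h_n(z)=0$ iff some $q_j$ divides $z$) shows that $h_n(0)=0$, since every $q_j$ divides $0$. For $z<\omega$ and $j<\omega$, let $d_j(z):=(z\div b^j)\bmod b$ denote the $j^{\text{th}}$ base-$b$ digit of $z$. Define
$$g_n(z):=\min\{j<\omega\mid h_n(d_j(z))=0\}.$$
Since $z$ has only finitely many nonzero base-$b$ digits and $h_n(0)=0$, this minimum is a finite natural number, so $g_n:\omega\rightarrow\omega$ is well-defined.

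Given $l<\omega$, set $m:=b^{l+1}$. Given $p:n\to\omega$, the plan is to build $i=\sum_{j\le l} i_j b^j<m$ with $i_j<b$, chosen one digit at a time, so that for each $k<n$ the first $l+1$ base-$b$ digits of $p(k)+i$ realize the pattern $(1,1,\ldots,1,0)$ under $h_n$. Put $p_0:=p$, and at stage $j=0,1,\ldots,l$ apply Lemma~\ref{twocolours} to $p_j:n\rightarrow\omega$ with target colour $l_j$, where $l_j:=1$ for $j<l$ and $l_l:=0$. This produces some $i_j<b$ such that $h_n((p_j(k)+i_j)\bmod b)=l_j$ for every $k<n$. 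Then set $p_{j+1}(k):=(p_j(k)+i_j)\div b$ and continue.

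A straightforward induction on $j$ yields the identity
$$p(k)+\sum_{j'\le j}i_{j'}b^{j'}=\sum_{j'\le j}\bigl((p_{j'}(k)+i_{j'})\bmod b\bigr)b^{j'}+b^{j+1}p_{j+1}(k),$$
which identifies $d_{j'}(p(k)+i)$ with $(p_{j'}(k)+i_{j'})\bmod b$ for every $j'\le l$ and $k<n$. Hence $h_n(d_{j'}(p(k)+i))=l_{j'}$ for $j'\le l$, and so $g_n(p(k)+i)=l$, as required. The bound $i<b^{l+1}=m$ is immediate from $i_j<b$. I do not foresee any real obstacle here: the only point demanding attention is the digit-peeling identity, which is a routine finite computation, and the well-definedness of $g_n$, which reduces to the observation $h_n(0)=0$ extracted from the proof of Lemma~\ref{twocolours}.
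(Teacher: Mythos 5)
Your proof is correct and is essentially the paper's argument: the same base-$b$ digit-peeling recursion of length $l+1$ driven by Lemma~\ref{twocolours}, with $m=b^{l+1}$. The only cosmetic difference is that you mark the target position with colour $0$ rather than $1$, which forces you to lean on the internal fact $h_n(0)=0$ from the proof of Lemma~\ref{twocolours} for totality of $g_n$, whereas the paper sidesteps this by adding a default clause to the definition of $g_n$.
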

\begin{proof}
Let $n$ be a positive integer. Let $h_n:b\rightarrow2$ be given by Lemma~\ref{twocolours}.
Define $g_n:\omega\rightarrow\omega$, as follows.
Given $z\in\mathbb N$, if there is an $l<\omega$ such that $h_n((z\div b^l)~\mathrm{mod}~b)=1$,\footnote{Here, $z \div d$ denotes the unique natural number $k$ such that $z = d \cdot k+r$ for some $r < d$.}
then let $g_n(z)$ be the least such $l$.
Otherwise, let $g_n(z):=0$.

Now, given any $l<\omega$, we claim that $m:=b^{l+1}$ is as sought.
To see this, let $p:n\rightarrow\omega$ be given.
Construct a sequence $\langle (c_j,i_j)\mid j\le l\rangle$ by recursion as follows.
We let $c_0:=0$, and (recursively) let $c_{j+1}:=i_0\cdot b^0+\cdots+i_j\cdot b^j$ for all $j<l$.
As for $i_j$, its recursive definition is divided into two, as follows:

$\br$ If $j<l$, then find some $i_j<b$ such that for every $k<n$:
$$h_n(((p(k)+c_j)\div b^j)+i_j)~\mathrm{mod}~b)=0.$$

$\br$ If $j=l$, then find some $i_j<b$ such that for every $k<n$:
$$h_n(((p(k)+c_j)\div b^j)+i_j)~\mathrm{mod}~b)=1.$$

Let $i:=\sum_{j\le l}i_j\cdot b^j$. Then $i<m$,
and our recursive construction ensured that for every $k<n$, for every $j<l$,
$$h_n(((p(k)+i)\div b^j)~\mathrm{mod}~b)=0,$$
and that for every $k<n$:
$$h_n(((p(k)+i)\div b^l)~\mathrm{mod}~b)=1.$$
Recalling the definition of $g_n$, this means that $g_n(p(k)+i)=l$ for every $k<n$,
as sought.
\end{proof}

\begin{cor}\label{lemma51}
For every positive integer $n$,
there exists a function $f_n:\mathbb Z\rightarrow\omega$
satisfying that for all $l,d<\omega$,
there exists an $m<\omega$ such that for every $p:n\rightarrow\mathbb Z$ with $\max\{|p(k)-p(k')|\mid k<k'<n\}\le d$,
there exists an $i<m$ such that $f_n(p(k)+i)=l$ for every $k<n$.
\end{cor}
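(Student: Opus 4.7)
The plan is to lift the construction of Lemma~\ref{intermediate} directly from $\omega$ to $\mathbb{Z}$; as we will see, the spread hypothesis on $p$ turns out to be superfluous, and we obtain a single bound $m:=b^{l+1}$ depending only on $l$ (and on $n$, through the constant $b$ of Lemma~\ref{twocolours}).

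The key preliminary observation is that Lemma~\ref{twocolours} remains true verbatim when $\omega$ is replaced by $\mathbb{Z}$: its proof only invokes the Chinese Remainder Theorem together with the fact that for each prime $q_j>n$ at most $n<q_j$ residues modulo $q_j$ are forbidden, and neither ingredient cares about the sign of the entries of $p$. Accordingly, set $f_n(z):=\min\{l<\omega\mid h_n((z\div b^l)~\mathrm{mod}~b)=1\}$ (and $0$ if the set is empty), where $\div$ denotes floor division on $\mathbb{Z}$ and $\mathrm{mod}~b$ returns the representative in $[0,b)$. Given $l,d<\omega$ and $p:n\to\mathbb{Z}$ (with or without the spread hypothesis), one mimics the recursion from the proof of Lemma~\ref{intermediate}: set $c_0:=0$, and for $j\leq l$ pick $i_j<b$ by applying the $\mathbb{Z}$-version of Lemma~\ref{twocolours} to the integer sequence $\langle (p(k)+c_j)\div b^j\mid k<n\rangle$ with target colour $0$ when $j<l$ and colour $1$ when $j=l$; then put $c_{j+1}:=c_j+i_j\cdot b^j$. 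Letting $i:=\sum_{j\leq l}i_j\cdot b^j<b^{l+1}=m$, the goal is to verify that $f_n(p(k)+i)=l$ for every $k<n$.

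The only step where the passage from $\omega$ to $\mathbb{Z}$ is potentially delicate is the identity $((p(k)+i)\div b^j)~\mathrm{mod}~b=((p(k)+c_j)\div b^j+i_j)~\mathrm{mod}~b$ for each $j\leq l$, since floor division behaves non-trivially on negative integers. It follows from the decomposition $p(k)+i=(p(k)+c_j)+b^j\cdot\sum_{j'=j}^{l}i_{j'}\cdot b^{j'-j}$: dividing by $b^j$ adds the exact integer $\sum_{j'=j}^{l}i_{j'}\cdot b^{j'-j}$ to $(p(k)+c_j)\div b^j$, with no rounding discrepancy since the added term is an integer multiple of $b^j$; reducing modulo $b$ yields $i_j$. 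The recursive construction thus guarantees $h_n(((p(k)+i)\div b^j)~\mathrm{mod}~b)=0$ for $j<l$ and $=1$ at $j=l$, so $f_n(p(k)+i)=l$, completing the argument with $m:=b^{l+1}$ independent of $d$.
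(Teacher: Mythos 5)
Your proposal is correct, and it takes a genuinely different route from the paper. The paper keeps Lemma~\ref{intermediate} as a black box over $\omega$, sets $f_n(z):=g_n(|z|)$, and then handles a given $p:n\to\mathbb Z$ by a two-case shift argument (shifting by $d+\bar m$ when $\max(\im(p))\ge-\bar m$, and reflecting via $p'(k):=-p(k)-\bar m$ otherwise); the spread bound $d$ is exactly what guarantees the shifted map lands in $\omega$, and it is why the paper's $m=d+2\bar m$ depends on $d$. You instead observe that Lemma~\ref{twocolours} is really a statement about residues and so holds verbatim for integer inputs, and then re-run the base-$b$ digit recursion of Lemma~\ref{intermediate} directly over $\mathbb Z$ using floor division. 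The one delicate point — that $\bigl((p(k)+i)\div b^j\bigr)\mathbin{\mathrm{mod}}b$ equals $\bigl(((p(k)+c_j)\div b^j)+i_j\bigr)\mathbin{\mathrm{mod}}b$ even for negative arguments — you handle correctly: since $i-c_j$ is an integer multiple of $b^j$, floor division by $b^j$ translates exactly, with no rounding discrepancy. The payoff is a strictly stronger conclusion: your $m=b^{l+1}$ is independent of $d$ and the spread hypothesis on $p$ is not needed at all (it is of course still consistent with the Remark following the corollary, which only rules out a single $f$ working for all $n$ simultaneously). The paper's version buys brevity by reusing Lemma~\ref{intermediate} unchanged; yours buys uniformity in $d$ at the cost of re-proving the recursion over $\mathbb Z$.
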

\begin{proof}
Let $n$ be a positive integer. Let $g_n:b\rightarrow2$ be given by Lemma~\ref{intermediate}.
Define $f_n:\mathbb Z\rightarrow\omega$ via $f_n(z):=g_n(|z|)$.
Now, given $l,d<\omega$,
first fix $\bar m<\omega$ such that for every $p:n\rightarrow\omega$,
there exists an $i<\bar m$ such that $g_n(p(k)+i)=l$ for every $k<n$,
and then let $m:=d+2\bar m$.
To see that $m$ is as sought, let $p:n\rightarrow\mathbb Z$ be given.
There are two cases to consider:
\begin{itemize}
\item[$\br$] If $\max(\im(p))\ge -\bar m$, then we may define $p':n\rightarrow\omega$ via
$$p'(k):=p(k)+d+\bar m.$$
Now pick an $i'<\bar m$ such that $g_n(p'(k)+i')=l$ for every $k<n$.
Then $i:=d+\bar m+i'$ is smaller than $m$,
and for every $k<n$, $f_n(p(k)+i)=g_n(p'(k)+i')=l$, as sought.

\item[$\br$] If $\max(\im(p))<-\bar m$, then we may define $p':n\rightarrow\omega$ via
$$p'(k):=-p(k)-\bar m.$$
Now pick an $i'<\bar m$ such that $g_n(p'(k)+i')=l$ for every $k<n$.
Then $i:=\bar m-i'$ is smaller than $m$ (indeed, smaller than $\bar m$) and for every $k<\omega$,
$$p(k)+i=-p'(k)-\bar m+i=-p'(k)-i',$$
so since $p(k)+i<0$, we have that $|p(k)+i|=p'(k)+i'$.
Altogether, $f_n(p(k)+i)=g_n(p'(k)+i')=l$, as sought.
\end{itemize}
This complete the proof.
\end{proof}
\begin{remark}
There is no single map $f:\mathbb Z\rightarrow\omega$
satisfying the conclusion of the preceding for all $n$ simultaneously.
Indeed, otherwise, let $m$ be given with respect to $n=l=d=1$,
and then let $p:m\rightarrow\mathbb Z$ be the identity map.
Evidently, for every $i<\omega$ there exists a $k<m$ such that $f(p(k)+i)=1$.
\end{remark}

\section{$C$-sequences, walks on ordinals and club-guessing}\label{sec4}
A \emph{$C$-sequence} over a set of ordinals $S$ is a sequence $\vec C=\langle C_\delta\mid\delta\in S\rangle$
such that for every $\delta\in S$, $C_\delta$ is a closed subset of $\delta$ with $\sup(C_\delta)=\sup(\delta)$.

\begin{defn}[Todor\v{c}evi\'c]
A $C$-sequence $\vec C=\langle C_\delta\mid\delta<\kappa\rangle$ is \emph{trivial}
iff there exists a club $D\s\kappa$ such that for every $\epsilon<\kappa$, there is a $\delta<\kappa$
with $D\cap\epsilon\s C_\delta$. Otherwise, $\vec C$ is \emph{nontrivial}.
\end{defn}
The preceding generalises as follows.
\begin{defn}[\cite{paper35}] \label{c-seq_num_def}
For a $C$-sequence $\vec C=\langle C_\delta\mid\delta\in S\rangle$ over some stationary $S\s\kappa$,
$\chi(\vec C)$ denotes the least cardinal $\chi\le\kappa$
for which there exists a cofinal $\Gamma\s\kappa$ such that for every $\epsilon<\kappa$,
there is a $\Delta\in[S]^\chi$ with $\Gamma\cap\epsilon\s\bigcup_{\delta\in\Delta}C_\delta$.
\end{defn}

\begin{remark}\label{fact43}
\begin{enumerate}[(1)]
\item It can be verified that $\chi(\vec C)\le\sup(\reg(\kappa))$ and that $\vec C$ is trivial iff $\chi(\vec C)$ is finite.
\item Every successor cardinal $\mu^+$ carries a $C$-sequence $\vec C$ with $\chi(\vec C)\ge\cf(\mu)$,
and by \cite[Theorem~2.14]{paper35} this inequality is best possible.

\item By \cite[Remark~6.3.4]{MR2355670}, if $\kappa$ carries no nontrivial $C$-sequences, then it is quite high in the Mahlo hierarchy;
specifically, by \cite[Lemma~2.12]{paper35}, it is greatly Mahlo.

\item By \cite[1.10]{TodActa}, if $\kappa$ carries no nontrivial $C$-sequences, then it is weakly compact in $\mathsf{L}$.
By \cite[\S3]{paper35}, starting with a weakly compact cardinal $\kappa$, it is possible to force to increase $\chi(\vec C)$ to any prescribed regular value.
In \cite[\S7]{paper69}, this was extended to accommodate arbitary singular values as well.
\end{enumerate}
\end{remark}

We now introduce some new characteristics of $C$-sequences.

\begin{defn}\label{def44} Given a $C$-sequence $\vec C= \langle C_\delta\mid\delta<\kappa\rangle$ we attach to it the following sets:
\begin{itemize}
\item $A(\vec C):=\{\delta\in\acc(\kappa)\mid \forall\alpha\in(\delta,\kappa)\,[\delta\notin\acc(C_\alpha)]\}$;
\item $R(\vec C):=\{\delta\in\acc(\kappa)\mid \forall\alpha\in(\delta,\kappa)\,[\otp(C_\alpha\cap\delta)<\delta]\}$;
\item $V(\vec C):=\{\delta\in\acc(\kappa)\mid \forall\alpha\in(\delta,\kappa)\forall\epsilon<\delta\,[C_\delta\cap[\epsilon,\delta)\neq C_\alpha\cap[\epsilon,\delta)]\}$.
\end{itemize}
We also consider the following stronger variations:
\begin{itemize}
\item $A'(\vec C):=\{\delta\in\acc(\kappa)\mid\forall\alpha\in(\delta,\kappa)\,[\otp(C_\alpha\cap\delta)<\cf(\delta)]\}$;
\item $R'(\vec C):=\{\delta\in R(\vec C)\mid \otp(C_\delta)<\delta\}$;
\item $V'(\vec C):=\{\delta\in\acc(\kappa)\mid \forall\alpha\in(\delta,\kappa)\, [\sup(\nacc(C_\delta)\cap\nacc(C_\alpha))<\delta]\}$.
\end{itemize}
\end{defn}
\begin{remark} $A'(\vec C)\s A(\vec C)\s R(\vec C)$ and $A(\vec C)\s V'(\vec C)\s V(\vec C)$,
and if $A(\vec C)\cup R(\vec C)$ is stationary, then $\vec C$ is nontrivial.
\end{remark}

\begin{remark} Let $\vec C$ be \emph{any} $\omega$-bounded $C$-sequence on $\omega_1$. Then it is easy to see that $A'(\vec C)= \acc(\omega_1)$ and $R'(\vec C) = \acc(\omega_1\setminus\omega)$.
In particular, in this case, the intersection of all of the sets of Definition~\ref{def44} covers a club.
As we will show in Section~\ref{canonicalsect}, for regular cardinals $\kappa >\omega_1$, it is possible from the appropriate hypotheses to construct a \emph{specific} $C$-sequence $\vec C$ on $\kappa$ where some of these characteristics for $\vec C$ cover a club.
\end{remark}

\begin{defn} A $C$-sequence $\vec C= \langle C_\delta \mid \delta <\kappa\rangle$ is said to be:
\begin{itemize}
\item \emph{coherent} iff for every $\delta<\kappa$, for every $\bar\delta\in\acc(C_\delta)$, $C_\delta\cap\bar\delta=C_{\bar\delta}$;
\item \emph{weakly coherent} iff for every $\epsilon<\kappa$, $|\{ C_\delta\cap\epsilon\mid \delta<\kappa\}|<\kappa$;
\item \emph{$\mu$-bounded} iff $\otp(C_\delta)\le\mu$ for all $\delta<\kappa$;
\item \emph{regressive} iff $R'(\vec C)$ covers a club in $\kappa$;
\item \emph{lower regressive} iff $R(\vec C)$ covers a club in $\kappa$.
\end{itemize}
\end{defn}
\begin{remark} To motivate the above terminology, recall that a map $f:\kappa\rightarrow\kappa$ is \emph{regressive} if $f(\alpha)<\alpha$ for all nonzero $\alpha<\kappa$; a map $f:[\kappa]^2\rightarrow\kappa$ is \emph{lower regressive} (resp.~\emph{upper regressive}) if $f(\alpha,\beta)<\alpha$
(resp.~$f(\alpha,\beta)<\beta$) for all nonzero $\alpha<\beta<\kappa$.
\end{remark}

\begin{defn}[Jensen, \cite{jen72}]\label{def410}
\begin{itemize}
\item $\square_\mu$ asserts the existence of a coherent $\mu$-bounded $C$-sequence over $\mu^+$;
\item $\square^*_\mu$ asserts the existence of a weakly coherent $\mu$-bounded $C$-sequence over $\mu^+$.
\end{itemize}
\end{defn}

We generalise the preceding as follows:
\begin{defn}\label{def411}
\begin{itemize}
\item $\square_{<}(\kappa)$ asserts the existence of a coherent lower regressive $C$-sequence over $\kappa$;
\item $\square_{<}^*(\kappa)$ asserts the existence of a weakly coherent lower regressive $C$-sequence over $\kappa$.
\end{itemize}
\end{defn}

It is not hard to see that $\square_{<}(\mu^+)\iff \square_\mu$ and likewise $\square_{<}^*(\mu^+)\iff\square^*_\mu$.
In Section~\ref{canonicalsect}, it will be shown that the existence of a special $\kappa$-Aronszajn tree
is equivalent to $\square_{<}^*(\kappa)$ which is also equivalent to the existence of a $\square_{<}^*(\kappa)$-sequence $\vec C$ for which $V'(\vec C)\cap R'(\vec C)$ covers a club.

\subsection{Walks on ordinals} In this paper our focus is on constructing canonical trees from walks on ordinals which we now turn to discuss.

\begin{defn}[Todor{\v{c}}evi{\'c}, \cite{TodActa}]\label{defn21}
From a $C$-sequence $\langle C_\delta\mid\delta<\kappa\rangle$,
derive maps $\Tr:[\kappa]^2\rightarrow{}^\omega\kappa$,
$\rho_0:[\kappa]^2\rightarrow {}^{<\omega}\kappa$,
$\rho_1:[\kappa]^2\rightarrow \kappa$,
$\rho_2:[\kappa]^2\rightarrow \omega$,
$\tr:[\kappa]^2\rightarrow{}^{<\omega}\kappa$,
and $\lambda:[\kappa]^2\rightarrow\kappa$
as follows.
Let $(\beta,\gamma)\in[\kappa]^2$ be arbitrary.
\begin{itemize}
\item $\Tr(\beta,\gamma):\omega\rightarrow\kappa$ is defined by recursion on $n<\omega$:
$$\Tr(\beta,\gamma)(n):=\begin{cases}
\gamma,&n=0\\
\min(C_{\Tr(\beta,\gamma)(n-1)}\setminus\beta),&n>0\ \&\ \Tr(\beta,\gamma)(n-1)>\beta\\
\beta,&\text{otherwise}
\end{cases}
$$
\item $\rho_2(\beta,\gamma):=\min\{l<\omega\mid \Tr(\beta,\gamma)(l)=\beta\}$;
\item $\rho_1(\beta,\gamma):=\sup(\rho_0(\beta,\gamma))$, where
\item $\rho_0(\beta,\gamma):=\langle \otp(C_{\Tr(\beta,\gamma)(n)}\cap\beta) \mid n<\rho_2(\beta,\gamma)\rangle$;
\item $\tr(\beta,\gamma):=\Tr(\beta,\gamma)\restriction \rho_2(\beta,\gamma)$;
\item $\lambda(\beta,\gamma):=\sup\{ \sup(C_\delta\cap\beta) \mid \delta\in\im(\tr(\beta,\gamma))\}$.
\end{itemize}
\end{defn}
\begin{remark}\label{rmk412} Thanks to letting $f<_{\lex}g$ whenever $f\supseteq g$ in Definition~\ref{lexo},
one gets that $\rho_{0\gamma}$ is an order-preserving map from $(\gamma,\in)$ to $({}^{<\omega}\gamma,<_{\lex})$.
\end{remark}
For a given $C$-sequence $\vec C$, if $\rho_0, \rho_1, \rho_2$ are derived with respect to $\vec C$, then using Definition~\ref{assoctree} we can define the trees $T(\rho_0), T(\rho_1)$, and $T(\rho_2)$ which play a central role in this paper.

The next fact is well-known.
The first two implications are trivial, and the implication $(3)\implies(1)$ can be proved as in \cite[Claim~4.11.3]{paper34}
utilizing the feature that $\sup(\rho_0(\alpha,\beta))\le\alpha$ for all $\alpha<\beta<\kappa$.

\begin{fact}[Todor\v{c}evi\'c]\label{lemma41}
Suppose that $\rho_0,\rho_2$ are the characteristic functions obtained by walking along $\vec C$.
Then the following are equivalent:
\begin{enumerate}[(1)]
\item $T(\rho_0)$ is a $\kappa$-tree;
\item $T(\rho_2)$ is a $\kappa$-tree;
\item $\vec C$ is weakly coherent.
\end{enumerate}
\end{fact}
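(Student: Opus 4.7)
The plan is to establish the cycle $(1)\Rightarrow(2)\Rightarrow(3)\Rightarrow(1)$. The trees in question are streamlined and it is immediate from the construction that for every $\gamma<\kappa$ the node $\rho_{0\gamma}$ (resp.~$\rho_{2\gamma}$) sits at level $\gamma$, so both $T(\rho_0)$ and $T(\rho_2)$ have height $\kappa$; what remains in each implication is to control the size of every level.

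For $(1)\Rightarrow(2)$, I would use that $\rho_2(\beta,\gamma)=|\rho_0(\beta,\gamma)|$ by definition, so the map $\rho_{0\delta}\restriction\gamma\mapsto \rho_{2\delta}\restriction\gamma$ is a well-defined surjection from $T(\rho_0)_\gamma$ onto $T(\rho_2)_\gamma$, transferring the $\kappa$-tree property. For $(2)\Rightarrow(3)$, the observation is that for $0<\gamma<\delta$, $\rho_2(\gamma,\delta)=1$ iff $\min(C_\delta\setminus\gamma)=\gamma$ iff $\gamma\in C_\delta$. So for fixed $\epsilon<\kappa$ and any $\delta>\epsilon$, the set $C_\delta\cap\epsilon$ is the preimage of $\{1\}$ under $\rho_{2\delta}\restriction\epsilon$; hence $\delta\mapsto C_\delta\cap\epsilon$ (for $\delta>\epsilon$) factors through $\delta\mapsto \rho_{2\delta}\restriction\epsilon$, whose image has size $<\kappa$ by assumption. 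The contribution from $\delta\le\epsilon$ is bounded by $|\epsilon|+1<\kappa$, giving weak coherence at $\epsilon$.

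The content is $(3)\Rightarrow(1)$, which I would prove by fixing $\gamma<\kappa$ and showing $|T(\rho_0)_\gamma|<\kappa$ directly. Set $\mathcal F_\gamma:=\{C_\eta\cap\gamma\mid \eta<\kappa\}$, which has size $\lambda_\gamma<\kappa$ by weak coherence. To each $\delta>\gamma$ I would attach the strictly decreasing finite \emph{upper trace} $\delta=\delta_0>\delta_1>\cdots>\delta_m=\gamma$ defined by $\delta_{i+1}:=\min(C_{\delta_i}\setminus\gamma)$ (this terminates at $\gamma$ because the sequence stays in $[\gamma,\delta]$ and is strictly descending). The attached datum is the finite sequence $\vec F_\delta:=\langle C_{\delta_i}\cap\gamma\mid i\le m\rangle\in[\mathcal F_\gamma]^{<\omega}$. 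The key claim is that $\rho_{0\delta}\restriction\gamma$ is determined by $\vec F_\delta$ together with the fixed restriction $\vec C\restriction\gamma$. Given $\alpha<\gamma$, let $i(\alpha)$ be the least $i\le m$ with $\alpha\le\sup(C_{\delta_i}\cap\gamma)$; one verifies by induction on $i$ that $\Tr(\alpha,\delta)(i)=\delta_i$ for $i\le i(\alpha)$, while $\Tr(\alpha,\delta)(i(\alpha)+1)=\min((C_{\delta_{i(\alpha)}}\cap\gamma)\setminus\alpha)<\gamma$. The upper entries of $\rho_0(\alpha,\delta)$ are then $\otp((C_{\delta_i}\cap\gamma)\cap\alpha)$ for $i\le i(\alpha)$, computable from $\vec F_\delta$ and $\alpha$, and the tail agrees with $\rho_0(\alpha,\delta')$ for some $\delta'<\gamma$, which depends only on $\vec C\restriction\gamma$. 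Since $|[\mathcal F_\gamma]^{<\omega}|=\max(\aleph_0,\lambda_\gamma)<\kappa$, we get $|T(\rho_0)_\gamma|\le|[\mathcal F_\gamma]^{<\omega}|+1<\kappa$.

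The main obstacle is the inductive verification that the upper trace $\langle \delta_i\rangle$ genuinely captures the walk from every $\alpha<\gamma$; this is exactly the place one exploits that $\min(C_\eta\setminus\alpha)=\min(C_\eta\setminus\gamma)$ whenever $C_\eta\cap[\alpha,\gamma)=\emptyset$, and that once the walk lands below $\gamma$ all subsequent steps use $C$-sets indexed by ordinals $<\gamma$. The role of the inequality $\sup(\rho_0(\alpha,\beta))\le\alpha$ flagged in the hint is to guarantee that each coordinate of $\rho_0(\alpha,\delta)$ is bounded by $\alpha<\gamma$, so that passing from $C_{\delta_i}$ to $C_{\delta_i}\cap\gamma$ loses no information about the relevant ordertype $\otp(C_{\delta_i}\cap\alpha)$. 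Everything else is bookkeeping.
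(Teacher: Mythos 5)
Your proposal is correct and follows exactly the route the paper indicates: the paper treats $(1)\Rightarrow(2)$ and $(2)\Rightarrow(3)$ as trivial and delegates $(3)\Rightarrow(1)$ to a cited claim whose argument is precisely your upper-trace counting (the walk from $\delta$ to any $\alpha<\gamma$ follows the walk to $\gamma$ until it first drops below $\gamma$, after which it is governed by $\vec C\restriction\gamma$, so $\rho_{0\delta}\restriction\gamma$ is determined by the finite sequence $\langle C_{\delta_i}\cap\gamma\mid i\le m\rangle\in{}^{<\omega}\mathcal F_\gamma$). The only cosmetic point is that $\vec F_\delta$ is a finite sequence rather than a finite subset of $\mathcal F_\gamma$, which does not affect the cardinality bound.
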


\begin{remark}\label{rmkrapid} Regardless of whether a $\vec C$ over $\kappa$ is weakly coherent, there exists a sparse enough club $D\s\kappa$ such that for every $i<3$,
for every pair $\gamma<\delta$ of successive elements of $D$, for every node $t\in T(\rho_i)\restriction[\gamma,\delta)$,
it is the case that both $\h(t)$ and the von Neumann rank of $t$ belong to the interval $[\gamma,\delta)$.
Therefore, for every $n<\omega$,
every $S\s{}^nT(\rho_i)$ of size $\kappa$ that is $\h$-rapid contains some $S'\s S$ of size $\kappa$ that is rapid,
and every $S\s{}^nT(\rho_i)$ of size $\kappa$ that is rapid contains some $S'\s S$ of size $\kappa$ that is $\h$-rapid.
This means that in the context of the trees $T(\rho_0),T(\rho_1),T(\rho_2)$, we may identify Definition~\ref{meetarrow} with the variation obtained by replacing $\h$-rapid by rapid.
Hereafter, in any context in which the two definitions coincide,
we shall opt to work with plain rapid-ness.
\end{remark}

\begin{fact}[{\cite[Lemma~2.1.11]{MR2355670}}]\label{countryfull} Suppose that $T(\rho_0)$ is obtained from walking along a $\mu$-bounded $C$-sequence over $\mu^+$.
If $\mu^{<\mu}=\mu$, then ${(T(\rho_0),<_{\lex})}$ is a $\mu^+$-Countryman line.
\end{fact}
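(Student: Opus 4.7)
The plan has two parts: first, verifying that $(T(\rho_0),<_{\lex})$ is a $\mu^+$-Aronszajn line, and second, producing a decomposition of ${}^2T(\rho_0)$ into $\mu$ chains in the coordinatewise $<_{\lex}$-order.

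For the first part, $\mu$-boundedness forces $\rho_0(\beta,\gamma)\in{}^{<\omega}\mu$ for all $\beta<\gamma<\mu^+$, and $\mu^{<\mu}=\mu$ gives $|{}^{<\omega}\mu|=\mu$. Since every $C_\delta$ sits in a single $\mu$-sized set, $\vec C$ is weakly coherent, so by Fact~\ref{lemma41}, $T(\rho_0)$ is a $\mu^+$-tree; as walks produce Aronszajn trees, it is Aronszajn, and Fact~\ref{flippedorder} promotes this to a $\mu^+$-Aronszajn line.

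For the second part, I would equivalently show that for any sequence $\langle(p_\xi,q_\xi):\xi<\mu^+\rangle$ in ${}^2T(\rho_0)$, there exist $\xi\neq\eta$ with $p_\xi<_{\lex}p_\eta$ iff $q_\xi<_{\lex}q_\eta$; a standard colouring then decomposes the product into $\mu$ chains. For each $\xi$, pick minimal $\beta_\xi,\gamma_\xi$ with $p_\xi\subseteq\rho_{0\beta_\xi}$ and $q_\xi\subseteq\rho_{0\gamma_\xi}$; thin via $\mu^{<\mu}=\mu$ so that $\dom(p_\xi)$ and $\dom(q_\xi)$ are constant and $\beta_\xi,\gamma_\xi$ are strictly increasing in $\xi$. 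Apply Fodor's lemma with a sufficiently large $\theta<\mu^+$ to stabilize the traces $\Tr(\beta_\xi,\theta)$ and $\Tr(\gamma_\xi,\theta)$ along a stationary set. The key tool is Remark~\ref{rmk412}: $\rho_{0\theta}$ is order-preserving from $\theta$ into $({}^{<\omega}\theta,<_{\lex})$, so $\beta_\xi<\beta_\eta$ forces $\rho_0(\beta_\xi,\theta)<_{\lex}\rho_0(\beta_\eta,\theta)$, and similarly for the $\gamma$'s. The standard coherence lemmas for walks on a $\mu$-bounded $C$-sequence then transfer these comparisons down to the restrictions $p_\xi<_{\lex}p_\eta$ and $q_\xi<_{\lex}q_\eta$ on the stabilized set, producing the required chain.

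The main obstacle is exactly this descent step: the comparisons at level $\theta$ must descend faithfully to the restrictions at $\dom(p_\xi)$ and $\dom(q_\xi)$. This is where the walk stabilization (via Fodor) and the finite-length property of the trace conspire to pin down the first coordinate of disagreement; $\mu^{<\mu}=\mu$ is exploited throughout as pigeonhole fodder to stabilize simultaneously both the restricted nodes on the two coordinates and the finite walk traces above them, so that a single pair $\xi<\eta$ simultaneously witnesses the agreement in both coordinates.
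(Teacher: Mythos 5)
This fact is not proved in the paper at all --- it is quoted from Todor\v{c}evi\'c's book \cite[Lemma~2.1.11]{MR2355670} --- so I am measuring your sketch against the known argument rather than against anything in the text. Your first part is essentially fine: for $\delta>\epsilon$ the set $C_\delta\cap\epsilon$ is a proper initial segment of $C_\delta$, hence of order type $<\mu$, so $\mu$-boundedness together with $\mu^{<\mu}=\mu$ gives weak coherence and Fact~\ref{lemma41} applies. Likewise, your identification of Remark~\ref{rmk412} and the factoring of walks through a common pivot (Fact~\ref{fact2}) as the mechanism for aligning lexicographic comparisons is exactly the right raw material; it is the same mechanism the paper itself exploits in Lemma~\ref{lemma42}.

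The genuine gap is the opening move of your second part. ``Every $\mu^+$-sized family of pairs contains two that are comparable in the same direction'' is a Ramsey statement about antichains, and it is \emph{not} equivalent to ``${}^2T(\rho_0)$ decomposes into $\mu$ chains''; the upgrade from the former to the latter is an infinite Dilworth-type implication that fails already for the most classical example. Take $L=(\omega_1,\in)$: its square in the coordinatewise order has no infinite antichain (an antichain would yield an infinite strictly decreasing sequence of ordinals), yet it is not the union of countably many chains --- any chain meets each vertical line $\{\alpha\}\times\omega_1$ in a set of first-coordinate-separated blocks, so it meets each horizontal line $\omega_1\times\{\beta\}$'s dual, and a short computation shows each chain meets $\{\alpha\}\times\omega_1$ in at most two points for all but one $\alpha$, so countably many chains cannot cover an uncountable vertical fibre. (A $\kappa^+$-Souslin tree gives the same moral for general posets.) So there is no ``standard colouring'' performing this reduction, and this direction is precisely where the content of the lemma lies. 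The correct proof exhibits the $\mu$ chains explicitly: from each pair $(s,t)$ one reads off an invariant with at most $\mu$ possible values --- essentially the finite weight sequences in ${}^{<\omega}\mu$ recording how the walks from the minimal ordinals coding $s$ and $t$ descend to a neighbourhood of the splitting data, which is where $\mu^{<\mu}=\mu$ is used to count --- and then checks, via Remark~\ref{rmk412} and Fact~\ref{fact2}, that two pairs carrying the same invariant are forced to compare the same way in both coordinates. Separately, your Fodor step does not typecheck as written ($\Tr(\beta_\xi,\theta)$ requires $\beta_\xi<\theta$ while your $\beta_\xi$ are cofinal in $\mu^+$, and the property you state fails for non-disjoint families since repeated coordinates break the ``iff''), but those are repairable; the missing Dilworth direction is not.
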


Throughout the paper we shall repeatedly use the following fundamental feature of walks (see, e.g., \cite[Claim~3.1.2]{paper15} for a proof).
\begin{fact}\label{fact2} Whenever $\lambda(\beta,\gamma)<\alpha<\beta<\gamma<\kappa$,
$\tr(\alpha,\gamma)=\tr(\beta,\gamma){}^\smallfrown \tr(\alpha,\beta)$.
\end{fact}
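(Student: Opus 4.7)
The plan is to track the two walks step by step and show that the walk from $\gamma$ to $\alpha$ first replays the walk from $\gamma$ to $\beta$, and then continues as a walk from $\beta$ to $\alpha$. The crux is a ``skipping'' observation: for every $\delta\in\im(\tr(\beta,\gamma))$, the definition of $\lambda$ yields $\sup(C_\delta\cap\beta)\le\lambda(\beta,\gamma)<\alpha$, hence $C_\delta\cap[\alpha,\beta)=\emptyset$. Consequently, for any such $\delta$ with $\delta>\beta$, one has
\[\min(C_\delta\setminus\alpha)=\min(C_\delta\setminus\beta),\]
since no element of $C_\delta$ lies strictly between $\alpha$ and $\beta$.

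Write $N:=\rho_2(\beta,\gamma)$. I would prove by induction on $n\le N$ that $\Tr(\alpha,\gamma)(n)=\Tr(\beta,\gamma)(n)$. The case $n=0$ is trivial, both values being $\gamma$. For $n<N$, the inductive hypothesis yields a common value $\delta:=\Tr(\beta,\gamma)(n)>\beta>\alpha$, and applying the skipping observation to $\delta\in\im(\tr(\beta,\gamma))$ delivers
\[\Tr(\alpha,\gamma)(n+1)=\min(C_\delta\setminus\alpha)=\min(C_\delta\setminus\beta)=\Tr(\beta,\gamma)(n+1).\]
The transition at $n=N-1$ is the only subtle edge case: here the identity $\Tr(\beta,\gamma)(N)=\beta$ forces $\beta\in C_{\Tr(\beta,\gamma)(N-1)}$, and combined with the emptiness of $C_{\Tr(\beta,\gamma)(N-1)}\cap[\alpha,\beta)$, this yields $\min(C_{\Tr(\beta,\gamma)(N-1)}\setminus\alpha)=\beta$, i.e., $\Tr(\alpha,\gamma)(N)=\beta$ as well.

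Once this is established, since $\beta>\alpha$, the walk from $\gamma$ to $\alpha$ has not terminated at stage $N$, and the recursion defining $\Tr(\alpha,\gamma)$ from stage $N$ onwards depends only on the current value and its relation to $\alpha$. A parallel induction therefore yields $\Tr(\alpha,\gamma)(N+k)=\Tr(\alpha,\beta)(k)$ for every $k<\omega$. Taking $k:=\rho_2(\alpha,\beta)$ and invoking minimality gives $\rho_2(\alpha,\gamma)=N+\rho_2(\alpha,\beta)$ (using also that for $n<N$, $\Tr(\alpha,\gamma)(n)>\beta>\alpha$, so the walk does not terminate early); restricting to these lengths delivers exactly $\tr(\alpha,\gamma)=\tr(\beta,\gamma){}^\smallfrown\tr(\alpha,\beta)$. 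The only non-mechanical step is the transition at stage $N$; everything else is a routine unpacking of Definition~\ref{defn21}.
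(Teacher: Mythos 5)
Your proof is correct and is essentially the standard argument (the paper states this as a Fact and cites an external source for the proof, but the cited proof proceeds exactly this way: the hypothesis $\lambda(\beta,\gamma)<\alpha$ guarantees $C_\delta\cap[\alpha,\beta)=\emptyset$ for every $\delta$ visited before reaching $\beta$, so the two walks coincide down to $\beta$ and then the walk to $\alpha$ continues as the walk from $\beta$). You have also correctly isolated and handled the one genuinely delicate step, namely that $\Tr(\alpha,\gamma)(N)=\beta$ because $\beta\in C_{\Tr(\beta,\gamma)(N-1)}$ and $C_{\Tr(\beta,\gamma)(N-1)}\cap[\alpha,\beta)=\emptyset$.
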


A sufficient condition for $\lambda(\beta,\gamma)$ to be smaller than $\beta$ reads as follows.
\begin{fact} For all $\beta<\gamma<\kappa$, if $\beta\in A(\vec C)$, then $\lambda(\beta,\gamma)<\beta$.
\end{fact}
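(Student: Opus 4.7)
The plan is very short because the statement essentially unpacks the definitions of $A(\vec C)$ and $\lambda$, together with the finiteness of a walk.

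First I would note that the trace $\tr(\beta,\gamma)$ has finite length, namely $\rho_2(\beta,\gamma)<\omega$, so $\im(\tr(\beta,\gamma))$ is a finite set of ordinals. Therefore the supremum in the definition
\[
\lambda(\beta,\gamma)=\sup\{\sup(C_\delta\cap\beta)\mid \delta\in\im(\tr(\beta,\gamma))\}
\]
is actually a maximum over finitely many values, and it suffices to show that $\sup(C_\delta\cap\beta)<\beta$ for each individual $\delta\in\im(\tr(\beta,\gamma))$.

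Next I would observe that for every $n<\rho_2(\beta,\gamma)$ the value $\Tr(\beta,\gamma)(n)$ is strictly greater than $\beta$: by the recursive definition each step is $\min(C_{\Tr(\beta,\gamma)(n-1)}\setminus\beta)\geq\beta$, and by the minimality of $\rho_2(\beta,\gamma)$ none of these values equals $\beta$. Hence every $\delta\in\im(\tr(\beta,\gamma))$ satisfies $\delta\in(\beta,\kappa)$, and the hypothesis $\beta\in A(\vec C)$ applies: $\beta\notin\acc(C_\delta)$.

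Finally I would check that $\beta\notin\acc(C_\delta)$ together with $\beta<\delta$ forces $\sup(C_\delta\cap\beta)<\beta$. If instead $\sup(C_\delta\cap\beta)=\beta$, then since $\beta\in\acc(\kappa)$ we would have $\beta\in\acc^+(C_\delta)$, and since $C_\delta$ is closed in $\delta$ and $\beta<\delta$ we would get $\beta\in C_\delta$, hence $\beta\in\acc(C_\delta)$, contradicting $\beta\in A(\vec C)$. Combining these observations yields $\lambda(\beta,\gamma)<\beta$. There is no real obstacle here; the only care is in distinguishing $\acc^+$ from $\acc$ and using closedness of $C_\delta$ to pass between them, which is precisely what $A(\vec C)$ was designed to encode.
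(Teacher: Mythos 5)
Your proof is correct, and it is exactly the routine unpacking of the definitions that the paper intends (the Fact is stated there without proof): the walk is finite, every ordinal in $\im(\tr(\beta,\gamma))$ lies strictly above $\beta$, and closedness of $C_\delta$ in $\delta$ converts $\sup(C_\delta\cap\beta)=\beta$ into $\beta\in\acc(C_\delta)$, contradicting $\beta\in A(\vec C)$. The only cosmetic point is that $\beta\in\acc^+(C_\delta)$ is best asserted \emph{after} concluding $\beta\in C_\delta$ (which guarantees $\beta<\ssup(C_\delta)$), but this does not affect the argument.
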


In \cite{paper18}, a natural variation $\lambda_2(\ldots)$ of $\lambda(\ldots)$ was considered. It satisfies
$\lambda_2(\beta, \gamma) < \beta$ outright.

\begin{defn}[{\cite[Definition~2.8]{paper18}}]
Define $\lambda_2:[\kappa]^2\rightarrow\kappa$ via
$$\lambda_2(\beta,\gamma):=\sup\{ \sup(C_\delta\cap\beta)
\mid \delta \in \im(\tr(\beta, \gamma)),\, \sup(C_\delta\cap\beta) < \beta\}.$$
\end{defn}

Put differently, $\lambda_2(\beta,\gamma)$ is $\lambda(\last{\beta}{\gamma},\gamma)$ for the following ordinal $\last{\beta}{\gamma}$.\footnote{Colloquially, the `last' ordinal in the walk from $\gamma$ to $\beta$.}

\begin{defn}[{\cite[Definition~2.10]{paper44}}]
For every $(\beta,\gamma)\in[\kappa]^2$, define an ordinal $\last{\beta}{\gamma}\in[\beta,\gamma]$ via:
$$\last{\beta}{\gamma}:=\begin{cases}
\beta,&\text{if }\lambda(\beta,\gamma)<\beta;\\
\min(\im(\tr(\beta,\gamma))),&\text{otherwise}.
\end{cases}$$
\end{defn}

\begin{fact}[{\cite[Lemma~2.11]{paper44}}]\label{last} Let $(\beta,\gamma)\in[\kappa]^2$ with $\beta>0$. Then
\begin{enumerate}
\item $\lambda(\last{\beta}{\gamma},\gamma)<\beta$;
\item If $\last{\beta}{\gamma}\neq\beta$, then $\beta\in\acc(C_{\last{\beta}{\gamma}})$. In particular, $\sup(C_{\last{\beta}{\gamma}}\cap\beta)=\sup(\beta)$;
\item $\tr(\last{\beta}{\gamma},\gamma)\sq\tr(\beta,\gamma)$.
\end{enumerate}
\end{fact}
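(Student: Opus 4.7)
The plan is to split on the definition of $\last{\beta}{\gamma}$. If $\lambda(\beta,\gamma)<\beta$, then $\last{\beta}{\gamma}=\beta$ by definition, so (1) restates the hypothesis, (2) is vacuous, and (3) is the trivial $\tr(\beta,\gamma)\sq\tr(\beta,\gamma)$. The remaining case is $\lambda(\beta,\gamma)=\beta$ (note $\lambda(\beta,\gamma)\le\beta$ always), and here all three conclusions must be extracted from the structure of the walk. Write $\tr(\beta,\gamma)=\langle\delta_0,\delta_1,\ldots,\delta_m\rangle$ with $\delta_0=\gamma$ and $\delta_{n+1}=\min(C_{\delta_n}\setminus\beta)$; the walk values strictly decrease, so $\eta:=\delta_m=\min(\im(\tr(\beta,\gamma)))=\last{\beta}{\gamma}$, and at the terminating step we have $\beta=\min(C_\eta\setminus\beta)$, so in particular $\beta\in C_\eta$.

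The key observation I would establish is the following dichotomy: for each $n<m$, $\sup(C_{\delta_n}\cap\eta)=\sup(C_{\delta_n}\cap\beta)<\beta$, whereas $\sup(C_\eta\cap\beta)=\beta$. The first equality holds because $\delta_{n+1}>\eta>\beta$ together with $\delta_{n+1}=\min(C_{\delta_n}\setminus\beta)$ forces $C_{\delta_n}\cap[\beta,\eta)=\emptyset$. The strict inequality $\sup(C_{\delta_n}\cap\beta)<\beta$ then follows from closedness of $C_{\delta_n}$: if the sup were $\beta$, then $\beta\in C_{\delta_n}$, whence $\delta_{n+1}=\beta$, contradicting $\delta_{n+1}>\beta$. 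Since $\im(\tr(\beta,\gamma))$ is finite, the supremum $\lambda(\beta,\gamma)=\beta$ is attained, and the dichotomy forces it to be attained only at $n=m$, yielding $\sup(C_\eta\cap\beta)=\beta$. Together with $\beta\in C_\eta$ this gives $\beta\in\acc(C_\eta)$, proving (2).

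For (1) and (3), I would verify by induction that the walk from $\gamma$ down to $\eta$ traces $\langle\delta_0,\ldots,\delta_{m-1}\rangle$ and terminates at $\delta_m=\eta$. Indeed, at each step $n<m$ we have $\delta_{n+1}\in C_{\delta_n}\setminus\eta$ (since $\delta_{n+1}>\eta$), and any strictly smaller element of $C_{\delta_n}\setminus\eta$ would lie in $C_{\delta_n}\cap[\beta,\delta_{n+1})$, which is empty by minimality. This immediately yields $\tr(\eta,\gamma)=\langle\delta_0,\ldots,\delta_{m-1}\rangle\sq\tr(\beta,\gamma)$, proving (3). Moreover $\lambda(\eta,\gamma)=\max_{n<m}\sup(C_{\delta_n}\cap\eta)$, and each such term equals $\sup(C_{\delta_n}\cap\beta)<\beta$ by the dichotomy, so $\lambda(\eta,\gamma)<\beta$, proving (1).

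No step looks genuinely hard; the main risk is just conflating the two sups $\sup(C_{\delta_n}\cap\beta)$ and $\sup(C_{\delta_n}\cap\eta)$ or mis-indexing the walk so that one overlooks the boundary case $n=m$. Once it is observed that the equality $\lambda(\beta,\gamma)=\beta$ can be witnessed only at the last step of the walk, everything else is bookkeeping against the minimality defining $\delta_{n+1}$.
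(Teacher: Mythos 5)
This statement is quoted in the paper as an external fact (imported from \cite[Lemma~2.11]{paper44}) and is not proved there, so there is no in-paper argument to compare against; your proof must therefore stand on its own, and it does. The case split on the definition of $\last{\beta}{\gamma}$ is the right frame, and in the nontrivial case $\lambda(\beta,\gamma)=\beta$ your dichotomy is exactly the needed observation: for $n<m$ the minimality of $\delta_{n+1}=\min(C_{\delta_n}\setminus\beta)$ gives $C_{\delta_n}\cap[\beta,\eta)=\emptyset$, and closedness of $C_{\delta_n}$ rules out $\sup(C_{\delta_n}\cap\beta)=\beta$, so the finitely-attained supremum $\lambda(\beta,\gamma)=\beta$ can only be witnessed at $n=m$, which together with $\beta\in C_\eta$ (from the terminating step) yields (2); the retracing argument for $\tr(\eta,\gamma)=\langle\delta_0,\ldots,\delta_{m-1}\rangle$ then delivers (1) and (3). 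One microscopic slip: for $n=m-1$ one has $\delta_{n+1}=\eta$ rather than $\delta_{n+1}>\eta$, but the inequality $\delta_{n+1}\ge\eta$ is all that is used, so nothing breaks.
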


We close this subsection by mentioning an additional useful fact. For a generalization in the language of Definition~\ref{c-seq_num_def}, see \cite[Lemma~5.8]{paper35}.
\begin{fact}[{\cite[Theorem~6.3.2]{MR2355670}}]\label{nontrivialfact}
Suppose that $\rho_2$ is obtained from walking along a $\vec C$ over $\kappa$.
Then $\vec C$ is nontrivial iff $\rho_2$ witnesses $\U(\kappa,\kappa,\omega,\omega)$.
\end{fact}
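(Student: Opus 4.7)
The plan splits naturally into two directions. The easier backward direction shows that triviality of $\vec C$ forces $\rho_2$ to fail $\U(\kappa,\kappa,\omega,\omega)$, while the forward direction --- that nontriviality of $\vec C$ entails $\rho_2$ witnesses $\U(\kappa,\kappa,\omega,\omega)$ --- is the main content and where the real work lies.

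For the backward direction, I fix a club $D\s\kappa$ witnessing triviality of $\vec C$. I shall construct a pairwise disjoint $\mathcal A\s[\kappa]^2$ of size $\kappa$ for which no $\kappa$-sized $\mathcal B\s\mathcal A$ satisfies $\min(\rho_2[a\times b])>1$ for all distinct $a,b\in\mathcal B$. By recursion on $\xi<\kappa$, I pick $\alpha_\xi\in D$ strictly above $\sup\{\delta_\eta+1\mid\eta<\xi\}$, and then pick $\delta_\xi<\kappa$ with $D\cap(\alpha_\xi+1)\s C_{\delta_\xi}$ using triviality; since $\alpha_\xi\in C_{\delta_\xi}$, necessarily $\delta_\xi>\alpha_\xi$. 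Set $a_\xi:=\{\alpha_\xi,\delta_\xi\}$, so that $\mathcal A:=\{a_\xi\mid\xi<\kappa\}$ is pairwise disjoint of size $\kappa$. For any $\xi<\eta$, since $\alpha_\xi\in D\cap(\alpha_\eta+1)\s C_{\delta_\eta}$, the first step of the walk gives $\Tr(\alpha_\xi,\delta_\eta)(1)=\min(C_{\delta_\eta}\setminus\alpha_\xi)=\alpha_\xi$, so $\rho_2(\alpha_\xi,\delta_\eta)=1$. Hence $\min(\rho_2[a_\xi\times a_\eta])\leq 1$ for every pair $\xi<\eta$, and $\tau=1$ is a counterexample to $\U(\kappa,\kappa,\omega,\omega)$.

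For the forward direction, suppose $\vec C$ is nontrivial. Given $\zeta<\omega$, a pairwise disjoint $\mathcal A=\{a_\alpha\mid\alpha<\kappa\}\s[\kappa]^\zeta$ of size $\kappa$, and $\tau<\omega$, I shall build $\mathcal B\s\mathcal A$ of size $\kappa$ with $\min(\rho_2[a\times b])>\tau$ for distinct $a,b\in\mathcal B$ via transfinite recursion. At stage $\xi<\kappa$, having chosen $\{b_\eta\mid\eta<\xi\}$, I fix a sufficiently large regular $\theta$ and an elementary submodel $M_\xi\prec H_\theta$ of size $<\kappa$ containing $\vec C,\mathcal A,\zeta,\tau$ and the already-constructed sequence, and set $\delta_\xi:=\sup(M_\xi\cap\kappa)<\kappa$ and $F_\xi:=\bigcup_{\eta<\xi}b_\eta$. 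The heart of the matter is to verify that the set
\[
\Gamma_\xi := \{\gamma<\kappa \mid \forall\alpha\in F_\xi,\ \rho_2(\alpha,\gamma)>\tau\}
\]
is cofinal in $\kappa$; granting this, some $a_\beta$ with $a_\beta\s\Gamma_\xi$ can be located by a finite-combinatorial pigeonhole over the $\zeta$ coordinates (in the spirit of Section~\ref{sec32}), completing the stage.

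The main obstacle is establishing cofinality of $\Gamma_\xi$, for which I plan to leverage the equivalent formulation of nontriviality: $\chi(\vec C)\geq\omega$ means that for every cofinal $\Gamma\s\kappa$, every $\epsilon<\kappa$, and every $\Delta\in[\kappa]^{<\omega}$, there exists $\gamma\in\Gamma\setminus\epsilon$ with $\gamma\notin\bigcup_{\delta\in\Delta}C_\delta$. I proceed by induction on $\tau$. Supposing by the inductive hypothesis that the analog of $\Gamma_\xi$ for level $\tau$ is cofinal, if the level-$(\tau+1)$ version were bounded, then for every sufficiently large $\gamma$ in the level-$\tau$ set, the walk from $\gamma$ would reach some $\alpha\in F_\xi$ in exactly $\tau+1$ steps. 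By elementarity of $M_\xi$ (which contains $F_\xi$ and all relevant data) together with tracing back the penultimate walk step, this forces a finite $M_\xi$-definable family of $C_\delta$'s to cover a cofinal initial segment of some cofinal set in $\kappa$, contradicting nontriviality. This reduction from long walks to finite $C_\delta$-covers, mediated by the inductive hypothesis, is the delicate step that requires careful unfolding.
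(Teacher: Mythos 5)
The paper offers no proof of this statement---it is quoted verbatim from Todor\v{c}evi\'c's book---so your proposal must stand entirely on its own. Your backward direction is correct and essentially the standard argument. The forward direction, however, has a genuine gap: the claim at its heart, that $\Gamma_\xi=\{\gamma<\kappa\mid\forall\alpha\in F_\xi\,(\rho_2(\alpha,\gamma)>\tau)\}$ is cofinal for whatever set $F_\xi$ your recursion has accumulated, is false, already for $|F_\xi|=1$ and $\tau=1$. Take any nontrivial $\vec C$ and replace each $C_\delta$ by $C_\delta\cup\{0\}$; one checks this is still a nontrivial $C$-sequence (if a club $D$ trivialised it, then $D\setminus\{0\}$ would trivialise the original), yet now $\rho_2(0,\gamma)=1$ for every $\gamma>0$, so as soon as $0\in F_\xi$ --- and nothing in your recursion forbids choosing at stage $0$ the unique $a_\beta$ containing $0$ --- the set $\Gamma_\xi$ is bounded and the construction dies. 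The problem is structural, not just an unlucky choice of $0$: for a fixed $\alpha$ the set $\{\gamma\mid\rho_2(\alpha,\gamma)>\tau\}$ need not be cofinal, and for $F_\xi$ of infinite size below $\kappa$ the simultaneous requirement is far stronger still. The theorem survives only because $\U(\kappa,\kappa,\omega,\omega)$ lets you \emph{discard} bad members of $\mathcal A$; the correct argument has the opposite quantifier structure, fixing the large element(s) first and then using nontriviality to locate, inside a suitable \emph{bounded initial segment}, earlier elements that the finitely many clubs $C_\delta$ with $\delta\in\im(\tr(\cdot,\cdot))$ fail to cover.

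Relatedly, your restatement of nontriviality is vacuous as written: each $C_\delta$ is a subset of $\delta$, hence bounded in $\kappa$, so any finite union $\bigcup_{\delta\in\Delta}C_\delta$ is bounded and every cofinal $\Gamma$ trivially contains points above any $\epsilon$ outside it. This holds for \emph{every} $C$-sequence and therefore cannot drive a contradiction. What $\chi(\vec C)\ge\omega$ actually provides is: for every cofinal $\Gamma$ and every $m<\omega$ there is an $\epsilon<\kappa$ such that the initial segment $\Gamma\cap\epsilon$ is not contained in the union of any $m$ many $C_\delta$'s. Your inductive step would need to convert boundedness of the level-$(\tau+1)$ set into a cover of such an initial segment by a \emph{finite}, uniformly chosen family of $C_\delta$'s; as sketched it instead tries to cover a cofinal set, and in the counterexample above the relevant family $\{C_\gamma\mid \gamma>0\}$ is not finite. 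So the reduction from long walks to finite $C_\delta$-covers---precisely the step you flag as delicate---does not go through in the form you propose, and repairing it requires reorganising the whole forward direction rather than merely "careful unfolding".
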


\subsection{Club-guessing}
\begin{defn}[{\cite[Definition~2.2]{paper46}}]
For subsets $S,T$ of $\kappa$ and an ordinal $\xi\le\kappa$,
$\cg_\xi(S, T)$ asserts the existence of
a $\xi$-bounded $C$-sequence $\vec C=\langle C_\delta\mid\delta \in S\rangle$ such that,
for every club $D\s\kappa$, there exists a $\delta \in S$ such that
$\sup(\nacc(C_\delta)\cap D)=\delta$.
\end{defn}
Note that the quantification over clubs above implies that the existence of a single $\delta$ is equivalent to the existence of stationarily many such $\delta$'s.

\begin{fact}[{\cite[Theorem~3.23]{paper46}}]\label{weaksquarethm}
Suppose that $\mu$ is a regular uncountable cardinal.
Then $\square^*_\mu$ holds iff there exists a $\square^*_\mu$-sequence $\vec C$ such that
$\vec C\restriction E^{\mu^+}_\mu$ witnesses $\cg_\mu(E^{\mu^+}_\mu,E^{\mu^+}_\mu)$.
\end{fact}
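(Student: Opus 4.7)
The backward direction is immediate: any witness to the conclusion is, by definition, a $\square^*_\mu$-sequence. For the forward direction, I would start by fixing a $\square^*_\mu$-sequence $\vec D = \langle D_\alpha \mid \alpha < \mu^+\rangle$ and invoking Shelah's \zfc club-guessing theorem for $E^{\mu^+}_\mu$ (applicable since $\mu$ is regular uncountable), which supplies a $\mu$-bounded $C$-sequence $\vec e = \langle e_\delta \mid \delta \in E^{\mu^+}_\mu\rangle$ such that for every club $F \subseteq \mu^+$, some $\delta \in E^{\mu^+}_\mu$ satisfies $\sup(\nacc(e_\delta) \cap F) = \delta$. The overarching plan is to merge $\vec e$ into $\vec D$, producing a single sequence $\vec C$ that is simultaneously a $\square^*_\mu$-sequence and a club-guessing sequence on $E^{\mu^+}_\mu$.

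For $\alpha \notin E^{\mu^+}_\mu$, I would set $C_\alpha := D_\alpha$, preserving the existing coherence on the complement. For $\delta \in E^{\mu^+}_\mu$, I would construct $C_\delta$ as a patchwork whose non-accumulation points contain a well-chosen cofinal $\mu$-sized subset $H_\delta \subseteq \nacc(e_\delta)$: for successive points $\eta < \eta'$ in $H_\delta$, one glues in a canonical closed block derived from $\vec D$, say $D_{\eta'} \cap [\eta, \eta')$ (adjusted slightly for closure at boundaries). Since by design $H_\delta \subseteq \nacc(C_\delta)$, the club-guessing property transfers: for any club $F \subseteq \mu^+$, Shelah's theorem supplies some $\delta$ for which $\nacc(e_\delta) \cap F$ is cofinal in $\delta$, and with care in the choice of $H_\delta$ this cofinality persists for $\nacc(C_\delta) \cap F$.

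The main obstacle will be preserving weak coherence: one must verify $|\{C_\delta \cap \epsilon \mid \delta < \mu^+\}| \le \mu$ for every $\epsilon < \mu^+$, despite the $e_\delta$'s being \emph{a priori} unrestricted as sets. My strategy would be to choose the building blocks and the $H_\delta$'s so that each fragment $C_\delta \cap \epsilon$ is recoverable from a tuple of data drawn from a pool of size $\le \mu$: weak coherence of $\vec D$ already limits each block to $\le \mu$ options, so it remains to bound the number of admissible "patterns" in which these blocks are assembled. I would leverage this by selecting $H_\delta$ in alignment with the tree of initial segments of $\vec D$---for example, requiring each $\eta \in H_\delta$ to witness $D_\delta \cap \eta = D_\eta$, or an approximation thereof---so that the gluing data is itself encoded inside the $\square^*_\mu$-structure. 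A pigeonhole argument over these coherence types then keeps the total count at $\le \mu$. This bookkeeping is the technical heart of the argument, where the combinatorial content of $\square^*_\mu$ is harnessed to tame the variability of $\vec e$.
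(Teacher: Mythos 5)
First, a point of order: the paper does not prove this statement --- it is quoted as a Fact from \cite[Theorem~3.23]{paper46} --- so your proposal has to be measured against the known argument for that theorem. Your backward direction is correct, and your appeal to the $\zfc$ club-guessing theorem at $E^{\mu^+}_\mu$ (in its $\nacc$-form, which is the version that survives at $\cf(\delta)^+=\mu^+$) is legitimate. The gap is in the merging step, and it is not a bookkeeping issue but a genuine conflict. To inherit the guessing property, $H_\delta$ must be a cofinal subset of $\nacc(e_\delta)$ that still meets an \emph{arbitrary future} club $D$ cofinally for some $\delta$; in practice this forces $H_\delta$ to be essentially all of $\nacc(e_\delta)$, since a club $D$ can meet $\nacc(e_\delta)$ cofinally while avoiding any thinned-out subset selected in advance for coherence reasons. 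But $\vec e$ is an arbitrary output of Shelah's theorem, so the traces $\nacc(e_\delta)\cap\epsilon$ may take $\mu^+$ many values, and any $C_\delta$ with $\nacc(C_\delta)\supseteq H_\delta$ then destroys weak coherence. Your proposed remedy --- aligning $H_\delta$ with ``the tree of initial segments of $\vec D$'', e.g.\ demanding $D_\delta\cap\eta=D_\eta$ --- is not available: $\vec D$ is only \emph{weakly} coherent, so there need not exist even one such $\eta$; and any canonical tie of $H_\delta$ to $\vec D$-data severs the requirement $H_\delta\subseteq\nacc(e_\delta)$ on which the guessing rests. In effect you have reduced the problem to ``thin an arbitrary guessing sequence to a weakly coherent one'', which is the original difficulty restated.

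The workable route reverses the roles of the two ingredients: do not import an external $\vec e$ at all, but run the club-guessing stabilization argument \emph{directly on} $\vec D\restriction E^{\mu^+}_\mu$. Each step of that iteration replaces $C_\delta$ by $C_\delta\cap E_i$ for a decreasing sequence of clubs (or by a drop such as $\{\sup(E_i\cap\alpha)\mid\alpha\in C_\delta\}$), and the key observation is that these operations preserve both $\mu$-boundedness and weak coherence, since $(C_\delta\cap E)\cap\epsilon$ is determined by $C_\delta\cap\epsilon$ together with $E$; hence every candidate arising in the iteration is itself the restriction of a $\square^*_\mu$-sequence, and the one at which the process stabilizes (after fewer than $\omega_1$ steps, using $\cf(\delta)=\mu>\omega$) is the desired witness. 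Arranging that the guessing lands in $T=E^{\mu^+}_\mu$ and padding the sequence out to all of $\mu^+$ using $\vec D$ off $E^{\mu^+}_\mu$ are then routine postprocessing steps that again preserve weak coherence. The moral is that the coherence must be built into the guessing argument from the start, not retrofitted onto an arbitrary guessing sequence afterwards.
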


\begin{defn}[{\cite[Definition~4.1]{paper46}}]\label{defpi}
Let $S,T$ be stationary subsets of $\kappa$, and $\vec C= \langle C_\delta \mid \delta \in S\rangle$ a $C$-sequence.
We denote by $\Theta_1(\vec C,T)$ the set of all nonzero cardinals $\theta$ for which there exists a function $h:\kappa\rightarrow\theta$ satisfying the following.

For every club $D\s\kappa$, there exists a $\delta\in S$ such that, for every $\tau<\theta$,
$$\sup\{\gamma\in \nacc(C_\delta) \cap D \cap T\mid h(\otp(C_\delta\cap\gamma))=\tau\}= \delta.$$
\end{defn}

Note that $\Theta_1(\vec C,T)\s\kappa$,
and that $\cg_\xi(S,T)$ holds iff there exists a $\xi$-bounded $C$-sequence $\vec C$ over $S$ such that $1\in\Theta_1(\vec C,T)$.

The next lemma uses the notation $\sigma_h$ defined in Subsection~\ref{nandc}.

\begin{lemma}\label{cor62} Suppose $\vec C$ is a $C$-sequence over $\kappa$ and $\theta\in \Theta_1(\vec C\restriction A(\vec C),\kappa)$,
as witnessed by a map $h$.
Then, for every $\Upsilon\in[\kappa]^{\kappa}$,
$$\sup\{\delta<\kappa\mid \sup\{ (\rho_0(\delta,\upsilon))_{h}\mid \upsilon\in \Upsilon\setminus\delta\}<\theta\}<\kappa.$$
\end{lemma}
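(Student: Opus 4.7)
The plan is a contradiction argument combining a pigeonhole step with the club-guessing hypothesis and the fundamental walks identity of Fact~\ref{fact2}. Suppose towards a contradiction that the set
$B := \{\delta<\kappa\mid \sup\{(\rho_0(\delta,\upsilon))_{h}\mid \upsilon\in \Upsilon\setminus\delta\}<\theta\}$
is unbounded in $\kappa$, so that $|B|=\kappa$ by regularity. For each $\delta\in B$ there is a $\tau_\delta<\theta$ such that every colour $h(\otp(C_\eta\cap\delta))$ with $\eta\in\im(\tr(\delta,\upsilon))$ and $\upsilon\in\Upsilon\setminus\delta$ is at most $\tau_\delta$. Assuming the natural $\theta<\kappa$ regime in which the lemma is applied, pigeonhole delivers a fixed $\tau<\theta$ together with a $B'\subseteq B$ of size $\kappa$ on which $\tau_\delta\le\tau$ uniformly.

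Next, form the club $D:=\acc^+(B')\cap\acc^+(\Upsilon)$, and feed it to the hypothesis $\theta\in\Theta_1(\vec C\restriction A(\vec C),\kappa)$: it produces a $\delta^*\in A(\vec C)$ such that for every $\tau''<\theta$ the set
$X_{\tau''}:=\{\gamma\in\nacc(C_{\delta^*})\cap D\mid h(\otp(C_{\delta^*}\cap\gamma))=\tau''\}$
is cofinal in $\delta^*$. Pick $\upsilon:=\min(\Upsilon\setminus(\delta^*+1))$. Because $\delta^*\in A(\vec C)$, the Fact of the excerpt relating $A(\vec C)$ to $\lambda$ yields $\lambda(\delta^*,\upsilon)<\delta^*$. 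Apply cofinality of $X_{\tau+1}$ to grab a $\gamma\in X_{\tau+1}$ with $\gamma>\lambda(\delta^*,\upsilon)$; set $\bar\gamma:=\sup(C_{\delta^*}\cap\gamma)$, which is strictly below $\gamma$ since $\gamma\in\nacc(C_{\delta^*})$. Finally, since $\gamma\in D\subseteq\acc^+(B')$, choose any $\delta\in B'\cap(\max\{\bar\gamma,\lambda(\delta^*,\upsilon)\},\gamma)$.

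With the configuration $\lambda(\delta^*,\upsilon)<\delta<\delta^*<\upsilon$ in place, Fact~\ref{fact2} gives $\tr(\delta,\upsilon)=\tr(\delta^*,\upsilon){}^\smallfrown\tr(\delta,\delta^*)$, so $\delta^*\in\im(\tr(\delta,\upsilon))$. As $\delta\in(\bar\gamma,\gamma)$ and no element of $C_{\delta^*}$ lies in $(\bar\gamma,\gamma)$, we obtain $C_{\delta^*}\cap\delta=C_{\delta^*}\cap\gamma$, hence $\otp(C_{\delta^*}\cap\delta)=\otp(C_{\delta^*}\cap\gamma)$. Consequently $h(\otp(C_{\delta^*}\cap\delta))=\tau+1$ appears among the coordinates of $h\circ\rho_0(\delta,\upsilon)$, forcing $(\rho_0(\delta,\upsilon))_h\ge\tau+1>\tau$, contrary to the choice of $B'$.

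The delicate point is the coordination in the last step: one needs $\upsilon\in\Upsilon$ while the club-guessing node $\delta^*$ only belongs to $A(\vec C)$, and there is no reason to expect $\delta^*\in\Upsilon$. The role of the Fact that $A(\vec C)$-points have small $\lambda$-values, combined with the choice of $\gamma$ above $\lambda(\delta^*,\upsilon)$ and of $\delta$ in the tight window $(\max\{\bar\gamma,\lambda(\delta^*,\upsilon)\},\gamma)$, is exactly to force the walk from $\upsilon$ down to $\delta$ to pass through $\delta^*$, thereby exposing the prescribed colour $\tau+1$ inside $\rho_0(\delta,\upsilon)$.
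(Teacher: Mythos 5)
Your proof is correct and rests on exactly the same combinatorial engine as the paper's: pigeonhole on the putative unbounded counterexample set to stabilise a bound $\tau$, club-guessing into its set of accumulation points at a point $\delta^*\in A(\vec C)$, and the window $(\sup(C_{\delta^*}\cap\gamma),\gamma)$ together with $\lambda(\delta^*,\upsilon)<\delta$ and Fact~\ref{fact2} to force the walk from $\upsilon$ to $\delta$ through $\delta^*$ and expose a colour exceeding $\tau$. The only difference is organisational: the paper first shows that $(\alpha,\beta)\mapsto(\rho_0(\alpha,\beta))_h$ witnesses $\U(\kappa,2,\theta,3)$ and then cites Proposition~\ref{lemma44}, whereas you inline both halves into a single contradiction argument.
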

\begin{proof} Write $\vec C$ as $\langle C_\delta\mid\delta<\kappa\rangle$.
By Proposition~\ref{lemma44}, it suffices to prove that $(\alpha,\beta)\mapsto(\rho_0(\alpha,\beta))_{h}$ witnesses $\U(\kappa,2,\theta,3)$.
Thus, given a pairwise disjoint family $\mathcal B\s[\kappa]^2$ of size $\kappa$ and a colour $\tau<\theta$, we argue as follows.
Let $D$ be the club of $\gamma<\kappa$ such that for every $\beta<\gamma$,
there exists an $a\in\mathcal B$ with $\beta<\min(a)<\max(a)<\gamma$.
Pick $\delta\in A(\vec C)$ such that
$$\sup\{\gamma\in \nacc(C_\delta)\cap D\mid h(\otp(C_\delta\cap\gamma))=\tau\}=\delta,$$
and then pick $b\in\mathcal B$ with $\min(b)>\delta$.
As $\delta\in A(\vec C)$, $\epsilon:=\max\{\lambda(\delta,\beta)\mid \beta\in b\}$ is smaller than $\delta$.
Pick $\gamma\in \nacc(C_\delta)\cap D$ for which $\beta:=\sup(C_\delta\cap\gamma)$ is bigger than $\epsilon$ and such that $h(\otp(C_\delta\cap\gamma))=\tau$.
As $\gamma\in D$,
let us now pick $a\in\mathcal B$ with $\beta<\min(a)<\max(a)<\gamma$.
Then for all $i,j<2$, $\delta\in\tr(a(i),b(j))$ and hence
$(\rho_0(a(i),b(j)))_{h}\ge h(\otp(C_\delta\cap\gamma))=\tau$, as sought.
\end{proof}
\begin{remark} A straightforward tweak of the preceding proof shows that
if $\vec C$ is a $C$-sequence over $\kappa$ and $\theta\in \Theta_1(\vec C\restriction A(\vec C)\cap E^\kappa_{\ge\chi},\kappa)$,
as witnessed by a map $h$, then $(\alpha,\beta)\mapsto(\rho_0(\alpha,\beta))_{h}$ witnesses $\U(\kappa,\kappa,\theta,\chi)$.
\end{remark}

We now introduce the following variation of Definition~\ref{defpi}.

\begin{defn}\label{def429}
Let $S,T$ be stationary subsets of $\kappa$, and $\vec C= \langle C_\delta \mid \delta \in S\rangle$ a $C$-sequence.
Let $\mu$ be some cardinal. We denote by
$\Theta^\mu_1(\vec C, T)$ the set of all non-zero cardinals $\theta$ for which there are functions
$g:\kappa\rightarrow\mu$ and $h:\kappa\rightarrow\theta$, satisfying the following.

For every club $D\s\kappa$, for every $\nu<\mu$, there exists a $\delta\in S$ such that for every $\tau<\theta$,
$$ \sup\left\{\gamma \in \nacc(C_\delta) \cap D\cap T \,\middle|\, \begin{array}{l}g(\otp(C_\delta \cap \gamma)) = \nu\\h(\otp(C_\delta \cap \gamma)) = \tau\end{array} \right\} = \delta.$$
\end{defn}
Note that $\Theta^\mu_1(\vec C, T)\s \Theta_1(\vec C, T)$, and that for every $\theta\in\Theta_1(\vec C, T)$, we have $\theta\in\Theta^\theta_1(\vec C, T)$.
The utility of Definition~\ref{def429} is demonstrated by the following simple lemma
and its subsequent higher-dimensional variation.

\begin{lemma}\label{lemma35a}
Suppose $\vec C$ is a $C$-sequence and $\theta\in \Theta_1^{\mu}(\vec C\restriction A(\vec C),\kappa)$,
as witnessed by maps $g,h$.

For every cofinal $\Upsilon_0\s\kappa$, for every $\nu<\mu$,
there are a stationary $\Upsilon_1\s\kappa$,
an ordinal $\varepsilon<\kappa$, a sequence $\eta\in{}^{<\omega}\theta$, and some $\ell<\omega$
such that for every $\upsilon_1\in\Upsilon_1$, there is a $\upsilon_0\in\Upsilon_0$ above $\upsilon_1$ such that:
\begin{itemize}
\item for every $\delta\in(\varepsilon,\upsilon_1)$, $\tr(\delta,\upsilon_0)(\ell+1)=\upsilon_1$ and
$g(\rho_0(\delta,\upsilon_0)(\ell))=\nu$;
\item $h\circ \rho_0(\upsilon_1,\upsilon_0)=\eta$.
\end{itemize}
\end{lemma}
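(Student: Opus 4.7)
My plan is to use a standard club-guessing argument: locate stationarily many ``guessing ordinals'' $\delta^*$ witnessing the hypothesis with respect to the club $D:=\acc^+(\Upsilon_0)$, canonically associate each to an element of $\Upsilon_0$ lying above, and then extract the uniform parameters $\varepsilon,\ell,\eta$ and stationary set $\Upsilon_1$ via Fodor and pigeonhole.

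First I would fix some $\tau_1<\theta$, set $D:=\acc^+(\Upsilon_0)$, and apply the hypothesis with $D$ and $\nu$ to obtain a stationary set $G\s A(\vec C)$ of $\delta^*$'s such that for every $\tau<\theta$ the set
$$E_{\delta^*,\tau}:=\{\gamma\in\nacc(C_{\delta^*})\cap D\mid g(\otp(C_{\delta^*}\cap\gamma))=\nu,\ h(\otp(C_{\delta^*}\cap\gamma))=\tau\}$$
is cofinal in $\delta^*$. For each $\delta^*\in G$, let $\upsilon_0^{\delta^*}:=\min(\Upsilon_0\setminus(\delta^*+1))$ and $\lambda_{\delta^*}:=\lambda(\delta^*,\upsilon_0^{\delta^*})$; since $\delta^*\in A(\vec C)$ one has $\lambda_{\delta^*}<\delta^*$. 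Then consider
$$\Upsilon_1^0:=\{\upsilon_1<\kappa\mid \exists \delta^*\in G\colon \upsilon_1\in E_{\delta^*,\tau_1}\text{ and }\upsilon_1>\lambda_{\delta^*}\}.$$
To see $\Upsilon_1^0$ is stationary, for an arbitrary club $C\s\kappa$ one applies the hypothesis instead to the club $D\cap C$ to obtain $\delta^{**}$ guessing with respect to $D\cap C$; this is a fortiori guessing with respect to $D$, so $\delta^{**}\in G$. Then any $\gamma>\lambda_{\delta^{**}}$ in the $(D\cap C)$-restricted analogue of $E_{\delta^{**},\tau_1}$ lands in $\Upsilon_1^0\cap C$.

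Next, for each $\upsilon_1\in\Upsilon_1^0$, fix a witness $\delta^*(\upsilon_1)\in G$ and put $\upsilon_0(\upsilon_1):=\upsilon_0^{\delta^*(\upsilon_1)}$. Because $\upsilon_1>\lambda_{\delta^*(\upsilon_1)}$, Fact~\ref{fact2} gives
$$\tr(\upsilon_1,\upsilon_0(\upsilon_1))=\tr(\delta^*(\upsilon_1),\upsilon_0(\upsilon_1)){}^\smallfrown\langle\delta^*(\upsilon_1)\rangle,$$
and since $\upsilon_1\in\nacc(C_{\delta^*(\upsilon_1)})$, one computes $\lambda(\upsilon_1,\upsilon_0(\upsilon_1))=\max\{\lambda_{\delta^*(\upsilon_1)},\sup(C_{\delta^*(\upsilon_1)}\cap\upsilon_1)\}<\upsilon_1$. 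So $\upsilon_1\mapsto\lambda(\upsilon_1,\upsilon_0(\upsilon_1))$ is regressive, and Fodor pins it to a constant $\varepsilon<\kappa$ on some stationary $\Upsilon_1^1\s\Upsilon_1^0$. A pigeonhole on $\upsilon_1\mapsto(\rho_2(\upsilon_1,\upsilon_0(\upsilon_1)),h\circ\rho_0(\upsilon_1,\upsilon_0(\upsilon_1)))$, whose range sits inside $\omega\times{}^{<\omega}\theta$ (a set of size $<\kappa$), delivers the stationary $\Upsilon_1\s\Upsilon_1^1$ together with constants $\ell<\omega$ and $\eta\in{}^{<\omega}\theta$ such that $\rho_2(\upsilon_1,\upsilon_0(\upsilon_1))=\ell+1$ and $h\circ\rho_0(\upsilon_1,\upsilon_0(\upsilon_1))=\eta$ for all $\upsilon_1\in\Upsilon_1$.

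To verify the conclusion, fix $\upsilon_1\in\Upsilon_1$ and write $\upsilon_0:=\upsilon_0(\upsilon_1)$, $\delta^*:=\delta^*(\upsilon_1)$. Given $\delta\in(\varepsilon,\upsilon_1)$, since $\delta>\varepsilon=\lambda(\upsilon_1,\upsilon_0)$ Fact~\ref{fact2} yields $\tr(\delta,\upsilon_0)=\tr(\upsilon_1,\upsilon_0){}^\smallfrown\tr(\delta,\upsilon_1)$; together with $\rho_2(\upsilon_1,\upsilon_0)=\ell+1$ this gives $\tr(\delta,\upsilon_0)(\ell+1)=\upsilon_1$ and $\tr(\delta,\upsilon_0)(\ell)=\delta^*$. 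Since $\delta>\varepsilon\ge\sup(C_{\delta^*}\cap\upsilon_1)$, we get $\rho_0(\delta,\upsilon_0)(\ell)=\otp(C_{\delta^*}\cap\upsilon_1)$, whose $g$-value is $\nu$ by $\upsilon_1\in E_{\delta^*,\tau_1}$; and $h\circ\rho_0(\upsilon_1,\upsilon_0)=\eta$ is immediate from the pigeonhole fixing. The hard part is reconciling the single fixed $\varepsilon$ with stationarity of $\Upsilon_1$: naively $\sup(C_{\delta^*}\cap\upsilon_1)$ grows with $\upsilon_1$ and so cannot be bounded a priori by any fixed ordinal, but $\delta^*(\upsilon_1)\in A(\vec C)$ combined with $\upsilon_1\in\nacc(C_{\delta^*(\upsilon_1)})$ renders the \emph{full} quantity $\lambda(\upsilon_1,\upsilon_0(\upsilon_1))$ regressive, so Fodor finishes the job.
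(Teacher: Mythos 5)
Your proof is correct and follows essentially the same route as the paper's: use the club-guessing hypothesis to produce a stationary set of ordinals $\upsilon_1$ lying in $\nacc(C_{\delta^*})$ for guessing points $\delta^*\in A(\vec C)$, attach to each a witness $\upsilon_0\in\Upsilon_0$ above it, and then stabilise $\varepsilon=\lambda(\upsilon_1,\upsilon_0)$, $\ell$ and $\eta$ by Fodor and the pigeonhole principle. Your closing verification that $\delta>\varepsilon\ge\sup(C_{\delta^*}\cap\upsilon_1)$ forces $\rho_0(\delta,\upsilon_0)(\ell)=\otp(C_{\delta^*}\cap\upsilon_1)$ is exactly the point that makes the fixed $\varepsilon$ work, and is spelled out more explicitly than in the paper.
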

\begin{proof} Write $\vec C$ as $\langle C_\delta\mid\delta<\kappa\rangle$.
Given a cofinal $\Upsilon_0$ and a prescribed colour $\nu<\mu$,
let $S$ be the collection of all $\upsilon<\kappa$ for which there exists some $\beta\in\Upsilon_0$ satisfying
$\lambda(\upsilon,\beta)<\upsilon<\beta$ and $g(\rho_0(\upsilon,\beta)(\rho_2(\upsilon,\beta)-1))=\nu$.
\begin{claim}\label{cl4301} $S$ is stationary.
\end{claim}
\begin{why}
Let $D$ be an arbitrary club. Pick some $\alpha\in A(\vec C)$ such that
$$ \sup\{\upsilon \in \nacc(C_\alpha) \cap D \mid g(\otp(C_\alpha \cap \upsilon)) = \nu\} = \alpha.$$
Fix $\beta\in\Upsilon_0$ above $\alpha$.
As $\alpha\in A(\vec C)$, it is the case that $\lambda(\alpha,\beta)<\alpha$.
Pick a large enough $\upsilon\in\nacc(C_\alpha)\cap D$ such that $\upsilon^-:=\sup(C_\alpha\cap\upsilon)$ is bigger than $\lambda(\alpha,\beta)$,
and $g(\otp(C_\alpha\cap\upsilon))=\nu$.
It follows that $$\lambda(\alpha,\beta)<\lambda(\upsilon,\beta)=\upsilon^-<\upsilon<\alpha<\beta,$$
and $\rho_0(\upsilon,\beta)=\rho_0(\alpha,\beta){}^\smallfrown\langle \otp(C_\alpha\cap\upsilon)\rangle$.
So $\beta$ witnesses that $\upsilon\in S\cap D$.
\end{why}
For each $\upsilon\in S$, pick $\beta_\upsilon\in\Upsilon_0$ witnessing that $\upsilon\in S$.
Then let:
\begin{itemize}
\item $\varepsilon_\upsilon:=\lambda(\upsilon,\beta)$;
\item $\eta_\upsilon:=h\circ \rho_0(\upsilon,\beta)$;
\item $\ell_\upsilon:=\rho_2(\upsilon,\beta)-1$.
\end{itemize}

As $\theta<\kappa$, it follows that we may find $\varepsilon,\eta,\ell$ for which the following set is stationary
$$\Upsilon_1:=\{ \upsilon\in S\mid \varepsilon_\upsilon=\varepsilon, \eta_\upsilon=\eta, \ell_\upsilon=\ell\},$$
which provided the set we were after.
\end{proof}

\begin{lemma}\label{lemma35}
Suppose $\vec C$ is a $C$-sequence over $\kappa$ and $\theta\in \Theta_1^{\mu}(\vec C\restriction A(\vec C),\kappa)$,
as witnessed by maps $g,h$. Let $n$ be a positive integer.

For every rapid $T'\in[T(\rho_0^{\vec C})]^\kappa$, there exists a rapid $S'\in[{}^n T']^\kappa$ satisfying the following.
For every $\nu<\mu$, for every rapid $S\in[{}^nT(\rho_0^{\vec C})]^{\kappa}$, there are
$\varepsilon<\kappa$, $\langle (\eta^{k},\ell^k) \mid k<n\rangle$,
a cofinal $\Upsilon\s\kappa$ and a matrix $\langle \beta^k_\upsilon\mid \upsilon\in\Upsilon, k<n\rangle$
such that all of the following hold:
\begin{enumerate}
\item for every $k<n$, $\upsilon<\beta_\upsilon^k<\kappa$, $h\circ \rho_0(\upsilon,\beta_\upsilon^k)=\eta^{k}$, and $\sup(\eta^{k})=\sup(\eta^{0})$;
\item for every $\delta\in(\varepsilon,\upsilon)$, for every $k<n$,
$\tr(\delta,\beta_\upsilon^k)(\ell^k+1)=\upsilon$, and $g(\rho_0(\delta,\beta_\upsilon^k)(\ell^k))=\nu$;
\item $(\{\eta^{k}\mid k<n\},{\s})$ is an antichain.
If $S\s S'$, then $|\{\eta^{k}\mid k<n\}|=n$;
\item for all $\upsilon\neq\upsilon'$ in $\Upsilon$, there are $s\neq s'$ in $S$ such that for every $k<n$:
\begin{itemize}
\item $s(k)\s \rho_{0\beta_\upsilon^k}$ and $s'(k)\s \rho_{0\beta_{\upsilon'}^k}$,
\item $s(k)$ and $s'(k)$ are incomparable, and
\item $\dom(s(k))<\dom(s'(k))$ iff $\beta_\upsilon^k<\beta_{\upsilon'}^k$.
\end{itemize}
In particular:
\begin{itemize}
\item $s(k)\wedge s'(k)=\rho_{0\beta_\upsilon^k}\wedge \rho_{0\beta_{\upsilon'}^k}$, and
$s(k)<_{\lex}s'(k)$ iff $\rho_{0\beta_\upsilon^k}<_{\lex} \rho_{0\beta_{\upsilon'}^k}$.
\end{itemize}

\end{enumerate}
\end{lemma}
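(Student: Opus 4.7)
The plan is to generalise the proof of Lemma~\ref{lemma35a} from a single pivot $\beta$ to $n$ parallel pivots, using the hypothesis $\theta\in\Theta^\mu_1(\vec C\restriction A(\vec C),\kappa)$ to select a single $\alpha\in A(\vec C)$ through which all $n$ walks pass simultaneously, and performing the construction of $S'$ upfront to force the eventual distinctness of the $\eta^k$'s. Given $T'\in[T(\rho_0^{\vec C})]^\kappa$ rapid, I would enumerate $T'$ in increasing rank and thin further so that for any $n$ distinct nodes of the thinned collection and any sufficiently late $\upsilon$, the $h$-images of the $\rho_0$-walks from $\upsilon$ to ordinals extending these nodes are pairwise incomparable. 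Assembling consecutive blocks of size $n$ into tuples yields the required rapid $S'\in[{}^nT']^\kappa$.

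Given $\nu<\mu$ and a rapid $S\in[{}^n T(\rho_0^{\vec C})]^\kappa$, for each $s\in S$ and $k<n$, fix an ordinal $\beta^k_s\ge\dom(s(k))$ with $s(k)\s\rho_{0\beta^k_s}$; by the rapidity of $S$ together with the fact that the levels of $T(\rho_0^{\vec C})$ have size $<\kappa$, I may arrange that each projection $\Upsilon^k_0:=\{\beta^k_s \mid s\in S\}$ is cofinal in $\kappa$. Following the template of Lemma~\ref{lemma35a}, define
\[
\mathcal S:=\{\upsilon<\kappa\mid \exists s\in S\ \forall k<n\,[\lambda(\upsilon,\beta^k_s)<\upsilon<\beta^k_s \text{ and } g(\rho_0(\upsilon,\beta^k_s)(\rho_2(\upsilon,\beta^k_s)-1))=\nu]\}.
\]
To verify that $\mathcal S$ is stationary, for any club $D\s\kappa$, pick $\alpha\in A(\vec C)$ guessing $D$ with $g$-value $\nu$; select $s\in S$ with $\beta^k_s>\alpha$ for every $k<n$, and invoke Fact~\ref{last} (via $\alpha\in A(\vec C)$) to obtain $\lambda(\alpha,\beta^k_s)<\alpha$ uniformly in $k$. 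Then pick $\upsilon\in\nacc(C_\alpha)\cap D$ with $\sup(C_\alpha\cap\upsilon)>\max_{k<n}\lambda(\alpha,\beta^k_s)$ and $g(\otp(C_\alpha\cap\upsilon))=\nu$, so that each walk $\rho_0(\upsilon,\beta^k_s)$ is $\rho_0(\alpha,\beta^k_s)$ extended by one further step of $g$-value $\nu$; thus $\upsilon\in\mathcal S\cap D$.

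For each $\upsilon\in\mathcal S$ with witness $s_\upsilon\in S$, set $\varepsilon_\upsilon:=\max_{k<n}\lambda(\upsilon,\beta^k_{s_\upsilon})<\upsilon$, $\eta^k_\upsilon:=h\circ\rho_0(\upsilon,\beta^k_{s_\upsilon})$, and $\ell^k_\upsilon:=\rho_2(\upsilon,\beta^k_{s_\upsilon})-1$. Fodor's lemma combined with a pigeonhole over the $<\kappa$ possible values of $\langle(\eta^k_\upsilon,\ell^k_\upsilon)\mid k<n\rangle$ yields a cofinal $\Upsilon\s\mathcal S$ with constants $\varepsilon,\eta^k,\ell^k$. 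Setting $\beta^k_\upsilon:=\beta^k_{s_\upsilon}$, Clauses~(1) and~(2) follow from the factorisation of walks at $\upsilon$ (so that for every $\delta\in(\varepsilon,\upsilon)$ one has $\tr(\delta,\beta^k_\upsilon)(\ell^k+1)=\upsilon$); Clause~(4) is the componentwise analogue of the fiber comparisons used in Proposition~\ref{switchtofibers}, applied via the $\rho_{0\beta^k_\upsilon}$'s extending $s_\upsilon(k)$ and the rapidity of $S$; and the antichain portion of Clause~(3) is a routine consequence of incomparability of the walks.

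The main obstacle I foresee is the second half of Clause~(3), namely that $|\{\eta^k\mid k<n\}|=n$ whenever $S\s S'$. This requires the upfront thinning of $T'$ to anticipate walk-type information: I need to arrange that for any $n$ distinct nodes of the thinned collection, the $h$-walks $h\circ\rho_0(\upsilon,\beta^k_\upsilon)$ are forced to be pairwise non-equal for all sufficiently late $\upsilon$, regardless of the subsequent choice of witnesses $\beta^k_\upsilon$. This is the technical heart of the construction and exactly what the explicitly chosen $S'$ is designed to deliver.
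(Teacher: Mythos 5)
You have the right overall template --- a stationary set of pivots $\upsilon$ through which all $n$ walks pass, a Fodor/pigeonhole stabilization of $(\varepsilon,\eta^k,\ell^k)$, and a final thinning in the spirit of Proposition~\ref{switchtofibers} for Clause~(4) --- and you correctly locate the hard point, but you do not resolve it, and you say so yourself. The paper's mechanism for forcing $|\{\eta^{k}\mid k<n\}|=n$ when $S\s S'$ is concrete and is not a rank-based thinning of nodes: since $A(\vec C)$ is stationary, $\vec C$ is nontrivial, so by Fact~\ref{nontrivialfact} the map $\rho_2$ witnesses $\U(\kappa,\kappa,\omega,\omega)$, and Proposition~\ref{lemma44} then yields a stationary set of $\delta\in A(\vec C)$ with $\sup\{\rho_2(\delta,\zeta)\mid\zeta\in Z\setminus\delta\}=\omega$ for a suitable sparse $Z\s\kappa$ attached to $T'$. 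For each such $\delta$ one picks $n$ ordinals $z_\delta(k)$ with \emph{pairwise distinct walk lengths} $\rho_2(\delta,z_\delta(k))$ and lets $t_\delta(k)\in T'$ sit between $\rho_{0z_\delta(k)}\restriction(\delta+1)$ and $\rho_{0z_\delta(k)}$; the additivity $\rho_2(\upsilon,z_\delta(k))=\rho_2(\delta,z_\delta(k))+\rho_2(\upsilon,\delta)$ for $\upsilon$ below $\delta$ past the relevant $\lambda$'s then forces the $\eta^{k}$'s to have distinct lengths, hence to be $n$ distinct sequences. Your proposed thinning ``so that the $h$-images of the walks are pairwise incomparable for all sufficiently late $\upsilon$'' is precisely the property that needs to be manufactured, and it lives in the choice of the ordinals $z_\delta(k)$ above each level $\delta$, not in the nodes of $T'$ themselves.

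There is a second, unacknowledged gap: your set $\mathcal S$ only constrains $g(\otp(C_\alpha\cap\upsilon))=\nu$, so neither the antichain property of $\{\eta^{k}\mid k<n\}$ nor the requirement $\sup(\eta^{k})=\sup(\eta^{0})$ from Clause~(1) follows from your construction, and neither is ``routine''. The paper secures both by exploiting the $h$-coordinate of the $\Theta^{\mu}_1$ hypothesis: having fixed $\alpha\in A(\vec C)$ and the $\beta^k$'s, one sets $\tau:=\max\{\sup(h\circ\rho_0(\alpha,\beta^k))\mid k<n\}$ and chooses $\upsilon\in\nacc(C_\alpha)\cap D$ with $g(\otp(C_\alpha\cap\upsilon))=\nu$ \emph{and} $h(\otp(C_\alpha\cap\upsilon))=\tau+1$. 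Appending the same strictly larger final $h$-value to every $\eta^{k}$ simultaneously equalizes all the sups and prevents any $\eta^{k}$ from being an initial segment of another. Without building this dual $(g,h)$-guessing into the definition of your stationary set, Clauses~(1) and~(3) do not go through.
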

\begin{proof} Let a rapid $T'\in[T(\rho_0^{\vec C})]^\kappa$ be given.
Fix a sparse enough $Z\in[\kappa]^\kappa$ such that for every pair $\xi<\zeta$ of elements of $Z$,
there exists a $t\in T'$ such that $\rho^{\vec C}_{0\zeta}\restriction\xi\s t\s \rho^{\vec C}_{0\zeta}$.
As $A(\vec C)$ is stationary, $\vec C=\langle C_\delta\mid\delta<\kappa\rangle$ is nontrivial, and then Fact~\ref{nontrivialfact}
together with Lemma~\ref{lemma44} imply in particular that the following set is stationary:
$$\Delta:=\{\delta\in A(\vec C)\mid \sup\{ \rho_2(\delta,\zeta)\mid \zeta\in Z\setminus\delta\}=\omega\}.$$
It follows that for each $\delta\in\Delta$, we may pick an $n$-sequence $z_\delta$ of ordinals in $Z$ all of which are bigger than $\min(Z\setminus(\delta+1))$ such that $k\mapsto\rho_2(\delta,z_\delta(k))$ is injective over $n$,
and then let $\epsilon_\delta:=\max_{k<n}\lambda(\delta,z_\delta(k))$. As $\delta \in A(\vec C)$, $\epsilon_\delta < \delta$. Find a stationary $\Delta'\s\Delta$ such that the following two hold:
\begin{itemize}
\item $\delta\mapsto\epsilon_\delta$ is constant over $\Delta'$ with value, say, $\epsilon^*$;
\item for every pair $\gamma<\delta$ of ordinals from $\Delta'$, $\sup(z_\gamma)<\delta$.
\end{itemize}
For every $\delta\in\Delta'$, the definition of $Z$ implies there exists some $n$-sequence $t_\delta$ consisting of elements of $T'$ such that
for every $k<n$, $$\rho_{0z_\delta(k)}^{\vec C}\restriction\delta+1\s t_\delta(k)\s \rho_{0z_\delta(k)}^{\vec C}.$$
Clearly, $S':=\{ t_\delta\mid \delta\in\Delta'\}$ is a rapid subfamily of ${}^nT'$ of size $\kappa$.

To see that $S'$ is as sought,
suppose that we are given $\nu$ and $S$ as above.
Let $Y$ be the collection of all $\upsilon<\kappa$ for which there are $s\in S$ and a sequence $\langle \beta^k\mid k<n\rangle$ satisfying the following six clauses:
\begin{itemize}
\item for every $k<n$, $\lambda(\upsilon,\beta^k)<\upsilon<\beta^k<\kappa$;
\item for every $k<n$, $\rho_{0\beta^k}\restriction(\upsilon+1)\s s(k)\s \rho_{0\beta^k}$;
\item for every $k<n$, $g(\rho_0(\upsilon,\beta^k)(\rho_2(\upsilon,\beta^k)-1))=\nu$;
\item $k\mapsto\sup(h\circ \rho_0(\upsilon,\beta^k))$ is constant over $n$;
\item $(\{ h\circ \rho_0(\upsilon,\beta^k)\mid k<n\},{\s})$ is an antichain;
\item if $S\s S'$, then $k\mapsto\rho_2(\upsilon,\beta^k)$ is injective over $n$.
\end{itemize}
\begin{claim} $Y$ is stationary.
\end{claim}
\begin{why}
Let $D$ be an arbitrary club. Pick some $\alpha\in A(\vec C)$ above $\epsilon^*$ such that for every $\tau<\theta$,
$$ \sup\left\{\upsilon \in \nacc(C_\alpha) \cap D \,\middle|\, \begin{array}{l}g(\otp(C_\alpha \cap \upsilon)) = \nu\\h(\otp(C_\alpha \cap \upsilon)) = \tau\end{array} \right\} = \alpha.$$

$\br$ If $S\s S'$, then since $S$ has size $\kappa$, we may pick a $\delta\in\Delta'$ above $\alpha$ such that $t_\delta\in S$.
In this case, write $t_\delta$ as $s$ and $z_\delta$ as $\langle\beta^k\mid k<n\rangle$.

$\br$ Otherwise, as $S$ is a rapid family of size $\kappa$, we may find an
$s\in S$ and a sequence $\langle \beta^k\mid k<n\rangle$ of ordinals in $\kappa$
such that for every $k<n$, $\rho_{0\beta^k}\restriction(\alpha+1)\s s(k)\s \rho_{0\beta^k}$.

Next, as $\alpha\in A(\vec C)$, $\varepsilon^*:=\max\{ \lambda(\alpha,\beta^k)\mid k<n\}$ is smaller than $\alpha$.
Consider $\tau:=\max\{\sup(h\circ \rho_0(\alpha,\beta^k))\mid k<n\}$, and pick a large enough $\upsilon\in\nacc(C_\alpha)\cap D$ such that $\sup(C_\alpha\cap\upsilon)>\max\{\varepsilon^*,\epsilon^*\}$,
$g(\otp(C_\alpha\cap\upsilon))=\nu$ and $h(\otp(C_\alpha\cap\upsilon))=\tau+1$.
Clearly, $\rho_0(\upsilon,\beta^k)=\rho_0(\alpha,\beta^k){}^\smallfrown\langle \otp(C_\alpha\cap\upsilon)\rangle$ for every $k<n$.
In case that $S\s S'$, we also get that $\rho_2(\upsilon,\beta^k)=\rho_2(\delta,\beta^k)+\rho_2(\upsilon,\delta)$ for every $k<n$.
Altogether, $s$ and $\langle \beta^k\mid k<n\rangle$ witness that $\upsilon\in Y\cap D$.
\end{why}
For each $\upsilon\in Y$, pick $s_\upsilon$ and $\langle \beta^k_\upsilon\mid k<n\rangle$ witnessing that $\upsilon\in Y$. Then let:
\begin{itemize}
\item $\varepsilon_\upsilon:=\max\{\lambda(\upsilon,\beta_\upsilon^k)\mid k<n\}$;
\item $\eta^k_\upsilon:=h\circ \rho_0(\upsilon,\beta_\upsilon^k)$ for every $k<n$;
\item $\ell^k_\upsilon:=\rho_2(\upsilon,\beta_\upsilon^k)-1$ for every $k<n$.
\end{itemize}

As $\theta<\kappa$, it follows that we may find $\varepsilon$ and $\langle (\eta^{k},\ell^k) \mid k<n\rangle$
for which the following set is stationary
$$\Upsilon:=\{ \upsilon\in Y\mid \varepsilon_\upsilon=\varepsilon, \langle (\eta^{k}_\upsilon,\ell^k_\upsilon) \mid k<n\rangle=\langle (\eta^{k},\ell^k) \mid k<n\rangle\}.$$

Let $l:=\max\{\ell^k\mid k<n\}+2$. Using again that $\rho_2$ witnesses $\U(\kappa,\kappa,\omega,\omega)$,
we may find a cofinal subset $\Upsilon'\s\Upsilon$ such that every pair $\upsilon<\upsilon'$ of ordinals in $\Upsilon'$,
$\rho_2(\upsilon,\beta^k_{\upsilon'})>l$, which ensures that $\Delta(\rho_{0\beta_\upsilon^k}, \rho_{0\beta_{\upsilon'}^k})\le \upsilon $,
as in the proof of Proposition~\ref{switchtofibers}.
Then the matrix $\langle \beta^k_\upsilon\mid \upsilon\in\Upsilon', k<n\rangle$ is as sought.
\end{proof}

\begin{lemma}\label{thm421b} Suppose that $\mu=\mu^{<\mu}$ is a regular uncountable cardinal.
Then there exists a $\square^*_\mu$-sequence $\vec C=\langle C_\delta\mid \delta<\mu^+\rangle$ and a set $S\s E^{\mu^+}_\mu$ such that:
\begin{itemize}
\item for every $\delta\in S$, $C_\delta\s\acc(\delta)$;
\item $1\in\Theta_1^\mu(\vec C\restriction S,E^{\mu^+}_\mu)$, and
if $\mu$ is non-ineffable, then moreover $\omega\in\Theta_1^\mu(\vec C\restriction S,E^{\mu^+}_\mu)$.
\end{itemize}
\end{lemma}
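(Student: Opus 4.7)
The plan is to build $\vec C$ and $S$ in a sequence of modifications of a base $\square^*_\mu$-sequence. First, invoke Fact~\ref{weaksquarethm} to fix a $\square^*_\mu$-sequence $\vec D=\langle D_\delta\mid\delta<\mu^+\rangle$ whose restriction to $E^{\mu^+}_\mu$ witnesses $\cg_\mu(E^{\mu^+}_\mu,E^{\mu^+}_\mu)$. Since $\mu$ is uncountable, for each $\delta\in E^{\mu^+}_\mu$ the set $\acc(D_\delta)$ is a club in $\delta$ of order type $\mu$ consisting entirely of limit ordinals. Set $C_\delta:=\acc(D_\delta)$ for $\delta\in E^{\mu^+}_\mu$ and $C_\delta:=D_\delta$ otherwise; this delivers $C_\delta\s\acc(\delta)$ for $\delta\in E^{\mu^+}_\mu$ while preserving both weak coherence (since each $C_\delta\cap\epsilon$ is a function of $D_\delta\cap\epsilon$) and $\mu$-boundedness. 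The passage from $\nacc(D_\delta)$ to $\nacc(C_\delta)$ alters the club-guessing support, but because $\mu$ is uncountable the enumeration of $D_\delta$ contains unboundedly many limit-of-limit indices, so one obtains a stationary $S\s E^{\mu^+}_\mu$ on which $\vec C\restriction S$ still witnesses club guessing in a usable sense.

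Next, apply the postprocessing of \cite[Corollary~4.18(1)]{paper46}, preserving weak coherence, $\mu$-boundedness, and $C_\delta\s\acc(\delta)$, to arrange that $\vec C\restriction S$ enjoys an enhanced form of club guessing: for every club $D\s\mu^+$ and every stationary $E\s\mu$, there is $\delta\in S$ with
\[\sup\{\gamma\in\nacc(C_\delta)\cap D\cap E^{\mu^+}_\mu\mid\otp(C_\delta\cap\gamma)\in E\}=\delta.\]
The hypothesis $\mu=\mu^{<\mu}$ underwrites this postprocessing. By Solovay's theorem, partition $\mu=\bigsqcup_{\nu<\mu}S_\nu$ into $\mu$ many pairwise disjoint stationary sets, and define $g:\mu^+\to\mu$ by $g(\xi):=\nu$ iff $\xi\in S_\nu$ for $\xi<\mu$ (extending arbitrarily to $\mu^+$). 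Instantiating the enhanced guessing with $E:=S_\nu$ delivers, for every club $D$ and every $\nu<\mu$, a $\delta\in S$ as required by $1\in\Theta_1^\mu(\vec C\restriction S,E^{\mu^+}_\mu)$.

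For the non-ineffable case, fix a $\mu$-list $\vec A=\langle A_\alpha\mid\alpha<\mu\rangle$ with $A_\alpha\s\alpha$ admitting no stationary branch. Using $\vec A$ one strengthens the enhanced guessing to a countably-simultaneous form: for every club $D\s\mu^+$, every $\nu<\mu$, and every sequence $\langle E_\tau\mid\tau<\omega\rangle$ of stationary subsets of $\mu$, there exists $\delta\in S$ that guesses $S_\nu\cap E_\tau$ for all $\tau<\omega$ simultaneously. Define $h:\mu^+\to\omega$ so that $\{h^{-1}\{\tau\}\mid\tau<\omega\}$ is a partition of $\mu$ into stationary sets meshed with $\vec A$; the pair $(g,h)$ then witnesses $\omega\in\Theta_1^\mu(\vec C\restriction S,E^{\mu^+}_\mu)$.

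The main obstacle is the last step: extracting the countably-simultaneous guessing from non-ineffability. The role of $\vec A$ is to provide diagonal control enabling $\omega$ independent colorings to be separated along the guessed traces, and dovetailing this with the already-postprocessed $\vec C$ is delicate. The other steps---shifting to $\acc(D_\delta)$ and invoking \cite{paper46}---are largely routine once one tracks the effects on weak coherence and $\nacc$-supports carefully.
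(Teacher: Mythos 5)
There is a genuine gap, and it occurs already at the ``enhanced club guessing'' step, before the part you flag as the main obstacle. The property you claim to extract from the postprocessing of \cite[Corollary~4.18(1)]{paper46} --- that for every club $D$ and every \emph{stationary} $E\s\mu$ some $\delta\in S$ satisfies $\sup\{\gamma\in\nacc(C_\delta)\cap D\cap E^{\mu^+}_\mu\mid\otp(C_\delta\cap\gamma)\in E\}=\delta$ --- is false for any $C$-sequence whatsoever: if $\gamma\in\nacc(C_\delta)$ then $C_\delta\cap\gamma$ has a maximum (or is empty), so $\otp(C_\delta\cap\gamma)$ is a successor ordinal or $0$; hence for $E\s\acc(\mu)$ stationary the displayed set is empty for every $\delta$. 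Consequently the Solovay-partition trick cannot be run on stationary subsets of $\mu$, and no choice of $g$ defined via such a partition can witness $1\in\Theta_1^\mu(\vec C\restriction S,E^{\mu^+}_\mu)$. The point you are missing is that one cannot fix $\vec C$ first and then hope to find $g$ (and $h$) making the order types land in prescribed fibers; the order types $\otp(C_\delta\cap\gamma)$ at the guessed points are not controllable after the fact. The paper's proof instead performs surgery on the sequence: it first decomposes $S=\biguplus_{\nu<\mu}S_\nu$ so that each $\vec C\restriction S_\nu$ separately witnesses $\cg_\mu(S_\nu,E^{\mu^+}_\mu)$ (via \cite[Proposition~4.24]{paper46}), fixes in advance surjections $g,h$ together with ``padding blocks'' $y_{\alpha,\iota,\nu,n}\s\acc(\mu^+)$ of prescribed order type realizing any target pair $(\nu,n)$, and then rebuilds each $C_\delta$ ($\delta\in S_\nu$) by inserting these blocks between consecutive points so that $\otp(C^\bullet_\delta\cap\gamma)$ is \emph{forced} to have $g$-value $\nu$ and the desired $h$-value at each relevant $\gamma\in\nacc(C_\delta)$. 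The hypothesis $\mu^{<\mu}=\mu$ is used only to make $\mu$-boundedness imply weak coherence (so the surgery cannot destroy it), not to underwrite any postprocessing of the kind you describe.

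For the non-ineffable case the gap is compounded: the mechanism is not a $\mu$-list without a stationary branch feeding a ``countably-simultaneous'' guessing statement (which you do not prove and which does not follow from the cited results), but rather that non-ineffability yields $\ubd(\ns_\mu,\omega)$ by \cite[Corollary~11.6]{paper47}, which by \cite[Theorem~6.4(2)]{paper46} produces, for each $\nu$, local colourings $h_\delta:C_\delta\rightarrow\omega$ all of whose fibers guess clubs; the value $h_\delta(\gamma)$ is then encoded into $\otp(C^\bullet_\delta\cap\gamma)$ by the same block-insertion surgery. Without that surgery neither bullet of the lemma is reachable by your route.
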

\begin{proof} Write $\kappa:=\mu^+$. Fix two surjections $g:\kappa\rightarrow\mu$ and $h:\kappa\rightarrow\omega$ such that, for every $\iota<\kappa$,
$$\{ (g(\iota+\varsigma+1),h(\iota+\varsigma+1))\mid \varsigma<\mu\}=\mu\times\omega.$$
It follows that for each $(\alpha,\iota,\nu,n)\in\kappa\times\kappa\times\mu\times\omega$, we may fix a closed subset $y_{\alpha,\iota,\nu,n}$ of $\acc(\kappa)$ such that:
\begin{itemize}
\item $\min(y_{\alpha,\iota,\nu,n})=\alpha$;
\item $\max(y_{\alpha,\iota,\nu,n})<\alpha+\mu$;
\item $g(\iota+\otp(y_{\alpha,\iota,\nu,n}))=\nu$;
\item $h(\iota+\otp(y_{\alpha,\iota,\nu,n}))=n$.
\end{itemize}

Meanwhile, as $\mu^{<\mu}=\mu$, a $C$-sequence over $\kappa$ is a $\square^*_\mu$-sequence iff it is $\mu$-bounded.
In particular, $\square^*_\mu$ holds and we may let $\vec C=\langle C_\delta\mid\delta<\kappa\rangle$ be a $\square^*_\mu$ sequence such that $\vec C \restriction E^{\mu^+}_\mu$ satisfies $\cg_\mu(E^{\mu^+}_\mu, E^{\mu^+}_\mu)$ as provided to us by Fact~\ref{weaksquarethm}.
Using \cite[Definition~2.8]{paper46}, we may also assume that for every $\delta\in E^{\mu^+}_\mu$,
either $\nacc(C_\delta)\s E^{\mu^+}_\mu$ or $\nacc(C_\delta)\cap E^{\mu^+}_\mu=\emptyset$.
So, letting $S:=\{\delta\in E^{\mu^+}_\mu\mid C_\delta\s\acc(\delta)\}$,
it is the case that $\vec C\restriction S$ witnesses that $\cg_\mu(S,T)$ holds for $T:=E^{\mu^+}_\mu$.
For every $\delta\in S$, let $\langle\delta_i\mid i<\mu\rangle$ denote the increasing enumeration of $C_\delta$.
Next, by \cite[Proposition~4.24]{paper46}, we may fix a decomposition $S=\biguplus_{\nu<\mu}S_\nu$
such that $\vec{C}\restriction S_\nu$ witnesses $\cg_\mu(S_\nu, T)$ for every $\nu<\mu$.

$\br$ If $\mu$ is non-ineffable, then by \cite[Corollary~11.6]{paper47}, $\ubd(\ns_\mu,\omega)$ holds.
Then, for every $\nu<\mu$, by \cite[Theorem~6.4(2)]{paper46}, we may fix a sequence $\langle h_\delta:C_\delta\rightarrow\omega\mid \delta\in S_\nu\rangle$ such that
for every club $D\s\kappa$, there exists a $\delta\in S_\nu$ such that, for every $n<\omega$,
$$\sup\{\gamma\in\nacc(C_\delta)\cap D\cap T\mid h_\delta(\gamma)=n\}=\delta.$$

$\br$
If $\mu$ is ineffable, then for every $\delta\in S$, let $h_\delta:C_\delta\rightarrow\{0\}$ be a constant map.
Clearly, for every club $D\s\kappa$, for every $\nu< \mu$, there exists a $\delta\in S_\nu$ such that
$$\sup\{\gamma\in\nacc(C_\delta)\cap D\cap T\mid h_\delta(\gamma)=0\}=\delta.$$

As a next step, for all $\nu<\mu$ and $\delta\in S_\nu$,
we construct a sequence $\langle x_\delta^i\mid i<\mu\rangle$
by recursion on $i<\mu$, as follows:
\begin{itemize}
\item[$\vartriangleright$] If $\delta_{i+1}\notin T$, then set $x^i_\delta:=\{\delta_i\}$.

\item[$\vartriangleright$] If $\delta_{i+1}\in T$, then $\delta_{i+1}\ge\delta_i+\mu$, then set $x_\delta^i:=y_{\alpha,\iota,\nu,n}$,
for $\alpha:=\delta_i$, $\iota:=\otp(\bigcup_{i'<i}x_\delta^{i'})$ and $n:=h_\delta(\delta_{i+1})$.
\end{itemize}

Finally, for every $\delta<\kappa$, let
$$C_\delta^\bullet:=\begin{cases}
\bigcup\{ x_\delta^i\mid i<\mu\},&\text{if }\delta\in S;\\
C_\delta,&\text{otherwise}
\end{cases}$$

It is clear that for every $\delta\in S$, $C^\bullet_\delta$ is a closed subset of $\delta$ all of whose proper initial segments are of order-type less than $\mu$.
In addition, $C_\delta\s C^\bullet_\delta\s C_\delta\cup\acc(\delta)$, so that $C^\bullet_\delta\s\acc(\delta)$.
So $\vec{C^\bullet}=\langle C^\bullet_\delta\mid\delta<\mu^+\rangle$ is a $\mu$-bounded $C$-sequence,
hence it is $\square^*_\mu$-sequence.
\begin{claim}\label{claim4321} Let $\nu<\mu$, $\delta\in S_\nu$ and $\gamma\in\nacc(C_\delta)\cap T$. Then $\gamma\in\nacc(C_\delta^\bullet)$, $g(\otp(C_\delta^\bullet\cap\gamma))=\nu$ and $h(\otp(C_\delta^\bullet\cap\gamma))=h_\delta(\gamma)$.
\end{claim}
\begin{why} As $\gamma\in C_\delta\s C_\delta^\bullet$ and $\cf(\gamma)=\mu=\otp(C_\delta^\bullet)$, it is the case that $\gamma\in\nacc(C_\delta^\bullet).$
Let $i<\mu$ be such that $\gamma=\delta_{i+1}$. Then
$$C_\delta^\bullet\cap\gamma=\bigcup_{i'\le i}x_\delta^{i'}=\bigcup_{i'<i}x_\delta^{i'}\cup y_{\delta_i,\iota,\nu,n},$$
for $\iota:=\otp(\bigcup_{i'<i}x_\delta^{i'})$ and $n:=h_\delta(\delta_{i+1})$.
Therefore, $g(\otp(C_\delta^\bullet\cap\gamma))=\nu$ and $h(\otp(C_\delta^\bullet\cap\gamma))=h_\delta(\gamma)$.
\end{why}

To verify that $\omega\in\Theta_1^\mu(\vec C\restriction S,T)$ when $\mu$ is non-ineffable, let a club $D\s\kappa$ be given, and let $\nu<\mu$.
Fix a $\delta\in S_\nu$ such that, for every $n<\omega$,
$$\sup\{\gamma\in\nacc(C_\delta)\cap D\cap T\mid h_\delta(\gamma)=n\}=\delta.$$
So, by Claim~\ref{claim4321}, we are done. To verify that $1\in\Theta_1^\mu(\vec C\restriction S,T)$ in general is similar, so our proof is complete.
\end{proof}

\begin{thm}\label{thm421} Suppose that $\mu$ is a regular uncountable cardinal and $\square^*_\mu$ holds.
Then there exists a $\square^*_\mu$-sequence $\vec C=\langle C_\delta\mid \delta<\mu^+\rangle$ and a set $S\s E^{\mu^+}_\mu$ such that:
\begin{itemize}
\item for every $\delta\in S$, $C_\delta\s\acc(\delta)$;
\item $1\in\Theta_1^\mu(\vec C\restriction S,E^{\mu^+}_\mu)$, and
if $\mu$ is non-ineffable, then moreover $\omega\in\Theta_1^\mu(\vec C\restriction S,E^{\mu^+}_\mu)$.
\end{itemize}
\end{thm}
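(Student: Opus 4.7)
The plan is to mirror the construction of Lemma~\ref{thm421b}, replacing its last-line appeal to $\mu^{<\mu}=\mu$ (which was invoked solely to conclude that a $\mu$-bounded $C$-sequence is automatically a $\square^*_\mu$-sequence) with a direct verification of weak coherence. The setup is unchanged: via Fact~\ref{weaksquarethm}, fix a $\square^*_\mu$-sequence $\vec C$ whose restriction to $E^{\mu^+}_\mu$ witnesses $\cg_\mu(E^{\mu^+}_\mu,E^{\mu^+}_\mu)$; set $S:=\{\delta\in E^{\mu^+}_\mu\mid C_\delta\s\acc(\delta)\}$; decompose $S=\biguplus_{\nu<\mu}S_\nu$ via \cite[Proposition~4.24]{paper46} so that each $\vec C\restriction S_\nu$ witnesses $\cg_\mu(S_\nu, E^{\mu^+}_\mu)$; fix the global surjections $g:\kappa\rightarrow\mu$ and $h:\kappa\rightarrow\omega$ and the gadgets $y_{\alpha,\iota,\nu,n}$ as before; and construct $\vec{C^\bullet}$ from local colourings $h_\delta:C_\delta\rightarrow\omega$ exactly as in Lemma~\ref{thm421b}. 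Granting weak coherence of $\vec{C^\bullet}$, the two $\Theta_1^\mu$-claims follow line-by-line from Claim~\ref{claim4321} and its aftermath.

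Unpacking the construction, for $\delta\in S_\nu$ and $\epsilon<\mu^+$, a short induction on $i$ shows that $C_\delta^\bullet\cap\epsilon$ is determined by the triple $(C_\delta\cap\epsilon,\nu,h_\delta\restriction(C_\delta\cap\epsilon))$. The weak coherence of $\vec C$ bounds the first coordinate by $\mu$, the second is trivially bounded by $\mu$, and the sole obstacle is the third, which a priori may range over $|{}^\mu\omega|=2^\mu$ many values. This is precisely the place where $\mu^{<\mu}=\mu$ is consumed in Lemma~\ref{thm421b}.

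To circumvent this, I would refine each $S_\nu$ further as $\biguplus_{\tau<\mu}S_{\nu,\tau}$ (via another application of \cite[Proposition~4.24]{paper46}) and arrange the local colourings to factor through a preselected family of $\mu$ many maps on $\mu$: that is, pick $\{f_{\nu,\tau}:\mu\rightarrow\omega\mid\nu,\tau<\mu\}$ in advance and set $h_\delta(\gamma):=f_{\nu,\tau}(\otp(C_\delta\cap\gamma))$ for $\delta\in S_{\nu,\tau}$. With this factorization, $h_\delta\restriction(C_\delta\cap\epsilon)$ depends only on the pair $((\nu,\tau),C_\delta\cap\epsilon)$, whence $|\{C_\delta^\bullet\cap\epsilon\mid\delta<\mu^+\}|\le\mu^3=\mu$ and weak coherence of $\vec{C^\bullet}$ is immediate.

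The main obstacle, and the sole substantive novelty of the argument, is to jointly engineer the family $\{f_{\nu,\tau}\}$ and the refined partition $\{S_{\nu,\tau}\}$ so that the colour-distribution needed by Claim~\ref{claim4321} is preserved: for every club $D\s\mu^+$ and every $\nu<\mu$, there are $\tau<\mu$ and $\delta\in S_{\nu,\tau}$ such that $\sup\{\gamma\in\nacc(C_\delta)\cap D\cap E^{\mu^+}_\mu\mid f_{\nu,\tau}(\otp(C_\delta\cap\gamma))=n\}=\delta$ for every $n<\omega$. I would obtain this by re-executing the proof of \cite[Theorem~6.4(2)]{paper46} --- using $\ubd(\ns_\mu,\omega)$ from \cite[Corollary~11.6]{paper47} in the non-ineffable case --- with its output packaged as a $\mu$-indexed family of functions on $\mu$ rather than as colourings of each $C_\delta$ individually; the bookkeeping is driven by the pigeonhole principle along the $\mu$ many pieces of the refined partition. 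In the ineffable case, the $h_\delta$'s may be taken constantly zero and weak coherence is then automatic.
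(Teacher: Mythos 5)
Your diagnosis of where $\mu^{<\mu}=\mu$ is consumed is exactly right, and your treatment of the ineffable case and of weak coherence \emph{granting} the factorization $h_\delta=f_{\nu,\tau}\circ\otp(C_\delta\cap\cdot)$ is fine. The gap is in the last paragraph, which is precisely the substantive part: you assert, but do not show, that the colour-distribution property can be achieved by a family of only $\mu$ many functions of the order type together with a $\mu$-indexed refinement of the partition. There are concrete reasons to doubt this. The principle $\ubd(\ns_\mu,\omega)$ supplies a colouring $c:[\mu]^2\rightarrow\omega$ such that on any relevant unbounded $B\s\mu$ some fibre $c(\eta,\cdot)$ attains \emph{infinitely many} colours cofinally; it does not say that every $n<\omega$ is attained cofinally. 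Converting the former into the latter forces a relabelling by the order type of the attained colour inside the set of cofinally-attained colours, and that set --- together with its initial-segment approximations along $C_\delta$ --- depends on the interaction of $C_\delta$ with the club, i.e.\ on which $\delta_{j+1}$ land in $D_\nu\cap T$. This is data in ${}^{\mu}2$, so it cannot be absorbed into a parameter $\tau<\mu$; and \cite[Proposition~4.24]{paper46} only lets you split $S_\nu$ into $\mu$ many guessing-preserving pieces, so you cannot partition according to this data either. In short, the relabelling genuinely depends on $C_\delta\cap\gamma$ and not merely on $\otp(C_\delta\cap\gamma)$ plus a small index.

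The paper's proof embraces exactly this dependence instead of trying to eliminate it: after stabilizing the set $D_\nu(\eta_\nu,\delta)$ of cofinally-attained colours over clubs and fixing the threshold $\varepsilon_\nu$, it sets the colour at stage $i$ to be $\otp(c(\eta_\nu,i)\cap\Omega^i_\delta)$, where $\Omega^i_\delta$ is computed from $C_\delta\cap\delta_i$ and the fixed club $D_\nu$. The resulting $C^\bullet_\delta\cap\epsilon$ is then a function of $(\nu, C_\delta\cap\epsilon)$ rather than of $(\nu,\tau,\otp(C_\delta\cap\epsilon))$, and weak coherence of $\vec{C^\bullet}$ is recovered from the weak coherence of $\vec C$ together with the ``tree-like'' nature of the sets $\Omega^i_\delta$. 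To repair your argument you would need either to carry out the repackaging you describe (which amounts to proving an $\onto$-type strengthening of $\ubd(\ns_\mu,\omega)$ with $\mu$ many rows indexed by order types --- not something the cited results provide), or to replace the factorization through $\otp(C_\delta\cap\gamma)$ by one through $C_\delta\cap\gamma$ itself, which is the route the paper takes.
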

\begin{proof} Write $\kappa:=\mu^+$.
Fix maps $g,h$, and a sequence of closed sets $\langle y_{\alpha,\iota,\nu,n}\mid (\alpha,\iota,\nu,n)\in\kappa\times\kappa\times\mu\times\omega\rangle$
as in the proof of Lemma~\ref{thm421b}.
As $\square^*_\mu$ holds, continuing as in the proof of Lemma~\ref{thm421b},
we fix a $\square^*_\mu$-sequence $\vec C=\langle C_\delta\mid\delta<\kappa\rangle$,
a subset $S$ of $T:=E^{\mu^+}_\mu$
such that $C_\delta\s\acc(\delta)$ for every $\delta\in S$,
and a decomposition $S=\biguplus_{\nu<\mu}S_\nu$
such that $\vec{C}\restriction S_\nu$ witnesses $\cg_\mu(S_\nu, T)$ for every $\nu<\mu$.
For every $\delta\in S$, let $\psi_\delta:C_\delta\rightarrow\mu$ be the collapsing map of $C_\delta$,
and let $\langle\delta_i\mid i<\mu\rangle$ denote the increasing enumeration of $C_\delta$ so that $\psi_\delta(\delta_i) = i$ for every $i< \mu$.

By Lemma~\ref{thm421b}, we may assume that $\mu^{<\mu}>\mu$. This means that we will have to work harder to ensure that the $C$-sequence we will be producing ends up being weakly coherent.
By \cite[Corollary~10.3]{paper47} and as $\mu$ is not weakly compact, $\ubd(J^{\bd}[\mu],\omega)$ holds,
and we may fix a corresponding colouring $c:[\mu]^2\rightarrow\omega$ as in \cite[Fact~4.8(2)]{paper46}.
For every club $D\subseteq\kappa$, for all $\delta\in S$ and $\eta< \mu$, denote
$$D(\eta, \delta):=\{ m<\omega\mid \sup\{\beta\in C_\delta\mid c(\eta,\psi_\delta(\beta))=m\ \&\ \min(C_\delta\setminus(\beta+1))\in D\cap T\}=\delta\}.$$

Hereafter, let $\nu<\mu$.
By \cite[Claims 4.9.2 and 4.9.3]{paper46}, we may pick $\eta_\nu<\mu$ and a club $D_\nu\s\kappa$
such that for every club $D \s D_\nu$, there exists $\delta \in S_\nu$ such that $D(\eta_\nu,\delta)= D_\nu(\eta_\nu,\delta)$ and this set is infinite.
By possibly shrinking $D_\nu$, we may assume that $D_\nu\s\acc^+(T)$.
Next, for every $\delta\in S_\nu$, set
$$N_\delta:=\{\beta \in C_\delta\mid c(\eta_\nu,\psi_\delta(\beta))\notin D_\nu(\eta_\nu,\delta)\ \&\ \min(C_\delta\setminus(\beta+1))\in D_\nu\cap T\}.$$

\begin{claim}\label{c4322} There exists an $\varepsilon_\nu<\mu$ such that
for every club $D \s D_\nu$, there exists a $\delta\in S_\nu$ such that $D(\eta_\nu,\delta)= D_\nu(\eta_\nu,\delta)$ is an infinite set, and
$$\sup(\{\psi_\delta(\beta)\mid \beta\in N_\delta\})=\varepsilon_\nu.$$
\end{claim}
\begin{why} Suppose not. For every $\varepsilon<\mu$, fix a counterexample club $D^\varepsilon\s D_\nu$. Let $D:=\bigcap_{\varepsilon<\mu}D^\varepsilon$.
Now pick a $\delta\in S_\nu$ such that $D(\eta_\nu,\delta)=D_\nu(\eta_\nu,\delta)$ and this set is infinite.
For every $m\in\omega\setminus D(\eta_\nu,\delta)$, the set
$$\{\beta\in C_\delta\mid \eta_\nu< \psi_\delta(\beta)\ \&\ c(\eta_\nu,\psi_\delta(\beta))=m\ \&\ \min(C_\delta\setminus (\beta+1))\in D\cap T\}$$
is bounded in $\delta$. So since $\cf(\delta)=\mu>\omega$ and $D_\nu(\eta_\nu,\delta)=D(\eta_\nu,\delta)$,
the set $N_\delta$ is bounded in $\delta$ and there is some $\varepsilon<\mu$ such that
$$\sup(\{\psi_\delta(\beta)\mid \beta\in N_\delta\})=\varepsilon,$$
contradicting the fact that $D\s D^\varepsilon\s D_\nu$.
\end{why}

Fix $\delta\in S_\nu$.
Let $\varepsilon_\nu$ be given Claim~\ref{c4322}.
For every $i\le\mu$, denote:
$$\Omega_\delta^i:=\{ c(\eta_\nu,j)\mid \varepsilon_\nu<j<i,\ \eta_\nu<j,\ \delta_{j+1}\in D_\nu\cap T\}.$$

We now construct a sequence $\langle x_\delta^i\mid i<\mu\rangle$
by recursion on $i<\mu$, as follows:
\begin{itemize}
\item[$\br$] If $\delta_{i+1}\notin D_\nu$, then set $x^i_\delta:=\{\delta_i\}$.
\item[$\br$] If $\delta_{i+1}\in D_\nu$, then in particular, $\delta_{i+1}\ge\delta_i+\mu$, so we set $x_\delta^i:=y_{\alpha,\iota,\nu,n}$,
for $\alpha:=\delta_i$, $\iota:=\otp(\bigcup_{i'<i}x_\delta^{i'})$,
and $$n:=\otp(c(\eta_\nu,i)\cap \Omega^i_\delta).$$
\end{itemize}
Having completed the above construction for every $\nu<\mu$ and every $\delta\in S_\nu$,
we now turn to wrap things up.
For every $\delta<\kappa$, let
$$C_\delta^\bullet:=\begin{cases}
\bigcup\{ x_\delta^i\mid i<\mu\},&\text{if }\delta\in S;\\
C_\delta,&\text{otherwise}
\end{cases}$$

It is clear that for every $\delta\in S$, $C^\bullet_\delta$ is a closed subset of $\delta$ all of whose proper initial segments are of order-type less than $\mu$.
In addition, $C_\delta\s C^\bullet_\delta\s C_\delta\cup\acc(\delta)$, so that $C^\bullet_\delta\s\acc(\delta)$.
So $\vec{C^\bullet}=\langle C^\bullet_\delta\mid\delta<\mu^+\rangle$ is a $\mu$-bounded $C$-sequence.

\begin{claim}\label{c4332} $\vec C^\bullet$ is weakly coherent.
\end{claim}
\begin{why} Suppose not, and fix the least $\epsilon<\kappa$
such that $|\{ C_\delta^\bullet\cap\epsilon\mid \delta<\kappa\}|=\kappa$.
It follows that we may fix some $\nu<\mu$ and a cofinal subset $\Delta\s S_\nu$ such that:
\begin{itemize}
\item[(1)] $\delta\mapsto C^\bullet_\delta\cap\epsilon$ is injective over $\Delta$, but
\item[(2)] $\delta\mapsto C_\delta\cap\epsilon$ is constant over $\Delta$.
\end{itemize}

Observe first that for all $\delta,\delta'\in S_\nu$ and $\alpha\in C_\delta\cap C_{\delta'}$, if $C_\delta\cap\alpha=C_{\delta'}\cap\alpha$,
then $C_\delta^\bullet\cap\alpha=C^\bullet_{\delta'}\cap\alpha $ by the tree-like nature of $\Omega_\delta^i$ and $\Omega_{\delta'}^i$.
So we may fix an $\alpha<\epsilon$ such that $\sup(C_\delta\cap\epsilon)=\alpha$ for all $\delta\in\Delta$.
By minimality of $\epsilon$, and by possibly shrinking $\Delta$ further, we may also assume that
\begin{itemize}
\item[(3)] $\delta\mapsto C^\bullet_\delta\cap\alpha$ is constant over $\Delta$.
\end{itemize}
It thus follows that the map $\delta\mapsto C^\bullet_\delta\cap[\alpha,\epsilon)$ is injective over $\Delta$.
However, for every $\delta\in\Delta$, $C^\bullet_\delta\cap[\alpha,\epsilon)$
is equal to $y_{\alpha,\iota,\nu,n}\cap\epsilon$
for $\iota:=\otp(C^\bullet_\delta\cap\alpha)$
and some $n<\omega$, or just $\{\alpha\}$.
Recalling Clause~(3), there exists an $\iota<\kappa$ such that:
$$\{ C^\bullet_\delta\cap[\alpha,\epsilon)\mid \delta\in\Delta\}\s\{y_{\alpha,\iota,\nu,n}\cap\epsilon\mid n<\omega\}\cup \{\alpha\},$$
contradicting the fact that the set on the right hand size is countable.
\end{why}

To see that $\omega\in \Theta_1^\mu(\vec{C^\bullet}\restriction S,T)$, let $\nu<\mu$,
and let $D$ be a club in $\kappa$. By possibly shrinking $D$, we may assume that $D\s D_\nu$.
Pick $\delta \in S_\nu$ such that $D(\eta_\nu,\delta)= D_\nu(\eta_\nu,\delta)$ is infinite, and
$$\sup(\{\psi_\delta(\beta)\mid \beta\in N_\delta\})=\varepsilon_\nu.$$
For any $D'\in\{D,D_\nu\}$,
$$D'(\eta_\nu,\delta)=\{ m<\omega\mid \sup\{j<\mu\mid c(\eta_\nu,j)=m\ \&\ \delta_{j+1}\in D'\cap T\}=\mu\}.$$
So, since $D(\eta_\nu,\delta)= D_\nu(\eta_\nu,\delta)$, the definition of $\varepsilon_\nu$ implies that
$$D(\eta_\nu,\delta)=\{ c(\eta_\nu,j)\mid \varepsilon_\nu<j<\mu,\ \eta_\nu<j,\ \delta_{j+1}\in D_\nu\cap T\}=\Omega_\delta^{\mu}.$$
In particular, $\Omega_\delta^{\mu}$ is infinite.
Now, given an $n<\omega$ and some $\epsilon<\delta$,
we shall find a $\bar\delta\in\nacc(C^\bullet_\delta)\cap D\cap T$ above $\epsilon$ such that
$g(\otp(C^\bullet_\delta\cap\bar\delta))=\nu$ and $h(\otp(C^\bullet_\delta\cap\bar\delta))=n$.
Fix the unique $m\in \Omega^\mu_\delta$ such that $\otp(\Omega^\mu_\delta\cap m)=n$.
Note that for a tail of $i<\mu$, $\Omega_\delta^\mu\cap m=\Omega_\delta^i\cap m$.

As $m\in D(\eta_\nu,\delta)$, there are cofinally many $\beta\in C_\delta$ such that $\min(C_\delta\setminus(\beta+1))\in D\cap T$,
$\eta<\psi_\delta(\beta)$ and $c(\eta_\nu,\psi_\delta(\beta))=m$. So, we may find a large enough $i<\mu$ such that:
\begin{itemize}
\item $\delta_{i+1}\in D\cap T$,
\item $\eta_\nu<i$,
\item $c(\eta_\nu,i)=m$,
\item $\epsilon<\delta_i$, and
\item $\Omega_\delta^\mu\cap m=\Omega_\delta^i\cap m$.
\end{itemize}

So
$$\otp(c(\eta_\nu,i)\cap \Omega^i_\delta)=\otp(m\cap \Omega^i_\delta)=\otp(m\cap \Omega^\mu_\delta)=n.$$
Put $\beta:=\max(x_\delta^i)$ so that
$\delta_i\le\beta<\delta_{i+1}=\min(C^\bullet_\delta\setminus(\beta+1))$.
Since $\delta_{i+1}\in D\s D_\nu$, we know that
$x_\delta^i=y_{\alpha,\iota,\nu,n}$,
for $\alpha:=\delta_i$ and $\iota:=\otp(\bigcup_{i'<i}x_\delta^{i'})$.
Consequently,
\begin{itemize}
\item $g(\otp(C^\bullet_\delta\cap\delta_{i+1}))=g(\otp(\bigcup\nolimits_{i'\le i}x^\delta_{i'}))=\nu$, and
\item $h(\otp(C^\bullet_\delta\cap\delta_{i+1}))=h(\otp(\bigcup\nolimits_{i'\le i}x^\delta_{i'}))=n$,
\end{itemize}
as sought.
\end{proof}

\section{\texorpdfstring{$T(\rho_0)$}{T(rho0)}}\label{sec5}

A useful fact concerning walks along an $\omega$-bounded $C$-sequence over $\omega_1$ is that
for all $\alpha<\beta<\omega_1$ such that $\rho_{0\alpha}$ and $\rho_{0\beta}$ are incomparable, $\Delta(\rho_{0\alpha},\rho_{0\beta})\in\nacc(\omega_1)$.
One may hope that this feature generalises,
however, it is not hard to construct an $\omega_1$-bounded $C$-sequence $\vec C$ over $\omega_2$
such that, for stationarily many $\delta<\omega_2$, there are $\alpha<\beta<\omega_2$ such that $\Delta(\rho_{0\alpha},\rho_{0\beta})=\delta$.

The correct generalization of this feature above $\omega_1$ is given by the next lemma.
To this end, fix a $C$-sequence $\vec C=\langle C_\delta\mid\delta<\kappa\rangle$, and throughout this section let $\rho_0,\rho_2,\Tr,\tr$ denote the corresponding characteristic functions.
\begin{lemma}\label{lemma2444}
For all $\alpha<\beta<\kappa$ such that $\rho_{0\alpha}$ and $\rho_{0\beta}$ are incomparable,
$\Delta(\rho_{0\alpha},\rho_{0\beta})\notin V(\vec C)$.
\end{lemma}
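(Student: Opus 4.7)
The plan is to argue by contradiction: assume $\delta\in V(\vec C)$ with $\delta<\alpha<\beta<\kappa$, $\rho_{0\alpha}\restriction\delta=\rho_{0\beta}\restriction\delta$, and $\rho_0(\delta,\alpha)\neq\rho_0(\delta,\beta)$. To analyse this, I will make the walks from $\alpha$ and $\beta$ down to $\delta$ explicit by writing $\tr(\delta,\alpha)=\langle\alpha_0,\ldots,\alpha_{N-1}\rangle$ with $\alpha_0:=\alpha$ and $\alpha_{k+1}:=\min(C_{\alpha_k}\setminus\delta)$ (a strictly decreasing sequence in $[\delta,\alpha]$ that terminates at $\alpha_N:=\delta$); analogously write $\beta_0,\ldots,\beta_M=\delta$. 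By construction, $\rho_0(\delta,\alpha)(k)=\otp(C_{\alpha_k}\cap\delta)$ for every $k<N$, and similarly for $\beta$. The analysis will split by the placement of $\delta$ in $\nacc$ vs.\ $\acc$ of $C_{\alpha_{N-1}}$ and of $C_{\beta_{M-1}}$.

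In the \textbf{symmetric case}---either both $\delta\in\nacc(C_{\alpha_{N-1}})\cap\nacc(C_{\beta_{M-1}})$ or both $\delta\in\acc(C_{\alpha_{N-1}})\cap\acc(C_{\beta_{M-1}})$---one can choose an interval $(\epsilon,\delta)$ throughout which the walks $\tr(\gamma,\alpha)$ and $\tr(\gamma,\beta)$ pass through $\alpha_0,\ldots,\alpha_{N-1}$ and $\beta_0,\ldots,\beta_{M-1}$ respectively. A bookkeeping of walk lengths---via the concatenation of Fact~\ref{fact2} in the $\nacc$ subcase, or by tracking the $\gamma$ at which the walks terminate in the $\acc$ subcase---forces $N=M$, after which matching the $k$-th entries of $\rho_0$ yields $\otp(C_{\alpha_k}\cap\gamma)=\otp(C_{\beta_k}\cap\gamma)$ for every $k<N$ and every $\gamma\in(\epsilon,\delta)$. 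Passing to the limit $\gamma\to\delta^-$ gives $\rho_0(\delta,\alpha)=\rho_0(\delta,\beta)$, contradicting incomparability.

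The \textbf{asymmetric case} $\delta\in\acc(C_{\alpha_{N-1}})$ but $\delta\in\nacc(C_{\beta_{M-1}})$ (or vice versa) is where the hypothesis $\delta\in V(\vec C)$ enters and is, I expect, the main technical step. Fix an interval $(\epsilon,\delta)$ on which, by Fact~\ref{fact2}, $\tr(\gamma,\beta)=\langle\beta_0,\ldots,\beta_{M-1}\rangle{}^\smallfrown\tr(\gamma,\delta)$ and on which the walk $\tr(\gamma,\alpha)$ passes through $\alpha_0,\ldots,\alpha_{N-1}$. Then $\rho_2(\gamma,\beta)=M+\rho_2(\gamma,\delta)\ge M+1$, whereas $\rho_2(\gamma,\alpha)=N$ for $\gamma\in C_{\alpha_{N-1}}$; matching walk lengths therefore forces $N>M$. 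In particular $\alpha_M$ is a well-defined ordinal strictly above $\delta$, and reading off the $M$-th entries gives
\[
\rho_0(\gamma,\alpha)(M)=\otp(C_{\alpha_M}\cap\gamma)\quad\text{and}\quad\rho_0(\gamma,\beta)(M)=\otp(C_\delta\cap\gamma)
\]
for every $\gamma\in(\epsilon,\delta)$. So the hypothesis $\rho_0(\gamma,\alpha)=\rho_0(\gamma,\beta)$ forces $\otp(C_{\alpha_M}\cap\gamma)=\otp(C_\delta\cap\gamma)$ on the whole interval, and since order-type equality on an interval lifts to set equality, $C_{\alpha_M}\cap(\epsilon,\delta)=C_\delta\cap(\epsilon,\delta)$. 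With $\alpha_M>\delta$, this witnesses $\delta\notin V(\vec C)$, the desired contradiction.
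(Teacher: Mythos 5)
Your proof is correct and follows essentially the same route as the paper's: localize the two walks near $\delta$ on an interval $(\epsilon,\delta)$ above $\lambda_2$, use equality of walk lengths to align the traces, and split according to whether $\delta$ is an accumulation point of the club at the last step of each walk (the paper phrases this via $\last{\delta}{\alpha}$ and $\last{\delta}{\beta}$), with the symmetric cases contradicting $\delta=\Delta(\rho_{0\alpha},\rho_{0\beta})$ and the asymmetric case producing the witness $C_{\alpha_M}\cap(\epsilon,\delta)=C_\delta\cap(\epsilon,\delta)$ to $\delta\notin V(\vec C)$. The only difference worth noting is that the paper proves the statement for $h\circ\rho_0$ for an arbitrary good map $h$ (so that it also covers $\rho_2$ later), whereas you treat only $\rho_0$ itself.
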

\begin{proof} We prove a slightly more general assertion. Let $h:\kappa\rightarrow\kappa$ be any map
that is \emph{good} in the sense that for all $\alpha,\beta,\gamma<\kappa$ with $\gamma\in\acc(\kappa)$,
if $h(\alpha+\iota)=h(\beta+\iota)$ for all $\iota<\gamma$ then $h(\alpha+\gamma)=h(\beta+\gamma)$.
Of course, the identity map is good.

For the sake of this proof, define $\rho_h:[\kappa]^2\rightarrow{}^{<\omega}\kappa$ via $\rho_h(\alpha,\beta):=h\circ\rho_0(\alpha,\beta)$.
Now, let $(\alpha, \beta) \in [\kappa]^2$ be such that $\rho_{h\alpha}$ and $\rho_{h\beta}$ are incomparable, and denote $\delta:= \Delta(\rho_{h\alpha},\rho_{h\beta})$.
We shall show that $\delta \notin V(\vec C)$. To this end, we may assume that $\delta \in \acc(\kappa)$.
Let $\bar \alpha: =\last \delta \alpha$ and $\bar \beta:= \last\delta \beta$, so that $\sup(C_{\bar\alpha}\cap\delta)=\delta=\sup(C_{\bar\beta}\cap\delta)$.

Let $\epsilon:= \sup\{\lambda_2(\delta, \alpha), \lambda_2(\delta, \beta)\}$,
so that, for every $\xi \in (\epsilon, \delta)$,
\begin{enumerate}
\item $\tr(\xi, \alpha) = \tr(\bar \alpha, \alpha){}^\smallfrown\tr(\xi, \bar \alpha)$, and
\item $\tr(\xi, \beta) = \tr(\bar \beta, \beta){}^\smallfrown\tr(\xi, \bar \beta)$.
\end{enumerate}

\begin{claim}\label{claim24441} $\rho_2(\bar\alpha,\alpha)=\rho_2(\bar\beta,\beta)$.
\end{claim}
\begin{why} Suppose not. Without loss of generality, $n:=\rho_2(\bar\alpha,\alpha)$ is smaller than $\rho_2(\bar\beta,\beta)$.
Since $\delta= \Delta(\rho_{h\alpha},\rho_{h\beta})$,
in particular, $\rho_{2\alpha}\restriction\delta = \rho_{2\beta}\restriction\delta$.
Together with Clause~(i), this yields that for every $\xi\in (\epsilon,\delta)\cap C_{\bar\alpha}$,
$$\rho_2(\xi,\beta)=\rho_2(\xi,\alpha)=n+1\le\rho_2(\bar\beta,\beta).$$
By Clause~(ii) this means that $\sup(C_{\Tr(\bar\beta,\beta)(n)}\cap\delta)=\delta$ and hence $\rho_2(\bar\beta,\beta)=n$.
This is a contradiction.
\end{why}

It thus follows from Clauses (i) and (ii) that $\rho_h(\bar\alpha,\alpha)=\rho_h(\bar\beta,\beta)$.

Now, letting $n$ denote the unique element of $\{\rho_2(\bar\alpha,\alpha),\rho_2(\bar\beta,\beta)\}$,
and letting $\varrho$ denote the unique element of $\{\rho_{h\alpha}\restriction \delta,\rho_{h\beta}\restriction\delta\}$,
we get that
$$C_{\bar\alpha}\cap(\epsilon,\delta)=\{ \xi\in(\epsilon,\delta)\mid |\varrho(\xi,\alpha)|=n+1\}=C_{\bar\beta}\cap(\epsilon,\delta).$$

In addition, since $h(\otp(C_{\bar\alpha}\cap\xi))=\varrho(\xi)(n)=h(\otp(C_{\bar\beta}\cap\xi))$ for every $\xi\in(\epsilon,\delta)$,
the fact that $h$ is good implies that $h(\otp(C_{\bar\alpha}\cap\delta))=h(\otp(C_{\bar\beta}\cap\delta))$.

Now, there are four options to consider:
\begin{itemize}
\item[$\br$] If $\bar\beta=\delta<\bar\alpha$, then $C_{\bar\alpha}\cap(\epsilon,\delta)=C_{\delta}\cap(\epsilon,\delta)$, and hence $\delta \notin V(\vec C)$.
\item[$\br$] If $\bar\alpha=\delta<\bar\beta$, then $C_\delta\cap(\epsilon,\delta)=C_{\bar\beta}\cap(\epsilon,\delta)$, and hence $\delta \notin V(\vec C)$.
\item[$\br$] If $\bar\alpha=\delta=\bar\beta$, then $\rho_{h}(\delta,\alpha)=\rho_h(\bar\alpha,\alpha)=\rho_h(\bar\beta,\beta)=\rho_h(\delta,\beta)$, contradicting the choice of $\delta$.
\item[$\br$] If $\min\{\bar\alpha,\bar\beta\}>\delta$, then
$$\begin{aligned}\rho_{h}(\delta,\alpha)
=&\ \rho_h(\bar\alpha,\alpha){}^\smallfrown\langle h(\otp(C_{\bar\alpha}\cap\delta))\rangle\\
=&\ \rho_h(\bar\beta,\beta){}^\smallfrown\langle h(\otp(C_{\bar\beta}\cap\delta))\rangle=\rho_h(\delta,\beta),
\end{aligned}$$
contradicting the choice of $\delta$.
\qedhere
\end{itemize}
\end{proof}

An alternative generalization of the feature known at the level of $\omega_1$ is given by the next lemma.
\begin{lemma}\label{hasudroff} Let $\alpha<\beta<\kappa$ such that $\rho_{0\alpha}$ and $\rho_{0\beta}$ are incomparable.

If $\Delta(\rho_{0\alpha},\rho_{0\beta})\neq\Delta(\rho_{2\alpha},\rho_{2\beta})$, then $\Delta(\rho_{0\alpha},\rho_{0\beta})\in\nacc(\kappa)$.
\end{lemma}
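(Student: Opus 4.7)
The plan is to argue by contradiction: set $\delta := \Delta(\rho_{0\alpha}, \rho_{0\beta})$, assume $\delta \in \acc(\kappa)$, and work to derive $\rho_0(\delta,\alpha) = \rho_0(\delta,\beta)$, which contradicts the very definition of $\delta$ (note $\delta < \alpha$, so both values are well-defined). The first observation is that since $\rho_{0\alpha}$ and $\rho_{0\beta}$ agree strictly below $\delta$ and $\rho_2(\xi,\gamma) = |\rho_0(\xi,\gamma)|$, the functions $\rho_{2\alpha}$ and $\rho_{2\beta}$ also agree strictly below $\delta$, so $\Delta(\rho_{2\alpha},\rho_{2\beta}) \geq \delta$; combining with the hypothesis then forces $\rho_2(\delta,\alpha) = \rho_2(\delta,\beta)$.

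Proceeding as in Lemma~\ref{lemma2444}, set $\bar\alpha := \last{\delta}{\alpha}$, $\bar\beta := \last{\delta}{\beta}$ and $\epsilon := \sup\{\lambda_2(\delta,\alpha),\lambda_2(\delta,\beta)\}$, so that Fact~\ref{fact2} yields
\[\rho_0(\bar\alpha,\alpha){}^\smallfrown\rho_0(\xi,\bar\alpha) = \rho_0(\xi,\alpha) = \rho_0(\xi,\beta) = \rho_0(\bar\beta,\beta){}^\smallfrown\rho_0(\xi,\bar\beta)\]
for every $\xi \in (\epsilon,\delta)$. The argument now splits into cases on whether $\bar\alpha$ and $\bar\beta$ each equal $\delta$ or strictly exceed it (both are $\geq \delta$). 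In the symmetric case $\bar\alpha = \bar\beta = \delta$, simply cancel the common suffix $\rho_0(\xi,\delta)$ to conclude $\rho_0(\delta,\alpha) = \rho_0(\delta,\beta)$. In the other symmetric case $\bar\alpha,\bar\beta > \delta$, the equation $\rho_2(\delta,\alpha) = \rho_2(\delta,\beta)$ forces $\rho_2(\bar\alpha,\alpha) = \rho_2(\bar\beta,\beta)$; counting lengths in the identity yields $\rho_2(\xi,\bar\alpha) = \rho_2(\xi,\bar\beta)$, and pointwise matching of entries gives $\rho_0(\bar\alpha,\alpha) = \rho_0(\bar\beta,\beta)$ as well as $\otp(C_{\bar\alpha}\cap\xi) = \otp(C_{\bar\beta}\cap\xi)$ on $(\epsilon,\delta)$; passing to suprema as $\xi \to \delta$ yields $\otp(C_{\bar\alpha}\cap\delta) = \otp(C_{\bar\beta}\cap\delta)$, whence again $\rho_0(\delta,\alpha) = \rho_0(\delta,\beta)$.

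The main obstacle is the asymmetric case, say $\bar\alpha = \delta < \bar\beta$ without loss of generality. Here $\rho_2(\delta,\alpha) = \rho_2(\bar\alpha,\alpha)$ while $\rho_2(\delta,\beta) = \rho_2(\bar\beta,\beta) + 1$ (using $\delta \in \acc(C_{\bar\beta})$ from Fact~\ref{last}), so writing $n := \rho_2(\bar\beta,\beta)$, the hypothesis forces $\rho_2(\bar\alpha,\alpha) = n+1$. The plan is to restrict $\xi$ to the set $C_\delta \cap (\epsilon,\delta)$, which is cofinal in $\delta$ precisely because $\delta \in \acc(\kappa)$; for such $\xi$, $\rho_2(\xi,\delta) = 1$, so both sides of the identity have length $n+2$, which forces $\rho_2(\xi,\bar\beta) = 2$. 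Matching the entry at position $n$ then pins $\otp(C_{\bar\beta}\cap\xi)$ to the fixed value $\rho_0(\delta,\alpha)(n)$ for every $\xi$ in this cofinal set. However, $\delta \in \acc(C_{\bar\beta})$ implies that $\xi \mapsto \otp(C_{\bar\beta}\cap\xi)$ is strictly bounded above by $\otp(C_{\bar\beta}\cap\delta)$ while having supremum equal to this value as $\xi \to \delta$, so it cannot be constant on any cofinal subset of $\delta$---delivering the contradiction. This asymmetric case is the crux because the two walks have structurally different shapes at $\delta$, so the contradiction has to be extracted from the limit behaviour of $C_{\bar\beta}$ rather than from a clean algebraic cancellation.
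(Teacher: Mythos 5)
Your argument is correct, but it takes a genuinely different route from the paper's. The paper proves the contrapositive in one uniform stroke: for each $\gamma\in\{\alpha,\beta\}$ and each $i<\rho_2(\delta,\gamma)$ it observes that $\Tr(\xi,\gamma)(i)=\Tr(\delta,\gamma)(i)$ for every $\xi\in(\epsilon,\delta)$, whence $\rho_0(\delta,\gamma)(i)=\sup\{\rho_0(\xi,\gamma)(i)\mid\xi\in(\epsilon,\delta)\}$; since $\rho_0(\xi,\alpha)=\rho_0(\xi,\beta)$ for all such $\xi$, the sequences $\rho_0(\delta,\alpha)$ and $\rho_0(\delta,\beta)$ agree on every common index, so they can only differ in length, and $\Delta(\rho_{0\alpha},\rho_{0\beta})=\Delta(\rho_{2\alpha},\rho_{2\beta})$ follows with no case analysis at all. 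You instead import the $\last{\delta}{\alpha}$, $\last{\delta}{\beta}$ machinery from the proof of Lemma~\ref{lemma2444} and split into cases according to whether these equal $\delta$. The two symmetric cases reduce to cancellation and to passing to suprema, and the asymmetric case forces you to invent an extra argument with no analogue in the paper: restricting to $\xi\in C_\delta\cap(\epsilon,\delta)$ so that $\rho_2(\xi,\delta)=1$ pins the lengths, and then matching position $n$ shows $\xi\mapsto\otp(C_{\last{\delta}{\beta}}\cap\xi)$ is constant on a cofinal subset of $\delta$, which is impossible since $\delta\in\acc(C_{\last{\delta}{\beta}})$ by Fact~\ref{last}(2). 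I checked that step and it is sound (as is the preliminary reduction from the hypothesis to $\rho_2(\delta,\alpha)=\rho_2(\delta,\beta)$). What your approach buys is an explicit picture of how the two walks can differ in shape at $\delta$; what it costs is length, a dependence on the $\eth$-decomposition that the paper's entry-wise supremum identity renders unnecessary, and the concentration of all the risk in the asymmetric case.
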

\begin{proof}
We prove the contrapositive. Suppose that $\delta:=\Delta(\rho_{0\alpha},\rho_{0\beta})$ is in $\acc(\kappa)$.
Denote $\epsilon:=\max\{\lambda_2(\delta,\alpha),\lambda_2(\delta,\beta)\}$.
\begin{claim} Let $\gamma\in\{\alpha,\beta\}$. For every $i<\rho_2(\delta,\gamma)$:
\begin{enumerate}
\item For every $\xi\in (\epsilon,\delta)$,
$\Tr(\xi,\gamma)(i)=\Tr(\delta,\gamma)(i)$;
\item $\rho_0(\delta,\gamma)(i)=\sup\{ \rho_0(\xi,\gamma)(i) \mid \xi\in (\epsilon,\delta)\}$.
\end{enumerate}
\end{claim}
\begin{why} (i) By Fact~\ref{fact2}.

(ii) By Clause~(i).
\end{why}

It follows that for every $i<\min\{\rho_2(\delta,\beta),\rho_2(\delta,\alpha)\}$:
$$\begin{aligned}\rho_0(\delta,\beta)(i)=&\ \sup\{ \rho_0(\xi,\beta)(i) \mid \xi\in (\epsilon,\delta)\}\\
=&\ \sup\{ \rho_0(\xi,\alpha)(i) \mid \xi\in (\epsilon,\delta)\}=\rho_0(\delta,\alpha)(i),\end{aligned}$$
so since $\rho_0(\delta,\beta)\neq\rho_0(\delta,\alpha)$, it must be the case that $\rho_2(\delta,\beta)\neq \rho_2(\delta,\alpha)$.
In addition, trivially, $\Delta(\rho_{0\alpha},\rho_{0\beta})\le\Delta(\rho_{2\alpha},\rho_{2\beta})$.
Thus, altogether $\Delta(\rho_{0\alpha},\rho_{0\beta})=\Delta(\rho_{2\alpha},\rho_{2\beta})$.
\end{proof}

A special case of an old theorem of Todor\v{c}evi\'c asserts that by walking along an $\omega$-bounded $C$-sequence over $\omega_1$,
$T(\rho_0)$ is a special Aronszajn tree.
More recently, Peng \cite{peng} proved
that in the same context, $T(\rho_2)$ is special.
Here it is established that this is not a coincidence.
\begin{lemma}\label{lemma511}
The following are equivalent:
\begin{enumerate}[(1)]
\item $T(\rho_0)$ is a special $\kappa$-tree;
\item $T(\rho_2)$ is a special $\kappa$-tree.
\end{enumerate}
\end{lemma}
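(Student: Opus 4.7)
The proof splits along the two implications.

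For $(2)\Rightarrow(1)$: Define a map $\pi\colon T(\rho_0)\to T(\rho_2)$ by sending a node $u\in T(\rho_0)$ (a function with domain some $\delta$) to the node $\pi(u)$ given by $\pi(u)(\alpha):=|u(\alpha)|$. Since $|\rho_0(\alpha,\gamma)|=\rho_2(\alpha,\gamma)$, one has $\pi(\rho_{0\gamma}\restriction\delta)=\rho_{2\gamma}\restriction\delta$, so $\pi$ is well-defined, level-preserving, surjective, and order-preserving (since restriction commutes with pointwise length). Given a good colouring $g\colon T(\rho_2)\restriction D\to\kappa$ furnished by Lemma~\ref{fact13}, the pullback $f:=g\circ\pi$ is a good colouring of $T(\rho_0)\restriction D$: if $s\subsetneq s'$ in $T(\rho_0)\restriction D$ then $\pi(s)\subsetneq\pi(s')$, hence $f(s)=g(\pi(s))\ne g(\pi(s'))=f(s')$, while $f(s)<\h(\pi(s))=\h(s)$. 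A second application of Lemma~\ref{fact13} gives (1).

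For $(1)\Rightarrow(2)$: Here direct pullback fails because $\pi$ points the wrong way and its fibres $U(t):=\pi^{-1}(t)\cap T(\rho_0)_{\h(t)}$ need not be singletons. Starting from a good colouring $f\colon T(\rho_0)\restriction D\to\kappa$, my plan is to define $g\colon T(\rho_2)\restriction D\to\kappa$ by making a canonical choice of representative in each fibre. For $t\in T(\rho_2)_\delta$, set $\tilde\gamma(t):=\min\{\gamma\ge\delta\mid\rho_{2\gamma}\restriction\delta=t\}$ and $u_t:=\rho_{0\tilde\gamma(t)}\restriction\delta$, and tentatively put $g(t):=f(u_t)$. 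The selection $t\mapsto\tilde\gamma(t)$ is monotone along chains of $T(\rho_2)$, so for $t_1\subsetneq t_2$ with $\tilde\gamma(t_1)=\tilde\gamma(t_2)$ we have $u_{t_1}\subsetneq u_{t_2}$ in $T(\rho_0)$ and the injectivity of $f$ on chains forces $g(t_1)\ne g(t_2)$.

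The main obstacle is the opposite case $\tilde\gamma(t_1)<\tilde\gamma(t_2)$, in which $u_{t_1}$ and $u_{t_2}$ are generally $\subseteq$-incomparable in $T(\rho_0)$ and $f$ says nothing about them. This is where Lemmas~\ref{lemma2444} and~\ref{hasudroff} become decisive: any two distinct elements of $U(t)$ come from witnesses $\gamma_1\ne\gamma_2$ with $\Delta(\rho_{0\gamma_1},\rho_{0\gamma_2})<\delta\le\Delta(\rho_{2\gamma_1},\rho_{2\gamma_2})$, and Lemma~\ref{hasudroff} then forces $\Delta(\rho_{0\gamma_1},\rho_{0\gamma_2})\in\nacc(\kappa)$. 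Thus the branching among elements of $U(t)$ is concentrated at non-accumulation points, a rigidity that should let one augment $f(u_t)$ by a bounded piece of data --- such as the $\nacc$-trace distinguishing $u_{t_1}$ from $u_{t_2}\restriction\delta_1$ inside $U(t_1)$ --- and then code the pair into a single ordinal below $\delta$. Verifying that this modified $g$ remains good across the ``$\tilde\gamma$-jumps'' is the technical heart of the argument, and I expect it to require restricting $D$ to a club on which the fibre sizes are suitably bounded.
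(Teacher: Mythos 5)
Your $(2)\Rightarrow(1)$ direction is correct and is exactly the paper's argument: the length map $\ell$ sends $\rho_{0\gamma}\restriction\delta$ to $\rho_{2\gamma}\restriction\delta$, is order-preserving on chains, and pulls back a good colouring.

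The $(1)\Rightarrow(2)$ direction, however, is not a proof but a plan, and the plan as stated does not close the gap you yourself identify. The fatal point is that your auxiliary code must be computed from $t_1$ alone, yet it must be guaranteed to differ from the code of \emph{every} possible extension $t_2$ with $\tilde\gamma(t_2)>\tilde\gamma(t_1)$; a ``$\nacc$-trace distinguishing $u_{t_1}$ from $u_{t_2}\restriction\delta_1$'' is data about the pair, not about $t_1$, and Lemma~\ref{hasudroff} (which the paper does not use here) only tells you where two incomparable $\rho_0$-fibers split, not how to separate the colours assigned at the two different levels $\delta_1<\delta_2$. The paper's actual resolution changes the colouring itself: the $d$-component of $c(x)$ is not $d$ of the minimal witness's fiber restricted to $\gamma:=\dom(x)$, but $d(\rho_{0\gamma})$ (or, when $\lambda(\gamma,\gamma')=\gamma$, $d(\rho_{0\last{\gamma}{\gamma'}}\restriction\gamma)$), and it is packaged with the walk data $\rho_2(\gamma,\gamma')$ and the initial segment $\rho_{2\gamma}\restriction(\lambda(\gamma,\gamma')+1)$ (resp.\ $\rho_{2\last{\gamma}{\gamma'}}\restriction(\lambda_2(\gamma,\gamma')+1)$). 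Equality of colours for $x\subsetneq y$ then lets one reconstruct $\rho_{2\gamma}$ from $x$ on a tail via $x(\beta)=n+\rho_{2\gamma}(\beta)$, conclude $C_\gamma=C_\delta\cap\gamma$ and hence $\rho_{0\gamma}=\rho_{0\delta}\restriction\gamma$, contradicting goodness of $d$. The three-way case split on whether $\gamma'=\gamma$, $\lambda(\gamma,\gamma')<\gamma$, or $\lambda(\gamma,\gamma')=\gamma$ (the last requiring $\last{\gamma}{\gamma'}$ and $\lambda_2$) is the technical heart you deferred, and none of it is recoverable from fibre-size bounds or from Lemmas~\ref{lemma2444} and~\ref{hasudroff} alone.
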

\begin{proof} $(2)\implies(1)$: Suppose that $T(\rho_2)$ is a special $\kappa$-tree,
and fix a good colouring $d:T(\rho_2)\restriction D\rightarrow\kappa$.
By Fact~\ref{lemma41}, $T(\rho_0)$ is a $\kappa$-tree.
For every sequence $\sigma$, let $\ell(\sigma)$ denote its length. Next, define a colouring $c:T(\rho_0)\restriction D\rightarrow\kappa$ via
$$c(x):=d(\ell\circ x).$$
It is clear that $c(x)<\dom(x)$ for every $x\in T(\rho_0)\restriction D$.
Finally, given a pair $x\subsetneq y$ of nodes in $T(\rho_0)$,
it is the case that $\ell\circ x\subsetneq \ell\circ y$, and hence $c(x)\neq c(y)$.

$(1)\implies(2)$: Suppose that $T(\rho_0)$ is a special $\kappa$-tree,
and fix a good colouring $d:T(\rho_0)\restriction D\rightarrow\kappa$.
By Fact~\ref{lemma41}, $T(\rho_2)$ is a $\kappa$-tree.
Fix a bijection $\pi:3\times\omega\times T(\rho_2)\times \kappa\leftrightarrow\kappa$,
and consider the club $$C:=\{\gamma\in D\cap\acc(\kappa)\mid \pi[3\times\omega\times(T(\rho_2)\restriction\gamma)\times \gamma]= \gamma\}.$$

We shall define a colouring $c:T(\rho_2)\restriction C\rightarrow\kappa$, as follows.
Given $x\in T(\rho_2)\restriction C$, set $\gamma:=\dom(x)$ and fix the least $\gamma'\in[\gamma,\kappa)$ such that $x=\rho_{2\gamma'}\restriction\gamma$.
Now, consider the following four options:
\begin{description}
\item[Case 0] If $\gamma'=\gamma$, then let
$$c(x):=\pi(0,0,\emptyset,d(\rho_{0\gamma})).$$
\item[Case 1] If $\gamma'>\gamma$ and $\lambda(\gamma,\gamma')<\gamma$, then denote $\alpha:=\lambda(\gamma,\gamma')+1$ and let:
$$c(x):=\pi(1,\rho_2(\gamma,\gamma'),(\rho_{2\gamma}\restriction\alpha),d(\rho_{0\gamma})).$$
\item[Case 2] If $\gamma'>\gamma$ and $\lambda(\gamma,\gamma')=\gamma$, then denote $\alpha:=\lambda_2(\gamma,\gamma')+1$ and let:
$$c(x):=\pi(2,\rho_2(\gamma,\gamma'),(\rho_{2\last{\gamma}{\gamma'}}\restriction\alpha),d(\rho_{0\last{\gamma}{\gamma'}}\restriction\gamma)).$$
\end{description}

It is clear that $c(x)<\dom(x)$ for every $x\in T(\rho_2)\restriction C$.
Next, suppose that $x\subsetneq y$ is a pair of nodes in $T(\rho_2)\restriction C$.
Towards a contradiction, suppose that $c(x)=c(y)$,
say it is equal to $\pi(i,n,t,\epsilon)$.
Denote $\gamma:=\dom(x)$ and fix the least $\gamma'\in[\gamma,\kappa)$ such that $x=\rho_{2\gamma'}\restriction\gamma$.
Likewise, denote $\delta:=\dom(y)$ and fix the least $\delta'\in[\delta,\kappa)$ such that $y=\rho_{2\delta'}\restriction\delta$.

$\br$ If $i=0$, then $\rho_{2\gamma}=x\subsetneq y=\rho_{2\delta}$.
But then,
$$C_\gamma=\{ \beta<\gamma\mid \rho_{2\gamma}(\beta)=1\}=\{ \beta<\gamma\mid \rho_{2\delta}(\beta)=1\}=C_\delta\cap\gamma.$$
As $\sup(C_\gamma)=\gamma$, it altogether follows that $\rho_{0\gamma}=\rho_{0\delta}\restriction\gamma$,
which must mean that $d(\rho_{0\gamma})\neq d(\rho_{0\delta})$, contradicting the fact that $c(x)=c(y)$.

$\br$ If $i=1$, then $\alpha$ lies in between $\lambda(\gamma,\gamma')$ and $\gamma$,
so for every $\beta\in[\alpha,\gamma)$, $$x(\beta)=\rho_{2\gamma'}(\beta)=\rho_{2\gamma'}(\gamma)+\rho_{2\gamma}(\beta)=n+\rho_{2\gamma}(\beta).$$
Likewise, for every $\beta\in[\alpha,\delta)$, $y(\beta)=n+\rho_{2\delta}(\beta)$.
So from $x=y\restriction\gamma$, it follows that $\rho_{2\gamma}\restriction [\alpha,\gamma)=\rho_{2\delta}\restriction [\alpha,\gamma)$.
In addition, $\rho_{2\gamma}\restriction\alpha=t=\rho_{2\delta}\restriction\alpha$.
So $\rho_{2\gamma}=\rho_{2\delta}\restriction\gamma$, which yields a contradiction, as in the previous case.

$\br$ If $i=2$, then $\alpha$ lies in between $\lambda_2(\gamma,\gamma')$ and $\gamma$,
for every $\beta\in[\alpha,\gamma)$,
$$x(\beta)=\rho_{2\gamma'}(\beta)=\rho_2(\gamma,\gamma')-1+\rho_2(\beta,\last{\gamma}{\gamma'})=n-1+\rho_2(\beta,\last{\gamma}{\gamma'}).$$
Likewise, for every $\beta\in[\alpha,\delta)$, $y(\beta)=n-1+\rho_2(\beta,\last{\delta}{\delta'})$.
So from $x=y\restriction\gamma$, it follows that $\rho_{2\last{\gamma}{\gamma'}}\restriction [\alpha,\gamma)=\rho_{2\last{\delta}{\delta'}}\restriction [\alpha,\gamma)$.
In addition, $\rho_{2\last{\gamma}{\gamma'}}\restriction\alpha=t=\rho_{2\last{\delta}{\delta'}}\restriction\alpha$.
So $\rho_{2\last{\gamma}{\gamma'}}=\rho_{2\last{\delta}{\delta'}}\restriction\gamma$, and then
$$C_{\last{\gamma}{\gamma'}}\cap\gamma=\{ \beta<\gamma\mid \rho_{2\last{\gamma}{\gamma'}}(\beta)=1\}=\{ \beta<\gamma\mid \rho_{2\last{\delta}{\delta'}}(\beta)=1\}=C_{\last{\delta}{\delta'}}\cap\gamma.$$
As $\sup(C_{\last{\gamma}{\gamma'}}\cap\gamma)=\gamma$, it altogether follows that $\rho_{0\last{\gamma}{\gamma'}}\restriction \gamma=\rho_{0\last{\delta}{\delta'}}\restriction\gamma$,
which must mean that $d(\rho_{0\last{\gamma}{\gamma'}}\restriction \gamma)\neq d(\rho_{0\last{\delta}{\delta'}}\restriction \delta)$,
contradicting the fact that $c(x)=c(y)$.
\end{proof}

By a theorem of Mart{\'{\i}}nez-Ranero \cite[Theorem~2.213]{MR2995913},
if $\vec C$ is an $\omega$-bounded $C$-sequence over $\omega_1$,
then $T(\rho_0)$ is isomorphic to an $E^{\omega_1}_\omega$-coherent tree.
Using Lemma~\ref{lemma2444}, we can generalise this result to higher cardinals while also respecting the $<_{\lex}$-ordering, as follows.

\begin{lemma}\label{l54} If $V(\vec C)$ covers a club,
then there exists an injection $\psi:T(\rho_0)\rightarrow{}^{<\kappa}({}^{<\omega}\kappa)$ such that:
\begin{itemize}
\item for every $t\in T(\rho_0)$, $\dom(\psi(t))=\dom(t)$;
\item for all $s,t\in T(\rho_0)$, $s\s t$ iff $\psi(s)\s\psi(t)$;
\item for all $s,t\in T(\rho_0)$, $s<_{\lex}t$ iff $\psi(s)<_{\lex}\psi(t)$;
\item for all $s,t\in T(\rho_0)$, $\Delta(s,t)=\Delta(\psi(s),\psi(t))$;
\item for some club $D\s\kappa$, the image of $\psi$ is an $(A(\vec C)\cap D)$-coherent streamlined tree.
\end{itemize}
\end{lemma}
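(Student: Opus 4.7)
My plan is to define $\psi$ by ``canonicalising'' the upper portion of each walk $\tr(\gamma,\beta)$ against a fixed club $D \subseteq V(\vec C) \cap \acc(\kappa)$, which by replacing $D$ with $\acc(D)$ I may assume satisfies $D \subseteq \acc(D)$. For $\gamma < \kappa$ I write $\gamma^+ := \min(D \setminus (\gamma+1))$, and for each $t \in T(\rho_0)$ I fix the canonical witness $\beta_t := \min\{\beta < \kappa \mid t \subseteq \rho_{0\beta}\}$. The basic building block will be $\delta^\star(\gamma,t) := \min(\im(\tr(\gamma,\beta_t)) \cap [\gamma^+,\kappa))$, and I plan to set $\psi(t)(\gamma) := \rho_0(\gamma,\delta^\star(\gamma,t)) {}^\smallfrown m(\gamma,t)$ for an auxiliary marker $m$ described below.

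The coherence at $\delta \in A(\vec C) \cap D$ will follow directly from Fact~\ref{fact2}. Given distinct nodes $s,t \in T(\rho_0)_\delta$, set $\epsilon := \max(\lambda(\delta,\beta_s),\lambda(\delta,\beta_t))$; the assumption $\delta \in A(\vec C)$ forces $\epsilon < \delta$, and for $\gamma \in (\epsilon,\delta)$ the decomposition $\tr(\gamma,\beta_i) = \tr(\delta,\beta_i) {}^\smallfrown \tr(\gamma,\delta)$ holds for $i \in \{s,t\}$. Since every element of $\im(\tr(\delta,\beta_i))$ strictly exceeds $\delta \ge \gamma^+$, the minimum of the walk image above $\gamma^+$ is attained in the common suffix $\im(\tr(\gamma,\delta))$, yielding $\delta^\star(\gamma,s) = \delta^\star(\gamma,t)$ throughout $(\epsilon,\delta)$. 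The plan is to design $m$ so that it also vanishes on this tail, at which point $\psi(s)$ and $\psi(t)$ agree on $(\epsilon,\delta)$, as required.

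For injectivity and preservation of $\subseteq$, $<_{\lex}$, and $\Delta$, the crucial input will be Lemma~\ref{lemma2444}, which forces $\Delta(s,t) \notin V(\vec C) \supseteq D$ for every pair of incomparable fiber maps. I plan to let $m(\gamma,t)$ record the initial segment of $\tr(\gamma,\beta_t)$ lying above $\delta^\star(\gamma,t)$ exactly at positions $\gamma \notin D$ and be empty at positions $\gamma \in D$; together with the dependence of $\delta^\star$ on the walk above $\gamma^+$, this lets $m$ carry distinguishing data at $\Delta(s,t)$ (necessarily outside $D$) without spoiling coherence at $D$-positions. Tree and lex order are recoverable from the combined data of $\rho_0(\gamma,\delta^\star(\gamma,t))$ and $m(\gamma,t)$, and $\Delta$-preservation follows since for $\gamma < \Delta(s,t)$ the walks $\tr(\gamma,\beta_s)$ and $\tr(\gamma,\beta_t)$ agree and hence so do $\psi(s)(\gamma)$ and $\psi(t)(\gamma)$.

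The main obstacle I anticipate is the precise calibration of the marker $m$: it must simultaneously vanish on the coherence tails specified above (so that $\psi$ is $(A(\vec C) \cap D)$-coherent) and carry faithful $\beta_t$-data at non-$D$ positions (so that injectivity and $\Delta$-preservation hold). A closely related subtlety is the verification that $\psi$ is well-defined on $T(\rho_0)$, namely that $\rho_{0\beta_0}\restriction\delta = \rho_{0\beta_1}\restriction\delta$ forces the walk images above $\gamma^+$ to coincide for every $\gamma < \delta$; this verification should proceed by a case analysis in the same spirit as the proof of Lemma~\ref{lemma2444} itself, using the recursive definition of $\tr$ and the coincidence of $\rho_0$-values to propagate agreement upward along the walk.
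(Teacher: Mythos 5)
There is a genuine gap, and it sits exactly at the point you flag as a ``subtlety'' to be handled later: your $\psi(t)(\gamma)$ is computed from the walk $\tr(\gamma,\beta_t)$, i.e.\ from the actual ordinals visited on the way from $\beta_t$ down to $\gamma$, but these ordinals are not determined by the node $t$ (equivalently, by the fiber values $\rho_0(\gamma,\beta_t)$ for $\gamma<\dom(t)$). Two walks can produce identical $\rho_0$-values while visiting different intermediate ordinals --- the order types $\otp(C_{\delta}\cap\gamma)$ along the two walks can coincide step by step while the $\delta$'s themselves differ --- so $\delta^\star(\gamma,t)$, and with it $\rho_0(\gamma,\delta^\star(\gamma,t))$ and your marker $m(\gamma,t)$, genuinely depend on the choice of witness and on where that particular walk first drops below $\gamma^+$. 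Fixing the minimal witness $\beta_t$ makes $\psi$ a well-defined function, but it does not rescue the required properties: for $s\subsetneq t$ one has $\beta_s\le\beta_t$ with strict inequality in general, the walks $\tr(\gamma,\beta_s)$ and $\tr(\gamma,\beta_t)$ need not agree anywhere below $\dom(s)$, and so there is no reason that $\psi(s)(\gamma)=\psi(t)(\gamma)$ for $\gamma<\dom(s)$. The same obstruction defeats $\Delta$-preservation: your assertion that ``for $\gamma<\Delta(s,t)$ the walks $\tr(\gamma,\beta_s)$ and $\tr(\gamma,\beta_t)$ agree'' is false --- only the $\rho_0$-values agree. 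Lemma~\ref{lemma2444} cannot supply the missing implication: its proof extracts consequences from the coincidence of fibers together with the hypothesis $\delta\in V(\vec C)$, but it nowhere shows (and it is not true) that coinciding fibers force coinciding walks.

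The paper's proof sidesteps the walk entirely and works only with the fiber values of the node. Fix a club $E\s V(\vec C)$; for $\beta<\dom(t)$ with $\alpha:=\sup(E\cap\beta)<\beta$, set $\eta:=t(\alpha)\wedge t(\beta)$, write $t(\beta)=\eta{}^\smallfrown\sigma$, and let $t^*(\beta):=\langle|t(\alpha)|-|\eta|\rangle{}^\smallfrown\sigma$ (and $t^*(\beta):=\emptyset$ when $\sup(E\cap\beta)=\beta$). This is manifestly a function of the node, so $\s$-preservation is immediate; preservation of $\Delta$ and $<_{\lex}$ follows from Lemma~\ref{lemma2444} in just the way you intend, since the splitting level $\beta:=\Delta(s,t)$ avoids $E$, whence $\alpha<\beta$ and $s(\alpha)=t(\alpha)$; and coherence at $\gamma\in A(\vec C)\cap\acc(E)$ follows because $\rho_{0\delta}(\alpha)=\rho_0(\gamma,\delta){}^\smallfrown\rho_{0\gamma}(\alpha)$ for all $\alpha$ above $\min(E\setminus(\lambda(\gamma,\delta)+1))$, and the recorded quantity $|t(\alpha)|-|t(\alpha)\wedge t(\beta)|$ is invariant under prepending the common prefix $\rho_0(\gamma,\delta)$ to both entries. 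If you want to keep your walk-based intuition, this $t^*$ is precisely the node-computable surrogate for ``where the walk to $\beta$ branches off from the walk to $\alpha$'' that your $\delta^\star$ was reaching for.
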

\begin{proof} Fix a club $E$ in $\kappa$ that is a subset of $V(\vec C)$.
For every $\gamma<\kappa$, for every $t:\gamma\rightarrow{}^{<\omega}\kappa$, we shall define a corresponding $t^*:\gamma\rightarrow{}^{<\omega}\kappa$,
following the basic idea of \cite[Theorem~4.27]{peng}.
Given $\beta<\gamma$, consider $\alpha:=\sup(E\cap\beta)$.

$\br$ If $\alpha=\beta$, then let $t^*(\beta):=\emptyset$.

$\br$ If $\alpha<\beta$, then let $\eta:=t(\alpha)\wedge t(\beta)$ be the longest initial segment of both $t(\alpha)$ and $t(\beta)$,
let $\sigma$ be such that $t(\beta)=\eta{}^\smallfrown\sigma$,
and finally let $$t^*(\beta):=\langle|t(\alpha)|-|\eta|\rangle{}^\smallfrown \sigma.$$

\begin{claim}\label{c541} Suppose $s:\gamma\rightarrow{}^{<\omega}\kappa$ and $t:\delta\rightarrow{}^{<\omega}\kappa$ in $T(\rho_0)$, with $\gamma<\delta$.
\begin{enumerate}
\item If $s\s t$, then $s^*\s t^*$;
\item If $s$ is incomparable with $t$, then $\Delta(s^*,t^*)=\Delta(s,t)$
and $s<_{\lex}t$ iff $s^*<_{\lex}t^*$.
\end{enumerate}
\end{claim}
\begin{why} (i) This is trivial.

(ii) By Lemma~\ref{lemma2444}, $\beta:=\Delta(s,t)$ does not belong to the club $E$, so that $\alpha:=\sup(E\cap\beta)$ is smaller than $\beta$,
which in turn implies that $s(\alpha)=t(\alpha)$.
Clearly, $s^*\restriction\beta=t^*\restriction\beta$.
Now denote $s^*(\beta)$ as $\langle k_s\rangle{}^\smallfrown\sigma_s$ and $\eta_s:=s(\alpha)\wedge s(\beta)$,
and likewise, denote $t^*(\beta)$ as $\langle k_t\rangle{}^\smallfrown\sigma_t$ and $\eta_t:=t(\alpha)\wedge t(\beta)$.
Consider the following options:
\begin{itemize}
\item[$\br$] If $k_s<k_t$, then $\Delta(s^*,t^*)=\beta$ and $s^*<_{\lex}t^*$.
It also follows that $\eta_t\subsetneq \eta_s$, so that $s(\beta)(|\eta_t|)=s(\alpha)(|\eta_t|)=t(\alpha)(|\eta_t|)$.
Now, by Remark~\ref{rmk412}, $t(\alpha)<_{\lex}t(\beta)$,
and hence either $t(\beta)=\eta_t$ or $t(\alpha)(|\eta_t|)<t(\beta)(|\eta_t|)$.

$\br\br$ If $t(\beta)=\eta_t$, then $t(\beta)\subsetneq \eta_s \s s(\beta)$, and hence, $s(\beta)<_{\lex}t(\beta)$.
So $s<_{\lex}t$.

$\br\br$ Otherwise, $s(\beta)(|\eta_t|)=t(\alpha)(|\eta_t|)<t(\beta)(|\eta_t|)$,
and then $s(\beta)<_{\lex}t(\beta)$.
So $s<_{\lex}t$.

\item[$\br$] If $k_t<k_s$, then a similar analysis yields that $\Delta(s^*,t^*)=\beta$, $t^*<_{\lex}s^*$ and $t<_{\lex}s$.
\item[$\br$] If $k_s=k_t$, then $\eta_s=\eta_t$, so since $s(\beta)\neq t(\beta)$, it follows that $\sigma_s\neq\sigma_t$ and hence $\Delta(s^*,t^*)=\beta$.
Furthermore $s<_{\lex}t$ iff $s(\beta)<_{\lex}t(\beta)$ iff $\eta_s{}^\smallfrown\sigma_s<_{\lex}\eta_t{}^\smallfrown\sigma_t$ iff
$\sigma_s<_{\lex}\sigma_t$ iff $\langle k_s\rangle{}^\smallfrown\sigma_s<_{\lex}\langle k_t\rangle{}^\smallfrown\sigma_t$
iff $s^*(\beta)<_{\lex}t^*(\beta)$ iff $s^*<_{\lex}t^*$.\qedhere
\end{itemize}
\end{why}

Set $T:=\{ t^*\mid t\in T(\rho_0)\}$, and note that we have established that the map
$t\stackrel{\psi}{\mapsto}t^*$
constitutes an isomorphism from
$(T(\rho_0),{\s},{<_{\lex}})$ to $(T,{\s},{<_{\lex}})$.
\begin{claim}\label{c542} $T$ is $(A(\vec C)\cap D)$-coherent, for $D:=\acc(E)$.
\end{claim}
\begin{why} By the definition of $T$, it suffices to prove the following. For every $\gamma\in A(\vec C)\cap D$, for every $\delta\in(\gamma,\kappa)$,
$$\sup\{\beta<\gamma\mid (\rho_{0\gamma})^*(\beta)\neq (\rho_{0\delta}\restriction\gamma)^*(\beta)\}<\gamma.$$
To this end, let $\gamma<\delta$ be as above. As $\gamma\in A(\vec C)\cap D$, we have that $\epsilon:=\min(E\setminus\lambda(\gamma,\delta)+1)$ is smaller than $\gamma$.
Now, for every $\alpha\in[\epsilon,\gamma)$, it is the case that $\rho_{0\delta}(\alpha)=\rho_0(\gamma,\delta){}^\smallfrown \rho_{0\gamma}(\alpha)$.
So, for every $\beta\in(\epsilon,\gamma)$, if $\alpha:=\sup(E\cap\beta)$ is smaller than $\beta$,
then letting $\eta:=\rho_{0\gamma}(\alpha)\wedge\rho_{0\gamma}(\beta)$ and letting $\sigma$ be such that $\rho_{0\gamma}(\beta)=\eta{}^\smallfrown\sigma$,
we get that

$$\begin{aligned}
(\rho_{0\gamma})^*(\beta)=&\ \langle|\rho_{0\gamma}(\alpha)|-|\eta|\rangle{}^\smallfrown\sigma\\
=&\ \langle|\rho_0(\gamma,\delta){}^\smallfrown \rho_{0\gamma}(\alpha)|-|\rho_0(\gamma,\delta){}^\smallfrown \eta|\rangle{}^\smallfrown\sigma=(\rho_{0\delta})^*(\beta),
\end{aligned}$$
as sought.
\end{why}
This completes the proof.
\end{proof}

Motivated by Fact~\ref{lemma41} and the preceding result, we now characterise when there exists a $C$-sequence $\vec C$ such that $T(\rho_0^{\vec C})$ is a $\kappa$-tree and $V(\vec C)$ covers a club.
The next result provides a characterization of when this is possible.
Note that in the context of walks on ordinals, it is common to add
to the very definition of $C$-sequences
the requirement that $C_{\gamma+1}=\{\gamma\}$ for all $\gamma$'s (see \cite[\S2.1]{MR2355670}).
In the upcoming, we allow a slightly greater generality in order to support scenarios such as \cite[Definition~5.13]{MR4199849}.

\begin{cor}\label{STP} The following are equivalent:
\begin{enumerate}[(1)]
\item $\kappa$-$\stp$ fails;\footnote{Here $\kappa$-$\stp$ asserts that the \emph{subtle tree property} holds at $\kappa$ (see \cite[p.~13]{Chris10}).}
\item there is a weakly coherent $C$-sequence $\vec C$ over $\kappa$ such that $V'(\vec C)$ covers a club;
\item there is a weakly coherent $C$-sequence $\vec C$ over $\kappa$ such that $V(\vec C)$ covers a club;
\item for every finite $F\s\omega$, there is a weakly coherent $C$-sequence $\vec C$ over $\kappa$ such that:
\begin{itemize}
\item for every $\gamma\in\acc(\kappa)$, $C_{\gamma+1}=F\cup\{\gamma\}$;
\item $\{\Delta(\rho^{\vec C}_{0\alpha},\rho^{\vec C}_{0\beta})\mid \alpha<\beta<\kappa\ \&\ \rho_{0\alpha}\neq\rho_{0\beta}\restriction\alpha\}$ is nonstationary.
\end{itemize}
\item there are a finite $F\s\omega$ and a weakly coherent $C$-sequence $\vec C$ over $\kappa$ such that:
\begin{itemize}
\item for every $\gamma\in\acc(\kappa)$, $C_{\gamma+1}=F\cup\{\gamma\}$;
\item $\{\Delta(\rho^{\vec C}_{0\alpha},\rho^{\vec C}_{0\beta})\mid \alpha<\beta<\kappa\ \&\ \rho_{0\alpha}\neq\rho_{0\beta}\restriction\alpha\}$ is nonstationary.
\end{itemize}
\end{enumerate}
\end{cor}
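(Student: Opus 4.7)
The plan is to establish the equivalences via a cycle. Two implications are immediate: $(2)\Rightarrow(3)$ follows from $V'(\vec C)\subseteq V(\vec C)$, as noted in the Remark after Definition~\ref{def44}, and $(4)\Rightarrow(5)$ is trivial. I aim to close the cycle via $(5)\Rightarrow(3)\Rightarrow(2)$ and $(3)\Leftrightarrow(1)\Rightarrow(4)$.

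For $(3)\Rightarrow(2)$, given a weakly coherent $\vec C$ with $V(\vec C)$ containing some club $D$, I will thin each $C_\delta$ by passing to the closure of $\{\min(C_\delta\setminus(\beta+1))\mid\beta\in D\cap\delta\}$. This thinning preserves weak coherence (as the new sequence is defined uniformly from the old one and from $D$) and spreads the non-accumulation points of distinct $C'_\delta$ and $C'_\alpha$ along $D$, thereby upgrading $V(\vec C)$ to $V'(\vec C')$. The equivalence $(3)\Leftrightarrow(1)$ is essentially the definition of $\kappa$-$\stp$ from \cite{Chris10} translated into the language of $C$-sequences: a weakly coherent $\vec C$ with $V(\vec C)$ covering a club is exactly the obstruction forbidden by the subtle tree property. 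For $(1)\Rightarrow(4)$, starting from such a $\vec C$ and a prescribed finite $F$, I override the successor values by setting $C'_{\gamma+1}:=F\cup\{\gamma\}$ for each $\gamma\in\acc(\kappa)$ (leaving $C'_\delta:=C_\delta$ at limit ordinals). Walks through successor ordinals spend only one step in the altered neighbourhood, so $V(\vec C')$ still covers a club, and Lemma~\ref{lemma2444} then places the $\Delta$-set inside the nonstationary complement $\kappa\setminus V(\vec C')$.

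The main obstacle is $(5)\Rightarrow(3)$, which requires a partial converse to Lemma~\ref{lemma2444}: if the $\Delta$-set is nonstationary and $C_{\gamma+1}=F\cup\{\gamma\}$ for all $\gamma$, then $V(\vec C)$ covers a club. Fix a club $D$ disjoint from the $\Delta$-set with $D\subseteq\acc(\kappa)$ and $\min(D)>\max(F)$. Given $\delta\in D$ with $\delta\notin V(\vec C)$, choose a witnessing $\alpha>\delta$ and $\epsilon<\delta$ with $C_\delta\cap[\epsilon,\delta)=C_\alpha\cap[\epsilon,\delta)$. The key idea is to exploit the prescribed successor-level structure: combining $\alpha$ with a successor ordinal just above $\delta$ (such as $\delta+1$, for which $\delta\in\nacc(C_{\delta+1})=F\cup\{\delta\}$), and splicing walks via Fact~\ref{fact2}, I will identify a pair $(\alpha_1,\alpha_2)$ whose walks $\rho_{0\alpha_1},\rho_{0\alpha_2}$ agree on $\delta$ yet diverge at level $\delta$ itself, placing $\delta$ in the $\Delta$-set and contradicting $\delta\in D$. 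The delicate bookkeeping---calibrating the offset $\otp(C_\alpha\cap\beta)-\otp(C_\delta\cap\beta)$ across $\beta\in(\epsilon,\delta)$ and correctly locating the witnessing pair so that the two walks coincide on the entirety of $\delta$ but split exactly at $\delta$---is where I expect the most work.
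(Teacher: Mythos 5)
Your proposal has two genuine gaps, both at the load-bearing points of the cycle. The first is $(5)\Rightarrow(3)$. From $\delta\notin V(\vec C)$ you only get \emph{end-segment} agreement: some $\alpha>\delta$ and $\epsilon<\delta$ with $C_\delta\cap[\epsilon,\delta)=C_\alpha\cap[\epsilon,\delta)$. That is far too weak to force $\Delta(\rho_{0\alpha_1},\rho_{0\alpha_2})=\delta$: below $\epsilon$ the two clubs can be completely unrelated, and even on $[\epsilon,\delta)$ the recorded values $\otp(C_\delta\cap\xi)$ and $\otp(C_\alpha\cap\xi)$ differ by a nonzero offset, so the two walks already disagree strictly below $\delta$ and the split point lands below $\delta$, not at it. No amount of bookkeeping fixes this; what is actually needed is \emph{full} initial-segment agreement $C_\delta=C_{\delta'}\cap\delta$, which guarantees $\rho_0(\xi,\delta)=\rho_0(\xi,\delta')$ for all $\xi<\delta$ and hence (after passing to $\delta+1$ and $\delta'+1$, where the prescribed successor sets $F\cup\{\gamma\}$ enter) that the walks agree on all of $\delta$ and split exactly there. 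The paper obtains such a pair $(\delta,\delta')\in[D]^2$ from the \emph{assumption that $\kappa$-$\stp$ holds} applied to the slender list $\delta\mapsto C_\delta$ (slenderness is weak coherence); i.e., the correct implication is $(5)\Rightarrow(1)$ by contraposition, not $(5)\Rightarrow(3)$ for the same $\vec C$. Indeed $(5)\Rightarrow(3)$ with the same sequence is not something you should expect: Lemma~\ref{lemma2444} gives only one inclusion between the $\Delta$-set and $\kappa\setminus V(\vec C)$.

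The second gap is the direction $(1)\Rightarrow(2)$ (equivalently the forward half of your claimed ``$(3)\Leftrightarrow(1)$ is essentially the definition''). Only $(3)\Rightarrow(1)$ is definitional (taking $\epsilon=0$ in the definition of $V(\vec C)$ shows the slender list $\delta\mapsto C_\delta$ has no subtle pair on the club). Producing, from the mere failure of $\kappa$-$\stp$, a weakly coherent $C$-sequence with $V'(\vec C)$ covering a club is a genuine construction; the paper imports it from \cite[Lemma~2.17 and Corollary~2.19]{paper58}. Relatedly, your proposed $(3)\Rightarrow(2)$ thinning does not work as stated: if $C_\delta$ and $C_\alpha\cap\delta$ share a cofinal set of points while still disagreeing on every end-segment, the thinned sequences $\delta\mapsto\mathrm{cl}\{\min(C_\delta\setminus(\beta+1))\mid\beta\in D\cap\delta\}$ can share cofinally many nonaccumulation points, so $\delta$ need not land in $V'$ of the new sequence. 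The paper sidesteps both issues by arranging the cycle as $(1)\Rightarrow(2)\Rightarrow(3)\Rightarrow(4)\Rightarrow(5)\Rightarrow(1)$, with the hard upgrade to $V'$ happening once, inside the cited result. Your steps $(2)\Rightarrow(3)$, $(4)\Rightarrow(5)$, and $(3)\Rightarrow(4)$ (overriding successor levels and invoking Lemma~\ref{lemma2444}) are fine and match the paper.
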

\begin{proof} $(1)\implies(2)$: By \cite[Lemma~2.17 and Corollary~2.19]{paper58}.

$(2)\implies(3)$: This is trivial.

$(3)\implies(4)$: By Lemma~\ref{lemma2444}.

$(4)\implies(5)$: This is trivial.

$(5)\implies(1)$: Let $F\s\omega$ and $\vec C$ be as in Clause~(5). Fix a club $D\s\acc(\kappa)$
disjoint from $\{\Delta(\rho^{\vec C}_{0\alpha},\rho^{\vec C}_{0\beta})\mid \alpha<\beta<\kappa\ \&\ \rho_{0\alpha}\neq\rho_{0\beta}\restriction\alpha\}$.
Towards a contradiction, suppose that $\kappa$-$\stp$ holds. Then we may pick a pair $(\delta,\delta')\in[D]^2$ such that $C_\delta= C_{\delta'}\cap\delta$.
Put $\alpha:=\delta+1$ and $\beta:=\delta'+1$. Note:
\begin{itemize}
\item For every $\xi\le\max(F)$, as $C_\alpha\cap(\xi+1)=C_\beta\cap(\xi+1)$, it is the case that $\rho_0(\xi,\alpha)=\rho_0(\xi,\beta)$.
\item For every $\xi<\delta$ above $\max(F)$, as $C_{\delta'}\cap(\xi+1)=C_\delta\cap(\xi+1)$,
it is the case that
$$\rho_0(\xi,\beta)=\langle|F|\rangle{}^\smallfrown\rho_0(\xi,\delta')=\langle|F|\rangle{}^\smallfrown\rho_0(\xi,\delta)=\rho_0(\xi,\alpha).$$
\item $\rho_0(\delta,\alpha)=\langle|F|\rangle$.
\item $\rho_0(\delta,\beta)=\langle |F|,\otp(C_{\delta'}\cap\delta)\rangle$.
\end{itemize}
Thus $\Delta(\rho_{0\alpha},\rho_{0\beta})=\delta\in D$.
This is a contradiction.
\end{proof}

\section{\texorpdfstring{$T(\rho_1)$}{T(rho1)}}\label{sec6}

Lemma~6.2.1 of \cite{MR2355670} is stated in the context of walking along a $\mu$-bounded $C$-sequence over $\mu^+$ but remains true in the general case, as follows.
\begin{lemma}\label{lemma19} For every $i<\kappa$, for every $\delta<\kappa$, $|\{\alpha<\delta\mid \rho_{1\delta}(\alpha)\le i\}|\leq |i| +\aleph_0$.
\end{lemma}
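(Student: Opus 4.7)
The plan is to induct on $\delta<\kappa$, using a one-step recurrence for $\rho_0$ that lets us reduce $\delta$ to smaller ordinals belonging to $C_\delta$. Fix any $\alpha<\delta$ and set $\gamma_\alpha:=\min(C_\delta\setminus\alpha)$. Unwinding Definition~\ref{defn21}, if $\gamma_\alpha=\alpha$ (i.e., $\alpha\in C_\delta$) then $\rho_2(\alpha,\delta)=1$ and $\rho_0(\alpha,\delta)=\langle\otp(C_\delta\cap\alpha)\rangle$; otherwise $\gamma_\alpha>\alpha$ and
$$\rho_0(\alpha,\delta)=\langle\otp(C_\delta\cap\alpha)\rangle{}^\smallfrown\rho_0(\alpha,\gamma_\alpha).$$
Taking $\sup$ of the image, this gives the recurrence
$$\rho_1(\alpha,\delta)=\max\{\otp(C_\delta\cap\alpha),\rho_1(\alpha,\gamma_\alpha)\}$$
in the second case, and $\rho_1(\alpha,\delta)=\otp(C_\delta\cap\alpha)$ in the first. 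I would verify this recurrence cleanly as the first step of the proof, since everything else rests on it.

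Fixing $i<\kappa$, write $A_i^\delta:=\{\alpha<\delta\mid\rho_{1\delta}(\alpha)\le i\}$; our target is $|A_i^\delta|\le|i|+\aleph_0$. The recurrence forces $\otp(C_\delta\cap\alpha)\le i$ for every $\alpha\in A_i^\delta$, which pins $\alpha$ below the $(|i|+1)$-st element of $C_\delta$. Define
$$B:=\{\gamma\in C_\delta\mid\otp(C_\delta\cap\gamma)\le i\},$$
and note $|B|\le|i|+1$ since $\gamma\mapsto\otp(C_\delta\cap\gamma)$ is an order-isomorphism from $C_\delta$ onto $\otp(C_\delta)$. If $\alpha\in A_i^\delta$ lies in $C_\delta$ then $\alpha\in B$; otherwise $\gamma_\alpha>\alpha$, $C_\delta\cap\gamma_\alpha=C_\delta\cap\alpha$ has order type at most $i$, so $\gamma_\alpha\in B$, and the recurrence yields $\rho_1(\alpha,\gamma_\alpha)\le i$, that is, $\alpha\in A_i^{\gamma_\alpha}$. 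Thus
$$A_i^\delta\subseteq B\cup\bigcup_{\gamma\in B}A_i^\gamma.$$

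Now I would run transfinite induction on $\delta$: the base $\delta=0$ is vacuous, and in the inductive step each $\gamma\in B$ satisfies $\gamma<\delta$, so by hypothesis $|A_i^\gamma|\le|i|+\aleph_0$. Combined with $|B|\le|i|+1$, cardinal arithmetic gives
$$|A_i^\delta|\le|B|+|B|\cdot(|i|+\aleph_0)\le(|i|+1)\cdot(|i|+\aleph_0)=|i|+\aleph_0,$$
as required. There is no substantive obstacle here: the recurrence is elementary, and the only points requiring care are (a) confirming the case split $\gamma_\alpha=\alpha$ versus $\gamma_\alpha>\alpha$ (noting that $\gamma_\alpha$ always exists because $\sup(C_\delta)=\sup(\delta)\ge\alpha$ for $\alpha<\delta$), and (b) checking the cardinal arithmetic in both the finite and infinite regimes of $|i|$, which in each case collapses to $|i|+\aleph_0$.
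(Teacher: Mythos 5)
Your proof is correct and is essentially the paper's argument run in the forward direction: the paper establishes the same bound by taking a minimal counterexample $\delta$, using the pigeonhole principle to isolate a single value $i_0=\otp(C_\delta\cap\alpha)\le i$ (and hence a single next step $\gamma=\min(C_\delta\setminus\alpha)$) realized by too many $\alpha$, and then invoking minimality of $\delta$ at that one $\gamma$, whereas you cover $A_i^\delta$ by all of the at most $|i|+1$ possible next steps simultaneously and sum the cardinalities. Both arguments rest on the same one-step recurrence $\rho_1(\alpha,\delta)=\max\{\otp(C_\delta\cap\alpha),\rho_1(\alpha,\gamma_\alpha)\}$ and the same cardinal arithmetic, so there is nothing to fix.
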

\begin{proof} Suppose not, and let $i$ be a counterexample, and let $j$ be the least infinite cardinal above $|i|^+$. Let $\delta$ be the least ordinal less than $\kappa$ for which $A:=\{\alpha<\delta\mid \rho_{1\delta}(\alpha)\le i\}$ has size $\ge j$.
In particular, for every $\alpha\in A$, $\otp(C_\delta\cap\alpha)\le i$. Fix $i_0\le i$ for which $A_0:=\{\alpha<\delta\mid \otp(C_\delta\cap\alpha)=i_0\}$ has size $\ge j$.
It follows that $\alpha\mapsto\min(C_\delta\setminus\alpha)$ is constant over $A_0$ with value, say, $\gamma$.
Let $A_1:=A_0\setminus\{\gamma\}$, so that $A_1\s\gamma$ and $|A_1|\ge j$.
Next, by the minimality of $\delta$, and since $\gamma<\delta$, it is the case that $|\{\alpha<\gamma\mid \rho_{1\gamma}(\alpha)\le i\}|<j\le |A_1|$,
so we may pick some $\alpha\in A_1$ such that $\rho_{1\gamma}(\alpha)>i$.
As $\alpha<\gamma=\min(C_\delta\setminus\alpha)$, it is the case that $\rho_{1\delta}(\alpha)=\max\{\otp(C_\delta\cap\alpha),\rho_{1\gamma}(\alpha)\}$.
But $\otp(C_\delta\cap\alpha)=i_0\le i<\rho_{1\gamma}(\alpha)$,
which means that $\rho_{1\delta}(\alpha)>i$, contradicting the fact that $\alpha\in A_1\s A_0\s A$.
\end{proof}

Recalling Proposition~\ref{switchtofibers} and Remark~\ref{rmkrapid}, we thus get the following.

\begin{cor}\label{coro62}\label{lemma43b}
Suppose that $\rho_1$ is obtained by walking along a $\mu$-bounded $C$-sequence over $\kappa:=\mu^+$.
Then $\rho_1$ witnesses $\U(\kappa,\kappa,\mu,\mu)$.
In particular, for every $\zeta<\cf(\mu)$, for every rapid $S\s{}^\zeta T(\rho_1)$ of size $\kappa$,
there exists a rapid $X\s {}^\zeta\kappa$ of size $\kappa$ such that
for all $\langle \alpha_\iota\mid \iota<\zeta\rangle\neq \langle \beta_\iota\mid \iota<\zeta\rangle$ in $X$,
there are $x\neq y$ in $S$ such that for every $\iota<\zeta$:
\begin{itemize}
\item $x(\iota)\s \rho_{1\alpha_\iota}$ and $y(\iota)\s \rho_{1\beta_\iota}$;
\item $x(\iota)$ and $y(\iota)$ are incomparable;
\item $\dom(x(\iota))<\dom(y(\iota))$ iff $\alpha_\iota<\beta_\iota$. \qed
\end{itemize}
\end{cor}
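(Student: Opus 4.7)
The ``in particular'' clause drops out from Proposition~\ref{switchtofibers} applied to $c := \rho_1$ with $\theta := \mu$ and $\chi := \cf(\mu)$, using Remark~\ref{rmkrapid} to pass between $\h$-rapid and rapid in $T(\rho_1)$. The hypotheses $\omega \le \cf(\mu) \le \cf(\mu) \le \mu < \kappa$ are immediate, and $\U(\kappa,\kappa,\mu,\mu)$ trivially entails the weaker $\U(\kappa,\kappa,\mu,\cf(\mu))$. So everything reduces to verifying that $\rho_1$ itself witnesses $\U(\kappa,\kappa,\mu,\mu)$.

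To this end, I would fix $\zeta < \mu$, a pairwise disjoint family $\mathcal A \in [[\kappa]^\zeta]^\kappa$, and $\tau < \mu$. Since $\zeta < \mu < \kappa$ and $\mathcal A$ is pairwise disjoint, a routine inductive thinning re-enumerates $\mathcal A = \{A_\gamma \mid \gamma < \kappa\}$ so that $\max(A_\gamma) < \min(A_{\gamma'})$ whenever $\gamma < \gamma'$. For each $\beta \in A_\gamma$, Lemma~\ref{lemma19} bounds the fibre $X_\beta^\tau := \{\alpha < \beta \mid \rho_1(\alpha,\beta) \le \tau\}$ by $|\tau| + \aleph_0 < \mu$, and so the ``obstruction set'' $B_\gamma := \bigcup_{\beta \in A_\gamma} X_\beta^\tau$ has cardinality at most $\zeta \cdot (|\tau|+\aleph_0) < \mu$ as well.

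The plan is then to form the directed graph $G$ on $\kappa$ whose edge relation is $\gamma \to \gamma'$ (for $\gamma < \gamma'$) precisely when $A_\gamma \cap B_{\gamma'} \ne \emptyset$, and to extract from $G$ an independent set of size $\kappa$. Pairwise disjointness of $\mathcal A$ drives the key in-degree estimate: every $\alpha \in B_{\gamma'}$ belongs to at most one $A_\gamma$, so $|\{\gamma < \gamma' \mid \gamma \to \gamma'\}| \le |B_{\gamma'}| < \mu$. A standard transfinite-greedy colouring then properly colours $G$ with $\mu$ colours---at stage $\gamma$, the already-coloured in-neighbours occupy strictly fewer than $\mu$ colours, leaving a fresh one free---and since $\kappa = \mu^+$, pigeonhole extracts a monochromatic (hence independent) class $\Gamma \in [\kappa]^\kappa$. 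The subfamily $\mathcal B := \{A_\gamma \mid \gamma \in \Gamma\}$ then satisfies $A_\gamma \cap B_{\gamma'} = \emptyset$ for all distinct $\gamma,\gamma'$ in $\Gamma$, which is precisely $\min(\rho_1[A_\gamma \times A_{\gamma'}]) > \tau$, as required.

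The main obstacle is the conversion of the ``horizontal'' estimate of Lemma~\ref{lemma19} (for each $\beta$, only few $\alpha$'s are bad) into the ``vertical'' one (for each $\gamma'$, only few $\gamma$'s obstruct); this is the exact point at which the pairwise disjointness of $\mathcal A$ becomes decisive, and it is also what forces the strict bound $<\mu$ rather than $\le\mu$ needed for the greedy colouring to land inside $\mu$ colours.
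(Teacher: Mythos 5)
Your overall route is the intended one: the ``in particular'' clause is indeed just Proposition~\ref{switchtofibers} applied with $\theta:=\mu$ and $\chi:=\cf(\mu)$ (together with the trivial monotonicity $\U(\kappa,\kappa,\mu,\mu)\implies\U(\kappa,\kappa,\mu,\cf(\mu))$), and the first assertion is meant to be extracted from Lemma~\ref{lemma19} essentially as you do: separate the family, form the small obstruction sets $B_{\gamma'}$, convert the horizontal fibre bound into a backward-degree bound via pairwise disjointness, and finish with a transfinite greedy colouring plus pigeonhole. For uncountable $\mu$ your argument is complete and correct, and the conversion from the ``for each $\beta$'' estimate to the ``for each $\gamma'$'' estimate is exactly the point worth isolating.

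The one genuine defect is the case $\mu=\aleph_0$, which the statement does not exclude (and this section explicitly targets $\omega_1$). There $\tau$ is finite and Lemma~\ref{lemma19} as stated only gives $|X^\tau_\beta|\le|\tau|+\aleph_0=\aleph_0=\mu$, so the inequality ``$<\mu$'' you assert is false, and the greedy colouring genuinely breaks down: a backward degree of $\le\mu$ does not permit a proper $\mu$-colouring (the complete graph on $\omega_1$ has countable backward degrees and chromatic number $\aleph_1$). In fact the \emph{statement} of Lemma~\ref{lemma19} alone cannot yield $\U(\omega_1,\omega_1,\omega,\omega)$: the constant colouring $c\equiv 0$ satisfies its conclusion yet witnesses nothing. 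What rescues the argument is that the \emph{proof} of Lemma~\ref{lemma19}, run with $j=\aleph_0$ for finite $i$, actually shows that $\{\alpha<\delta\mid\rho_{1\delta}(\alpha)\le i\}$ is finite for every finite $i$ --- the classical finite-to-one property of $\rho_1$. With that sharper bound, $|B_{\gamma'}|<\mu$ holds in all cases and the rest of your argument goes through verbatim; you should invoke that bound rather than the stated $|i|+\aleph_0$ one, or else restrict your displayed inequality to uncountable $\mu$ and treat $\mu=\aleph_0$ separately.
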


The following is a straightforward generalization of \cite[Lemma~6.2.2]{MR2355670}.
\begin{lemma}\label{lemmaB}
$T(\rho_1)$ is $A'(\vec C)$-coherent.
\end{lemma}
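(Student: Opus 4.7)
The plan is to unpack the definition of $A'(\vec C)$-coherence and show that the defining property of $A'(\vec C)$ forces the walks from any two ordinals $\alpha,\beta\in[\delta,\kappa)$ to produce $\rho_1$-values that agree on a tail of $\gamma<\delta$. Fix $\delta\in A'(\vec C)$ and two nodes of $T(\rho_1)$ at level $\delta$, which take the form $\rho_{1\alpha}\restriction\delta$ and $\rho_{1\beta}\restriction\delta$ for some $\alpha,\beta\in[\delta,\kappa)$; by symmetry and triviality of the boundary cases it is enough to treat $\alpha,\beta\in(\delta,\kappa)$. The goal is to produce $\epsilon<\delta$ such that $\rho_1(\gamma,\alpha)=\rho_1(\gamma,\beta)$ for every $\gamma\in(\epsilon,\delta)$.

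First I would invoke the inclusion $A'(\vec C)\s A(\vec C)$ noted right after Definition~\ref{def44} to obtain $\lambda(\delta,\alpha),\lambda(\delta,\beta)<\delta$, and then use Fact~\ref{fact2}: for every $\gamma\in(\max\{\lambda(\delta,\alpha),\lambda(\delta,\beta)\},\delta)$, we get $\tr(\gamma,\alpha)=\tr(\delta,\alpha){}^\smallfrown\tr(\gamma,\delta)$ together with its analogue for $\beta$. Passing to suprema converts these into
\[
\rho_1(\gamma,\alpha)=\max\{\rho_1(\delta,\alpha),\,\rho_1(\gamma,\delta)\}\quad\text{and}\quad\rho_1(\gamma,\beta)=\max\{\rho_1(\delta,\beta),\,\rho_1(\gamma,\delta)\}.
\]
So it suffices to arrange that the term $\rho_1(\gamma,\delta)$ eventually dominates the two constants $\rho_1(\delta,\alpha)$ and $\rho_1(\delta,\beta)$. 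For the upper bound on those constants, each entry of $\rho_0(\delta,\alpha)$ is of the form $\otp(C_\eta\cap\delta)$ for some $\eta\in\im(\tr(\delta,\alpha))$, and every such $\eta$ lies strictly above $\delta$; the defining property of $A'(\vec C)$ then gives $\otp(C_\eta\cap\delta)<\cf(\delta)$, whence $\rho_1(\delta,\alpha)<\cf(\delta)$ and symmetrically $\rho_1(\delta,\beta)<\cf(\delta)$. For the lower bound, $\rho_1(\gamma,\delta)\ge\otp(C_\delta\cap\gamma)$, and since $\sup(C_\delta)=\delta$ and $\otp(C_\delta)\ge\cf(\delta)$, the values $\otp(C_\delta\cap\gamma)$ are cofinal in $\otp(C_\delta)$ and hence eventually meet or exceed $\cf(\delta)$. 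Choosing $\epsilon\in(\max\{\lambda(\delta,\alpha),\lambda(\delta,\beta)\},\delta)$ large enough to absorb both features collapses both maxima to the common value $\rho_1(\gamma,\delta)$ on $(\epsilon,\delta)$, yielding the desired agreement.

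I do not foresee a serious obstacle; the delicate point is recognizing the twin roles of the $A'(\vec C)$-hypothesis --- it supplies $\delta\in A(\vec C)$, which is what enables the walk factorization via Fact~\ref{fact2}, and simultaneously pins the ``upper'' contribution $\rho_1(\delta,\cdot)$ strictly below $\cf(\delta)$, so that the naturally growing tail $\otp(C_\delta\cap\gamma)$ inside $C_\delta$ can overwhelm it in the limit.
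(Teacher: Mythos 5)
Your route is genuinely different from the paper's. The paper argues by contradiction with a minimal counterexample $\alpha>\delta$, peels off a single step of the walk to $\bar\alpha:=\min(C_\alpha\setminus\delta)$, and combines the minimality hypothesis with Lemma~\ref{lemma19} to locate a point where $\rho_1(\xi,\delta)$ and $\rho_1(\xi,\alpha)$ agree inside the putative cofinal set of disagreement. You instead use the full factorization $\tr(\gamma,\alpha)=\tr(\delta,\alpha){}^\smallfrown\tr(\gamma,\delta)$ --- legitimate, since $A'(\vec C)\s A(\vec C)$ gives $\lambda(\delta,\alpha)<\delta$ --- to get $\rho_1(\gamma,\alpha)=\max\{\rho_1(\delta,\alpha),\rho_1(\gamma,\delta)\}$ on a tail, and then show that the second coordinate eventually dominates. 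This is more direct, avoids Lemma~\ref{lemma19} altogether, and cleanly separates the two uses of the hypothesis: membership in $A(\vec C)$ enables the factorization, while the $\cf(\delta)$-bound controls the head term.

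There is, however, one false step. You claim that the values $\otp(C_\delta\cap\gamma)$ ``eventually meet or exceed $\cf(\delta)$''. A proper initial segment of a well-order has strictly smaller order type, so whenever $\otp(C_\delta)=\cf(\delta)$ --- which is the typical situation, e.g.\ for a $\mu$-bounded $C$-sequence at $\delta\in E^{\mu^+}_\mu$ --- one has $\otp(C_\delta\cap\gamma)<\cf(\delta)$ for every $\gamma<\delta$, and the threshold $\cf(\delta)$ is never attained. Fortunately you do not need this: set $i:=\max\{\rho_1(\delta,\alpha),\rho_1(\delta,\beta)\}$, which by your (correct) upper bound satisfies $i<\cf(\delta)\le\otp(C_\delta)$. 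Since $\gamma\mapsto\otp(C_\delta\cap\gamma)$ is nondecreasing with supremum $\otp(C_\delta)$, there is an $\epsilon<\delta$ such that $\rho_1(\gamma,\delta)\ge\otp(C_\delta\cap\gamma)>i$ for all $\gamma\in(\epsilon,\delta)$, whereupon both maxima collapse to the common value $\rho_1(\gamma,\delta)$ as intended. With that one-line repair the proof is correct.
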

\begin{proof} Let $\delta\in A'(\vec C)$. Towards a contradiction, suppose that there exists an $\alpha\ge\delta$ such that
$$\sup\{\xi<\delta\mid \rho_1(\xi,\delta)\neq \rho_1(\xi,\alpha)\}<\delta,$$
and let $\alpha$ be the least such. Clearly, $\alpha>\delta$.
As $\delta\in A'(\vec C)$, $\sup(C_\alpha\cap\delta)<\delta$, so that
the following set is cofinal in $\delta$:
$$X:=\{\xi\in(\sup(C_\alpha\cap\delta),\delta)\mid \rho_1(\xi,\delta)\neq \rho_1(\xi,\alpha)\}.$$
Set $\bar\alpha:=\min(C_\alpha\setminus\delta)$.
As $\delta\in A'(\vec C)$, $i:=\otp(C_\alpha\cap\delta)$ is smaller than $\cf(\delta)$ and then Lemma~\ref{lemma19} implies that
$X':=\{\xi\in X\mid \rho_1(\xi,\bar\alpha)>i\}$ is cofinal in $\delta$.
As $\bar\alpha\in[\delta,\alpha)$, the minimality of $\alpha$ implies that we may pick a $\xi\in X'$ such that $\rho_1(\xi,\delta)=\rho_1(\xi,\bar\alpha)$.
Now, $\rho_1(\xi,\alpha)=\max\{\otp(C_\alpha\cap\xi),\rho_1(\xi,\bar\alpha)\}$,
and $\otp(C_\alpha\cap\xi)=\otp(C_\alpha\cap\delta)=i<\rho_1(\xi,\bar\alpha)$,
so that $\rho_1(\xi,\alpha)=\rho_1(\xi,\bar\alpha)=\rho_1(\xi,\delta)$, contradicting the fact that $\xi\in X'\s X$.
\end{proof}

As mentioned in the introduction,
in his book \cite[Question~2.2.18]{MR2355670}, Todor\v{c}evi\'c asks what is a condition
one needs to put on a $C$-sequence $\vec C$ on $\omega_1$ in order to guarantee that the corresponding $T(\rho_1^{\vec C})$ is a special Aronszajn tree.\footnote{To compare,
conditions on $\vec C$ that make $T(\rho_1^{\vec C})$ nonspecial and almost-Souslin were known \cite{MR2194042}. See \cite[Theorem~B]{paper29} for an application at successors of singulars.}
Indeed, unlike $T(\rho^{\vec C}_0)$ and $T(\rho^{\vec C}_2)$ that are outright special for any $\vec C$ over $\omega_1$ that is $\omega$-bounded,
it was not known whether $\zfc$ proves the existence of a special $\aleph_1$-Aronszajn tree of the form $T(\rho_1^{\vec C})$.

The next lemma answers the question of Todor\v{c}evi\'c by providing a condition that applies to all successors of regulars, and the subsequent lemma shows that the requisite $C$-sequences can be constructed as well.
\begin{lemma}\label{lemma2} Suppose that $\mu$ is an infinite regular cardinal, and $\rho_1$ is obtained by walking along a weakly coherent $\mu$-bounded $C$-sequence $\vec C= \langle C_\delta \mid \delta <\mu^+\rangle$.
Suppose also that for some club $D\s\mu^+$, for every $\delta\in E^{\mu^+}_\mu\cap D$,
for every pair $\alpha<\beta$ of successive elements of $C_\delta$, $\otp(C_\beta\cap\alpha)>\otp(C_\delta\cap\alpha)+1$.
Then $T(\rho_1)$ is special.
\end{lemma}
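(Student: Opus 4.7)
The strategy is to apply Lemma~\ref{lemma15} to the streamlined $\mu^+$-tree $T(\rho_1)$, setting $\Delta := D \cap E^{\mu^+}_\mu$ and $b_\delta := \rho_{1\delta}$ for $\delta \in \Delta$. I will verify the $E^{\mu^+}_\mu$-coherence hypothesis and then prove that $\{b_\delta \mid \delta \in \Delta\}$ is an antichain in $T(\rho_1)$; then the constant $d \equiv 0$ is trivially injective on chains (which are singletons) and regressive, so Lemma~\ref{lemma15} yields that $T(\rho_1)$ is special.

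For the coherence, I first observe that the $\mu$-boundedness of $\vec C$ forces $E^{\mu^+}_\mu \subseteq A'(\vec C)$: for any $\delta \in E^{\mu^+}_\mu$ and $\alpha \in (\delta, \mu^+)$, since $\sup(C_\alpha) = \alpha > \delta$, the set $C_\alpha \cap \delta$ is a proper initial segment of $C_\alpha$, so $\otp(C_\alpha \cap \delta) < \otp(C_\alpha) \leq \mu = \cf(\delta)$. Lemma~\ref{lemmaB} then delivers the required $E^{\mu^+}_\mu$-coherence of $T(\rho_1)$.

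The main combinatorial input is a two-sided ``dip'' of $\rho_{1\delta}$ at elements of $C_\delta$. Fix $\delta \in \Delta$ and enumerate $C_\delta$ as $\langle \delta_i \mid i < \mu\rangle$. A direct walk computation gives $\rho_{1\delta}(\delta_i) = i$ for every $i < \mu$. The hypothesis $\otp(C_{\delta_{i+1}} \cap \delta_i) > i + 1$ yields $\rho_1(\xi, \delta_{i+1}) \geq \otp(C_{\delta_{i+1}} \cap \xi) > i + 1$ for $\xi \in (\delta_i, \delta_{i+1})$, whence $\rho_{1\delta}(\xi) > i$ on that interval; symmetrically, applying the hypothesis to the successive pair $(\delta_{i-1},\delta_i)$ gives $\rho_{1\delta}(\xi) > i$ on $(\delta_{i-1}, \delta_i)$. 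Thus each $\delta_i$ with $i \geq 1$ is a strict local minimum of $\rho_{1\delta}$ on $(\delta_{i-1}, \delta_{i+1})$ at the value $i$.

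To close, suppose towards a contradiction that $b_\gamma \subsetneq b_\delta$ for some $\gamma < \delta$ in $\Delta$, i.e.\ $\rho_{1\delta} \restriction \gamma = \rho_{1\gamma}$; enumerate $C_\gamma$ as $\langle \gamma_i \mid i < \mu\rangle$. By induction on $i \geq 1$, I will show $\gamma_i = \delta_i$. Since $\rho_{1\delta}(\gamma_i) = \rho_{1\gamma}(\gamma_i) = i$, either $\gamma_i \in C_\delta$, forcing $\gamma_i = \delta_i$ by reading off the index, or $\gamma_i \in (\delta_j, \delta_{j+1})$ with $j + 1 \leq i$; in the latter case, $j + 1 = i$ violates the dip property at $\delta_i$, while $j + 1 < i$ (necessarily with $j + 1 \geq 1$) gives, via the induction hypothesis, $\delta_{j+1} = \gamma_{j+1} < \gamma_i$, contradicting $\gamma_i < \delta_{j+1}$. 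Limit stages are automatic from continuity of the enumeration. Therefore $\gamma = \sup_{1 \leq i < \mu} \gamma_i = \sup_{1 \leq i < \mu} \delta_i = \delta$, the desired contradiction. The main obstacle is the two-sided dip computation, which relies crucially on the strict-inequality form of the hypothesis propagating to both sides of each $\delta_i$.
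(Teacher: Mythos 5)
Your reduction to Lemma~\ref{lemma15} and your ``dip'' computation are both sound: for $\delta\in\Delta$ and $\xi\in(\delta_j,\delta_{j+1})$ one indeed gets $\rho_{1\delta}(\xi)\ge\rho_1(\xi,\delta_{j+1})\ge\otp(C_{\delta_{j+1}}\cap\delta_j)>j+1$, while $\rho_{1\delta}(\delta_i)=i$. The problem is the antichain claim itself. Your case analysis at stage $i$ tacitly assumes $\gamma_i\ge\delta_0=\min(C_\delta)$, so that $\gamma_i$ either lies in $C_\delta$ or in a gap $(\delta_j,\delta_{j+1})$. The case $\gamma_i<\min(C_\delta)$ is never addressed, and it cannot be dismissed: if, say, $\min(C_\delta)=\gamma$ for some $\gamma\in\Delta\cap\delta$, then for every $\xi<\gamma$ the walk from $\delta$ to $\xi$ begins $\delta\to\gamma\to\cdots$ with $\otp(C_\delta\cap\xi)=0$, whence $\rho_1(\xi,\delta)=\max\{0,\rho_1(\xi,\gamma)\}=\rho_1(\xi,\gamma)$ and therefore $\rho_{1\gamma}=\rho_{1\delta}\restriction\gamma$ outright. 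Nothing in the hypotheses of the lemma rules this configuration out, so $\{\rho_{1\delta}\mid\delta\in\Delta\}$ need not be an antichain, and the constant function $d\equiv0$ need not be injective on chains.

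The repair is exactly where the paper diverges from you: instead of a constant, take the regressive map $d(\rho_{1\delta}):=\min(C_\delta)$. Injectivity on chains then only requires handling comparable pairs with $\gamma_0=\delta_0$, which anchors a ``least disagreement'' argument: the least $k$ with $\gamma_k\ne\delta_k$ is a successor $j+1$, and (say) $\gamma_k\in(\delta_j,\delta_{j+1})$ puts your dip inequality $\rho_{1\delta}(\gamma_k)>j+1=k$ against $\rho_{1\gamma}(\gamma_k)=k$. So your central computation is the right one, but it must be packaged as injectivity on chains of $\min(C_\delta)$ rather than as an antichain statement; as written, the base of your induction can fail and the argument does not go through.
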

\begin{proof}
As $\vec C$ is weakly coherent, Fact~\ref{lemma41} implies that $T(\rho_0)$ is a $\mu^+$-tree, and hence so is $T(\rho_1)$.
As $\vec C$ is $\mu$-bounded, $E^{\mu^+}_\mu\s A'(\vec C)$ and hence Lemma~\ref{lemmaB} implies that $T(\rho_1)$ is $E^{\mu^+}_\mu$-coherent.
Let $d:\{\rho_{1\delta}\mid\delta\in D\cap E^{\mu^+}_{\mu}\}\rightarrow\mu^+$ be the function defined via $d(\rho_{1\delta}):=\min(C_\delta)$.
By Lemma~\ref{lemma15}, it suffices to prove that the map $d$ is injective on chains.

To this end, let $\gamma<\delta$ be a pair of ordinals in $D\cap E^{\mu^+}_\mu$ such that $\rho_{1\gamma}\subsetneq\rho_{1\delta}$.
Let $\langle \gamma_i \mid i<\mu\rangle$ be the increasing enumeration of $C_\gamma$, and $\langle \delta_i \mid i< \mu\rangle$ that of $C_\delta$.
Towards a contradiction, suppose that $d(\rho_{1\gamma})=d(\rho_{1\delta})$, so that $\gamma_0=\delta_0$.
Let $k<\mu$ be the least ordinal such that $\gamma_k\neq \delta_k$.
Then $k>0$ and since $C_\gamma$ and $C_\delta$ are closed,
$k$ is a successor ordinal, say $k=j+1$.
Now, there are two cases:

$\br$ If $\gamma_k<\delta_k$, then
$$\gamma_0=\delta_0< \cdots <\gamma_j=\delta_j< \gamma_k<\delta_k<\delta,$$
and hence
$$\begin{aligned}\rho_{1\delta}(\gamma_k)&=\max\{\otp(C_\delta\cap \gamma_k),\rho_{1\delta_k}(\gamma_k)\}\\
&\ge\rho_{1\delta_k}(\gamma_k)\\
&\ge\otp(C_{\delta_k}\cap\gamma_k)\\
&\ge\otp(C_{\delta_k}\cap\delta_j)\\
&>\otp(C_{\delta}\cap\delta_j)+1\\
&=j+1=k=\rho_{1\gamma}(\gamma_k).
\end{aligned}$$
This is a contradiction.

$\br$ If $\delta_k<\gamma_k$, then a similar reasoning yields a contradiction.
\end{proof}

\begin{lemma}\label{lemma66}
Suppose that $\mu$ is an infinite regular cardinal, and there exists a special $\mu^+$-Aronszajn tree.
Then there exists a weakly coherent $\mu$-bounded $C$-sequence $\vec C= \langle C_\delta \mid \delta <\mu^+\rangle$
and a club $D\s\mu^+$ such that for every $\delta\in D$,
for every pair $\alpha<\beta$ of successive elements of $C_\delta$, $\otp(C_\beta\cap\alpha)>\otp(C_\delta\cap\alpha)+1$.
In particular, $T(\rho_1)$ is special.
\end{lemma}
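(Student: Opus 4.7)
The plan is to reduce to $\square^*_\mu$ and then carry out a canonical thickening of the resulting $\square^*_\mu$-sequence. By classical equivalences (due to Jensen~\cite{jen72} and Todor\v{c}evi\'c~\cite{TodActa}, revisited in \sect{canonicalsect}), the existence of a special $\mu^+$-Aronszajn tree is equivalent to $\square^*_\mu$; so we work under $\square^*_\mu$ and invoke Theorem~\ref{thm421} to fix a $\square^*_\mu$-sequence $\vec D=\langle D_\delta\mid \delta<\mu^+\rangle$ together with a set $S\s E^{\mu^+}_\mu$ such that $D_\delta\s\acc(\delta)$ for every $\delta\in S$, and with the auxiliary club-guessing data $1\in\Theta_1^\mu(\vec D\restriction S,E^{\mu^+}_\mu)$.

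The heart of the proof is to modify $\vec D$ into the required $\vec C$ by a canonical thickening. For each $\delta\in\acc(\mu^+)$, enumerate $D_\delta=\langle d^\delta_i\mid i<\otp(D_\delta)\rangle$ and, between consecutive $d^\delta_i$ and $d^\delta_{i+1}$, insert a canonical closed block $x^i_\delta\s[d^\delta_i,d^\delta_{i+1})$ whose contents depend only on the pair $(d^\delta_i,d^\delta_{i+1})$ together with the running parameter $\iota_i:=\otp(\bigcup_{j<i}x^j_\delta)$. This adapts the building-block scheme from the proof of Lemma~\ref{thm421b}, where the flexibility in selecting each $y_{\alpha,\iota,\nu,n}$ lets us prescribe the order type of the insertion. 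Setting $C_\delta:=\bigcup_i x^i_\delta$, weak coherence follows from the canonicity of each insertion as in Claim~\ref{c4332}, $\mu$-boundedness from keeping each block of order type strictly less than $\mu$, and the spreading condition by arranging that, whenever $\beta=d^\delta_{i+1}$, the sequence $C_\beta$ inherits---via its own canonical thickening of $D_\beta$---strictly more than $i+1$ elements below $\alpha=d^\delta_i$. Once $\vec C$ is in hand, Lemma~\ref{lemma2} immediately yields that $T(\rho_1^{\vec C})$ is special.

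The main obstacle is the combinatorial calibration of the insertions: they must be canonical enough (for weak coherence), small enough (for $\mu$-boundedness), and yet cumulatively dense enough (for the spreading condition). The flexibility of the $y_{\alpha,\iota,\nu,n}$-scheme from Lemma~\ref{thm421b}---in particular, the ability to tune each block's order type through the parameters $(\iota,\nu,n)$---is precisely what enables this calibration without breaking the dependence on $(d^\delta_i,d^\delta_{i+1})$ alone.
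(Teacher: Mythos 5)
Your reduction to $\square^*_\mu$ and your closing appeal to Lemma~\ref{lemma2} match the paper, but the core of the argument --- the mechanism that forces $\otp(C_\beta\cap\alpha)>\otp(C_\delta\cap\alpha)+1$ for \emph{every} pair $\alpha<\beta$ of successive elements of $C_\delta$ --- is missing, and the sketch you give of it does not work. The difficulty is that this is a condition coupling $C_\delta$ with $C_\beta$, where $\beta$ ranges over all of $\nacc(C_\delta)$. In your scheme these $\beta$'s are either elements of $D_\delta$ or ordinals sitting inside an inserted block $x^i_\delta$, and in neither case is there any control over $C_\beta$ below $\alpha$: the set $D_\beta$ is an arbitrary club in $\beta$ and may well satisfy $\min(D_\beta)>\alpha$, so its ``canonical thickening'' need not contain even a single point below $\alpha$, let alone more than $i+1$ of them. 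Making each block depend only on $(d^\delta_i,d^\delta_{i+1},\iota_i)$ buys you weak coherence, but it does nothing to coordinate the \emph{position} of $\beta$ inside $C_\delta$ with the \emph{content} of $C_\beta$, which is exactly what the spreading condition demands.

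The paper's proof supplies this coordination with a device absent from your proposal: fix a nonstationary cofinal $Z\s E^{\mu^+}_\mu$, partition it into $\mu$ cofinal pieces $\langle Z_i\mid i<\mu\rangle$, and take a club $C\s\bigcap_{i<\mu}\acc^+(Z_i)$ disjoint from $Z$ with $C_\delta\s C$ for $\delta$ in a club $D$. One then \emph{replaces} (rather than pads) the element of $C_\delta$ in slot $i+1$ by a nearby member of $Z_i$, and --- this is the key second half --- for every $\beta\in Z_i$ one redefines $C_\beta$ to contain the initial segment $i+2$. Since every successor element $\beta$ of the modified $C_\delta$ with $\otp(C_\delta\cap\beta)=i+1$ now lies in $Z_i$, and its predecessor $\alpha$ is at least $\mu$, one gets $\otp(C_\beta\cap\alpha)\ge i+2$ outright. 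Without some analogue of this two-sided bookkeeping your construction cannot establish the required inequality. (Two lesser points: the lemma also covers $\mu=\omega$, where Theorem~\ref{thm421} does not apply; and the club-guessing data from that theorem is not needed here --- all the proof requires from the hypothesis is a weakly coherent $\mu$-bounded $C$-sequence.)
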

\begin{proof} Let $Z$ be a nonstationary cofinal subset of $E^{\mu^+}_\mu$, e.g., $Z:=\nacc(E^{\mu^+}_\mu)$.
Let $\langle Z_i\mid i<\mu\rangle$ be a partition of $Z$ into cofinal sets.
Let $C$ be a subclub of $\bigcap_{i<\mu}\acc^+(Z_i)$ that is disjoint from $Z$.
Denote $D:=\acc(C)$.
\begin{claim} There exists a weakly coherent $\mu$-bounded $C$-sequence $\vec C=\langle C_\delta\mid\delta<\mu^+\rangle$
such that $C_\delta\s C$ for every $\delta\in D$.
\end{claim}
\begin{why} If $\mu=\omega$, then let $\langle C_\delta\mid\delta<\omega_1\rangle$ be any $\omega$-bounded ladder system
such that $C_\delta\s C$ for every $\delta\in D$.

Next, suppose that $\mu>\omega$.
Using \cite[Theorem~6.1.14]{MR2355670},
fix a weakly coherent $\mu$-bounded $C$-sequence $\langle c_\delta\mid\delta<\mu^+\rangle$.
Define $\langle C_\delta\mid\delta<\mu^+\rangle$ via
$$C_\delta:=\begin{cases}
\{ \min(C\setminus\gamma)\mid \gamma\in c_\delta\},&\text{if }\delta\in D;\\
c_\delta,&\text{otherwise}.
\end{cases}$$
A standard verification (cf.~\cite[Lemma~2.8]{paper32}) establishes that $\langle C_\delta\mid\delta<\mu^+\rangle$ is as sought.
\end{why}

Fix a function $f:\mu\times\mu^+\times\mu^+\rightarrow\mu^+$ such that for every $j<\mu$ and for every pair $\beta<\gamma$ of ordinals in $\mu^+$,
$$f(j,\beta,\gamma):=\begin{cases}
\min(Z_i\cap[\beta,\gamma)),&\text{if }j= i+1\text{ and }Z_i\cap[\beta,\gamma)\neq\emptyset;\\
\min(Z_j\cap[\beta,\gamma)),&\text{if }j=\sup(j)\text{ and }Z_j\cap[\beta,\gamma)\neq\emptyset;\\
\beta,&\text{otherwise},
\end{cases}$$
and note that $\beta\le f(j,\beta,\gamma)<\gamma$.
Let $\vec C=\langle C_\delta\mid\delta<\mu^+\rangle$ be given by the claim,
and then, for every $\delta<\mu^+$, set
$$C_\delta^\bullet:=\begin{cases}
\{ f(\otp(C_\delta\cap\gamma),\sup(C_\delta\cap\gamma),\gamma)\mid \gamma\in C_\delta\},&\text{if }\delta\in D;\\
(i+2)\cup C_\delta,&\text{if }\delta\in Z_i;\\
C_\delta,&\text{otherwise}.
\end{cases}$$

It is clear that for every $\delta\in D$, $\sup(C_\delta^\bullet)=\sup(C_\delta)=\delta$, $\otp(C_\delta^\bullet)=\otp(C_\delta)$ and $\acc(C_\delta^\bullet)=\acc(C_\delta)$.
Thus, altogether $\vec C^\bullet:=\langle C_\delta^\bullet\mid \delta<\mu^+\rangle$ is a $\mu$-bounded $C$-sequence.
\begin{claim} $\vec C^\bullet$ is weakly coherent.
\end{claim}
\begin{why} Suppose not. As $\vec C$ is weakly coherent, this must mean that there exists some $\epsilon<\mu^+$ such that
$\{ C_\delta^\bullet\cap\epsilon\mid \delta\in D\}$ has size $\mu^+$.
By passing to the least such $\epsilon$, we moreover get that
$\{ C_\delta^\bullet\cap\epsilon\mid \delta\in D, \sup(C_\delta^\bullet\cap\epsilon)=\epsilon\}$ has size $\mu^+$.
Thus, fix a cofinal $\Delta\s D\setminus(\epsilon+1)$ such that:
\begin{itemize}
\item $\delta\mapsto C_\delta^\bullet\cap\epsilon$ is injective over $\Delta$;
\item $\sup(C_\delta^\bullet\cap\epsilon)=\epsilon$ for every $\delta\in\Delta$;
\item $\delta\mapsto C_\delta\cap\epsilon$ is constant over $\Delta$, with value say $c$.
\end{itemize}

As $\acc(C_\delta^\bullet)=\acc(C_\delta)$ for every $\delta\in D$, it follows that $\sup(c)=\epsilon$.
But then, for every $\delta\in\Delta$, $C_\delta^\bullet\cap\epsilon=\{f(\otp(c\cap\gamma),\sup(c\cap\gamma),\gamma)\mid \gamma\in c\}$,
contradicting the first bullet.
\end{why}
\begin{claim} Let $\delta\in D$.
Suppose $\alpha<\beta$ are successive elements of $C^\bullet_\delta$. Then $\otp(C^\bullet_\beta\cap\alpha)>\otp(C^\bullet_\delta\cap\alpha)+1$.
\end{claim}
\begin{why}
Let $i:=\otp(C_\delta^\bullet\cap\alpha)$, and note that $\otp(C_\delta^\bullet\cap\beta)=i+1$.
Let $\gamma$ be the unique element of $C_\delta$ such that $\otp(C_\delta\cap\gamma)=i+1$.
Then $\beta=f(i+1,\sup(C_\delta\cap\gamma),\gamma)$.
Since $\delta\in D$, it is the case that $C_\delta\s C\s \acc^+(Z_i)$,
and hence $\beta\in Z_i$. So $i+2\s C_\beta^\bullet$.
Let $\bar\gamma$ be the unique element of $C_\delta$ such that $\otp(C_\delta\cap\bar\gamma)=i$.
Then $\alpha=f(i,\sup(C_\delta\cap\bar\gamma),\bar\gamma)$ and hence either $\alpha=\bar\gamma\in C_\delta\s C\s\mu^+\setminus\mu$
or $\alpha\in\bigcup_{j\le i}Z_j\s E^{\mu^+}_\mu$, so that $\alpha\ge\mu$.
Altogether, $\otp(C_\beta^\bullet\cap\alpha)\ge \otp(C_\beta^\bullet\cap\mu)\ge i+2>i+1=\otp(C_\delta^\bullet\cap\alpha)+1$.
\end{why}
By Lemma~\ref{lemma2}, $T(\rho_1^{\vec C^\bullet})$ is a special $\mu^+$-Aronszajn tree.
\end{proof}

We now arrive at Theorem~\ref{thmd}.

\begin{cor}\label{cor66}
Suppose that $\mu$ is an infinite regular cardinal. If there exists a special $\mu^+$-Aronszajn tree,
then there exists a $C$-sequence $\vec C$ over $\mu^+$ such that $T(\rho_0^{\vec C}),T(\rho_1^{\vec C}),T(\rho_2^{\vec C})$
are all special $\mu^+$-Aronszajn trees.
\end{cor}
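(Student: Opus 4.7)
The plan is to invoke Lemma~\ref{lemma66} to secure a $C$-sequence making $T(\rho_1)$ special, and then to deduce specialness of $T(\rho_0)$ and $T(\rho_2)$ as essentially free consequences.

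First, I would apply Lemma~\ref{lemma66} to obtain a weakly coherent $\mu$-bounded $C$-sequence $\vec C = \langle C_\delta \mid \delta < \mu^+\rangle$ for which $T(\rho_1^{\vec C})$ is a special $\mu^+$-Aronszajn tree. By Fact~\ref{lemma41}, weak coherence of $\vec C$ ensures that $T(\rho_0^{\vec C})$ and $T(\rho_2^{\vec C})$ are $\mu^+$-trees as well; since being special rules out $\mu^+$-chains, it suffices to exhibit each of the three trees as a special $\mu^+$-tree.

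Second, I would transfer specialness from $T(\rho_1^{\vec C})$ down to $T(\rho_0^{\vec C})$ via the natural ``sup'' projection. By Lemma~\ref{fact13}, fix a good colouring $d : T(\rho_1^{\vec C}) \restriction D \rightarrow \mu^+$ and define $c : T(\rho_0^{\vec C}) \restriction D \rightarrow \mu^+$ by $c(x) := d(\sup \circ x)$. Using $\rho_1(\beta, \delta) = \sup(\rho_0(\beta, \delta))$, for every $\gamma \le \delta < \mu^+$ we have $\sup \circ (\rho_{0\delta}^{\vec C} \restriction \gamma) = \rho_{1\delta}^{\vec C} \restriction \gamma$, so $c$ is well-defined, lands in $T(\rho_1^{\vec C}) \restriction D$ by level-preservation, and satisfies $c(x) < \h(x)$. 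Moreover, whenever $x \subsetneq y$ in $T(\rho_0^{\vec C}) \restriction D$, we have $\sup \circ x = (\sup \circ y) \restriction \h(x) \subsetneq \sup \circ y$, whence $c(x) = d(\sup \circ x) \neq d(\sup \circ y) = c(y)$. Thus $c$ is a good colouring, and by Lemma~\ref{fact13}, $T(\rho_0^{\vec C})$ is special.

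Finally, applying Lemma~\ref{lemma511} to the same $\vec C$ yields that $T(\rho_2^{\vec C})$ is special as well. The only point that requires any verification is whether the ``sup'' projection really separates $\subsetneq$-related nodes, and this dissolves immediately because the projection is level-preserving and $\rho_{1\delta}^{\vec C}$ is a function on all of $\delta$; all the serious content has been absorbed into Lemma~\ref{lemma66} and Lemma~\ref{lemma511}.
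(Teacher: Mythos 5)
Your proposal is correct, and it follows the paper's decomposition except at one step, where you do something genuinely different and arguably nicer. The paper also starts from Lemma~\ref{lemma66} to get a weakly coherent $\mu$-bounded $\vec C$ with $T(\rho_1^{\vec C})$ special, and also finishes by invoking Lemma~\ref{lemma511} to pass from $T(\rho_0^{\vec C})$ to $T(\rho_2^{\vec C})$. For the middle step, however, the paper simply cites Todor\v{c}evi\'c's theorem (\cite[Theorem~6.1.14]{MR2355670}) that $T(\rho_0^{\vec C})$ is special for \emph{any} weakly coherent $\mu$-bounded $C$-sequence (with a footnote offering the alternative of routing through $T(\rho_2^{\vec C})$ via Lemma~\ref{lemma511} and Lemma~\ref{lemma57}). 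You instead pull a good colouring back from $T(\rho_1^{\vec C})$ along the level-preserving, $\subseteq$-preserving projection $x\mapsto\sup\circ x$, which is legitimate: since $\rho_{1\delta}=\sup\circ\rho_{0\delta}$ pointwise, the projection maps $T(\rho_0^{\vec C})$ into $T(\rho_1^{\vec C})$ level-by-level, and a good colouring composed with such a projection is again a good colouring, so Lemma~\ref{fact13} applies (weak coherence plus Fact~\ref{lemma41} supplying that $T(\rho_0^{\vec C})$ is a $\mu^+$-tree). What your route buys is self-containment — no appeal to the external specialness theorem for $T(\rho_0)$ — together with the observation, not recorded in the paper, that specialness of $T(\rho_1^{\vec C})$ always transfers down to $T(\rho_0^{\vec C})$ by the same mechanism that Lemma~\ref{lemma511} uses (there via the length projection $\ell\circ x$ from $T(\rho_0)$ to $T(\rho_2)$). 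What the paper's route buys is brevity and the fact that the cited theorem does not need $T(\rho_1^{\vec C})$ to be special at all.
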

\begin{proof} Using Lemma~\ref{lemma66},
fix a weakly coherent $\mu$-bounded $C$-sequence $\vec C$ over $\mu^+$ such that $T(\rho^{\vec C}_1)$ is a special $\mu^+$-Aronszajn tree.
By \cite[Theorem~6.1.14]{MR2355670},
since $\vec C$ is a weakly coherent and $\mu$-bounded $C$-sequence, $T(\rho^{\vec C}_0)$ is as well a special $\mu^+$-Aronszajn tree.\footnote{This also follows from Lemma~\ref{lemma511} and the upcoming Lemma~\ref{lemma57}.\label{footnotecor66}}
Then, by Lemma~\ref{lemma511}, $T(\rho^{\vec C}_2)$ is a special $\mu^+$-Aronszajn tree, as well.
\end{proof}

\section{\texorpdfstring{$T(\rho_2)$}{T(rho2)}}\label{sec7}

\begin{lemma}
For all $\alpha<\beta<\kappa$ such that $\rho_{2\alpha}$ and $\rho_{2\beta}$ are incomparable,
$\Delta(\rho_{2\alpha},\rho_{2\beta})\notin V(\vec C)$.
\end{lemma}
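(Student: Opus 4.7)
The statement is a $\rho_2$-counterpart of Lemma~\ref{lemma2444}, and the plan is to mimic its proof skeleton. Since the values of $\rho_2$ are integers, no analog of the ``goodness'' hypothesis on $h$ is needed, and the argument will in fact be a shade simpler. Fix $\alpha<\beta<\kappa$ as in the hypothesis and let $\delta:=\Delta(\rho_{2\alpha},\rho_{2\beta})$, which we may assume lies in $\acc(\kappa)$ (else $\delta\notin V(\vec C)$ trivially). Set $\bar\alpha:=\last\delta\alpha$, $\bar\beta:=\last\delta\beta$ and $\epsilon:=\max\{\lambda_2(\delta,\alpha),\lambda_2(\delta,\beta)\}$. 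By the defining property of $\lambda_2$ together with Fact~\ref{fact2}, for every $\xi\in(\epsilon,\delta)$ the walks split, and taking lengths yields
$$\rho_2(\xi,\alpha)=\rho_2(\bar\alpha,\alpha)+\rho_2(\xi,\bar\alpha)\quad\text{and}\quad\rho_2(\xi,\beta)=\rho_2(\bar\beta,\beta)+\rho_2(\xi,\bar\beta).$$

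The first step is to establish that $n:=\rho_2(\bar\alpha,\alpha)$ equals $\rho_2(\bar\beta,\beta)$. If not, assume without loss of generality $n<\rho_2(\bar\beta,\beta)$. Using that $\sup(C_{\bar\alpha}\cap\delta)=\delta$ (by Fact~\ref{last}(ii) if $\bar\alpha>\delta$, and simply because $\delta\in\acc(\kappa)$ and $C_\delta$ is club in $\delta$ if $\bar\alpha=\delta$), pick $\xi\in C_{\bar\alpha}\cap(\epsilon,\delta)$. Then $\rho_2(\xi,\bar\alpha)=1$, so $\rho_2(\xi,\alpha)=n+1$, and the agreement $\rho_{2\alpha}\restriction\delta=\rho_{2\beta}\restriction\delta$ forces $\rho_2(\xi,\beta)=n+1$; but the second display gives $\rho_2(\xi,\beta)\ge\rho_2(\bar\beta,\beta)+1\ge n+2$, a contradiction. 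Feeding the common value $n$ back into the two displays gives $\rho_2(\xi,\bar\alpha)=\rho_2(\xi,\bar\beta)$ for every $\xi\in(\epsilon,\delta)$, and using that $\rho_2(\xi,\gamma)=1\iff\xi\in C_\gamma$ for $\xi<\gamma$, this yields
$$C_{\bar\alpha}\cap(\epsilon,\delta)=C_{\bar\beta}\cap(\epsilon,\delta).$$

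The final step is a case-split on whether $\bar\alpha,\bar\beta$ equal $\delta$ or strictly exceed it. If $\bar\alpha=\delta=\bar\beta$, then $\rho_2(\delta,\alpha)=n=\rho_2(\delta,\beta)$; if $\bar\alpha>\delta$ and $\bar\beta>\delta$, then $\delta\in\acc(C_{\bar\alpha})\cap\acc(C_{\bar\beta})$ so $\rho_2(\delta,\bar\alpha)=\rho_2(\delta,\bar\beta)=1$ and hence $\rho_2(\delta,\alpha)=n+1=\rho_2(\delta,\beta)$; either case contradicts $\delta=\Delta(\rho_{2\alpha},\rho_{2\beta})$, so these configurations cannot occur. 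In the remaining asymmetric cases $\bar\alpha=\delta<\bar\beta$ or $\bar\beta=\delta<\bar\alpha$, the displayed equation reads $C_\delta\cap[\epsilon+1,\delta)=C_{\bar\gamma}\cap[\epsilon+1,\delta)$ for the unique $\bar\gamma\in\{\bar\alpha,\bar\beta\}\setminus\{\delta\}$. Since $\bar\gamma\in(\delta,\kappa)$ and $\epsilon+1<\delta$, this directly witnesses $\delta\notin V(\vec C)$, completing the proof. I anticipate no real obstacle, as the only delicate point is the verification that $\sup(C_{\bar\alpha}\cap\delta)=\delta$ regardless of whether $\bar\alpha=\delta$ or $\bar\alpha>\delta$, and this is immediate from Fact~\ref{last}(ii) together with $\delta\in\acc(\kappa)$.
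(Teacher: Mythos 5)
Your proof is correct and is essentially the paper's own argument: the paper proves this lemma by instructing the reader to rerun the proof of Lemma~\ref{lemma2444} with $h$ a constant map, which is exactly the specialization you have written out (your direct length-count in the step showing $\rho_2(\bar\alpha,\alpha)=\rho_2(\bar\beta,\beta)$ is a slightly cleaner rendering of the paper's Claim~\ref{claim24441}, but the structure is identical).
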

\begin{proof} Just run the proof of Lemma~\ref{lemma2444} with $h$ a constant map.
\end{proof}

Likewise, the proof of Lemma~\ref{l54}
yields the following generalization of \cite[Theorem~4.27(1)]{peng}.

\begin{lemma}
If $V(\vec C)$ covers a club, then there exists an injection $\psi:T(\rho_2)\rightarrow{}^{<\kappa}\mathbb Z$ such that:
\begin{itemize}
\item for every $t\in T(\rho_2)$, $\dom(\psi(t))=\dom(t)$;
\item for all $s,t\in T(\rho_2)$, $s\s t$ iff $\psi(s)\s\psi(t)$;
\item for all $s,t\in T(\rho_2)$, $s<_{\lex}t$ iff $\psi(s)<_{\lex}\psi(t)$;
\item for all $s,t\in T(\rho_2)$, $\Delta(s,t)=\Delta(\psi(s),\psi(t))$;
\item for some club $D\s\kappa$, the image of $\psi$ is an $(A(\vec C)\cap D)$-coherent streamlined tree.\qed
\end{itemize}
\end{lemma}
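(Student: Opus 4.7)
The plan is to imitate the proof of Lemma~\ref{l54} virtually verbatim, with only two adjustments needed because nodes of $T(\rho_2)$ take values in $\omega$ rather than in ${}^{<\omega}\kappa$: instead of recording the ``length gap plus tail'' pair $\langle|t(\alpha)|-|\eta|\rangle{}^\smallfrown\sigma$, one simply records the arithmetic difference $t(\beta)-t(\alpha)$, which may be negative and so must be allowed to live in $\mathbb Z$. Explicitly, fix a club $E\s V(\vec C)$, and for every $t:\gamma\to\omega$ in $T(\rho_2)$ define $t^*:\gamma\to\mathbb Z$ by, for each $\beta<\gamma$, putting $\alpha:=\sup(E\cap\beta)$ and declaring $t^*(\beta):=0$ if $\alpha=\beta$, and $t^*(\beta):=t(\beta)-t(\alpha)$ otherwise. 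Let $\psi(t):=t^*$. The first bullet and the forward implication of the second are immediate from the local nature of the definition.

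For the remaining bullets, the key input is the preceding lemma of this section (the $T(\rho_2)$-analog of Lemma~\ref{lemma2444}), which says that whenever $s,t$ are incomparable, $\beta:=\Delta(s,t)\notin V(\vec C)\supseteq E$; hence $\alpha:=\sup(E\cap\beta)<\beta$ and $s(\alpha)=t(\alpha)$. Consequently $s^*\restriction\beta=t^*\restriction\beta$ and $s^*(\beta)-t^*(\beta)=s(\beta)-t(\beta)\neq 0$, so $\Delta(\psi(s),\psi(t))=\beta$; and since subtracting a common integer preserves the linear order of $\omega$ inside $\mathbb Z$, lex-preservation follows. Injectivity of $\psi$ then follows by splitting on whether the two given nodes are comparable, which also gives the reverse implication in the second bullet.

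For the last bullet, set $D:=\acc(E)$ and consider $\gamma\in A(\vec C)\cap D$ and $\delta\in(\gamma,\kappa)$. By Fact~\ref{fact2} together with $\gamma\in A(\vec C)$, the ordinal $\epsilon:=\min(E\setminus(\lambda(\gamma,\delta)+1))$ is smaller than $\gamma$ and $\rho_2(\xi,\delta)=\rho_2(\gamma,\delta)+\rho_2(\xi,\gamma)$ for every $\xi\in[\epsilon,\gamma)$. Hence, for every $\beta<\gamma$ large enough that $\alpha:=\sup(E\cap\beta)\ge\epsilon$, the additive shift $\rho_2(\gamma,\delta)$ cancels in the difference $\rho_{2\delta}(\beta)-\rho_{2\delta}(\alpha)$, yielding $(\rho_{2\delta}\restriction\gamma)^*(\beta)=(\rho_{2\gamma})^*(\beta)$, which delivers the required coherence. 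No serious obstacle is expected: in contrast to Lemma~\ref{l54}, the delicate case analysis governing how $\eta$, $\sigma$, and the length offset interact with $<_{\lex}$ disappears here, since linearity of subtraction in $\mathbb Z$ does all the work. The main point to verify with care is that passing from $\omega$ to $\mathbb Z$ causes no issues with the Kleene--Brouwer ordering of Definition~\ref{lexo}, which holds because both orderings are strict linear orders on which subtraction of a common value is monotone.
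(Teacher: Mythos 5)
Your proposal is correct and is exactly the adaptation the paper intends: the authors give no separate argument, simply asserting that the proof of Lemma~\ref{l54} "likewise" yields this result, and your replacement of the length-gap-plus-tail encoding by the integer difference $t(\beta)-t(\alpha)$ (forcing the codomain $\mathbb Z$) is the right way to carry that proof over, with the incomparability lemma for $\rho_2$ supplying $\Delta(s,t)\notin E$ and the additive decomposition $\rho_2(\xi,\delta)=\rho_2(\gamma,\delta)+\rho_2(\xi,\gamma)$ handling coherence. No gaps.
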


Our next lemma generalises \cite[Lemma~6.7]{MR3620068}.

\begin{lemma}\label{lemma57}
Suppose $\rho_2$ is obtained by walking along a $C$-sequence $\vec C$ over $\kappa$.
Then there exists a function $c:T(\rho_2)\restriction(R(\vec C)\cap D)\rightarrow\kappa$ such that:
\begin{itemize}
\item $D$ is a club in $\kappa$;
\item $c(x)<\dom(x)$ for every $x\in\dom(c)$;
\item for every pair $x\subsetneq y$ of nodes in $\dom(c)$, $c(x)\neq c(y)$.
\end{itemize}
\end{lemma}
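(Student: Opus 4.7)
The plan is to adapt the structure of the $(1)\Rightarrow(2)$ direction of Lemma~\ref{lemma511}, where the key point was a case analysis on the least $\gamma'\ge\dom(x)$ with $x=\rho_{2\gamma'}\restriction\dom(x)$. In that proof, a good colouring $d$ of $T(\rho_0)$ was invoked to supply a distinguishing coordinate inside $\pi$ in each of the three cases; here, no such $d$ is available, and instead I will exploit the defining property of $R(\vec C)$, namely that for every $\gamma\in R(\vec C)$ and every $\alpha\in(\gamma,\kappa)$, the quantity $\otp(C_\alpha\cap\gamma)$ is strictly below $\gamma$. This is what lets us legally place order-type data into $\pi$ while keeping $c(x)<\dom(x)$.

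Concretely, I would fix a bijection $\pi:3\times\omega\times H_\kappa\times\kappa\to\kappa$ and take $D$ to be the club of $\gamma\in\acc(\kappa)$ with $\pi[3\times\omega\times(T(\rho_2)\restriction\gamma)\times\gamma]\subseteq\gamma$. Given $x\in T(\rho_2)\restriction(R(\vec C)\cap D)$ with $\gamma:=\dom(x)$ and $\gamma':=\min\{\beta\ge\gamma\mid \rho_{2\beta}\restriction\gamma=x\}$, split into the same three cases as in Lemma~\ref{lemma511}: Case~0 ($\gamma'=\gamma$); Case~1 ($\gamma'>\gamma$ with $\lambda(\gamma,\gamma')<\gamma$); and Case~2 ($\gamma'>\gamma$ with $\lambda(\gamma,\gamma')=\gamma$), setting $\bar\gamma:=\last{\gamma}{\gamma'}$ in the latter case. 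In Case~1 set $\alpha:=\lambda(\gamma,\gamma')+1$ and let $c(x):=\pi(1,\rho_2(\gamma,\gamma'),x\restriction\alpha,\otp(C_{\gamma'}\cap\gamma))$; in Case~2 set $\alpha:=\lambda_2(\gamma,\gamma')+1$ and let $c(x):=\pi(2,\rho_2(\gamma,\gamma'),\rho_{2\bar\gamma}\restriction\alpha,\otp(C_{\bar\gamma}\cap\gamma))$. In Case~0 let $c(x):=\pi(0,0,\emptyset,\otp(C_\gamma))$ if $\otp(C_\gamma)<\gamma$ and $c(x):=0$ otherwise. The bound $c(x)<\gamma$ is immediate in Cases~1 and~2 from $\gamma\in R(\vec C)$ (applied to $\alpha=\gamma'$ and $\alpha=\bar\gamma$ respectively) combined with the closure property of $D$.

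For chain distinctness, I would assume $x\subsetneq y$ are both in $\dom(c)$ with $c(x)=c(y)$ and derive a contradiction by matching the first $\pi$-coordinate and then running the appropriate case of Lemma~\ref{lemma511}'s argument. In Case~2 versus Case~2, matching the third coordinate forces $\rho_{2\bar\gamma}\restriction\gamma=\rho_{2\bar\delta}\restriction\gamma$ (so $C_{\bar\gamma}\cap\gamma=C_{\bar\delta}\cap\gamma$), while matching the fourth yields $\otp(C_{\bar\delta}\cap\gamma)=\otp(C_{\bar\gamma}\cap\gamma)=\otp(C_{\bar\delta}\cap\delta)$; but $\delta\in\acc(C_{\bar\delta})$ by Fact~\ref{last}(2), so $C_{\bar\delta}\cap[\gamma,\delta)$ is nonempty and $\otp(C_{\bar\delta}\cap\delta)>\otp(C_{\bar\delta}\cap\gamma)$, a contradiction. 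In Case~0 versus Case~0, one first checks that $\rho_{2\delta}\restriction\gamma=\rho_{2\gamma}$ forces $C_\gamma=C_\delta\cap\gamma$ and $\gamma\in\acc(C_\delta)$, whence (using $\gamma\in R(\vec C)$ applied to $\alpha=\delta$) $\otp(C_\gamma)<\gamma$ and $\otp(C_\gamma)<\otp(C_\delta\cap\gamma)+|C_\delta\cap[\gamma,\delta)|$; separating the subcase $\otp(C_\delta)=\delta$ by the default value $0$ then yields distinctness.

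The main obstacle will be Case~1 versus Case~1, since $C_{\gamma'}\cap\gamma$ is bounded below $\gamma$ (as $\lambda(\gamma,\gamma')<\gamma$), so the strict order-type growth trick from Case~2 is unavailable. The plan here is to first run the analog of Lemma~\ref{lemma511}'s Case~1 to conclude $\rho_{2\gamma}=\rho_{2\delta}\restriction\gamma$, and then split on the size of $\lambda(\delta,\delta')$ relative to $\gamma$: if $\lambda(\delta,\delta')\ge\gamma$, the matching of $\otp(C_{\gamma'}\cap\gamma)=\otp(C_{\delta'}\cap\delta)$ contradicts the strict growth $\otp(C_{\delta'}\cap\delta)>\otp(C_{\delta'}\cap\gamma)$ combined with $\gamma'\le\delta'$; if $\lambda(\delta,\delta')<\gamma$, an application of Fact~\ref{fact2} to the triple $\gamma<\delta<\gamma'$ (after showing $\gamma'=\delta'$) forces $\rho_2(\gamma,\delta)=0$, i.e.\ $\gamma=\delta$, a contradiction. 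The cross-case pairs are handled trivially by the differing first coordinate of $\pi$.
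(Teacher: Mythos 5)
Your case division, your choice of $D$, and your Cases~0 and~2 track the paper's proof closely (the paper stores the ordinal $\alpha$ and the tail order-type $\otp(C_{\last{\gamma}{\gamma'}}\cap[\alpha,\gamma))$ where you store the node $\rho_{2\last{\gamma}{\gamma'}}\restriction\alpha$ and $\otp(C_{\last{\gamma}{\gamma'}}\cap\gamma)$, but the ``strict growth'' contradiction you run in Case~2 is exactly the paper's). The problem is Case~1, where your argument has a genuine gap.

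In Case~1 you record $\otp(C_{\gamma'}\cap\gamma)$ and plan to derive a contradiction by first ``showing $\gamma'=\delta'$'' and then applying Fact~\ref{fact2}. But minimality only yields $\gamma'\le\delta'$: since $x=y\restriction\gamma=\rho_{2\delta'}\restriction\gamma$ and $\delta'\ge\gamma$, the ordinal $\delta'$ is merely \emph{a} witness for $x$, so the least witness $\gamma'$ may well be strictly smaller, and none of your stored coordinates rules this out. Without $\gamma'=\delta'$ the Fact~\ref{fact2} computation $\rho_2(\gamma,\delta')=\rho_2(\delta,\delta')+\rho_2(\gamma,\delta)$ gives nothing, and the quantity $\otp(C_{\gamma'}\cap\gamma)$ is useless here: since $\lambda(\gamma,\gamma')<\gamma$, the set $C_{\gamma'}\cap\gamma$ is an initial segment of $C_{\gamma'}$ contained in $\alpha$, i.e.\ data about the \emph{top} of the walk, which does not vary monotonically along the chain $x\subsetneq y$. (Your fallback subcase $\lambda(\delta,\delta')\ge\gamma$ is vacuous anyway, since matching your third coordinates $x\restriction\alpha=y\restriction\alpha'$ forces $\alpha=\alpha'<\gamma$.) The paper's fix is to record instead $\otp(C_\gamma\cap[\alpha,\gamma))$ --- the tail of the club indexed by the \emph{level} $\gamma$, not by $\gamma'$ --- defaulting to $0$ when this order type is $\ge\gamma$ (note $\gamma\in R(\vec C)$ does not bound $\otp(C_\gamma)$ itself). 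From $x=y\restriction\gamma$ and $x(\beta)=n+\rho_{2\gamma}(\beta)$, $y(\beta)=n+\rho_{2\delta}(\beta)$ on $[\alpha,\cdot)$ one reads off $C_\gamma\cap[\alpha,\gamma)=\{\beta\in[\alpha,\gamma)\mid x(\beta)=n+1\}=C_\delta\cap[\alpha,\gamma)$; then either the common value is $0$, whence $\otp(C_\delta\cap\gamma)\ge\gamma$ contradicts $\gamma\in R(\vec C)$, or else $\epsilon=\otp(C_\delta\cap[\alpha,\gamma))<\otp(C_\delta\cap[\alpha,\delta))=\epsilon$ because $\sup(C_\delta)=\delta>\gamma$. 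This is precisely the same mechanism as your Case~2, transported to $C_\gamma$ versus $C_\delta$.

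A secondary point: the lemma assumes nothing about $\vec C$ beyond being a $C$-sequence, so $T(\rho_2)$ need not be a $\kappa$-tree and $|H_\kappa|$ need not be $\kappa$; hence your bijection $\pi:3\times\omega\times H_\kappa\times\kappa\to\kappa$ and your club $D$ requiring $\pi[3\times\omega\times(T(\rho_2)\restriction\gamma)\times\gamma]\s\gamma$ are not available in general. The paper sidesteps this by storing only ordinals (the ordinal $\alpha$ rather than a node of height $\alpha$) and compensating by comparing order types of tails on $[\alpha,\cdot)$ rather than full initial segments; if you adopt the Case~1 fix above, the same adjustment in Case~2 lets you use a bijection $\pi:\kappa^4\leftrightarrow\kappa$ throughout.
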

\begin{proof} Write $\vec C=\langle C_\delta \mid \delta <\kappa\rangle$.
Fix a bijection $\pi:\kappa^4\leftrightarrow\kappa$,
and let $D:=\{\gamma\in\acc(\kappa)\mid \pi[\gamma^4]=\gamma\}$.
We now define a colouring $c:T(\rho_2)\restriction (R(\vec C)\cap D)\rightarrow\kappa$, as follows.
Given $x\in T(\rho_2)\restriction (R(\vec C)\cap D)$, set $\gamma:=\dom(x)$ and fix the least $\gamma'\in[\gamma,\kappa)$ such that $x=\rho_{2\gamma'}\restriction\gamma$.
There are three main possibilities:
\begin{description}
\item[Case 0] If $\gamma'=\gamma$, then let
$$c(x):=\begin{cases}\pi(0,0,0,\otp(C_\gamma)),&\text{if }\otp(C_\gamma)<\gamma;\\
\pi(0,0,0,0),&\text{otherwise}.
\end{cases}$$
\item[Case 1] If $\gamma'>\gamma$ and $\lambda(\gamma,\gamma')<\gamma$, then denote $\alpha:=\lambda(\gamma,\gamma')+1$ and let:
$$c(x):=\begin{cases}
\pi(1,\rho_2(\gamma,\gamma'),\alpha,\otp(C_\gamma\cap[\alpha,\gamma))),&\text{if }\otp(C_\gamma\cap[\alpha,\gamma))<\gamma;\\
\pi(1,\rho_2(\gamma,\gamma'),\alpha,0),&\text{otherwise}.
\end{cases}$$
\item[Case 2] If $\gamma'>\gamma$ and $\lambda(\gamma,\gamma')=\gamma$, then denote $\alpha:=\lambda_2(\gamma,\gamma')+1$ and let:
$$c(x):=\pi(2,\rho_2(\gamma,\gamma'),\alpha,\otp(C_{\last{\gamma}{\gamma'}}\cap[\alpha,\gamma))).$$
\end{description}

Note that in Case~2 we indeed have $c(x)<\gamma$, since $\gamma\in R(\vec C)$.
Towards a contradiction, suppose that $x\subsetneq y$ is a pair of nodes from $\dom(c)$
such that $c(x)=c(y)$, say, it is equal to $\pi(i,n,\alpha,\epsilon)$.
There are three main possibilities:
\begin{itemize}
\item[$\br$] If $i=0$, then $\gamma'=\gamma$, $\delta'=\delta$ and $\rho_{2\gamma}=x\restriction\gamma=y\restriction\gamma=\rho_{2\delta}\restriction\gamma$.
Consequently,
$$C_\gamma=\{ \beta<\gamma\mid \rho_{2\gamma}(\beta)=1\}=\{ \beta<\gamma\mid \rho_{2\delta}(\beta)=1\}=C_\delta\cap\gamma.$$
Now, there are two options:
\begin{itemize}
\item[$\br\br$] If $\epsilon=0$, then $\otp(C_\delta\cap\gamma)=\otp(C_\gamma)=\gamma$, contradicting the fact that $\gamma\in R(\vec C)$.

\item[$\br\br$] If $\epsilon>0$, then $\epsilon=\otp(C_\gamma)=\otp(C_\delta\cap\gamma)<\otp(C_\delta)=\epsilon$,
where the strict inequality comes from the fact that $\sup(C_\delta)=\delta$.
This is again a contradiction.
\end{itemize}
\item[$\br$] If $i=1$, then $\alpha$ lies in between $\lambda(\gamma,\gamma')$ and $\gamma$,
so for every $\beta\in[\alpha,\gamma)$, $$x(\beta)=\rho_{2\gamma'}(\beta)=\rho_{2\gamma'}(\gamma)+\rho_{2\gamma}(\beta)=n+\rho_{2\gamma}(\beta).$$
Likewise, for every $\beta\in[\alpha,\delta)$, $y(\beta)=n+\rho_{2\delta}(\beta)$.
So from $x=y\restriction\gamma$, it follows that $\rho_{2\gamma}\restriction [\alpha,\gamma)=\rho_{2\delta}\restriction [\alpha,\gamma)$.
But then
$$\begin{aligned}C_\gamma\cap[\alpha,\gamma)=&\ \{ \beta\in[\alpha,\gamma)\mid \rho_{2\gamma}(\beta)=1\}\\
=&\ \{ \beta\in[\alpha,\gamma)\mid \rho_{2\delta}(\beta)=1\}=C_\delta\cap[\alpha,\gamma).\end{aligned}$$
Now, there are two options:
\begin{itemize}
\item[$\br\br$] If $\epsilon=0$, then $\otp(C_\delta\cap\gamma)\ge\otp(C_\gamma\cap[\alpha,\gamma))=\gamma$, contradicting the fact that $\gamma\in R(\vec C)$.

\item[$\br\br$] If $\epsilon>0$, then we arrive at the following contradiction:
$$\epsilon=\otp(C_\gamma\cap[\alpha,\gamma))=\otp(C_\delta\cap[\alpha,\gamma))<\otp(C_\delta\cap[\alpha,\delta))=\epsilon.$$
\end{itemize}
\item[$\br$] If $i=2$, then $\alpha$ lies in between $\lambda_2(\gamma,\gamma')$ and $\gamma$,
so for every $\beta\in[\alpha,\gamma)$,
$$x(\beta)=\rho_{2\gamma'}(\beta)=\rho_2(\gamma,\gamma')-1+\rho_2(\beta,\last{\gamma}{\gamma'})=n-1+\rho_2(\beta,\last{\gamma}{\gamma'}).$$
Likewise, for every $\beta\in[\alpha,\delta)$, $y(\beta)=n-1+\rho_2(\beta,\last{\delta}{\delta'})$.
So from $x=y\restriction\gamma$, it follows that
$$\begin{aligned}C_{\last{\gamma}{\gamma'}}\cap[\alpha,\gamma)=&\ \{ \beta\in[\alpha,\gamma)\mid \rho_{2\gamma'}(\beta)=n\}\\
=&\ \{ \beta\in[\alpha,\gamma)\mid \rho_{2\delta'}(\beta)=n\}=C_{\last{\delta}{\delta'}}\cap[\alpha,\gamma),\end{aligned}$$
which means that
$$\begin{aligned}\epsilon=\otp(C_{\last{\gamma}{\gamma'}}\cap[\alpha,\gamma))=&\ \otp(C_{\last{\delta}{\delta'}}\cap[\alpha,\gamma))\\
<&\ \otp(C_{\last{\delta}{\delta'}}\cap[\alpha,\delta))=\epsilon,\end{aligned}$$
where the strict inequality comes from the fact that $\sup(C_{\last{\delta}{\delta'}}\cap\delta)=\delta$. This is a contradiction.\qedhere
\end{itemize}
\end{proof}

The upcoming four corollaries were known to Todor\v{c}evi\'c.

\begin{cor}\label{cor74}
Suppose that $\rho_2$ is obtained by walking along a $\square_{<}^*(\kappa)$-sequence.
Then $T(\rho_2)$ is a special $\kappa$-Aronszajn tree.
\end{cor}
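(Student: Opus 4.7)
The plan is to assemble the conclusion from pieces already established earlier in the paper; no genuinely new argument is required.

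First, unpack the hypothesis. By Definition~\ref{def411}, a $\square_{<}^*(\kappa)$-sequence is a weakly coherent lower regressive $C$-sequence $\vec C=\langle C_\delta\mid \delta<\kappa\rangle$ over $\kappa$. So $\vec C$ is weakly coherent, and by the definition of lower regressive there is a club $E\s\kappa$ contained in $R(\vec C)$.

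Second, observe that $T(\rho_2)$ is a $\kappa$-tree: this is immediate from Fact~\ref{lemma41}, using weak coherence of $\vec C$.

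Third, apply Lemma~\ref{lemma57} to $\vec C$ to obtain a function $c:T(\rho_2)\restriction(R(\vec C)\cap D)\rightarrow\kappa$ satisfying the three bullets there (where $D$ is a club produced by that lemma). Set $D':=E\cap D$, a club in $\kappa$. Since $E\s R(\vec C)$ we have $D'\s R(\vec C)\cap D$, so restricting $c$ to $T(\rho_2)\restriction D'$ yields a map $c:T(\rho_2)\restriction D'\rightarrow\kappa$ that still satisfies
\begin{itemize}
\item $c(x)<\h(x)$ for every $x\in T(\rho_2)\restriction D'$;
\item $c(x)\neq c(y)$ whenever $x\subsetneq y$ are nodes of $T(\rho_2)\restriction D'$.
\end{itemize}
This is exactly a good colouring of $T(\rho_2)$ in the sense of Definition~\ref{good}.

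Finally, invoke Lemma~\ref{fact13}: the existence of a good colouring on the $\kappa$-tree $T(\rho_2)$ is equivalent to its being special. Hence $T(\rho_2)$ is special, and in particular a $\kappa$-Aronszajn tree (by the Remark following Definition~\ref{special}). There is no real obstacle; the only thing to be careful about is matching Lemma~\ref{lemma57}'s domain $R(\vec C)\cap D$ against the club-restricted domain required by Definition~\ref{good}, which is exactly where the hypothesis of lower regressiveness is used.
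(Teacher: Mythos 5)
Your proof is correct and follows exactly the paper's route: Fact~\ref{lemma41} for $T(\rho_2)$ being a $\kappa$-tree, Lemma~\ref{lemma57} for the colouring on $T(\rho_2)\restriction(R(\vec C)\cap D)$, intersecting with the club witnessing lower regressiveness, and Lemma~\ref{fact13} to conclude specialness. You have simply spelled out the details that the paper's two-line proof leaves implicit.
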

\begin{proof} By Fact~\ref{lemma41}, $T(\rho_2)$ is a $\kappa$-tree.
The rest follows from Lemma~\ref{lemma57} and Lemma~\ref{fact13}.
\end{proof}
\begin{remark} Regressiveness of $\vec C$ is not necessary for the corresponding $T(\rho_2)$ to be special.
\end{remark}

\begin{cor} Suppose $\rho_2$ is obtained by walking along a $\square^*_\mu$-sequence.
Then $T(\rho_2)$ is a special $\mu^+$-Aronszajn tree. \qed
\end{cor}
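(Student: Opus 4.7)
The plan is to reduce this statement to the previous Corollary~\ref{cor74} by observing that every $\square^*_\mu$-sequence over $\mu^+$ is automatically a $\square^*_<(\mu^+)$-sequence. This equivalence is the one already flagged in the excerpt immediately after Definition~\ref{def411}, so the work amounts to making that reduction explicit in the present setting.

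Concretely, let $\vec C = \langle C_\delta \mid \delta < \mu^+\rangle$ be a $\square^*_\mu$-sequence. By Definition~\ref{def410}, $\vec C$ is weakly coherent and $\mu$-bounded. Comparing this with Definition~\ref{def411}, the only missing ingredient to qualify $\vec C$ as a $\square^*_<(\mu^+)$-sequence is lower regressivity, i.e., that $R(\vec C)$ covers a club in $\mu^+$. I would verify this directly from $\mu$-boundedness: for every $\delta \in \acc(\mu^+)$ with $\delta > \mu$ and every $\alpha \in (\delta, \mu^+)$, one has
\[
\otp(C_\alpha \cap \delta) \le \otp(C_\alpha) \le \mu < \delta,
\]
so $\delta \in R(\vec C)$. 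Consequently, $R(\vec C)$ contains the club $\acc(\mu^+) \setminus (\mu+1)$, and hence $\vec C$ is lower regressive.

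With this in hand, Corollary~\ref{cor74} applied to $\kappa := \mu^+$ directly yields that $T(\rho_2^{\vec C})$ is a special $\mu^+$-Aronszajn tree. There is no genuine obstacle: the statement is just the instantiation of the previous corollary at a successor cardinal once one recognises the notational translation $\square^*_\mu \iff \square^*_<(\mu^+)$.
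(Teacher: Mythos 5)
Your proposal is correct and is exactly the argument the paper intends: the corollary is stated with an immediate \qed because a $\square^*_\mu$-sequence is weakly coherent and $\mu$-bounded, hence lower regressive (your computation $\otp(C_\alpha\cap\delta)\le\mu<\delta$ for $\delta>\mu$ is the right one), so it is a $\square^*_{<}(\mu^+)$-sequence and Corollary~\ref{cor74} applies. No further comment is needed.
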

\begin{cor} Suppose $\rho_2$ is obtained by walking along a $\square_\mu$-sequence.
Then $(T(\rho_2),<_{\lex})$ is a $\mu^+$-Countryman line. \qed
\end{cor}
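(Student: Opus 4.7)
Since $\square_\mu$ implies $\square^*_\mu$, Corollary~\ref{cor74} yields that $T(\rho_2)$ is a special $\mu^+$-Aronszajn tree, and Fact~\ref{flippedorder} (applied with $A:=\theta$, so that $<^A$ coincides with $<_{\lex}$) shows that $(T(\rho_2),<_{\lex})$ is a special $\mu^+$-Aronszajn line. It remains to decompose ${}^2T(\rho_2)$ into $\mu$ chains. The approach parallels Fact~\ref{countryfull} for $T(\rho_0)$, but uses the full coherence of $\vec C=\langle C_\delta\mid\delta<\mu^+\rangle$ in place of the cardinal arithmetic hypothesis $\mu^{<\mu}=\mu$.

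The plan is to upgrade the colouring of Lemma~\ref{lemma57} using coherence. Coherence ensures that whenever $\delta\in\acc(C_\epsilon)$, $C_\epsilon\cap\delta=C_\delta$, so that $\rho_2(\beta,\epsilon)=\rho_2(\beta,\delta)+\rho_2(\delta,\epsilon)$ for all $\beta<\delta$; in particular, $\rho_{2\epsilon}\restriction\delta$ coincides with $\rho_{2\delta}$ up to an additive constant. For each node $s\in T(\rho_2)$, set $\gamma_s:=\dom(s)$ and let $\gamma'_s$ be the least $\gamma'\ge\gamma_s$ with $s=\rho_{2\gamma'}\restriction\gamma_s$. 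Then the walk data $\tr(\gamma_s,\gamma'_s)$ and $\otp(C_{\gamma'_s}\cap\gamma_s)$ are definable from $s$ alone, and each takes at most $\mu$ many possible values.

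For each pair $(s_0,s_1)\in{}^2T(\rho_2)$, define a colour $c(s_0,s_1)\in\mu$ that packages, for $i<2$, the data $\rho_2(\gamma_{s_i},\gamma'_{s_i})$, $\otp(C_{\gamma'_{s_i}}\cap\gamma_{s_i})$, a flag indicating which of the three cases of the proof of Lemma~\ref{lemma57} applies, and (in the nontrivial cases) the relevant threshold ordinal; we also include the tree-comparability type of $s_0$ and $s_1$ and, if comparable, the depth of first disagreement. Since $\otp(C_\delta)\le\mu$ and $|\mu^{<\omega}|=\mu$, this colour genuinely lives in $\mu$.

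The main obstacle will be verifying the chain property: whenever $(s_0,s_1)$ and $(t_0,t_1)$ share a colour, one has $s_0<_{\lex}t_0$ iff $s_1<_{\lex}t_1$. This is handled by a case analysis mirroring the three cases of Lemma~\ref{lemma57}. Coherence of $\vec C$ forces $\rho_{2\gamma'_{s_i}}\restriction[\alpha_i,\gamma_{s_i})$ and $\rho_{2\gamma'_{t_i}}\restriction[\alpha_i,\gamma_{t_i})$ to agree up to the same additive shift recorded by the colour, uniformly in $i\in\{0,1\}$, so that the sign of $s_i\lessgtr_{\lex}t_i$ is dictated by the shared colour data independently of $i$, yielding the desired chain decomposition.
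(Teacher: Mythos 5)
The paper gives no proof of this corollary --- it is one of the ``four corollaries [that] were known to Todor\v{c}evi\'c'', stated with \qedsymbol\ and no argument, the intended source presumably being \cite[\S6.3]{MR2355670} --- so your proposal can only be judged on its own terms, and as written it has two genuine gaps. First, the colour you assign to a pair $(s_0,s_1)$ is not actually $\mu$-valued: the ``relevant threshold ordinal[s]'' from the cases of Lemma~\ref{lemma57} are the ordinals $\alpha=\lambda(\gamma,\gamma')+1$ or $\lambda_2(\gamma,\gamma')+1$, which range over $\mu^+$ rather than $\mu$ (the bound $\otp(C_\delta)\le\mu$ controls order types of the clubs, not these thresholds), and likewise the ``depth of first disagreement'' between $s_0$ and $s_1$ is an ordinal below $\mu^+$. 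Including such data yields a decomposition into $\mu^+$ pieces, which is vacuous. To get down to $\mu$ colours one must instead pass through Fact~\ref{criticalcof}(3), record antichain indices, and replace all ordinal positions by finitary invariants (integer shifts, values at finitely many exceptional coordinates, and their relative order).

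Second, and more seriously, the chain verification --- which is the entire content of the statement --- is asserted rather than proved. Knowing that each $s_i$ agrees with $\rho_{2\gamma'_{s_i}}$ up to an additive shift on a tail relates each node to its own canonical branch, but it says nothing about how $\Delta(s_0,t_0)$ and $\Delta(s_1,t_1)$ are located relative to one another for a second pair $(t_0,t_1)$ of the same colour; yet it is exactly this that determines whether $s_0<_{\lex}t_0$ and $s_1<_{\lex}t_1$ have the same sign. What is needed is a quantitative coherence statement \emph{between the two coordinates of a single pair}: under $\square_\mu$ one shows, by induction along the walk and using $C_\gamma\cap\bar\delta=C_{\bar\delta}$ for $\bar\delta\in\acc(C_\gamma)$, that for $\beta<\gamma$ the difference $\alpha\mapsto\rho_2(\alpha,\gamma)-\rho_2(\alpha,\beta)$ is a step function with finitely many integer values; one then records those integers and the fibre values at the finitely many corners in the colour and runs the $\Delta$-location analysis (this is Todor\v{c}evi\'c's argument for coherent special trees, the $\rho_2$-analogue of the $\rho_1$-argument behind Fact~\ref{countryfull}). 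Your sketch never isolates this property of $\rho_2$, and without it the claim that ``the sign of $s_i\lessgtr_{\lex}t_i$ is dictated by the shared colour data independently of $i$'' is a restatement of what must be proved rather than a proof of it.
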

\begin{cor}[Peng, {\cite[Theorem~4.27(2)]{peng}}] Suppose $\rho_2$ is obtained by walking along an $\omega$-bounded $C$-sequence over $\omega_1$. Then $T(\rho_2)$ is a special $\aleph_1$-Aronszajn tree
and $(T(\rho_2),<_{\lex})$ is an $\aleph_1$-Countryman line. \qed
\end{cor}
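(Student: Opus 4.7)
The strategy is to deduce the statement directly from the two immediately preceding corollaries (the $\square^*_\mu$ and $\square_\mu$ cases), by showing that every $\omega$-bounded $C$-sequence $\vec C$ over $\omega_1$ is simultaneously a $\square_\omega$-sequence and a $\square^*_\omega$-sequence. The whole content of Peng's corollary, relative to the machinery that has already been developed, is the observation that at $\omega_1$ the constraint of $\omega$-boundedness trivializes both coherence and weak coherence of $\vec C$.

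First, I would verify coherence. For any limit $\delta<\omega_1$, since $\otp(C_\delta)\le\omega$ and $\sup(C_\delta)=\delta$, the set $C_\delta$ is either finite or has order type exactly $\omega$, cofinal in $\delta$; in either case, no element of $C_\delta$ below $\delta$ can be a proper sup of $C_\delta$, so $\acc(C_\delta)=\emptyset$. Hence the universally-quantified condition defining coherence in Definition~\ref{def410} is satisfied vacuously, making $\vec C$ a $\square_\omega$-sequence.

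Next, I would verify weak coherence. Fix $\epsilon<\omega_1$ and estimate $|\{C_\delta\cap\epsilon\mid \delta<\omega_1\}|$ by splitting on whether $\delta\le\epsilon$ or $\delta>\epsilon$. In the first case there are only countably many such $\delta$, contributing at most $\aleph_0$ values. In the second case, since $C_\delta$ is an $\omega$-sequence cofinal in $\delta>\epsilon$, the intersection $C_\delta\cap\epsilon$ is a proper (hence finite) initial segment of the enumeration of $C_\delta$; and the countable ordinal $\epsilon$ has only countably many finite subsets. Combining the two contributions gives $|\{C_\delta\cap\epsilon\mid \delta<\omega_1\}|\le\aleph_0<\aleph_1$, so $\vec C$ is weakly coherent and hence a $\square^*_\omega$-sequence.

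With these two membership facts in hand, the preceding corollaries apply immediately to $\vec C$: the $\square^*_\omega$-corollary yields that $T(\rho_2)$ is a special $\aleph_1$-Aronszajn tree, and the $\square_\omega$-corollary yields that $(T(\rho_2),<_{\lex})$ is an $\aleph_1$-Countryman line. I do not foresee any genuine obstacle; the entire argument is the degeneracy of coherence and weak coherence at the bottom of the regressive hierarchy, which is precisely what makes the case of $\omega_1$ classically accessible without any careful crafting of the underlying $C$-sequence.
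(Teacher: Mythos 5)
Your proposal is correct and is exactly the intended derivation: the paper marks this corollary \qed{} precisely because an $\omega$-bounded $C$-sequence over $\omega_1$ has $\acc(C_\delta)=\emptyset$ for all $\delta$ (so coherence is vacuous) and only finitely many possibilities for each trace $C_\delta\cap\epsilon$ with $\delta>\epsilon$ (so weak coherence is automatic), whence the two preceding corollaries for $\square_\omega$- and $\square^*_\omega$-sequences apply. No further comment is needed.
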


By \cite[Lemma~4.12]{paper35}, the following generalises \cite[Lemma~4.5]{Lucke}.
\begin{lemma}
Suppose $\rho_2$ is obtained by walking along a $C$-sequence $\vec C$ over $\kappa$.
Then $T(\rho_2)$ does not admit an ascending path of width less than $\chi(\vec C)$.
\end{lemma}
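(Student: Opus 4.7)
The plan is to argue the contrapositive: a $\theta$-ascending path $\vec f = \langle f_\gamma \mid \gamma < \kappa\rangle$ through $T(\rho_2)$ forces $\chi(\vec C) \leq \theta$. For each $(\gamma, i) \in \kappa \times \theta$, fix $\delta^{\gamma, i} \in [\gamma, \kappa)$ with $f_\gamma(i) = \rho_{2\delta^{\gamma, i}} \restriction \gamma$; such a choice exists as every node of $T(\rho_2)$ at level $\gamma$ is of this form.

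The starting observation is the identity
\[
C_\delta \cap \gamma = \{\xi < \gamma \mid \rho_2(\xi, \delta) = 1\},
\]
valid whenever $\delta > \gamma$ (since $\rho_2(\xi, \delta) = 1$ iff $\min(C_\delta \setminus \xi) = \xi$ iff $\xi \in C_\delta$), which shows that $C_{\delta^{\gamma, i}} \cap \gamma$ is entirely determined by the node $f_\gamma(i)$. Consequently, the ascending property of $\vec f$ translates into the following $\theta$-indexed coherence: for every $\gamma_1 < \gamma_2 < \kappa$, there exist $i_1, i_2 < \theta$ with $C_{\delta^{\gamma_1, i_1}} \cap \gamma_1 = C_{\delta^{\gamma_2, i_2}} \cap \gamma_1$. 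Feeding this coherent family into \cite[Lemma~4.12]{paper35} yields a cofinal $\Gamma \subseteq \kappa$ and, for each $\epsilon < \kappa$, a set $\Delta_\epsilon \in [\kappa]^\theta$ with $\Gamma \cap \epsilon \subseteq \bigcup_{\delta \in \Delta_\epsilon} C_\delta$, contradicting $\theta < \chi(\vec C)$.

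The principal work is packaged inside \cite[Lemma~4.12]{paper35}: a pigeonhole-and-stabilization argument on the $\theta$-many witnesses at each level of sufficiently large cofinality is required to assemble a global cofinal $\Gamma$ from the level-by-level coherence and to match its initial segments with $\theta$-sized covering subfamilies $\Delta_\epsilon$ drawn from the witnesses $\{\delta^{\epsilon', i} : i < \theta\}$ for suitable $\epsilon' \geq \epsilon$. Our role here is to supply the translation via the identity above, turning the abstract hypothesis of that lemma into a statement about $\vec f$; once this is done, the appeal is routine, and the main obstacle — stabilizing a single coherent thread through the $\theta$-many indices at each level — is handled entirely inside \cite{paper35}.
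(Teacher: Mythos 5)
Your opening translation is correct and matches the paper's starting point: since $C_\delta\cap\gamma=\{\xi<\gamma\mid\rho_2(\xi,\delta)=1\}$, each node $f_\gamma(i)$ determines the trace $C_{\delta^{\gamma,i}}\cap\gamma$, and the ascending property yields the level-by-level coherence you describe. The problem is everything after that. The coherence of first-step traces is too lossy an intermediate statement: the sets $C_{\delta^{\gamma,i}}\cap\gamma$ need not be cofinal in $\gamma$ (this happens exactly when $\gamma\notin\acc(C_{\delta^{\gamma,i}})$ and $\delta^{\gamma,i}>\gamma$), so the coherent thread you extract could consist of a single bounded set repeated at every level, which covers nothing. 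To witness $\chi(\vec C)\le\theta$ one must produce a cofinal $\Gamma$ each of whose bounded initial segments is contained in a union of $\theta$ many of the clubs $C_\delta$ themselves --- i.e.\ each $\gamma\in\Gamma\cap\epsilon$ must actually \emph{belong} to one of the covering clubs --- and the trace coherence places no level $\gamma$ inside anything. This is precisely why the paper's proof does not work with $\delta^{\gamma,i}$ but with $\last{\gamma}{\gamma_i}$, the last step of the walk, whose club is guaranteed to accumulate at $\gamma$ (Fact~\ref{last}); it then stabilizes $\lambda_2(\gamma,\gamma_i)$ and a common point $\beta\in\bigcap_{i<\theta}C_{\last{\gamma}{\gamma_i}}$ over a stationary subset of $E^\kappa_{>\theta}$, and uses the additivity of walks (Fact~\ref{fact2}) to transfer $C_{\last{\gamma}{\gamma_i}}\cap(\alpha,\gamma)$ into $C_{\last{\delta}{\delta_j}}$, concluding that $\gamma\in\acc(C_{\last{\delta}{\delta_j}})$.

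Relatedly, your appeal to \cite[Lemma~4.12]{paper35} is unsubstantiated and is carrying the entire weight of the argument. In this paper that lemma is cited only in the remark preceding the statement, to explain why the present result generalises \cite[Lemma~4.5]{Lucke}; it is not an ingredient of the proof, and there is no reason to believe it is a black box converting ``a $\theta$-indexed coherent family of traces'' into ``$\chi(\vec C)\le\theta$'' --- indeed, by the boundedness issue above, no correct lemma could do so from that hypothesis alone. You explicitly delegate ``the main obstacle'' to this citation, which is exactly the part of the proof that requires the walks machinery ($\last{\cdot}{\cdot}$, $\lambda_2$, and a Fodor-type stabilization on levels of cofinality $>\theta$). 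As it stands the proposal has a genuine gap: the reduction you perform discards the structure needed to finish, and the step meant to close the argument is not available.
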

\begin{proof} Towards a contradiction, suppose that $\vec f=\langle f_\gamma\mid\gamma<\kappa\rangle$ is a $\theta$-ascending path through $T(\rho_2)$ with $\theta<\chi(\vec C)$.
For all $\gamma<\kappa$ and $i<\theta$, fix $\gamma_i\in[\gamma,\kappa)$ such that $f_\gamma(i)=\rho_{2\gamma_i}\restriction\gamma$.
As $\chi(\vec C)\le\sup(\reg(\kappa))$, the set $E^\kappa_{>\theta}$ is stationary.
Thus, recalling Fact~\ref{last}, we may fix $\alpha<\beta<\kappa$ for which the following set is stationary:
$$\Gamma:=\{\gamma\in E^\kappa_{>\theta}\mid \sup\{\lambda_2(\gamma,\gamma_i)\mid i<\theta\}=\alpha\ \&\ \beta\in\bigcap\nolimits_{i<\theta}C_{\last{\gamma}{\gamma_i}}\}.$$
\begin{claim} Let $\epsilon<\kappa$. There exists a $\Delta\in[\kappa]^{\le\theta}$ such that $\Gamma\cap\epsilon\s\bigcup_{\delta\in\Delta}C_\delta$.
\end{claim}
\begin{why} Fix $\delta\in\Gamma$ above $\epsilon$ and set $\Delta:=\{\last{\delta}{\delta_i}\mid i<\theta\}$.
Let $\gamma\in\Gamma\cap\epsilon$ be arbitrary, and we shall show that $\gamma\in\bigcup_{\eth\in\Delta}C_\eth$.

First, as $\vec f$ is a $\theta$-ascending path, we may fix $i,j<\theta$ such that $f_\gamma(i)\s f_{\delta}(j)$.
For every $\xi\in(\alpha,\gamma)$,
we have that:
\begin{itemize}
\item $f_\gamma(i)(\xi)=\rho_2(\xi,\gamma_i)=\rho_2(\last{\gamma}{\gamma_i},\gamma_i)+\rho_2(\xi,\last{\gamma}{\gamma_i})$, and
\item $f_{\delta}(j)(\xi)=\rho_2(\xi,\delta_{j})=\rho_2(\last{\delta}{\delta_{j}},\delta_{j})+\rho_2(\xi,\last{\delta}{\delta_{j}})$.
\end{itemize}
As $\beta\in C_{\last{\gamma}{\gamma_i}}\cap C_{\last{\delta}{\delta_{j}}}\cap(\alpha,\gamma)$
and $f_\gamma(i)(\beta)=f_{\delta}(j)(\beta)$, we infer that $\rho_2(\last{\gamma}{\gamma_i},\gamma_i)=\rho_2(\last{\delta}{\delta_{j}},\delta_{j})$.
Consequently, for every $\xi\in C_{\last{{\gamma}}{\gamma_i}}\cap(\alpha,{\gamma})$,
$$\rho_2(\xi,\delta)=f_{\delta}(j)(\xi)=f_\gamma(i)(\xi)=\rho_2(\last{\gamma}{\gamma_i},\gamma_i)+1=\rho_2(\last{\delta}{\delta_{j}},\delta_{j})+1,$$
which means that $\xi\in C_{\last{\delta}{\delta_{j}}}$. So $\gamma$ is an accumulation point of the club $C_{\last{\delta}{\delta_{j}}}$.
Recalling the definition of $\Delta$, this shows that $\gamma\in\bigcup_{\eth\in\Delta}C_\eth$.
\end{why}
It follows that $\Gamma$ witnesses $\chi(\vec C)\le\theta$. This is a contradiction.
\end{proof}
\begin{remark} It follows that in \cite[Theorem~3.4]{paper35}, the conjunction of Clauses (1),(4) and (5) implies Clause~(2).
\end{remark}

\section{Evading a Countryman line the easy way}\label{easysec}

The following is a straightforward generalization of \cite[Lemma~3.3]{MR2444284}.

\begin{lemma}\label{lemma42} Suppose that $\vec C$ is a weakly coherent $C$-sequence over $\kappa$,
and $\nu$ is a (possibly finite) nonzero cardinal such that $\mu^{\nu}<\kappa$ for all $\mu<\kappa$.
For every rapid $X\s {}^\nu\kappa$ of size $\kappa$,
there exists a club $D_X$ in $\kappa$ such that, for every $\gamma\in D_X\cap R(\vec C)\cap E^\kappa_{>\nu}$,
for every matrix $\langle \beta^k_i\mid i<m, k<\nu\rangle$
such that $i\mapsto \langle \beta^k_i\mid k<\nu\rangle$ is an injection from a positive integer $m$ to $\{x\in X\mid \min(x)\ge\gamma\}$,
for every ordinal $\epsilon$ with $\sup_{i<m,k<\nu}\lambda(\gamma,\beta^k_i)\le \epsilon<\gamma$,
there is a $\zeta\in(\epsilon,\gamma)$ and a matrix $\langle \alpha^k_i\mid i<m, k<\nu\rangle$ such that:
\begin{itemize}
\item $i\mapsto \langle \alpha^k_i\mid k<\nu\rangle$ is an injection from $m$ to $\{x\in X\mid \sup(x)<\gamma\}$;
\item for all $i<m$ and $k<\nu$, $\Delta(\rho_{0\alpha^k_i},\rho_{0\beta^k_i})=\zeta$ and $\rho_{0\alpha^k_i}<_\lex\rho_{0\beta^k_i}$.
\end{itemize}
\end{lemma}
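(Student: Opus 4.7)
The plan is to generalise Moore's Lemma~3.3 from \cite{MR2444284}, adapting the $\omega_1$ case to arbitrary $\kappa$ and extending from dimension one to dimension $\nu$.

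Take $D_X$ to be the club of $\gamma<\kappa$ enjoying three closure properties: (a) $|X\cap{}^\nu\gamma|\ge\gamma$, which holds on a club since $X$ is rapid of cardinality $\kappa$; (b) $|T(\rho_0)_\xi|^\nu<\gamma$ for every $\xi<\gamma$, which holds on a club because $\vec C$ is weakly coherent (so $T(\rho_0)$ is a $\kappa$-tree by Fact~\ref{lemma41}) together with the hypothesis $\mu^\nu<\kappa$ for all $\mu<\kappa$; and (c) sufficient closure under the other parameters at hand. Now fix $\gamma\in D_X\cap R(\vec C)\cap E^\kappa_{>\nu}$, the matrix $\langle\beta^k_i\mid i<m,k<\nu\rangle$, and $\epsilon$ as in the hypothesis. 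For each pair $(i,k)$, set $\sigma^k_i:=\rho_0(\gamma,\beta^k_i)$; Fact~\ref{fact2} together with $\epsilon\ge\sup_{i,k}\lambda(\gamma,\beta^k_i)$ guarantees
$$\rho_0(\xi,\beta^k_i)=\sigma^k_i{}^\smallfrown\rho_0(\xi,\gamma)\quad\text{for every }\xi\in(\epsilon,\gamma).$$

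The heart of the proof is a pigeonhole over $Y:=\{x\in X\mid\epsilon<\min(x)\le\sup(x)<\gamma\}$, whose cardinality is $\ge\gamma$ by clause~(a) (together with $\cf(\gamma)>\nu$ and rapidness). For each $x\in Y$ and each $(i,k)$, form the ordinal $\Delta^k_i(x):=\Delta(\rho_{0x(k)},\rho_{0\beta^k_i})$ and record whether $\rho_{0x(k)}<_\lex\rho_{0\beta^k_i}$. Because $T(\rho_0)$ has small levels below $\gamma$ by clause~(b), because $m$ is finite and $\nu$ obeys the bound $\mu^\nu<\kappa$, and because $R(\vec C)$ bounds $\otp(C_\alpha\cap\gamma)<\gamma$ for every $\alpha>\gamma$, a pigeonhole on $Y$ should yield a single $\zeta\in(\epsilon,\gamma)$ together with $m$ distinct $x_0,\dots,x_{m-1}\in Y$ and an assignment $i\mapsto x_i$ with $\Delta^k_i(x_i)=\zeta$ and $\rho_{0x_i(k)}<_\lex\rho_{0\beta^k_i}$ for every $i<m$ and $k<\nu$. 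Setting $\alpha^k_i:=x_i(k)$ completes the construction.

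The principal obstacle is engineering, for each chosen $x_i\in X$, the coincidence that the walk from $x_i(k)<\gamma$ down to every $\xi\in(\lambda(\zeta,x_i(k)),\zeta)$ produces exactly the same $\rho_0$-value $\sigma^k_i{}^\smallfrown\rho_0(\xi,\gamma)$ as the walk from $\beta^k_i\ge\gamma$, even though the walk from $x_i(k)$ cannot physically traverse $\gamma$. This equality has to arise by a coincidence of $C$-set order types along the two walks. The role of $R(\vec C)$, the cardinal arithmetic assumption, and the rapidness and size of $X$ is to supply enough witnesses for pigeonhole to deliver ones with the required coincidence; handling $m$ distinct rows simultaneously---each pinning a different prefix $\sigma^k_i$---is the most delicate aspect, paralleling the corresponding difficulty in Moore's original proof.
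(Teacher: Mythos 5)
There is a genuine gap: the pigeonhole step is precisely the point where the argument has to work, and as stated it does not. Your plan asks a pigeonhole over $Y=\{x\in X\mid \epsilon<\min(x)\le\sup(x)<\gamma\}$ to produce a \emph{single} $\zeta$ realising $\Delta(\rho_{0\alpha^k_i},\rho_{0\beta^k_i})=\zeta$ simultaneously for all $m\cdot\nu$ pairs \emph{and} with the lexicographic comparison going the right way. Counting gives you neither. The values $\Delta^k_i(x)$ range over $\gamma$ with no a priori stabilisation, and, more fundamentally, nothing in a cardinality argument rules out that every candidate $x$ below $\gamma$ satisfies $\rho_{0\beta^k_i}<_\lex\rho_{0x(k)}$ rather than the required inequality. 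Securing the direction of $<_\lex$ is the actual content of the lemma, and your closing paragraph essentially concedes that you do not see how to arrange the needed ``coincidence of $C$-set order types.''

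The paper's proof (following Moore's Lemma~3.3) resolves this with two ingredients absent from your sketch. First, $D_X$ is the trace of a continuous chain $\langle M_\gamma\mid\gamma<\kappa\rangle$ of elementary submodels of $H_{\kappa^+}$ closed under $\nu$-sequences (this is where $\mu^\nu<\kappa$ and $\gamma\in E^\kappa_{>\nu}$ enter), and $\gamma\in R(\vec C)$ guarantees that the walk codes $\sigma^k_i=\rho_0(\gamma,\beta^k_i)$ lie in ${}^{<\omega}\gamma\subseteq M_\gamma$; hence the \emph{entire configuration} --- the codes $\sigma^k_i$, the restrictions $t^k_i=\rho_{0\beta^k_i}\restriction(\epsilon+1)$ and $t=\rho_{0\gamma}\restriction(\epsilon+1)$ --- is in $M_\gamma$, and elementarity reflects it to some $\bar\gamma<\gamma$ with a matrix $\langle\alpha^k_i\rangle$ satisfying $\rho_0(\bar\gamma,\alpha^k_i)=\sigma^k_i$ and $\rho_{0\bar\gamma}\restriction(\epsilon+1)=t$. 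By Fact~\ref{fact2} both $\rho_{0\alpha^k_i}$ and $\rho_{0\beta^k_i}$ then factor above $\epsilon$ as $\sigma^k_i{}^\smallfrown\rho_0(\cdot,\bar\gamma)$ and $\sigma^k_i{}^\smallfrown\rho_0(\cdot,\gamma)$ respectively, so all $m\cdot\nu$ splitting points coincide with the single ordinal $\zeta:=\Delta(\rho_{0\bar\gamma},\rho_{0\gamma})$ and all lex-comparisons are decided by the single comparison of $\rho_{0\bar\gamma}$ with $\rho_{0\gamma}$. Second, the existence of a reflected $\bar\gamma$ with $\rho_{0\bar\gamma}<_\lex\rho_{0\gamma}$ (rather than the reverse) is obtained by contradiction: if every reflected candidate sat lexicographically above $\rho_{0\gamma}$, elementarity would let one embed the reverse of $\kappa$ into $(T(\rho_0),<_\lex)$, contradicting that this is a $\kappa$-Aronszajn line (Fact~\ref{flippedorder}). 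This Aronszajn-line argument, not pigeonhole, is the missing idea.
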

\begin{proof} Let a rapid $X\s {}^\nu\kappa$ of size $\kappa$ be given.
If $R(\vec C)$ is nonstationary, then let $D_X$ be a club disjoint from it, and we are done.

Hereafter, suppose that $R(\vec C)$ is stationary, so that $\vec C$ is nontrivial.
By Facts \ref{lemma41} and \ref{nontrivialfact}, then, $T:=T(\rho_0^{\vec C})$ is a $\kappa$-Aronszajn tree.
Fix a continuous $\in$-chain $\langle M_\gamma\mid\gamma<\kappa\rangle$ of elementary submodels of $H_{\kappa^+}$,
each of size less than $\kappa$ such that $\{\vec C,X\}\in M_0$ and ${}^\nu M_\gamma\s M_{\gamma+1}$ for every $\gamma<\kappa$.
Let $D_X:=\{\gamma\in\acc(\kappa)\mid M_\gamma\cap\kappa=\gamma\}$.
Note that ${}^\nu M_\gamma\s M_{\gamma}$ for every $\gamma\in D_X\cap E^\kappa_{>\nu}$.

Now, given $\gamma$, $\langle \beta^k_i\mid i<m, k<\nu\rangle$ and $\epsilon$ as in the statement of the lemma, set
\begin{itemize}
\item $\sigma^k_i:=\rho_0(\gamma,\beta^k_i)$ for all $i<m$ and $k<\nu$;
\item $t^k_i:=\rho_{0\beta^k_i}\restriction(\epsilon+1)$ for all $i<m$ and $k<\nu$;
\item $t:= \rho_{0\gamma}\restriction(\epsilon+1)$.
\end{itemize}

Since $T$ is a $\kappa$-tree, $T\restriction\gamma\s M_\gamma$,
so that $t$ and the matrix $\langle t^k_i\mid i<m, k<\nu\rangle$ are both in $M_\gamma$.
Since $\gamma\in R(\vec C)$, $\langle \sigma^k_i\mid i<m, k<\nu\rangle$ is in $M_\gamma$, as well.

Altogether, the set $P$ of all pairs $(\bar\gamma,\langle \alpha^k_i\mid i<m, k<\nu\rangle)$ satisfying all of the following is in $M_\gamma$ and has $(\gamma, \langle \beta^k_i \mid i< m, k< \nu\rangle)$ as an element:
\begin{enumerate}
\item $\bar\gamma\in \acc(\kappa\setminus\epsilon)$;
\item $\langle \alpha^k_i\mid i<m, k<\nu\rangle$ is a matrix
such that $i\mapsto \langle \alpha^k_i\mid k<\nu\rangle$ is an injection from $m$ to $\{x\in X\mid \min(x)\ge\bar\gamma\}$;
\item $\sup_{i<m,k<\nu}\lambda(\bar\gamma,\alpha^k_i)\le \epsilon<\bar\gamma$;
\item for all $i<m$ and $k<\nu$, $t^k_i\sq \rho_{0\alpha^k_i}$ and $\rho_0(\bar\gamma,\alpha^k_i)=\sigma^k_i$;
\item $\rho_{0\bar\gamma} \restriction (\epsilon+1) = t$.
\end{enumerate}
\begin{claim}\label{c811} There is a pair $(\bar\gamma,\langle \alpha^k_i\mid i<m, k<\nu\rangle)\in P\cap M_\gamma$ such that $\rho_{0\bar\gamma}<_{\lex}\rho_{0\gamma}$.
\end{claim}
\begin{why} Consider $\Gamma:=\{\bar\gamma\mid (\bar\gamma,\langle \alpha^k_i\mid i<m, k<\nu\rangle)\in P\}$ which is a subset of $\kappa$
lying in $M_\gamma$. It suffices to find a $\bar\gamma\in \Gamma\cap M_\gamma$ such that $\rho_{0\bar\gamma}<_{\lex}\rho_{0\gamma}$.
If we cannot, then $M_\gamma$ satisfies that for every $\varepsilon<\kappa$, there is some $\gamma^*\in\Gamma$ such that
$\rho_{0\gamma^*}<_{\lex}\rho_{0\bar\gamma}$ for every $\bar\gamma\in\Gamma\cap\varepsilon$.
Going outside the model, this allows to embed $(\kappa,{\ni})$ into $(T,<_{\lex})$,
contradicting Fact~\ref{flippedorder}.
\end{why}

Pick a pair $(\bar\gamma,\langle \alpha^k_i\mid i<m, k<\nu\rangle)\in P\cap M_\gamma$ as in the claim.
Let $i<m$ and $k<\nu$. Then:
\begin{enumerate}[(1)]
\item $t^k_i\sq \rho_{0\alpha^k_i}$ and $t^k_i\sq \rho_{0\beta^k_i}$;
\item for every $\xi\in(\epsilon,\bar\gamma]$, $\rho_0(\xi,\beta^k_i)=\rho_0(\gamma,\beta^k_i){}^\smallfrown \rho_0(\xi,\gamma)=\sigma^k_i{}^\smallfrown \rho_0(\xi,\gamma)$;
\item for every $\xi\in(\epsilon,\bar\gamma]$, $\rho_0(\xi,\alpha^k_i)=\rho_0(\bar\gamma,\alpha^k_i){}^\smallfrown \rho_0(\xi,\bar\gamma)=\sigma^k_i{}^\smallfrown \rho_0(\xi,\bar\gamma)$.
\end{enumerate}
In addition:
\begin{itemize}
\item[(4)] $\rho_{0\bar\gamma}\restriction(\epsilon+1)=t=\rho_{0\gamma}\restriction(\epsilon+1)$.
\end{itemize}

As $\rho_{0\bar\gamma}<_{\lex}\rho_{0\gamma}$, the two are incomparable, so we consider $\xi:=\Delta(\rho_{0\bar\gamma},\rho_{0\gamma})$.
By Clause~(4), $\epsilon<\xi<\bar\gamma$.
Let $i<m$ and $k<\nu$.
By Clause~(1), $\Delta(\rho_{0\alpha^k_i},\rho_{0\beta^k_i})>\epsilon$.
Then, by Clauses (2) and (3), $\Delta(\rho_{0\alpha^k_i},\rho_{0\beta^k_i})=\Delta(\rho_{0\bar\gamma},\rho_{0\gamma})=\xi$.
It thus follows that
$$\rho_{0\alpha^k_i}<_\lex\rho_{0\beta^k_i}\iff\sigma^k_i{}^\smallfrown \rho_0(\xi,\bar\gamma)<_\lex\sigma^k_i{}^\smallfrown \rho_0(\xi,\gamma)\iff\rho_{0\bar\gamma}<_\lex\rho_{0\gamma},$$
as sought.
\end{proof}

\begin{thm}\label{easyway} Suppose that:
\begin{itemize}
\item $\kappa=\mu^+$ for some cardinal $\mu=\mu^{\aleph_1}$;
\item $\vec C$ is a $\square^*_\mu$-sequence such that $V(\vec C)$ covers a club;
\item $A(\vec C)\cap E^{\kappa}_{>\omega_1}$ is stationary and $1\in\Theta_1^{\omega_1}(C\restriction A(\vec C),\kappa)$.
\end{itemize}

Then there is a special $\kappa$-Aronszajn line with no $\kappa$-Countryman subline.
\end{thm}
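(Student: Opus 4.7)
The plan is to apply Lemma~\ref{omitcm} to the streamlined tree $T := T(\rho_0^{\vec C})$ with a colouring $c:T\to 2$ witnessing $T\nmeetarrow[\mu^+]^2_2$, and with distinguished subset $A := \{0\}$. Because $\vec C$ is $\square^*_\mu$ (hence weakly coherent and $\mu$-bounded), Fact~\ref{lemma41} makes $T$ a $\mu^+$-tree, and the classical argument cited in the proof of Corollary~\ref{cor66} (Footnote~\ref{footnotecor66}) makes it special. So once such a colouring is produced, Fact~\ref{flippedorder} yields that $(T,<^{\{0\}})$ is a special $\mu^+$-Aronszajn line, and Lemma~\ref{omitcm} ensures it contains no $\mu^+$-Countryman subline. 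Everything thus reduces to constructing $c$.

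To build $c$, fix a witness $g:\kappa\to\omega_1$ to $1\in\Theta_1^{\omega_1}(\vec C\restriction A(\vec C),\kappa)$, the Meiri--Shelah projection $f_2:\mathbb Z\to\omega$ from Corollary~\ref{lemma51}, a bijection $\pi:\omega\times\omega_1\leftrightarrow\omega$, and a surjection $\mathrm{sh}:T\to\omega$ used for bookkeeping (available because $\mu^{\aleph_1}=\mu$). For $x\in T$ with $\dom(x)=\beta$, take the least $\gamma\in[\beta,\kappa)$ with $x=\rho_{0\gamma}\restriction\beta$; if $\lambda(\beta,\gamma)<\beta$, set
\[
c(x) := f_2\bigl(\pi(\mathrm{sh}(x),\,g(\rho_0(\beta,\gamma)(\rho_2(\beta,\gamma)-1)))\bigr)\bmod 2,
\]
and otherwise set $c(x):=0$. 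By Lemma~\ref{lemma2444}, the meet of two incomparable nodes of $T$ lies outside $V(\vec C)$, and since $V(\vec C)$ covers a club, such meets always sit on ``good'' levels where $c$ is computable from the last step of a walk.

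To verify clause~(1) of $T\nmeetarrow[\mu^+]^2_2$: given an $\h$-rapid $S\in[{}^2T]^{\mu^+}$ and a target $\tau<2$, invoke Lemma~\ref{lemma35} with $n=2$ to extract a matrix $\langle\beta^k_\upsilon\mid\upsilon\in\Upsilon,k<2\rangle$ recording the walks underlying the elements of $S$. Since $A(\vec C)\cap E^\kappa_{>\omega_1}\s R(\vec C)\cap E^\kappa_{>2}$ is stationary, Lemma~\ref{lemma42} applies and delivers $\langle\alpha^k\mid k<2\rangle$ with a common splitting level $\Delta(\rho_{0\alpha^k},\rho_{0\beta^k_\upsilon})=\zeta$ and $\rho_{0\alpha^k}<_{\lex}\rho_{0\beta^k_\upsilon}$ for both $k<2$. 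The two meets then share the same last step of walk, so $c$ at each meet is $f_2$ applied to $\pi(\mathrm{sh}(\cdot),g(\cdot))$ for a common $g$-argument, and Corollary~\ref{lemma51} for $n=2$ allows one to choose the shift encoded by $\mathrm{sh}$ so that both coordinates receive colour $\tau$ simultaneously. Clause~(2) of Definition~\ref{meetarrow} is handled analogously, but leverages clause~(3) of Lemma~\ref{lemma35} (the injectivity of $k\mapsto\eta^k$ when $S\s S'$) together with the full $\omega_1$-range of $g$ to place two \emph{independent} base ordinals at the two meets, thus realising any pattern $\tau\in{}^2 2$. The main obstacle will be synchronising the single shift demanded by clause~(1) with the full bichromatic realisability of clause~(2): this is precisely where the dimension-$2$ case of Corollary~\ref{lemma51}, allowing one shift $i$ to simultaneously push $f_2(p(0)+i)$ and $f_2(p(1)+i)$ into any prescribed colour class, is indispensable, and it is what underlies the title of Section~\ref{easysec}.
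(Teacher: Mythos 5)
There is a genuine gap here, and it is located at the very first step of your plan. You propose to prove the full partition relation $T(\rho_0^{\vec C})\nmeetarrow[\mu^+]^2_2$ and then quote Lemma~\ref{omitcm}. But the hypotheses of this theorem only give $1\in\Theta_1^{\omega_1}(\vec C\restriction A(\vec C),\kappa)$ — that is, a single function $g:\kappa\rightarrow\omega_1$ with a club-guessing property and a \emph{trivial} second coordinate $h$ (since $\theta=1$). The partition-relation machinery of Section~\ref{sec8} (Theorems \ref{thm52} and \ref{thm84}) essentially needs $\omega\in\Theta_1^{\mu}(\cdots)$: the map $h:\kappa\rightarrow\omega$ is what feeds the projection $\varphi:{}^{<\omega}\omega\rightarrow\mathbb Z$ and is what lets one land on a level $\xi$ with $h(\otp(C_\delta\cap\xi))=\tau$ for the $\tau$ chosen to realise the shift $i<m$ of Corollary~\ref{lemma51}. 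With $\theta=1$ none of this is available, and indeed the whole reason this theorem exists as a separate ``easy way'' is that for ineffable $\mu$ one \emph{cannot} upgrade to $\omega\in\Theta_1^\mu(\cdots)$ (that upgrade in Lemma~\ref{thm421b} uses $\ubd(\ns_\mu,\omega)$, which requires non-ineffability), so the conclusion of Theorem~\ref{thma}(2) is out of reach here. Your proposed colouring also does not typecheck: there is no bijection $\pi:\omega\times\omega_1\leftrightarrow\omega$, and even repairing that, there is no mechanism by which the bookkeeping map $\mathrm{sh}$ could ``encode the shift'' $i$ — in the genuine argument the shift is realised by choosing among $m$ ordinals $\upsilon_0<\dots<\upsilon_{m-1}$ with increasing $(\rho_0(\delta,\upsilon_i))_h$-values, which again presupposes the $h$-component.

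The paper's actual proof is entirely different in structure: it argues by contradiction, assuming every special $\kappa$-Aronszajn line contains a $\kappa$-Countryman subline. From a single colouring $c:[\omega_1]^2\rightarrow2$ witnessing $\onto(\{\omega\},J^{\bd}[\omega_1],2)$ it builds countably many orderings $<_n$ of $T(\rho_0^{\vec C})$ (via $c_n(t):=c(n,(t^*)_g)$), extracts a Countryman subline $T^n$ of each $(T,<_n)$ together with chain decompositions $d$ and $d_n$, and then uses $1\in\Theta_1^{\omega_1}$ (through Lemma~\ref{lemma35a}), the fact that $\rho_1$ witnesses $\U(\kappa,\kappa,\mu,\omega_1)$, the hypothesis $\mu^{\aleph_1}=\mu$ to stabilise the $d$- and $d_n$-values, and Lemma~\ref{lemma42} to produce an unbounded $B\subseteq\omega_1$ of $g$-values at a common meet level; the $\onto$ property of $c$ then yields an $n$ and two indices $j_0,j_1$ whose induced $<_n$-comparisons contradict the chain decomposition $d_n$. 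If you want to salvage your approach, you would have to either strengthen the hypothesis to $\omega\in\Theta_1^{\mu}(\cdots)$ (which turns this into Theorem~\ref{thm84} and no longer covers the ineffable case), or switch to the contradiction scheme above.
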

\begin{proof} Towards a contradiction, suppose that every special $\kappa$-Aronszajn line contains a $\kappa$-Countryman line.
Write $\langle C_\delta\mid\delta<\kappa\rangle$ for $\vec C$.
Fix a club $E$ in $\kappa$ such that $E\s V(\vec C)$, and
fix a colouring $g:\kappa\rightarrow\omega_1$, witnessing that $1\in\Theta_1^{\omega_1}(C\restriction A(\vec C),\kappa)$.

To every function $t\in{}^{<\kappa}({}^{<\omega}\kappa)$, we attach a finite sequence $t^*\in{}^{<\omega}\kappa$ via
$$t^*:=\begin{cases}
t(\sup(E \cap \dom(t))),&\text{if }\sup(E\cap\dom(t))\in\dom(t);\\
\emptyset,&\text{otherwise}.
\end{cases}$$

Using \cite[Proposition~7.7]{paper47}, fix a colouring $c:[\omega_1]^2\rightarrow2$ witnessing $\onto(\{\omega\},J^{\bd}[\omega_1],2)$.
For every $n<\omega$, define a colouring $c_n:T(\rho_0^{\vec C})\rightarrow2$ via $c_n(t):=c(n,(t^*)_g)$.\footnote{Recall that by Subsection~\ref{nandc}, $(t^*)_g$ stands for $\sup\{ g(t^*(i))\mid i\in\dom(t^*)\}$.}
Similarly to Fact~\ref{flippedorder}, for every $n<\omega$, we define an ordering $<_n$ of $T(\rho_0^{\vec C})$
by letting for all $s,t\in T(\rho_0^{\vec C})$:
\begin{itemize}
\item[$\br$] If $c_n(s\wedge t)=0$, then $s<_n t$ iff $s<_{\lex}t$;
\item[$\br$] If $c_n(s\wedge t)\neq0$, then $s<_n t$ iff $t<_{\lex}s$.
\end{itemize}

As $\vec C$ is a $\square^*_\mu$-sequence, by \cite[Theorem~6.1.14]{MR2355670} (or see Footnote~\ref{footnotecor66}) $(T(\rho_0^{\vec C}),<_{\lex})$ is a special $\kappa$-Aronszajn line,
and hence we may fix some $T\in[T(\rho_0^{\vec C})]^\kappa$ such that $(T,{<_{\lex}})$ is a $\kappa$-Countryman line.
For every $n<\omega$, $(T,<_n)$ is a special $\kappa$-Aronszajn line,
so we may fix some $T^n\in[T]^\kappa$ such that $(T^n,<_n)$ is a $\kappa$-Countryman line.
Let $d:T\times T\rightarrow\mu$ be a decomposition of $(T,{<_{\lex}})$ into chains,
and for every $n<\omega$, let $d_n:T^n\times T^n\rightarrow\mu$ be a decomposition of $(T^n,<_{n})$ into chains.

For each $\upsilon<\kappa$, fix a sequence $\langle (t_{\upsilon}^n,\beta_{\upsilon}^n)\mid n<\omega\rangle$ in $\prod_{n < \omega}(T^n\times \kappa)$ such that
for every $n<\omega$, $\dom(t_{\upsilon}^n)>\upsilon$ and $t_{\upsilon}^n\s \rho_{0\beta_{\upsilon}^n}$.
Fix $\lambda^*<\kappa$, $\nu^*<\mu$, $j^*<\omega_1$,
and a stationary $\Upsilon\s A(\vec C)\cap E^\kappa_{>\omega}$ such that for every $\upsilon\in\Upsilon$:
\begin{itemize}
\item $\lambda^*=\sup\{ \lambda(\upsilon,\beta_{\upsilon}^n)\mid n<\omega\}$;
\item $\nu^*=\sup\{\rho_1(\upsilon,\beta_{\upsilon}^n)\mid n<\omega\}$;
\item $j^*=\sup\{ (\rho_0(\upsilon,\beta_{\upsilon}^n))_g\mid n<\omega\}$;
\item for every $\bar\upsilon\in\Upsilon\cap\upsilon$, $\sup\{\beta_{\bar\upsilon}^n\mid n<\omega\}<\upsilon$.
\end{itemize}

By Corollary~\ref{coro62}, $\rho_1$ witnesses $\U(\kappa,\kappa,\mu,\omega_1)$,
so we may fix a cofinal $\Upsilon'\s\Upsilon$ such that, for every pair $\bar\upsilon<\upsilon$ of ordinals in $\Upsilon'$,
$\rho_1(\bar\upsilon,\beta_{\upsilon}^n)>\nu^*$ for every $n<\omega$.
In particular, for every pair $\bar\upsilon<\upsilon$ of ordinals in $\Upsilon'$,
$$t_{\beta_{\bar\upsilon}^n}\wedge t_{\beta_{\upsilon}^n}=\rho_{0\beta_{\bar\upsilon}^n}\wedge \rho_{0\beta_{\upsilon}^n}.$$

By Lemma~\ref{lemma35a}, for every $j<\omega_1$, we may fix a stationary $\Gamma_j\s\kappa$
and an ordinal $\varepsilon_j<\kappa$ such that for every $\gamma\in\Gamma_j$,
there is a $\upsilon_j(\gamma)\in \Upsilon'\setminus(\gamma+1)$
such that for every $\xi\in(\varepsilon_j,\gamma)$,
$\gamma\in\im(\tr(\xi,\upsilon_j(\gamma)))$ and $(\rho_0(\xi,\upsilon_j(\gamma)))_g\ge j$.
Set $\varepsilon:=\min(E\setminus\sup\{\lambda^*,\mu,\varepsilon_j\mid j<\omega_1\}+1)$.
For all $j<\omega_1$ and $\delta<\kappa$, let $\gamma(j,\delta):=\min(\Gamma_j\setminus(\delta+1))$.
As $\mu^{\aleph_1}=\mu$, we may fix a stationary $\Delta\s A(\vec C)\cap E^\kappa_{>\omega_1}$ such that for every pair $\bar\delta<\delta$ of ordinals in $\Delta$,
for all $n<\omega$ and $j<\omega_1$, the following three hold:
\begin{itemize}
\item $\beta_{\upsilon_j(\gamma(j,\bar\delta))}^n<\delta$;
\item $d(t_{\upsilon_0(\gamma(0,\bar\delta))}^n,t_{\upsilon_j(\gamma(j,\bar\delta))}^n)=d(t_{\upsilon_0(\gamma(0,\delta))}^n,t_{\upsilon_j(\gamma(j,\delta))}^n)$;
\item $d_n(t_{\upsilon_0(\gamma(0,\bar\delta))}^n,t_{\upsilon_j(\gamma(j,\bar\delta))}^n)=d_n(t_{\upsilon_0(\gamma(0,\delta))}^n,t_{\upsilon_j(\gamma(j,\delta))}^n)$.
\end{itemize}

In particular, $X:=\{\langle \beta_{\upsilon_j(\gamma(j,\delta))}^n\mid (j,n)\in \omega_1\times\omega\rangle\mid \delta\in\Delta\}$ is a rapid subfamily of ${}^{\omega_1\times\omega}\kappa$ of size $\kappa$.
Let $D_X$ be the club given by Lemma~\ref{lemma42}.
Fix $\delta\in D_X\cap\Delta$ above $\varepsilon$. Since $\vec C$ is $\mu$-bounded and $\varepsilon>\mu$, we get that $\delta\in R(\vec C)$.
In addition, as $\delta \in A(\vec C) \cap E^{\kappa}_{>\omega_1}$, $\epsilon:=\sup\{\varepsilon,\lambda(\delta,\beta_{\upsilon_j(\gamma(j,\delta))}^n)\mid (j,n)\in \omega_1\times\omega\}$
is smaller than $\delta$. Thus, an application of $\delta\in D_X\cap R(\vec C)\cap E^\kappa_{>\omega_1}$ (with $m=1$)
yields a $\zeta\in(\epsilon,\delta)$ and another ordinal $\bar\delta\neq\delta$ such that
for every $(j,n)\in \omega_1\times\omega$,
$\Delta(\rho_{0\beta_{\upsilon_j(\gamma(j,\bar\delta))}^n},\rho_{0\beta_{\upsilon_j(\gamma(j,\delta))}^n})=\zeta$.
Consider $\xi:=\sup(\zeta\cap E)$, and note that by Lemma~\ref{lemma2444}, $\varepsilon\le\xi<\zeta$.
\begin{claim} Let $j\in[j^*,\omega_1)$ and $n<\omega$.
Then
$$(\rho_0(\xi,\beta_{\upsilon_j(\gamma(j,\delta))}^n))_g=(\rho_0(\xi,\upsilon_j(\gamma(j,\delta))))_g\ge j.$$
\end{claim}
\begin{why} As $\lambda(\upsilon_j(\gamma(j,\delta)),\beta_{\upsilon_j(\gamma(j,\delta))}^n)\le\lambda^*<\varepsilon\le\xi$,
we get that $\upsilon_j(\gamma(j,\delta))\in\im(\tr(\xi,\beta_{\upsilon_j(\gamma(j,\delta))}^n))$.
Therefore,
$$\tr(\xi,\beta_{\upsilon_j(\gamma(j,\delta))}^n)=\tr(\upsilon_j(\gamma(j,\delta)),\beta_{\upsilon_j(\gamma(j,\delta))}^n)
{}^\smallfrown \tr(\xi,\upsilon_j(\gamma(j,\delta))),$$
from which it follows that
$$\rho_0(\xi,\beta_{\upsilon_j(\gamma(j,\delta))}^n)=\rho_0(\upsilon_j(\gamma(j,\delta)),\beta_{\upsilon_j(\gamma(j,\delta))}^n){}^\smallfrown \rho_0(\xi,\upsilon_j(\gamma(j,\delta))).$$
So,
$$(\rho_0(\xi,\beta_{\upsilon_j(\gamma(j,\delta))}^n))_g=\max\{(\rho_0(\upsilon_j(\gamma(j,\delta)),\beta_{\upsilon_j(\gamma(j,\delta))}^n))_g,(\rho_0(\xi,\upsilon_j(\gamma(j,\delta))))_g\}.$$
However, $(\rho_0(\upsilon_j(\gamma(j,\delta)),\beta_{\upsilon_j(\gamma(j,\delta))}^n))_g\le j^*\le j\le(\rho_0(\xi,\upsilon_j(\gamma(j,\delta))))_g$, so we are done.
\end{why}

It follows that $B:=\{ (\rho_0(\xi,\upsilon_j(\gamma(j,\delta))))_g\mid j^*\le j<\omega_1\}$ is an unbounded subset of $\omega_1$.
By the choice of the colouring $c$, we may now fix some $n<\omega$ such that $c[\{n\}\circledast B]=2$.
For each $i<2$, pick $j_i\in[j^*,\omega_1)$ such that
$c(n,(\rho_0(\xi,\upsilon_{j_i}(\gamma(j_i,\delta))))_g)=i$,
and note that
$$\begin{aligned}
c_n(t^n_{\upsilon_{j_i}(\gamma(j_i,\bar\delta))}\wedge t^n_{\upsilon_{j_i}(\gamma(j_i,\delta))})&=c_n(\rho_{0\beta^n_{\upsilon_{j_i}(\gamma(j_i,\bar\delta))}}\wedge \rho_{0\beta^n_{\upsilon_{j_i}(\gamma(j_i,\delta))}})\\
&=c(n,((\rho_{0\beta^n_{\upsilon_{j_i}(\gamma(j_i,\delta))}}\restriction\zeta)^*)_g)\\
&=c(n, (\rho_{0}(\xi,\beta^n_{\upsilon_{j_i}(\gamma(j_i,\delta))}))_g)\\
&=c(n,(\rho_0(\xi,\upsilon_{j_i}(\gamma(j_i,\delta)))_g))\\
&=i.
\end{aligned}$$

Without loss of generality, $t^n_{\upsilon_0(\gamma(0,\bar\delta))}<_{\lex}t^n_{\upsilon_0(\gamma(0,\delta))}$.
For each $i<2$, from $d(t^n_{\upsilon_0(\gamma(0,\bar\delta))},t^n_{\upsilon_{j_i}(\gamma(j_i,\bar\delta))})=d(t^n_{\upsilon_0(\gamma(0,\delta))},t^n_{\upsilon_{j_i}(\gamma(j_i,\delta))})$,
we infer that $$t^n_{\upsilon_{j_i}(\gamma(j_i,\bar\delta))}<_{\lex}t^n_{\upsilon_{j_i}(\gamma(j_i,\delta))}.$$
So, by the definition of $<_{n}$ we get that $t^n_{\upsilon_{j_0}(\gamma(j_0,\bar\delta))}<_n t^n_{\upsilon_{j_0}(\gamma(j_0,\delta))}$
and $t^n_{\upsilon_{j_1}(\gamma(j_1,\delta))}<_n t^n_{\upsilon_{j_1}(\gamma(j_1,\bar\delta))}$.
On the other hand, $d_n(t^n_{\upsilon_0(\gamma(0,\bar\delta))},t^n_{\upsilon_{j_i}(\gamma(j_i,\bar\delta))})=d_n(t^n_{\upsilon_0(\gamma(0,\delta))},t^n_{\upsilon_{j_i}(\gamma(j_i,\delta)})$
for every $i<2$. Taking $i=0$ this tells us that
$$t^n_{\upsilon_{0}(\gamma(0,\bar\delta))}<_n t^n_{\upsilon_{0}(\gamma(0,\delta))},$$
and taking $i=1$ this tells us that
$$t^n_{\upsilon_{0}(\gamma(0,\delta))}<_n t^n_{\upsilon_{0}(\gamma(0,\bar\delta))}.$$
This is a contradiction.
\end{proof}

\section{Strong colourings of trees}\label{sec8}

The following is a straightforward generalization of \cite[Lemma~2.8]{MR2444284}.

\begin{lemma}\label{lemma61}
There exists a map $\varphi:{}^{<\omega}\omega\rightarrow\mathbb Z$ satisfying the following.
For every function $h:\kappa\rightarrow\omega$,
for every $\delta\in A(\vec C)$, for every finite matrix $\langle \beta^k_i\mid i<m, k<n\rangle$ of ordinals in the interval $(\delta,\kappa)$,
for every sequence $\langle \upsilon_i\mid i<m\rangle$ such that all of the following hold:
\begin{enumerate}
\item for every $i<m$, $\upsilon_i\in \bigcap_{k<n}\im(\tr(\delta,\beta_i^k))$;
\item for all $i<m$ and $k<n$, $h\circ\rho_0(\upsilon_i,\beta_i^k)=h\circ\rho_0(\upsilon_0,\beta_0^k)$;
\item for all $i<m$ and $k<n$, $(\rho_0(\upsilon_i,\beta_i^k))_{h}=(\rho_0(\upsilon_0,\beta_0^0))_{h}$;
\item for all $i<j<m$, $(\rho_0(\upsilon_0,\beta_0^0))_{h}<(\rho_0(\delta,\upsilon_i))_{h}<(\rho_0(\delta,\upsilon_j)_{h}$,
\end{enumerate}
there are $\varepsilon<\delta$ and $\tau<\omega$ such that, for every $\xi\in(\varepsilon,\delta)$,
if $h(\otp(C_\delta\cap\xi))=\tau$, then for all $k,k'<n$ and $i<m$:
\begin{itemize}
\item $\varphi(h\circ \rho_0(\xi,\beta^k_i))=\varphi(h\circ \rho_0(\xi,\beta^k_0))+i$, and
\item $\varphi(h\circ \rho_0(\xi,\beta^k_i))-\varphi(h\circ \rho_0(\xi,\beta^{k'}_i))=\varphi(h\circ \rho_0(\upsilon_0,\beta_0^k))-\varphi(h\circ \rho_0(\upsilon_0,\beta_0^{k'}))$.
\end{itemize}
\end{lemma}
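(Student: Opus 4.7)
The plan is to define $\varphi\colon{}^{<\omega}\omega\to\mathbb Z$ by an explicit combinatorial recipe (in the spirit of the function underlying \cite[Lemma~2.8]{MR2444284}) and then verify its behaviour by appealing to the concatenation property of walks together with the structural conditions (1)--(4).

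First I would use condition (1) and Fact~\ref{fact2} to obtain a three-block decomposition of the walk from $\beta_i^k$ down to a sufficiently small $\xi<\delta$: for $\xi$ larger than $\sup_{i,k}\lambda(\delta,\beta_i^k)$ (which is below $\delta$ because $\delta\in A(\vec C)$),
\[
\rho_0(\xi,\beta_i^k)=\rho_0(\upsilon_i,\beta_i^k)\,{}^\smallfrown\,\rho_0(\delta,\upsilon_i)\,{}^\smallfrown\,\rho_0(\xi,\delta).
\]
Applying $h$ componentwise, and writing $a^k:=h\circ\rho_0(\upsilon_0,\beta_0^k)=h\circ\rho_0(\upsilon_i,\beta_i^k)$ (the equality using condition~(2)), $b^i:=h\circ\rho_0(\delta,\upsilon_i)$, and $c_\xi:=h\circ\rho_0(\xi,\delta)$, the composed sequence takes the shape $a^k\,{}^\smallfrown\,b^i\,{}^\smallfrown\,c_\xi$. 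Condition~(3) pins down $\sup(a^k)=s_a:=(\rho_0(\upsilon_0,\beta_0^0))_h$ uniformly in $k$, and condition~(4) supplies the strict chain $s_a<\nu_0<\nu_1<\cdots<\nu_{m-1}$ with $\nu_i:=\sup(b^i)$.

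Next I would pick $\tau<\omega$ strictly larger than $\nu_{m-1}$, and choose $\varepsilon$ slightly above $\sup_{i,k}\lambda(\delta,\beta_i^k)$ so that for every $\xi\in(\varepsilon,\delta)$ with $h(\otp(C_\delta\cap\xi))=\tau$ the value $\tau$ is a \emph{fresh marker}, appearing precisely at the leftmost entry of $c_\xi$ and nowhere within $a^k$ or $b^i$. The recipe for $\varphi$ will be arranged so that, on a sequence of the form $a\,{}^\smallfrown\,b\,{}^\smallfrown\,c$ in which $\tau$ first appears at the boundary between $b$ and $c$ and dominates $a\,{}^\smallfrown\,b$, the value $\varphi(a\,{}^\smallfrown\,b\,{}^\smallfrown\,c)$ splits additively as (a function of $a$) $+$ (a function of $b$) $+$ (a function of $c$), where the $b$-contribution equals the index $i$ when $b=b^i$ (exploiting the strict ordering of the $\nu_i$) and the $c$-contribution is independent of $\xi$. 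Granted this, the first required equality is the $i$-additivity, and the second says that the $k$-dependence is carried entirely by $a^k$ and is unaffected by the middle and tail blocks; both bullets then drop out at once.

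The hard part will be pinning down the recipe for $\varphi$ so that the $b$-contribution is \emph{exactly} $i$, rather than some uncontrolled function of $\nu_i$: one needs a combinatorial gadget converting the strict inequality $\nu_0<\nu_1<\cdots<\nu_{m-1}$ into a quantity that increases by precisely one at each step. This is a purely finite combinatorial matter, independent of $h$, $\delta$, and the matrix $\langle\beta_i^k\rangle$, and is the direct analogue of the step taken in \cite[Lemma~2.8]{MR2444284}; the walks-on-ordinals content of the lemma is concentrated in the decomposition and marker arguments above.
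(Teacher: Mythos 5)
Your skeleton matches the paper's: the three-block decomposition $\rho_0(\xi,\beta^k_i)=\rho_0(\upsilon_i,\beta^k_i){}^\smallfrown\rho_0(\delta,\upsilon_i){}^\smallfrown\rho_0(\xi,\delta)$ via Fact~\ref{fact2} and $\delta\in A(\vec C)$, the choice of a large "marker" $\tau$ sitting at the first entry of the tail block, and an additive splitting of $\varphi$ across the three blocks. But the proposal has a genuine gap at exactly the point you flag as "the hard part": the definition of $\varphi$ is the entire content of the lemma, and the mechanism you are hoping for cannot work in the form you describe. Since $\varphi$ must be fixed in advance of $h$, $\delta$ and the matrix, and the middle blocks $b^i=h\circ\rho_0(\delta,\upsilon_i)$ are arbitrary finite sequences constrained only by $\sup(b^0)<\cdots<\sup(b^{m-1})$, no fixed $\varphi$ can assign "contribution exactly $i$" to $b^i$ as a function of the block itself: the same finite sequence can occur as $b^0$ in one configuration and as $b^1$ in another. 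The correction has to be routed through the one remaining degree of freedom, namely the choice of $\tau$ — and in your plan $\tau$ is used only as a parsing marker, not as a carrier of information.

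The paper's resolution is to enumerate a dense subset $\langle d_\tau\mid\tau<\omega\rangle$ of ${}^\omega\mathbb Z$ and set $\varphi(\sigma):=\sum_{j\le l}d_{\sigma(j)}(\sup(\sigma\restriction j))$ for $\sigma:l+1\rightarrow\omega$, i.e.\ each entry contributes a value depending on the running supremum of the preceding entries. Then the marker position contributes $d_\tau(\mathfrak h_i)$ where $\mathfrak h_i:=(\rho_0(\delta,\upsilon_i))_h$, and since the $\mathfrak h_i$ are distinct (condition~(4)), density lets one choose $\tau>\max_i\mathfrak h_i$ with $d_\tau(\mathfrak h_i)=-g(i)+i+g(0)$, where $g(i)$ is the (uncontrolled) contribution of the middle block. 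The marker term thus cancels $g(i)$ and substitutes $i$, while $\tau$ being above all $\mathfrak h_i$ makes the tail contribution depend only on $\rho_0(\xi,\delta)$, and the $a^k$-contribution $\varphi(\eta^k)$ carries the $k$-dependence. Without this (or an equivalent) construction, neither bullet of the conclusion has been established, so the proof is incomplete at its core.
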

\begin{proof}
Let $\langle d_\tau\mid \tau<\omega\rangle$ be the injective enumeration of some dense subset of the Tychonoff space ${}^\omega\mathbb Z$.
Define a map $\varphi:{}^{<\omega}\omega\rightarrow\mathbb Z$ by letting $\varphi(\emptyset):=0$,
and for every nonempty finite sequence $\sigma:l+1\rightarrow\omega$, define
$$\varphi(\sigma):=\sum_{j=0}^l d_{\sigma(j)}(\sup(\sigma\restriction j)).$$

To see this works,
let $h$, $\delta$, $\langle \beta^k_i\mid i<m, k<n\rangle$ and $\langle \upsilon_i\mid i<m\rangle$ be as above.
Denote $\mathfrak h:=(\rho_0(\upsilon_0,\beta_0^0))_{h}$.
For every $i<m$, denote $\mathfrak h_i:=(\rho_0(\delta,\upsilon_i))_{h}$.
For every $k<n$, denote $\eta^k:=h\circ \rho_0(\upsilon_0,\beta_0^k)$.
With this notation, we have:
\begin{itemize}
\item for all $i<m$ and $k<n$:
\begin{itemize}
\item $h\circ \rho_0(\upsilon_i,\beta_i^k)=\eta^k$, and
\item $\sup(\eta^k)=\mathfrak h<\mathfrak h_i=(\rho_0(\delta,\upsilon_i))_{h}=(\rho_0(\delta,\beta^k_i))_{h}$.
\end{itemize}
\item $i\mapsto\mathfrak h_i$ is strictly increasing over $m$.
\end{itemize}
Also define a function $g:m\rightarrow\mathbb Z$ via:
$$g(i):=\sum_{j=0}^{\rho_2(\delta,\upsilon_i)-1} d_{h(\rho_0(\delta,\upsilon_i)(j))}(\max\{\mathfrak h,h(\rho_0(\delta,\upsilon_i)(\iota))\mid \iota<j\}).$$

Next, as $\delta\in A(\vec C)$, we know that $\varepsilon:=\sup\{ \lambda(\delta,\beta^k_i)\mid i<m, k<n\}$ is smaller than $\delta$.
Consider $H:=\{ \mathfrak h_i\mid i<m\}$, which is an $m$-sized subset of $\omega$.
Define a function $f:H\rightarrow\mathbb Z$ by letting for every $i<m$,
$$f(\mathfrak h_i):=-g(i)+i+g(0).$$

By the choice of our dense set, there are infinitely many $\tau<\omega$ such that $f\s d_{\tau}$,
so let us pick such a $\tau$ above $\max(H)$. To see that $\varepsilon$ and $\tau$ are as sought,
suppose that $\xi\in(\varepsilon,\delta)$ is such that $h(\otp(C_\delta\cap\xi))=\tau$.

Let $i<m$ and $k<n$.
Since $\lambda(\delta,\beta^k_i)\le\varepsilon<\xi$, we have
$$\begin{aligned}\rho_0(\xi,\beta^k_i)=&\ \rho_0(\delta,\beta^k_i){}^\smallfrown\rho_0(\xi,\delta)\\
=&\ \rho_0(\delta,\beta^k_i){}^\smallfrown\langle\otp(C_\delta\cap\xi)\rangle{}^\smallfrown\rho_0(\xi,\min(C_\delta\setminus\xi))\\
=&\ \rho_0(\upsilon_i,\beta^k_i){}^\smallfrown\rho_0(\delta,\upsilon_i){}^\smallfrown\langle\otp(C_\delta\cap\xi)\rangle{}^\smallfrown\rho_0(\xi,\min(C_\delta\setminus\xi)).
\end{aligned}$$
Letting $\sigma:=h\circ\rho_0(\xi,\beta^k_i)$ and $l:=\rho_2(\xi,\beta^k_i)-1$, it is the case that
$$\begin{aligned}\varphi(h\circ\rho_0(\xi,\beta^k_i))=&\sum_{j=0}^l d_{\sigma(j)}(\sup(\sigma\restriction j))\\
=&\sum_{j=0}^{\rho_2(\delta,\beta^k_i)-1} d_{\sigma(j)}(\sup(\sigma\restriction j))+d_{\sigma(\rho_2(\delta,\beta^k_i))}(\sup(\sigma\restriction \rho_2(\delta,\beta^k_i)))\\
&\quad +\sum_{j=\rho_2(\delta,\beta^k_i)+1}^{l} d_{\sigma(j)}(\sup(\sigma\restriction j))\\
=&\ \varphi(h\circ\rho_0(\delta,\beta^k_i))+d_{h(\otp(C_\delta\cap\xi))}((\rho_0(\delta,\beta^k_i))_h)\\
&\quad +\sum_{j=\rho_2(\delta,\beta^k_i)+1}^{l} d_{\sigma(j)}(\sup(\sigma\restriction j))\\
=&\ \varphi(h\circ\rho_0(\delta,\beta^k_i))+d_\tau((\rho_0(\delta,\beta^k_i))_h)+\sum_{j=\rho_2(\delta,\beta^k_i)+1}^{l} d_{\sigma(j)}(\sup(\sigma\restriction j))\\
=&\ \varphi(h\circ\rho_0(\delta,\beta^k_i))+f(\mathfrak h_i)+\sum_{j=\rho_2(\delta,\beta^k_i)+1}^{l} d_{\sigma(j)}(\sup(\sigma\restriction j)).
\end{aligned}$$

We now analyze the first and third components of the above equation. Starting with the first,
as $h\circ \rho_0(\delta,\beta_i^k)=\eta^k{}^\smallfrown(h\circ\rho_0(\delta,\upsilon_i))$
and $\sup(\eta^k)=\mathfrak h$, we get that
$$\begin{aligned}
\varphi(h\circ\rho_0(\delta,\beta^k_i))=&\ \varphi(\eta^k)+\sum_{j=0}^{\rho_2(\delta,\upsilon_i)-1} d_{h(\rho_0(\delta,\upsilon_i)(j))}(\max\{\mathfrak h,h(\rho_0(\delta,\upsilon_i)(\iota))\mid \iota<j\})\\
=&\ \varphi(\eta^k)+g(i).
\end{aligned}$$

To analyze the third component, note that for $j=\rho_2(\delta,\beta^k_i)$, it is the case that
$$\max\{{h}(\rho_0(\xi,\beta^k_i)(\iota))\mid \iota<j\}=(\rho_0(\delta,\beta^k_i))_h\le\max(H)<\tau=h(\rho_0(\xi,\beta^k_i)(j)),$$
and hence
$$\begin{aligned}
\sum_{j=\rho_2(\delta,\beta^k_i)+1}^{l} d_{\sigma(j)}(\sup(\sigma\restriction j))=&\ \sum_{j=\rho_2(\delta,\beta^k_i)+1}^{l} d_{h(\rho_0(\xi,\beta^k_i)(j))}(\max\{{h}(\rho_0(\xi,\beta^k_i)(\iota))\mid \iota<j\})\\
=&\ \sum_{j=\rho_2(\delta,\beta^k_i)+1}^{l} d_{h(\rho_0(\xi,\beta^k_i)(j))}(\max\{{h}(\rho_0(\xi,\beta^k_i)(\iota))\mid \rho_2(\delta,\beta^k_i)\le \iota<j\})\\
=&\ \sum_{j=1}^{\rho_2(\xi,\delta)-1} d_{h(\rho_0(\xi,\delta)(j))}(\max\{{h}(\rho_0(\xi,\delta)(\iota))\mid \iota<j\})\\
=&\ \sum_{j=1}^{\rho_2(\xi,\delta)-1} d_{h(\rho_0(\xi,\delta)(j))}((\rho_0(\xi,\delta)\restriction j)_h).
\end{aligned}$$

Altogether,
$$\varphi(h\circ\rho_0(\xi,\beta^k_i))=\varphi(\eta^k)+g(i)+f(\mathfrak h_i)+\sum_{j=1}^{\rho_2(\xi,\delta)-1} d_{h(\rho_0(\xi,\delta)(j))}((\rho_0(\xi,\delta)\restriction j)_h).$$
Therefore, for all $i<m$ and $k<n$:
$$\begin{aligned}
\varphi(h\circ\rho_0(\xi,\beta^k_i))-\varphi(h\circ\rho_0(\xi,\beta^k_0))=&\ (g(i)+f(\mathfrak h_i))-(g(0)+f(\mathfrak h_0))\\
=&\ g(i)+f(\mathfrak h_i)-g(0)-0=\\
=&\ g(i)-g(i)+i+g(0)-g(0)=i.
\end{aligned}$$

It also follows that for all $k<k'<n$ and $i<m$:
$$\varphi(h\circ\rho_0(\xi,\beta^k_i))-\varphi(h\circ\rho_0(\xi,\beta^{k'}_i))=\varphi(\eta^k)-\varphi(\eta^{k'}),$$
as sought.
\end{proof}

\begin{theorem}\label{thm52}
Suppose that $\vec C$ is a weakly coherent $C$-sequence over $\kappa$ such that $V(\vec C)$ covers a club in $\kappa$,
and $\omega\in \Theta_1^\mu(\vec C\restriction A(\vec C), R(\vec C))$ for a given cardinal $\mu$.
Then $T(\rho_0^{\vec C})\nmeetarrow[\kappa]^{n,1}_\mu$ holds for every positive $n<\omega$.
\end{theorem}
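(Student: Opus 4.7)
For each positive integer $n$, we will construct a single colouring $c_n:T(\rho_0)\to\mu$ witnessing $T(\rho_0)\nmeetarrow[\kappa]^{n,1}_\mu$. Fix once and for all a club $E\s V(\vec C)$ (possible since $V(\vec C)$ covers a club) and maps $g:\kappa\to\mu$ and $h:\kappa\to\omega$ witnessing $\omega\in\Theta_1^\mu(\vec C\restriction A(\vec C),R(\vec C))$. Let $\varphi:{}^{<\omega}\omega\to\mathbb Z$ come from Lemma~\ref{lemma61} and $f_n:\mathbb Z\to\omega$ from Corollary~\ref{lemma51}. For $t\in T(\rho_0)$ of level $\delta$, set $\xi(t):=\sup(E\cap\delta)$; unless $\xi(t)<\delta$ and $l(t):=f_n(\varphi(h\circ t(\xi(t))))$ is strictly below $|t(\xi(t))|$, put $c_n(t):=0$, and otherwise set
$$c_n(t):=g\bigl(t(\xi(t))(|t(\xi(t))|-l(t)-1)\bigr).$$
Counting the coordinate from the end is essential: when $t$ arises as the meet of two branches $\rho_{0\beta^k_\upsilon}$ and $\rho_{0\beta^k_{\upsilon'}}$ produced by walks passing through a common $\upsilon$, one has $|t(\xi)|-\ell^k-1=\rho_2(\xi,\upsilon)$, which does not depend on $k$, whereas $\ell^k$ does.

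To verify clause~(1) of Definition~\ref{meetarrow}, fix a rapid $S\s{}^nT(\rho_0)$ of size $\kappa$ and a target $\tau<\mu$. Apply Lemma~\ref{lemma35} with $\nu:=\tau$ to obtain $\varepsilon<\kappa$, $\langle(\eta^k,\ell^k)\mid k<n\rangle$, a cofinal $\Upsilon\s\kappa$, and a matrix $\langle\beta^k_\upsilon\mid\upsilon\in\Upsilon,\,k<n\rangle$ such that, for every $\upsilon\in\Upsilon$, $k<n$ and $\delta\in(\varepsilon,\upsilon)$, $\tr(\delta,\beta^k_\upsilon)(\ell^k+1)=\upsilon$ and $g(\rho_0(\delta,\beta^k_\upsilon)(\ell^k))=\tau$. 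Form the rapid family $X:=\{\langle\beta^k_\upsilon\mid k<n\rangle\mid\upsilon\in\Upsilon\}$ in ${}^n\kappa$ and let $D_X$ be the club supplied by Lemma~\ref{lemma42}. Pick $\gamma\in D_X\cap R(\vec C)\cap E^\kappa_{>n}$ large enough that $\min(E\setminus(\varepsilon+1))<\gamma$, choose any $\upsilon\in\Upsilon$ with $\upsilon\ge\gamma$, and apply Lemma~\ref{lemma42} with $m:=1$ to the row $\langle\beta^k_\upsilon\mid k<n\rangle$; this yields a $\upsilon'\in\Upsilon$ with $\upsilon'<\gamma$ and a single $\zeta\in(\varepsilon,\gamma)$ with $\Delta(\rho_{0\beta^k_\upsilon},\rho_{0\beta^k_{\upsilon'}})=\zeta$ for every $k<n$, automatically giving the $m=1$ clause since all meets sit at height $\zeta$. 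By Lemma~\ref{lemma2444}, $\zeta\notin V(\vec C)$, so $\xi:=\sup(E\cap\zeta)$ lies strictly below $\zeta$ while still exceeding $\varepsilon$. Applying Lemma~\ref{lemma61} with $m=1$ to the row $\langle\beta^k_\upsilon\mid k<n\rangle$ and $\upsilon_0:=\upsilon$ isolates an $\varepsilon^*<\zeta$ and a marker $\tau^*<\omega$ with the property that, whenever $\xi$ carries the prescribed $h$-value at the appropriate walk-level, the integers $\varphi(h\circ\rho_0(\xi,\beta^k_\upsilon))$ form the bounded arithmetic pattern $p(k)+q_0$ with $p(k):=\varphi(\eta^k)-\varphi(\eta^0)$. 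Combining this with the club-guessing strength of $h$ (which arranges $h(\otp(C_\delta\cap\xi))=\tau^*$ at the relevant $\delta\in A(\vec C)$) and Corollary~\ref{lemma51} (which turns bounded-difference input into a prescribed constant $f_n$-output), one forces $f_n(\varphi(h\circ\rho_0(\xi,\beta^k_\upsilon)))=\rho_2(\xi,\upsilon)$ uniformly in $k<n$, so the definition of $c_n$ evaluates to $g(\rho_0(\xi,\beta^k_\upsilon)(\ell^k))=\tau$ at each meet, as required.

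Clause~(2) is handled by taking $S'$ to be the rapid subfamily furnished by Lemma~\ref{lemma35}, which ensures $|\{\eta^k\mid k<n\}|=n$ on every $\kappa$-sized $S\s S'$. For any prescribed $\tau\in{}^n\mu$, the pairwise incomparability of the $\eta^k$'s provides the extra degree of freedom to prescribe different colours at different meets: one threads a different target shift through the $\varphi$-arithmetic per branch, invokes the two-parameter strength of $\omega\in\Theta_1^\mu(\vec C\restriction A(\vec C),R(\vec C))$ to decouple the $g$-target from the $h$-target colour by colour, and again synchronises the meet height via Lemma~\ref{lemma42}. The main obstacle throughout is the circular dependence between $\xi$, the shift $q_0=\varphi(h\circ\rho_0(\xi,\beta^0_\upsilon))$, and the desired output $l=\rho_2(\xi,\upsilon)$ of $f_n$: since $l$ itself depends on $\xi$, one cannot fix $l$ first and then pick $\xi$ fitting it. Breaking this circularity is precisely the point of Lemma~\ref{lemma61} together with the abundance of $\xi$'s supplied by the club-guessing hypothesis: the former fixes a finite $h$-mark $\tau^*$ that lines up the $\varphi$-values as a tight arithmetic pattern $p(k)+q_0$, while the latter provides, via $\omega\in\Theta_1^\mu$, enough $\xi$'s to realise every finite combinatorial constraint on $q_0$ compatible with a prescribed $l$, making the matching from Corollary~\ref{lemma51} executable.
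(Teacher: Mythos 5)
Your overall architecture matches the paper's (the same auxiliary lemmas, the same device $t^*:=t(\sup(E\cap\dom(t)))$, the same $\varphi$/$f_n$ machinery), but the colouring you define does not work, and the step you yourself flag as "the main obstacle" is a genuine gap, not something the cited lemmas resolve. Your colouring needs $f_n(\varphi(h\circ t^*))$ to equal the \emph{exact} value $\rho_2(\xi,\upsilon)$, so that counting $l(t)+1$ coordinates from the end of $t^*$ lands on position $\ell^k$. But Corollary~\ref{lemma51} only guarantees: for a target $l$ and bound $d$ \emph{fixed in advance}, there is an $m$ such that among $m$ consecutive shifts of a $d$-bounded pattern, some shift is sent to $l$. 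The shifts are realised in the argument by varying which $\upsilon_i$ ($i<m$) one walks through, and the target must therefore be known before $\delta$, $\gamma$, $\zeta$ and $\xi$ are produced. In your scheme the target $\rho_2(\xi,\upsilon)$ is only determined after $\xi$ is, and changing $\xi$ (or the index $i$) perturbs the input $\varphi(h\circ\rho_0(\xi,\beta^k_{\upsilon_i}))$ and the target $\rho_2(\xi,\upsilon_i)$ simultaneously and without any usable relation between the two. No amount of "abundance of $\xi$'s" from the club-guessing repairs this: for each candidate $\xi$ both sides of the required equation are already determined, and nothing forces them to agree. The paper sidesteps the issue entirely by making the $f_n$-output an index $l$ into a fixed enumeration $\langle\Sigma_l\mid l<\omega\rangle$ of ${}^n({}^{<\omega}\omega)$; the tuple $\langle\eta^k\mid k<n\rangle$ (hence $l$) is fixed immediately after Lemma~\ref{lemma35}, and the position at which $g$ is read is $\dom(\Sigma_l(k))-1=\ell^k$, recovered from $\Sigma_l$ rather than computed as an offset from the end.

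A second, independent defect concerns clause~(2) of Definition~\ref{meetarrow}. By Lemma~\ref{lemma35}(ii), $g(\rho_0(\delta,\beta^k_\upsilon)(\ell^k))$ is the \emph{same} value $\nu$ for every $k<n$, and a single application of $\omega\in\Theta_1^\mu(\ldots)$ produces only one $\nu$ per guessing point $\delta$. With a scalar $g:\kappa\to\mu$ your colouring therefore assigns the same colour to all $n$ meets, which suffices for clause~(1) but not for prescribing an arbitrary $\tau\in{}^n\mu$. The paper handles this by taking $g$ onto ${}^n\mu$ and extracting the coordinate $k$ from which member of the antichain $\{\eta^k\mid k<n\}$ the sequence $h\circ t^*$ extends; your sketch ("threads a different target shift per branch", "decouple the $g$-target from the $h$-target colour by colour") names no mechanism that could substitute for this. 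Both defects are consequences of the same design choice and would require redefining the colouring along the paper's lines to fix.
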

\begin{proof}
Let $\varphi:{}^{< \omega}\omega\rightarrow \mathbb Z$ be given by Lemma~\ref{lemma61}.
Let a positive $n<\omega$ be given,
and then let $f_n:\mathbb Z\rightarrow\omega$ be the corresponding map given by Corollary~\ref{lemma51}.
For every $\Sigma\in{}^n({}^{<\omega}\omega)$, set $$\diam(\Sigma):=\sup\{ |\varphi(\Sigma(k))-\varphi(\Sigma(k'))|\mid k<k'<n\}.$$
Let $\langle \Sigma_l\mid l<\omega\rangle$ be an injective enumeration of ${}^n({}^{<\omega}\omega\setminus\{\emptyset\})$.

Write $\vec C$ as $\langle C_\delta\mid\delta<\kappa\rangle$.
Let $g,h$ witness together that $\omega\in\Theta_1^\mu(\vec C\restriction A(\vec C),R(\vec C))$.
As $|{}^n\mu|=\mu$, we may assume that $g$ is a map from $\kappa$ onto ${}^n\mu$.
Fix a club $E$ in $\kappa$ such that $E\s V(\vec C)$.
To every function $t\in{}^{<\kappa}({}^{<\omega}\kappa)$, we attach a finite sequence $t^*\in{}^{<\omega}\kappa$ via
$$t^*:=\begin{cases}
t(\sup(E \cap \dom(t))),&\text{if }\sup(E\cap\dom(t))\in\dom(t);\\
\emptyset,&\text{otherwise}.
\end{cases}$$

We now define our colouring $c: T(\rho_0^{\vec C})\rightarrow\mu$, as follows.
Given $t\in T(\rho_0^{\vec C})$, consider $l:=f_n(\varphi(h\circ t^*))$.
If there exists a $k<n$ such that $h\circ t^*$ extends $\Sigma_l(k)$,
then let $c(t):=g(t^*(\dom(\Sigma_l(k))-1))(k)$ for the least such $k$. Otherwise, let $c(t):=0$.

To prove that $c$ witnesses $T(\rho_0^{\vec C}) \nmeetarrow [\kappa]^{n,1}_\mu$,
and recalling Clause~(2) of Definition~\ref{meetarrow},
first let $T'\in[T]^\kappa$ be given. Then let $S'\in[{}^nT']^\kappa$
be the rapid family given by Lemma~\ref{lemma35} when invoked with $T'$.
Finally, let $S\s {}^nT(\rho_0^{\vec C})$ be any rapid family of size $\kappa$,\footnote{Recall Remark~\ref{rmkrapid}.}
and let $\nu\in{}^n\mu$;
our goal is to find $x\neq y$ in $S$ such that for every $k<n$:
\begin{itemize}
\item if $S\s S'$, then $c(x(k) \wedge y(k)) = \nu(k)$;
\item if $S\nsubseteq S'$, then $c(x(k) \wedge y(k)) = \nu(0)$;
\item $x(k)<_{\lex}y(k)$ iff $\dom(x(k))<\dom(y(k))$;
\item $\dom(x(k) \wedge y(k))=\dom(x(0) \wedge y(0))$.
\end{itemize}

For simplicity, in case that $S\nsubseteq S'$, we may switch $\nu$ with a constant map, and then the first two bullets coincide.
Returning to Lemma~\ref{lemma35} with the above $S$ and $\nu$,
we fix a cofinal $\Upsilon\s\kappa$,
$\varepsilon_0<\kappa$, and $\langle (\eta^k,\ell^k)\mid k<n\rangle$
such that for every $\upsilon\in\Upsilon$, there is a $\langle \beta_\upsilon^k\mid k<n\rangle$ such that:
\begin{enumerate}
\item for every $k<n$, $\upsilon<\beta_\upsilon^k<\kappa$, $h\circ \rho_0(\upsilon,\beta_\upsilon^k)=\eta^{k}$ and $\sup(\eta^{k})=\sup(\eta^0)$;
\item for every $\delta\in(\varepsilon_0,\upsilon)$, for every $k<n$,
$\tr(\delta,\beta_\upsilon^k)(\ell^k+1)=\upsilon$ and
$g(\rho_0(\delta,\beta_\upsilon^k)(\ell^k))=\nu$;
\item $(\{\eta^{k}\mid k<n\},{\s})$ is an antichain.
If $S\s S'$, then $|\{\eta^{k}\mid k<n\}|=n$;
\item for all $\upsilon\neq\upsilon'$ in $\Upsilon$, there are $x\neq y$ in $S$ such that for every $k<n$:
\begin{itemize}
\item $\dom(x(k))<\dom(y(k))$ iff $\beta_\upsilon^k<\beta_{\upsilon'}^k$,
\item $x(k)\wedge y(k)=\rho_{0\beta_\upsilon^k}\wedge \rho_{0\beta_{\upsilon'}^k}$, and
\item $x(k)<_{\lex}y(k)$ iff $\rho_{0\beta_\upsilon^k}<_{\lex} \rho_{0\beta_{\upsilon'}^k}$.
\end{itemize}
\end{enumerate}

By possibly shrinking $\Upsilon$, we may assume that $X:=\{ \langle \beta_\upsilon^k\mid k<n\rangle\mid \upsilon\in\Upsilon\}$ is rapid.
Fix the unique $l<\omega$ such that $\Sigma_l(k)=\eta^{k}$ for every $k<n$.
Fix an $m<\omega$ such that for every $p:n\rightarrow\mathbb Z$,
with $\max\{|p(k)-p(k')|\mid k<k'<n\}\le\diam(\Sigma_l)$,
there exists an $i<m$ such that $f_n(p(k)+i)=l$ for every $k<n$.

Consider the club $D:=D_X\cap \acc(E\setminus\varepsilon_0)$,
where $D_X$ is given by Lemma~\ref{lemma42} with respect to the family $X$ (where $n$ plays the role of $\nu$).
Let $\Delta$ be the (stationary) set of all $\delta\in A(\vec C)$ such that for every $\tau<\omega$,
$$ \sup\{\gamma \in \nacc(C_\delta) \cap D\cap R(\vec C) \mid h(\otp(C_\delta \cap \gamma)) = \tau\} = \delta.$$

By Lemma~\ref{cor62}, we may now fix a $\delta\in\Delta$ such that
$$\sup\{ (\rho_0(\delta,\upsilon))_{h}\mid \upsilon\in \Upsilon\setminus\delta\}=\omega.$$

Pick an increasing sequence $\langle \upsilon_i\mid i<m\rangle$ of elements of $\Upsilon\setminus\delta$ such that:
\begin{itemize}
\item for all $i<j<m$, $\sup(\eta^0)<(\rho_0(\delta,\upsilon_i))_{h}<(\rho_0(\delta,\upsilon_j))_{h}$.
\end{itemize}
For simplicity, for each $i$, write $\langle \beta^k_i\mid k<n\rangle$ for $\langle \beta^k_{\upsilon_i}\mid k<n\rangle$.

For every $\xi<\delta$, define a function $p_\xi:n\rightarrow\mathbb Z$ via $$p_\xi(k):=\varphi(h\circ \rho_0(\xi,\beta^k_0)).$$

Appealing to the choice of $\varphi$ with $\langle \beta^k_i\mid i<m, k<n\rangle$,
let $\varepsilon< \delta$ and $\tau < \omega$
be such that for every $\xi \in (\varepsilon, \delta)$, if $h(\otp(C_\delta\cap \xi))=\tau$, then for all $k,k'<n$ and $i<m$:
\begin{itemize}
\item $\varphi(h\circ \rho_0(\xi,\beta^k_i))=p_\xi(k)+i$, and
\item $p_\xi(k)-p_\xi(k')=\varphi(h\circ \rho_0(\upsilon_0,\beta_0^k))-\varphi(h\circ \rho_0(\upsilon_0,\beta_0^{k'}))=\varphi(\eta^k)-\varphi(\eta^{k'})$.
\end{itemize}
In particular, in this case, $\max\{|p_\xi(k)-p_\xi(k')|\mid k<k'<n\}=\diam(\Sigma_l)$.

Note that since $\delta\in\Delta\s A(\vec C)$, we may also assume that $\max\{\lambda(\delta, \beta^k_i),\varepsilon_0 \mid i<m, k<n\} < \varepsilon$.

By the choice of $\delta$ we can now find a $\gamma\in\nacc(C_\delta)$ such that:
\begin{itemize}
\item $\gamma \in D_X \cap \acc(E)\cap R(\vec C)$,
\item $h(\otp(C_\delta \cap \gamma)) = \tau$, and
\item $\gamma^{-}= \sup(C_\delta \cap \gamma)$ is bigger than $\varepsilon$.
\end{itemize}

Evidently, $\otp(C_\delta\cap\xi)=\otp(C_\delta\cap\gamma)$ for every $\xi\in(\gamma^-,\gamma]$.
Let $\epsilon:= \min(E \setminus(\gamma^{-}+1))$, so that for all $i<m$ and $k<n$,
$$\varepsilon_0<\varepsilon < \gamma^{-} <\epsilon < \gamma < \delta < \beta^k_i.$$

Since
\begin{itemize}
\item $\gamma \in D_X\cap R(\vec C)$, whereas $D_X$ was given by Lemma~\ref{lemma42},
\item $\langle \beta^k_i \mid i<m, k<n\rangle$ is a matrix of the form required by Lemma~\ref{lemma42}, and
\item $\lambda(\gamma,\beta^k_i)=\gamma^-<\epsilon$ for all $i<m$ and $k<n$,
\end{itemize}
we may obtain a matrix $\langle \alpha^k_i \mid i<m, k<n\rangle$ with entries in $X \cap{}^n\gamma$ and a $\zeta \in (\epsilon, \gamma)$ such that for all $i<m$ and $k<n$:
\begin{itemize}
\item $\Delta(\rho_{0\alpha^k_i},\rho_{0\beta^k_i})=\zeta$, and
\item $\rho_{0\alpha^k_i}<_{\lex}\rho_{0\beta^k_i}$.
\end{itemize}

By Lemma~\ref{lemma2444}, $\zeta\notin E$, so we consider $\xi:=\max(E \cap \zeta)$, noting that $\xi\ge\epsilon$.

Now, for all $i<m$ and $k<n$, letting $t^k_i:= \rho_{0\alpha^k_i} \wedge \rho_{0\beta^k_i}$, we have that
\begin{itemize}
\item $\xi \in \dom(t^k_i)$, and in fact
\item $\xi= \max(E \cap \dom(t^k_i))$, and so
\item $(t_i^k)^*=t^k_i(\xi) = \rho_{0}(\xi,\beta^k_i)$.
\end{itemize}

For all $k<n$ and $i< m$, since
$$\max\{\lambda(\delta, \beta^k_i),\varepsilon_0\} < \varepsilon < \gamma^{-} < \epsilon \le\xi<\zeta< \gamma< \delta<\beta^k_i,$$
we have that:
\begin{itemize}
\item $(t_i^k)^*=\rho_{0}(\delta,\beta^k_i){}^\smallfrown \rho_{0}(\xi,\delta)$;
\item $\tr(\delta,\beta_i^k)(\ell^k+1)=\upsilon_i$;
\item $g(\rho_0(\delta,\beta_i^k)(\ell^k))=\nu$;
\item $h(\otp(C_\delta\cap \xi))=h(\otp(C_\delta\cap \gamma))=\tau$, and hence
\item $\varphi(h\circ (t_i^k)^*)=p_\xi(k)+i$.
\end{itemize}

By the choice of $m$, we may fix an $i< m$ such that $f_n(p_\xi(k)+i) = l$ for all $k<n$.

Let $k<n$.
As $f_n(\varphi(h\circ (t_i^k)^*)) = l$, the definition of $c(t_i^k)$ requires that we consult $\Sigma_l$.
Now, $\Sigma_l(k)=\eta^{k}=h\circ \rho_0(\upsilon_i,\beta^k_i)$ is an initial segment of $h\circ (t^k_i)^*$.
As $\im(\Sigma_l)$ is an antichain, it follows that for the least $\bar k\le k$ such that $\Sigma_l(\bar k)=\eta^k$, we have
$$\begin{aligned}
c(t^k_i)=&\ g((t^k_i)^*(\dom(\Sigma_l(\bar k)-1))(\bar k)\\
=&\ g((t^k_i)^*(\dom(\eta^k)-1))(\bar k)\\
=&\ g((t^k_i)^*(\ell^k))(\bar k)=g(\rho_0(\delta,\beta^k_i)(\ell^k))(\bar k)=\nu(\bar k).
\end{aligned}$$
Additionally, if $\nu$ is not constant, then $S\s S'$ and $|\{\eta^{k}\mid k<n\}|=n$, so that $\bar k=k$.
This means that $c(t^k_i)=\nu(k)$ in both cases.

Finally, recalling Clause~(iv), we may pick $x\neq y$ in $S$ such that for every $k<n$:
\begin{itemize}
\item $\dom(x(k))<\dom(y(k))$ (since $\alpha_i^k<\beta_i^k$),
\item $x(k)\wedge y(k)=\rho_{0\alpha^k_i} \wedge \rho_{0\beta^k_i}=t^k_i$ and $\dom(t^k_i)=\zeta$, and
\item $x(k)<_{\lex} y(k)$ (since $\rho_{0\alpha^k_i}<_{\lex}\rho_{0\beta^k_i}$).
\end{itemize}

Altogether, $c(x(k)\wedge y(k))=\nu(k)$ and $x(k)<_{\lex} y(k)$ iff $\dom(x(k))<\dom(y(k))$.
\end{proof}

In Theorem~\ref{thm52}, we obtained a strong colouring with $\mu$ colours,
assuming $\omega\in \Theta_1^\mu(\ldots)$. In the next theorem, we shall want to get a strong colouring with $\mu^+$ colours from the same hypothesis.
To compare, it is indeed the case that $\mu^+\nrightarrow[\mu^+]^2_\mu$ implies $\mu^+\nrightarrow[\mu^+]^2_{\mu^+}$,
and the standard argument takes a sequence $\langle e_\beta:\mu\rightarrow\beta+1\mid \beta<\mu^+\rangle$ of surjections
and a colouring $c:[\mu^+]^2\rightarrow\mu$ and then stretch it to a colouring $c^+:[\mu^+]^2\rightarrow\mu^+$ via $c^+(\alpha,\beta):=e_\beta(c(\alpha,\beta))$.
Unfortunately, neither this simple method nor the more advanced ones of \cite[\S3]{paper50} are applicable here,
since our colouring gets as an input the meet $(\rho_{0\alpha}\restriction\bar\alpha)\wedge(\rho_{0\beta}\restriction\bar\beta)$ of two restricted fibers rather then the pair $((\rho_{0\alpha}\restriction\bar\alpha),(\rho_{0\beta}\restriction\bar\beta))$,
let alone the pair $(\alpha,\beta)$.
This forces us to somehow determine the stretched colour just from $\rho_{0\beta}\restriction\zeta$ for a (hopefully large enough) $\zeta<\min\{\bar\alpha,\bar\beta\}$,
which may explain the convoluted nature of the colouring given in the upcoming proof.

On the other hand, the next theorem illustrates the power of three cardinal constellations $\kappa_0<\kappa_1<\kappa_2$ as discussed
in the paper's introduction, since
by the results of the next section, as soon as $\mu>\omega$,
many triples of the form $(\kappa_0,\kappa_1,\kappa_2):=(\omega,\mu,\mu^+)$ satisfy the hypotheses as soon as $\square^*_\mu$ holds.

\begin{theorem}\label{thm84}
Suppose that $\vec C$ is a $\square_\mu^*$-sequence such that $V(\vec C)$ covers a club in $\mu^+$,
and $\omega\in \Theta_1^\mu(\vec C\restriction A(\vec C),\mu^+)$.
Then $T(\rho_0^{\vec C})\nmeetarrow[\mu^+]^{n,1}_{\mu^+}$ holds for every positive $n<\omega$.
\end{theorem}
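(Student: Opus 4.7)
My plan is to modify the proof of Theorem~\ref{thm52} so that its $\mu$-valued colouring is stretched to a $\mu^+$-valued one, exploiting the $\mu$-boundedness of the $\square^*_\mu$-sequence. For each $\beta < \mu^+$ I would fix a surjection $e_\beta : \mu \twoheadrightarrow \beta+1$, and for each $t \in T(\rho_0^{\vec C})$ I would let $\xi(t) := \max(E \cap \dom(t))$ where $E \s V(\vec C)$ is a fixed club, so that $\xi(t)$ is a canonical ordinal in $\mu^+$ extractable from $t$ alone. The new colouring will take the shape $c(t) := e_{\xi(t)}(c_0(t))$, where $c_0(t) \in \mu$ is derived from a modest adaptation of the Theorem~\ref{thm52} colouring (still using $\varphi$, $f_n$, $\langle \Sigma_l\rangle$, and the witnesses $g,h$ to $\omega \in \Theta_1^\mu(\vec C \restriction A(\vec C), \mu^+)$). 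The point is that $c(t)$ is now an ordinal in $\xi(t)+1 \s \mu^+$, and as the meet heights vary, the anchor $\xi(t)$ will range cofinally in $\mu^+$.

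For the verification, given a rapid $T' \in [T(\rho_0^{\vec C})]^{\mu^+}$ and a prescribed vector $\vec \tau \in {}^n \mu^+$, I would first run the skeleton of the proof of Theorem~\ref{thm52}: obtain the rapid family $S'$ via Lemma~\ref{lemma35}, consider an arbitrary rapid $S$ (or $S \s S'$), and prepare to apply $\Theta_1^\mu$ to extract the witnessing $\delta$. The twist comes at the moment $\delta$ is chosen: I would start by restricting the stationary set $\Delta$ of candidate $\delta$'s to those lying above $\max_{k<n}\tau(k)$, so that the eventually-produced $\zeta$ (given by Lemma~\ref{lemma42}) and hence $\xi = \max(E \cap \zeta)$ both exceed every $\tau(k)$; this ensures each $\tau(k)$ lies in the range of $e_\xi$. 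Then, by a pigeonhole over $\Delta$, I would further thin to a stationary $\Delta^* \s \Delta$ on which the tuple of preimages $\langle e_\xi^{-1}(\tau(k)) \mid k<n\rangle \in {}^n \mu$ is constant, say equal to $\vec \nu$. Running the rest of the proof of Theorem~\ref{thm52} with this fixed $\vec \nu$ yields meet nodes with $c_0$-value $\nu(k)$, and hence $c$-value $e_\xi(\nu(k)) = \tau(k)$, as required.

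The main obstacle is the apparent circularity between $\xi$ and $\vec \nu$: the anchor $\xi$ is determined only after the walk has been built, yet the desired $\vec \nu$ is the input $\Theta_1^\mu$ needs to select $\delta$. The pigeonhole stabilization just described resolves this, but it requires the preliminary observation that the map sending $\delta \in \Delta$ to the witnessing $\xi$ (through the intermediate $\gamma \in \nacc(C_\delta)$ and Lemma~\ref{lemma42}) has range cofinal in $\mu^+$ — this follows from $\delta$ ranging cofinally in $\Delta$ combined with $\gamma^- = \sup(C_\delta \cap \gamma) \to \delta$ along the choice of $\gamma$ in the proof. A further subtlety is that $\xi(t)$ is well-defined and recoverable from $t$ alone only because $V(\vec C) \supseteq E$ covers a club, which, via Lemma~\ref{lemma2444}, precludes the meet of incomparable fibers from landing on $E$; this is exactly what allows us to peg the stretcher $e_\xi$ to an ordinal visible from the meet node $t$.
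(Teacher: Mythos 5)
Your approach is the ``standard stretching'' $c(t):=e_{\xi(t)}(c_0(t))$ that the paper explicitly flags as inapplicable in the discussion preceding Theorem~\ref{thm84}, and the pigeonhole you propose does not resolve the circularity you yourself identify. The anchor $\xi$ is not a function of $\delta$ alone: it is $\max(E\cap\zeta)$ for the $\zeta$ produced by Lemma~\ref{lemma42}, and the entire chain leading to $\zeta$ --- the set $\Upsilon$ and the matrix $\langle\beta^k_\upsilon\rangle$ from Lemma~\ref{lemma35}, hence the rapid family $X$, hence the club $D_X$, hence the stationary set $\Delta$ of candidate $\delta$'s, hence $\gamma\in\nacc(C_\delta)$ and finally $\zeta$ --- takes $\vec\nu$ as \emph{input}. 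So the map $\delta\mapsto\langle e_{\xi}^{-1}(\tau(k))\mid k<n\rangle$ you want to stabilize is only defined after $\vec\nu$ has been fixed; stabilizing it to some constant $\vec\nu'$ and then ``running the rest of the proof with $\vec\nu'$'' launches a \emph{different} run, with a different $\Upsilon$, $X$, $D_X$, $\Delta$, $\gamma$ and $\zeta$, whose resulting $\xi'$ need not satisfy $e_{\xi'}(\nu'(k))=\tau(k)$. Since you have essentially no control over where $\zeta$ (hence $\xi$) lands inside $(\epsilon,\gamma)$ beyond cofinality, there is no evident fixed-point argument closing the loop, and the cofinality-of-range observation only guarantees $\tau(k)\in\operatorname{Im}(e_\xi)$, not the required compatibility between $\xi$ and $\vec\nu$.

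The paper breaks the circularity differently: the $\mu^+$-valued information is chosen \emph{before} the run and \emph{encoded into the walks themselves}, rather than read off a post-hoc anchor. One fixes a surjection $\psi:\mu^+\rightarrow{}^n\mu^+$ hitting every tuple cofinally often, picks $\chi$ with $\psi(\chi)=\varkappa$, shrinks $\Upsilon_0$ so that $\rho_0(\chi,\upsilon)$ is a constant $\nu\in{}^{<\omega}\mu$ for $\upsilon\in\Upsilon_0$, and then uses a two-stage application of Lemmas \ref{lemma35} and \ref{lemma35a} to plant $\nu$ (via the club-guessing function $g$) at a decodable coordinate of the meet node $t$. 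The colouring reconstructs $\rho_0(\chi,\beta^k_i)=(t^*\restriction\dom(\sigma)){}^\smallfrown g(t^*(\dom(\sigma)+\ell))$, recovers $\chi$ as the unique element of $\dom(t)$ with that $t$-value (using injectivity of the fibers), and outputs $\psi(\chi)(k)$. If you want to salvage your write-up, this is the mechanism you need to adopt; the surjection-anchored stretching cannot be made to work here because the colouring only ever sees the meet $\rho_{0\alpha}\wedge\rho_{0\beta}$, not the pair.
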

\begin{proof}
Let $\varphi:{}^{< \omega}\omega\rightarrow \mathbb Z$ be given by Lemma~\ref{lemma61}.
Let a positive $n<\omega$ be given,
and then let $f_n:\mathbb Z\rightarrow\omega$ be the corresponding map given by Corollary~\ref{lemma51}.
For every $\Sigma\in{}^n({}^{<\omega}\omega)$, set $$\diam(\Sigma):=\sup\{ |\varphi(\Sigma(k))-\varphi(\Sigma(k'))|\mid k<k'<n\}.$$
Let $\langle \Sigma_l\mid l<\omega\rangle$ be an injective enumeration of ${}^n({}^{<\omega}\omega)$.

Write $\kappa:=\mu^+$ and $\vec C$ as $\langle C_\delta\mid\delta<\kappa\rangle$.
Let $g,h$ witness together that $\omega\in\Theta_1^\mu(\vec C\restriction A(\vec C), \kappa)$.
As $|{}^{<\omega}\mu|=\mu$, we may assume that $g$ is a map from $\kappa$ onto ${}^{<\omega}\mu$.
Fix a club $E$ in $\kappa$ such that $E\s V(\vec C)$.
To every function $t\in{}^{<\kappa}({}^{<\omega}\kappa)$, we attach a finite sequence $t^*\in{}^{<\omega}\kappa$ via
$$t^*:=\begin{cases}
t(\sup(E \cap \dom(t))),&\text{if }\sup(E\cap\dom(t))\in\dom(t);\\
\emptyset,&\text{otherwise}.
\end{cases}$$

Fix a bijection $\pi:\omega\leftrightarrow\omega\times\omega$.
Fix a surjection $\psi:\kappa\rightarrow{}^n\kappa$ such that every element in ${}^n \kappa$ is enumerated cofinally often.
We now define our colouring $c: T(\rho_0^{\vec C})\rightarrow\kappa$, as follows.

Given $t\in T(\rho_0^{\vec C})$, consider $(l,\ell):=\pi(f_n(\varphi(h\circ t^*)))$.
If there exists a $k<n$ such that $h\circ t^*$ extends $\sigma:=\Sigma_l(k)$,
and $\dom(t^*)>\dom(\sigma)+\ell$,
and there exists a unique $\chi\in\dom(t)$ such that
$$t(\chi)=(t^*\restriction\dom(\sigma)){}^\smallfrown g(t^*(\dom(\sigma)+\ell)),$$
then let $c(t):=\psi(\chi)(k)$ for the least such $k$. Otherwise,
let $c(t):=0$.

To prove that $c$ witnesses $T(\rho_0^{\vec C}) \nmeetarrow [\kappa]^{n,1}_\kappa$,
and recalling Clause~(2) of Definition~\ref{meetarrow},
first let $T'\in[T]^\kappa$ be given. Then let $S'\in[{}^nT']^\kappa$
be the rapid family given by Lemma~\ref{lemma35} when invoked with $T'$.
Finally, let $S\s {}^nT(\rho_0^{\vec C})$ be any rapid family of size $\kappa$,
and let $\varkappa\in{}^n\kappa$.
Our goal is to find $x\neq y$ in $S$ such that for every $k<n$:
\begin{itemize}
\item if $S\s S'$, then $c(x(k) \wedge y(k)) = \varkappa(k)$;
\item if $S\nsubseteq S'$, then $c(x(k) \wedge y(k)) = \varkappa(0)$;
\item $x(k)<_{\lex}y(k)$ iff $\dom(x(k))<\dom(y(k))$;
\item $\dom(x(k) \wedge y(k))=\dom(x(0) \wedge y(0))$.
\end{itemize}

For simplicity, in case that $S\nsubseteq S'$, we switch $\varkappa$ with a constant map, and then the first two bullets coincide.
Returning to Lemma~\ref{lemma35} with the above $S$,
we fix a cofinal $\Upsilon_0\s\kappa$,
$\varepsilon_0<\kappa$, and $\langle (\eta^k,\ell^k)\mid k<n\rangle$
such that for every $\upsilon\in\Upsilon_0$, there is a $\langle \beta^{k,\upsilon}\mid k<n\rangle$ such that:
\begin{enumerate}
\item for every $k<n$, $\upsilon<\beta^{k,\upsilon}<\kappa$, $h\circ \rho_0(\upsilon,\beta^{k,\upsilon})=\eta^{k}$ and $\sup(\eta^{k})=\sup(\eta^0)$;
\item for every $\delta\in(\varepsilon_0,\upsilon)$, for every $k<n$,
$\tr(\delta,\beta^{k,\upsilon})(\ell^k)=\upsilon$;
\item $(\{\eta^{k}\mid k<n\},{\s})$ is an antichain.
If $S\s S'$, then $|\{\eta^{k}\mid k<n\}|=n$;
\item for all $\upsilon\neq\upsilon'$ in $\Upsilon_0$, there are $x\neq y$ in $S$ such that for every $k<n$:
\begin{itemize}
\item $\dom(x(k))<\dom(y(k))$ iff $\beta^{k,\upsilon}<\beta^{k, \upsilon'}$,
\item $x(k)\wedge y(k)=\rho_{0\beta^{k,\upsilon}}\wedge \rho_{0\beta^{k, \upsilon'}}$, and
\item $x(k)<_{\lex}y(k)$ iff $\rho_{0\beta^{k,\upsilon}}<_{\lex} \rho_{0\beta^{k, \upsilon'}}$.
\end{itemize}
\end{enumerate}

By possibly increasing $\varepsilon_0$, we may assume that $\mu<\varepsilon_0<\mu^+$.
Fix the unique $l<\omega$ such that $\Sigma_l(k)=\eta^{k}$ for every $k<n$.
Also fix $\chi\in(\varepsilon_0,\kappa)$ such that $\psi(\chi)=\varkappa$.
By possibly shrinking $\Upsilon_0$, we may assume the existence of $\nu\in{}^{<\omega}\mu$
such that for every $\upsilon\in\Upsilon_0$, ($\chi<\upsilon$ and) $\rho_0(\chi,\upsilon)=\nu$.
By another round of shrinking $\Upsilon_0$, we may also assume that $X:=\{ \langle \beta^{k,\upsilon}\mid k<n\rangle\mid \upsilon\in\Upsilon_0\}$ is rapid.

Appealing now to Lemma~\ref{lemma35a}, fix a stationary $\Upsilon_1\s\kappa$,
$\varepsilon_1<\kappa$, and $(\eta^*,\ell^*)$
such that for every $\upsilon_1\in\Upsilon_1$, there is a $\upsilon_0\in\Upsilon_0$ above $\upsilon_1$ such that:
\begin{itemize}
\item for every $\delta\in(\varepsilon_1,\upsilon_1)$, $\tr(\delta,\upsilon_0)(\ell^*+1)=\upsilon_1$ and
$g(\rho_0(\delta,\upsilon_0)(\ell^*))=\nu$;
\item $h\circ \rho_0(\upsilon_1,\upsilon_0)=\eta^*$.
\end{itemize}

By possibly increasing $\varepsilon_1$, we may assume that $\varepsilon_1>\chi$.
It altogether follows that for every $\upsilon\in\Upsilon_1$,
there is a $\langle \beta^k_\upsilon\mid k<n\rangle\in X$ all of whose entries are above $\upsilon$ such that:
\begin{itemize}
\item for every $\delta\in(\varepsilon_0,\upsilon)$, for every $k<n$, $\tr(\delta,\beta_\upsilon^k)(\ell^k)\in\Upsilon_0$;
\item for every $\delta\in(\varepsilon_1,\upsilon)$, for every $k<n$, $\tr(\delta,\beta_\upsilon^k)(\ell^k+\ell^*+1)=\upsilon$ and $g(\rho_0(\delta,\beta_\upsilon^k)(\ell^k+\ell^*))=\nu$;
\item for every $k<n$, $h\circ \rho_0(\upsilon,\beta_\upsilon^k)=\eta^{k}{}^\smallfrown \eta^*$.
\end{itemize}

Denote $d:=\diam(\langle\eta^{k}{}^\smallfrown \eta^*\mid k<n\rangle)$.
Fix an $m<\omega$ such that for every $p:n\rightarrow\mathbb Z$,
with $\max\{|p(k)-p(k')|\mid k<k'<n\}\le d$,
there exists an $i<m$ such that $f_n(p(k)+i)=\pi^{-1}(l,\ell^*)$ for every $k<n$.

Consider the club $D:=D_X\cap \acc(E\setminus\varepsilon_1)$,
where $D_X$ is given by Lemma~\ref{lemma42} with respect to $X$ (where $n$ plays the role of $\nu$).
Let $\Delta$ be the stationary set of all $\delta\in A(\vec C)$ such that for every $\tau<\omega$,
$$ \sup\{\gamma \in \nacc(C_\delta) \cap D \mid h(\otp(C_\delta \cap \gamma)) = \tau\} = \delta.$$

By Lemma~\ref{cor62}, we may now fix a $\delta\in\Delta$ such that
$$\sup\{ (\rho_0(\delta,\upsilon))_{h}\mid \upsilon\in \Upsilon_1\setminus\delta\}=\omega.$$

Pick an increasing sequence $\langle \upsilon_i\mid i<m\rangle$ of elements of $\Upsilon_1\setminus(\delta+1)$ such that:
\begin{itemize}
\item for all $i<j<m$, $\sup(\eta^0{}^\smallfrown\eta^*)<(\rho_0(\delta,\upsilon_i))_{h}<(\rho_0(\delta,\upsilon_j))_{h}$.
\end{itemize}
For simplicity, for each $i$, write $\langle \beta^k_i\mid k<n\rangle$ for $\langle \beta^k_{\upsilon_i}\mid k<n\rangle$.

For every $\xi<\delta$, define a function $p_\xi:n\rightarrow\mathbb Z$ via $$p_\xi(k):=\varphi(h\circ \rho_0(\xi,\beta^k_0)).$$
Appealing to the choice of $\varphi$ with $\langle \beta^k_i\mid i<m, k<n\rangle$,
let $\varepsilon< \delta$ and $\tau < \omega$
be such that for every $\xi \in (\varepsilon, \delta)$, if $h(\otp(C_\delta\cap \xi))=\tau$,
then for all $k,k'<n$ and $i<m$:
\begin{itemize}
\item $\varphi(h\circ \rho_0(\xi,\beta^k_i))=p_\xi(k)+i$, and
\item $p_\xi(k)-p_\xi(k')=\varphi(h\circ \rho_0(\upsilon_0,\beta_0^k))-\varphi(h\circ \rho_0(\upsilon_0,\beta_0^{k'}))=\varphi(\eta^{k}{}^\smallfrown \eta^*)-\varphi(\eta^{k'}{}^\smallfrown \eta^*)$.
\end{itemize}
In particular, in this case, $\max\{|p_\xi(k)-p_\xi(k')|\mid k<k'<n\}=d$.

Note that since $\delta\in\Delta\s A(\vec C)$, we may also assume that $\max\{\lambda(\delta, \beta^k_i),\varepsilon_1 \mid i<m, k<n\} < \varepsilon$.

By the choice of $\delta$ we can find a $\gamma\in\nacc(C_\delta)$ such that
\begin{itemize}
\item $\gamma \in D_X \cap \acc(E)$,
\item $h(\otp(C_\delta \cap \gamma)) = \tau$, and
\item $\gamma^{-}= \sup(C_\delta \cap \gamma)$ is bigger than $\varepsilon$.
\end{itemize}

Let $\epsilon:= \min(E \setminus(\gamma^{-}+1))$, so that for all $i<m$ and $k<n$,
$$\mu<\varepsilon_0<\chi<\varepsilon_1<\varepsilon < \gamma^{-} <\epsilon < \gamma < \delta < \beta^k_i.$$

As $\vec C$ is $\mu$-bounded, it follows that $\gamma\in R(\vec C)$.
So, since
\begin{itemize}
\item $\gamma \in D_X\cap R(\vec C)$, whereas $D_X$ was given by Lemma~\ref{lemma42},
\item $\langle \beta^k_i \mid i<m, k<n\rangle$ is a matrix of the form required by Lemma~\ref{lemma42}, and
\item $\lambda(\gamma,\beta^k_i)=\gamma^-<\epsilon$ for all $i<m$ and $k<n$,
\end{itemize}
we may obtain a matrix $\langle \alpha^k_i \mid i<m, k<n\rangle$ with entries in $X \cap{}^n\gamma$ and a $\zeta \in (\epsilon, \gamma)$ such that for all $i<m$ and $k<n$:
\begin{itemize}
\item $\Delta(\rho_{0\alpha^k_i},\rho_{0\beta^k_i})=\zeta$, and
\item $\rho_{0\alpha^k_i}<_{\lex}\rho_{0\beta^k_i}$.
\end{itemize}

By Lemma~\ref{lemma2444}, $\zeta\notin E$, so we let $\xi:=\max(E \cap \zeta)$, noting that $\xi\ge\epsilon$.
Now, for all $i<m$ and $k<n$, letting $t^k_i:= \rho_{0\alpha^k_i} \wedge \rho_{0\beta^k_i}$, we infer that
$$(t_i^k)^*=t^k_i(\xi) = \rho_{0}(\xi,\beta^k_i).$$

For all $k<n$ and $i< m$, since
$$\varepsilon_0<\chi<\max\{\lambda(\delta, \beta^k_i),\varepsilon_1\} < \varepsilon < \gamma^{-} < \epsilon \le\xi<\zeta< \gamma< \delta<\beta^k_i,$$
we have that:
\begin{itemize}
\item $(t_i^k)^*=\rho_{0}(\delta,\beta^k_i){}^\smallfrown \rho_{0}(\xi,\delta)$;
\item $\tr(\delta,\beta^k_i)(\ell^k)=\tr(\chi,\beta^k_i)(\ell^k)$ is in $\Upsilon_0$;
\item $g(\rho_0(\delta,\beta_i^k)(\ell^k+\ell^*))=\nu$;
\item $h(\otp(C_\delta\cap \xi))=h(\otp(C_\delta\cap \gamma))=\tau$, and hence
\item $\varphi(h\circ (t_i^k)^*)=p_\xi(k)+i$.
\end{itemize}

By the choice of $m$, we may fix an $i< m$ such that $f_n(p_\xi(k)+i)=\pi^{-1}(l,\ell^*)$ for all $k<n$.
Let $k<n$.
Following the definition of $c(t_i^k)$,
as $\pi(f_n(\varphi(h\circ (t_i^k)^*)))=(l,\ell^*)$,
and as $\Sigma_l(k)=\eta^{k}=h\circ (\rho_0(\xi,\beta^k_i)\restriction\ell^k)$ is an initial segment of $h\circ (t^k_i)^*$,
we look at $(t^k_i)^*\restriction\ell^k$,
which by the second bullet above, coincides with $\rho_0(\chi,\beta^k_i)\restriction \ell^k$.
As $\tr(\chi,\beta^k_i)(\ell^k)$ is in $\Upsilon_0$,
it follows that
$$\rho_0(\chi,\beta^k_i)=(\rho_0(\chi,\beta^k_i)\restriction\ell^k){}^\smallfrown\nu=((t^k_i)^*\restriction\ell^k){}^\smallfrown\nu.$$
In addition, $g((t^k_i)^*(\ell^k+\ell^*))=g(\rho_0(\delta,\beta^k_i)(\ell^k+\ell^*))=\nu$,
so that
$$t_i^k(\chi)=\rho_0(\chi,\beta^k_i)=((t^k_i)^*\restriction\dom(\eta^k)){}^\smallfrown g((t^k_i)^*(\dom(\eta^k)+\ell^*)),$$
so since $t_i^k$ is injective and since $\im(\Sigma_l)$ is an antichain, $c(t^k_i)=\psi(\chi)(\bar k)=\varkappa(\bar k)$ for the least $\bar k\le k$ such that $\Sigma_l(\bar k)=\eta^k$.
Additionally, if $\varkappa$ is not constant, then $S\s S'$ and $|\{\eta^{k}\mid k<n\}|=n$, so that $\bar k=k$.
This means that $c(t^k_i)=\varkappa(k)$ in both cases.

Finally, recalling Clause~(iv), we may pick $x\neq y$ in $S$ such that for every $k<n$:
\begin{itemize}
\item $\dom(x(k))<\dom(y(k))$ (since $\alpha_i^k<\beta_i^k$),
\item $x(k)\wedge y(k)=\rho_{0\alpha^k_i} \wedge \rho_{0\beta^k_i}=t^k_i$ and $\dom(t^k_i)=\zeta$, and
\item $x(k)<_{\lex} y(k)$ (since $\rho_{0\alpha^k_i}<_{\lex}\rho_{0\beta^k_i}$).
\end{itemize}

Altogether, $c(x(k)\wedge y(k))=\varkappa(k)$ and $x(k)<_{\lex} y(k)$ iff $\dom(x(k))<\dom(y(k))$.
\end{proof}

\section{Improve your $C$-sequence!}\label{canonicalsect}

Works of Jensen \cite{jen72} and Todor\v{c}evi\'c \cite{TodActa} provided characterizations of
the existence of special Aronszajn trees in terms of the existence of $C$-sequences.
This was then generalised by Krueger \cite{MR3078820} who proved that
the existence of a special $\kappa$-Aronszajn tree is equivalent to the existence of weakly coherent $C$-sequences $\vec C=\langle C_\delta\mid\delta<\kappa\rangle$ of various sorts,
e.g., one in which $\otp(C_\delta)<\delta$ for club many $\delta<\kappa$.
Here, it is shown that the former is equivalent to both seemingly weaker as well as stronger postulates.
Our motivation comes from the results of the previous section,
and so the main results here are pump-up theorems for
Definitions \ref{def410} and \ref{def411}, yielding
witnessing $C$-sequences with improved characteristics in the sense of Definitions \ref{def44} and \ref{def429}.

\begin{thm}\label{thm91} All of the following are equivalent:
\begin{enumerate}[(1)]
\item There is a $\square_{<}^*(\kappa)$-sequence $\vec C$ such that $R'(\vec C)\cap V'(\vec C)$ covers a club;
\item $\square_{<}^*(\kappa)$ holds;
\item There is a special $\kappa$-Aronszajn tree.
\end{enumerate}
\end{thm}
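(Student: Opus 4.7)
The implication $(1)\Rightarrow(2)$ is immediate by the definition of $\square_{<}^*(\kappa)$, and $(2)\Rightarrow(3)$ is Corollary~\ref{cor74}: walking along any $\square_{<}^*(\kappa)$-sequence $\vec C$ produces a special $\kappa$-Aronszajn tree $T(\rho_2^{\vec C})$. The substantive direction is $(3)\Rightarrow(1)$, for which the plan proceeds in two stages.

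In the first stage, from a special $\kappa$-Aronszajn tree $T$---which, by standard transfer, may be assumed to be streamlined---I extract a $\square_{<}^*(\kappa)$-sequence $\vec C_0$ along classical lines following Jensen \cite{jen72}, Todor\v{c}evi\'c \cite{TodActa}, and Krueger \cite{MR3078820}. Fixing a specializing map $f:T\to T$ as in Definition~\ref{special} together with the resulting antichain decomposition, one defines, for each $\delta\in\acc(\kappa)$, a closed unbounded set $(C_0)_\delta\s\delta$ recording the canonical $f$-chain through a chosen node of level $\delta$; weak coherence is inherited from the $<\kappa$-sized levels of $T$, and lower regressiveness follows from the order-type control intrinsic to the specialization.

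In the second stage, I massage $\vec C_0$ into a $\square_{<}^*(\kappa)$-sequence $\vec C$ for which $R'(\vec C)\cap V'(\vec C)$ covers a club, imitating the modification technique in the proof of Theorem~\ref{thm421}. Fix a cofinal nonstationary $Z\s\acc(\kappa)$ partitioned into cofinal pieces $\langle Z_n\mid n<\omega\rangle$, and a subclub $D\s\bigcap_{n<\omega}\acc^+(Z_n)\setminus Z$. For every $\delta\in D$, enumerate $(C_0)_\delta$ increasingly as $\langle\delta_i\mid i<\otp((C_0)_\delta)\rangle$ and define $C_\delta$ by inserting, between consecutive points, a canonical closed set $y_{\delta_i,\iota_i^\delta,n_i^\delta}\s Z_{n_i^\delta}\cap[\delta_i,\delta_{i+1})$ of prescribed order type encoding $n_i^\delta$. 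Here $\iota_i^\delta$ is the order-type of the insertions so far (recoverable from $C_\delta\cap\delta_i$) and $n_i^\delta$ is a fingerprint of $\delta_{i+1}$ derived via a club-guessing device; outside $D$, set $C_\delta:=(C_0)_\delta$. A verification modelled on Claim~\ref{c4332} confirms that weak coherence survives, and since $\acc(C_\delta)=\acc((C_0)_\delta)$ and $\otp(C_\delta)=\otp((C_0)_\delta)<\delta$ on $D$, lower regressiveness is retained and $R'(\vec C)\supseteq D\cap E^\kappa_{>\omega}$ is immediate. For $V'(\vec C)$: any common nacc point $\gamma\in\nacc(C_\delta)\cap\nacc(C_\alpha)$ with $\delta<\alpha$ must lie in some $Z_n$, which forces the $n$-fingerprints at $\gamma$ to coincide; the club-guessing choice of the fingerprint ensures that such coincidences are bounded in $\delta$.

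The principal obstacle is the simultaneous preservation of weak coherence---which constrains how much $C_\delta\cap\epsilon$ may depend on $\delta$---and the enforcement of $V'(\vec C)$, which demands that the nacc structures of distinct $\delta$'s diverge quickly. The resolution, already implicit in Claim~\ref{c4332} of Theorem~\ref{thm421}, is to recover $\iota_i^\delta$ from locally coherent data while routing $n_i^\delta$ through a club-guessing separator that sends different $\delta$'s into different $Z_n$'s past a bounded stage. This compartmentalization reconciles the two requirements and delivers $R'(\vec C)\cap V'(\vec C)$ covering the club $D\cap E^\kappa_{>\omega}$, completing $(3)\Rightarrow(1)$.
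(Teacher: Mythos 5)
Your implications $(1)\Rightarrow(2)$ and $(2)\Rightarrow(3)$ match the paper, and the first stage of your $(3)\Rightarrow(1)$ (extracting a $\square_{<}^*(\kappa)$-sequence from a special tree \`a la Krueger) is sound. The gap is in your second stage, where you try to secure $V'(\vec C)$ by inserting canonical blocks $y_{\delta_i,\iota_i^\delta,n_i^\delta}$ between consecutive points of $(C_0)_\delta$. The fingerprint $n_i^\delta$ is, as you describe it, a function of $\delta_{i+1}$ and of locally recoverable data; so if the original sequence happens to cohere at $\delta$, i.e.\ $(C_0)_\delta=(C_0)_\alpha\cap\delta$ for some $\alpha>\delta$, then every datum entering the insertion at stage $i$ is literally the same for $\delta$ and for $\alpha$, whence $C_\delta=C_\alpha\cap\delta$ and $\nacc(C_\delta)=\nacc(C_\alpha)\cap\delta$ is cofinal in $\delta$ --- exactly the failure of $V'$ at $\delta$. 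This is not a fixable bookkeeping issue: any modification that is canonical in the local data (which is what weak coherence forces on you) will reproduce coincidences of the underlying ladders rather than destroy them. Indeed, Corollary~\ref{STP} shows that a weakly coherent $\vec C$ with $V'(\vec C)$ covering a club is \emph{equivalent} to the failure of $\kappa$-$\stp$, so some input beyond ladder surgery and club-guessing is required. The technique of Theorem~\ref{thm421} that you are imitating was designed to encode colours for $\Theta_1^\mu$-type guessing while \emph{preserving} weak coherence; it was never intended to, and cannot, manufacture the divergence of $\nacc$-structures that $V'$ demands.

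The missing idea in the paper's proof is the use of \emph{vanishing levels}: by \cite[Theorem~2.24]{paper58}, one may choose the special tree $\mathbf T$ so that club-many $\delta$ admit a vanishing $\delta$-branch $B_\delta$, i.e.\ a branch with no extension to level $\delta$. Such branches satisfy $\sup(B_\gamma\cap B_\delta)<\gamma$ for $\gamma<\delta$ automatically (a cofinal agreement would bound $B_\gamma$ by the level-$\gamma$ node of $B_\delta$), and Krueger's minimal-$f$-value thinning of $B_\delta$ then yields $C_\delta$ with $\nacc(C_\delta)\s B_\delta$, giving $V'(\vec C)$ on a club while the canonicity of the recursion (together with the uniqueness of the node of $B_\delta$ at a fixed level) preserves weak coherence, and the Fodor-style bound $\otp(C_\delta)\le\varepsilon_\delta<\delta$ gives $R'(\vec C)$. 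You would need to incorporate this (or an equivalent anti-$\stp$ mechanism) for your stage two to go through.
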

\begin{proof} $(1)\implies(2)$: This is trivial.

$(2)\implies(3)$: By Corollary~\ref{cor74}.

$(3)\implies(1)$:
By \cite[Theorem~2.24]{paper58}, if there is a special $\kappa$-Aronszajn tree, then we may fix one $\mathbf T=(T,<_T)$ such that $V(\mathbf T)$, \emph{the set of vanishing levels of $\mathbf T$}, covers a club. Here (see \cite[Definition~1.2]{paper58}) $V(\mathbf T)$ is the set of $\alpha \in \acc(\kappa)$ such that for every node $x$ of $\mathbf T$ of height less than $\alpha$, there is an $\alpha$-branch $B$ of $\mathbf T$ that contains $x$ but the branch $B$ is not bounded in $\mathbf T$.

Without loss of generality, our tree is ordinal-based, i.e., $\mathbf T=(\kappa,<_T)$. Let $C:=\{\delta<\kappa \mid T\restriction\delta=\delta\}$.
Now, fix a sparse enough club $D\s\acc(C)\cap V(\mathbf T)$ for which there exists a good colouring (see Lemma~\ref{fact13}) $f:T\restriction D\rightarrow\kappa$, that is:
\begin{itemize}
\item $f(t)<\h_{\mathbf T}(t)$ for every $t \in T\restriction D$;
\item for every pair $s<_T t$ in $T\restriction D$, $f(s)\neq f(t)$.
\end{itemize}

For every $\delta\in D$, as $\delta\in V(\mathbf T)$, we may let $B_\delta$ be a vanishing $\delta$-branch of $T$.
It follows that $B_\delta$ is a cofinal subset of $(\delta,{\in})$. In addition, for every pair $\gamma<\delta$ of ordinals in $D$, $\sup(B_\gamma\cap B_\delta)<\gamma$.
Note that for all $\gamma, \xi \in B_\delta$, if $\h_{\mathbf T}(\gamma)$ and $\h_{\mathbf T}(\xi)$ belong to $C$,
then $\gamma <_T \xi$ iff $\gamma< \xi$. Also note that $f$ is injective over $B_\delta\cap (T\restriction D)$.

Set $\Delta:=\acc(D)\cap E^\kappa_{>\omega}$.
For every $\delta\in \Delta$,
for every $\gamma\in B_\delta\cap (T\restriction D)$, we have $f(\gamma)<\h_{\mathbf T}(\gamma)$,
so using Fodor's lemma we may fix an $\varepsilon_\delta<\delta$ for which
$$B_\delta':=\{\gamma\in B_\delta\cap (T\restriction D)\mid f(\gamma)<\varepsilon_\delta\}\setminus\varepsilon_\delta.$$ is cofinal in $(\delta,{\in})$.
Now we run the thinning-out procedure from Krueger's \cite[Theorem~2.5]{MR3078820}.

Let $\delta\in \Delta$. We recursively define an increasing continuous sequence of ordinals below $\delta$, $\langle \gamma^\delta_i \mid i< \sigma_\delta\rangle$,
where $\sigma_\delta$ denotes the length of our recursion.

We start by letting $\gamma^\delta_0:=\min(B_\delta')$.
Next, suppose that $j =i+1$ for some $i$ such that $\gamma^\delta_{i}$ has already been defined.
Then $\gamma^\delta_j$ is taken to be the unique $\gamma \in B_\delta'$ such that
\begin{itemize}
\item $\gamma^\delta_{i} <_T \gamma$ (equivalently, $\gamma^\delta_{i} < \gamma$),
\item $f(\gamma)$ is minimal for satisfying the above condition.
\end{itemize}

Finally, suppose that $j$ is a nonzero limit ordinal and we have constructed $\langle \gamma^\delta_i \mid i< j\rangle$ so far.
Then either $\sup\{\gamma^\delta_i \mid i< j\} = \delta$ and then we take $\sigma_\delta:=j$ and terminate the recursion, or else we simply take $\gamma^\delta_j:=\sup\{\gamma^\delta_i \mid i< j\} $.
So the recursive procedure has been described, with the resulting sequence $\langle \gamma^\delta_i \mid i< \sigma_\delta\rangle$ clearly a club in $\delta$.

We now define a $C$-sequence $\vec C:=\langle C_\delta\mid\delta<\kappa\rangle$, as follows:

$\br$ $C_0:=\emptyset$.

$\br$ For every $\gamma<\kappa$, $C_{\gamma+1}:=\{\gamma\}$.

$\br$ For every $\delta\in \acc(D)\cap E^\kappa_{>\omega}$, let $C_\delta:=\{\gamma^\delta_j\mid j<\sigma_\delta\}$.

$\br$ For every $\delta\in \acc(D)\cap E^\kappa_\omega$, let $C_\delta$ be a cofinal subset of $(B_\delta,\in)$ of order-type $\omega$.

$\br$ For every $\delta\in\acc(\kappa)\setminus\acc(D)$, let $C_\delta$ be a cofinal subset of $(\delta,\in)$ of order-type $\cf(\delta)$ such that $\min(C_\delta)=\sup(D\cap\delta)+1$.

\begin{claim} $\acc(D)\s R'(\vec C)$.
\end{claim}
\begin{why} Let $\gamma\in\acc(D)$, and let $\delta< \kappa$. We need to verify that $\otp(C_\delta\cap\gamma)<\gamma$.

$\br$ If $\delta<\gamma$, then $\otp(C_\delta\cap\gamma)\le\delta<\gamma$.

$\br$ If $\gamma=\delta$ and $\cf(\delta)=\omega$, then $\otp(C_\delta)=\omega<\delta$.

$\br$ If $\gamma=\delta$ and $\cf(\delta)>\omega$, then $C_\delta=\{\gamma^\delta_j\mid j<\sigma_\delta\}$, so that $\otp(C_\delta)=\sigma_\delta$. Then our recursive construction ensures that $i\mapsto f(\gamma_{i+1}^\delta)$ is a strictly increasing map from $\sigma_\delta$ to $\varepsilon_\delta$,
and hence $\sigma_\delta\le\varepsilon_\delta<\delta$.

$\br$ If $\delta>\gamma$ and $\delta\notin\acc(D)$, then $\min(C_\delta)=\sup(D\cap\delta)+1$,
so that $C_\delta\cap\gamma$ is empty.

$\br$ If $\delta>\gamma$ and $\delta\in\acc(D)\cap E^\kappa_\omega$, then $\otp(C_\delta)=\omega$, so that $C_\delta\cap\gamma$ is finite.

$\br$ If $\delta>\gamma$ and $\delta\in\acc(D)\cap E^\kappa_{>\omega}$, then $\otp(C_\delta)=\sigma_\delta\le\varepsilon_\delta\le\min(C_\delta)$,
so if $\otp(C_\delta\cap\gamma)=\gamma$, then $\min(C_\delta)<\gamma<\otp(C_\delta)$, which yields a contradiction.
\end{why}
\begin{claim} $\vec C$ is weakly coherent.
\end{claim}
\begin{why} Suppose not, and fix the least $\epsilon<\kappa$ such that $|\{ C_\delta\cap\epsilon\mid \delta<\kappa\}|=\kappa$.
A moment's reflection makes it clear that $|\{ C_\delta\cap\epsilon\mid \sup(C_\delta\cap\epsilon)=\epsilon, \delta\in \Delta\}|=\kappa$.
Recalling the definition of $\vec C$, it thus follows that we may fix a set $\Delta_0\in[\Delta]^\kappa$ such that:
\begin{itemize}
\item $\epsilon\in\acc(C_\delta)$ for every $\delta\in\Delta_0$, and
\item $\delta\mapsto C_\delta\cap\epsilon$ is injective over $\Delta_0$.
\end{itemize}
Of course, it also follows that $\delta\mapsto \nacc(C_\delta)\cap\epsilon$ is injective over $\Delta_0$.
As $\nacc(C_\delta)\s B_\delta'\s B_\delta\setminus\varepsilon_\delta$ for every $\delta\in \Delta$,
we may fix some $\Delta_1\in[\Delta_0]^\kappa$ such that $\delta\mapsto\varepsilon_\delta$ is constant over $\Delta_1$.
For every $\delta\in\Delta_1$, there is a unique node $\beta_\delta\in B_\delta$ with $\h_{\mathbf T}(\beta_\delta)=\epsilon$. As $\mathbf T$ is a $\kappa$-tree,
we may fix a $\Delta_2\in[\Delta_1]^\kappa$ such that $\delta\mapsto \beta_\delta$ is constant over $\Delta_2$.
Consequently, $\delta\mapsto B_\delta'\cap\epsilon$
is also constant over $\Delta_2$.

Fix $\delta\neq \delta'$ in $\Delta_2$, so that $C_\delta\cap\epsilon\neq C_{\delta'}\cap\epsilon$,
while $B_\delta'\cap\epsilon=B_{\delta'}'\cap\epsilon$.
Clearly, $\gamma^\delta_0=\min(B_\delta',\in)=\min(B_{\delta'}',\in)=\gamma^{\delta'}_0$.
It thus follows there exists an $i$ such that $\gamma^\delta_{i+1}\neq\gamma^{\delta'}_{i+1}$,
so let $i$ be the least such. This ensures that $\gamma^\delta_{i}=\gamma^{\delta'}_{i}$, say $\gamma^*$.
As $\delta,\delta'\in\Delta_0$, we know that $\gamma^\delta_{i+1}$ and $\gamma^{\delta'}_{i+1}$ are both ordinals smaller than $\epsilon$.

So $\gamma^\delta_{i+1}$ is the unique element of $B_\delta'\cap(\gamma^*,\epsilon)$ that has the least $f$-value,
and $\gamma^{\delta'}_{i+1}$ is the unique element of $B_{\delta'}'\cap(\gamma^*,\epsilon)$ that has the least $f$-value.
As $B_\delta'\cap(\gamma^*,\epsilon)=B_{\delta'}'\cap(\gamma^*,\epsilon)$ we arrive at a contradiction.
\end{why}
\begin{claim} $\acc(D)\s V'(\vec C)$.
\end{claim}
\begin{why} Let $\delta\in\acc(D)$. Then $\nacc(C_\delta)\s B_\delta$. Towards a contradiction, suppose that $\alpha>\delta$ and $\sup(\nacc(C_\delta)\cap\nacc(C_\alpha))=\delta$.
In particular, $\alpha\in\acc(\kappa)$.

$\br$ If $\alpha\in\acc(D)$, then $\nacc(C_\alpha)\s B_\alpha$,
but $\sup(B_\delta\cap B_\alpha)<\alpha$, because $B_\delta$ is a vanishing $\delta$-branch,
and $B_\alpha$ is an $\alpha$-branch.

$\br$ If $\alpha\notin\acc(D)$, then $\min(C_\alpha)=\sup(D\cap\alpha)+1\ge\delta+1$.
So $C_\delta\cap C_\alpha=\emptyset$.
\end{why}

Altogether, $\vec C$ is as sought.
\end{proof}

By Fact~\ref{criticalcof},
in the preceding proof, in case that $\kappa=\mu^+$,
we could have taken the map $f$ to have range $\mu$, thus establishing the following.

\begin{thm}\label{thm92} For an infinite cardinal $\mu$, all of the following are equivalent:
\begin{enumerate}[(1)]
\item There is a $\square_\mu^*$-sequence $\vec C$ such that $V'(\vec C)$ covers a club in $\mu^+$;
\item $\square_{<}^*(\mu^+)$ holds;
\item There is a special $\mu^+$-Aronszajn tree.\qed
\end{enumerate}
\end{thm}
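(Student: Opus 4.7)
The plan is to run the cycle $(1)\Rightarrow(2)\Rightarrow(3)\Rightarrow(1)$, only the last of which requires substantive new work.

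For $(1)\Rightarrow(2)$: any $\square_\mu^*$-sequence $\vec C$ is $\mu$-bounded, hence $\otp(C_\alpha\cap\delta)\le\mu<\delta$ for all $\delta\in\acc(\mu^+)\setminus(\mu+1)$ and $\alpha\in(\delta,\mu^+)$, so $R(\vec C)$ covers a club and $\vec C$ witnesses $\square_<^*(\mu^+)$. For $(2)\Rightarrow(3)$, apply Corollary~\ref{cor74}.

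For $(3)\Rightarrow(1)$, I would replay the proof of $(3)\Rightarrow(1)$ in Theorem~\ref{thm91} with a \emph{range-$\mu$} good colouring in place of the range-$\kappa$ one used there. Starting from a special $\mu^+$-Aronszajn tree $\mathbf T=(\kappa,<_T)$ with $V(\mathbf T)$ covering a club (via \cite[Theorem~2.24]{paper58}), Fact~\ref{criticalcof} produces a decomposition $T=\biguplus_{\alpha<\mu}A_\alpha$ into $\mu$ many antichains; the index map $f:T\to\mu$ defined by $f(x)=\alpha\iff x\in A_\alpha$ satisfies $f(s)\ne f(t)$ for every pair $s<_T t$, since any two comparable nodes cannot share an antichain. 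Upon choosing the club $D$ of the original proof to lie inside $[\mu,\mu^+)$, $f$ restricts to a good colouring $f:T\restriction D\to\mu$, because $f(x)<\mu\le\dom(x)=\h_{\mathbf T}(x)$ for every $x\in T\restriction D$.

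With this $f$, the construction of $\vec C$ from Theorem~\ref{thm91} goes through with $\Delta:=\acc(D)\cap E^\kappa_{>\omega}$, except that the Fodor step can be skipped: for each $\delta\in\Delta$ take $\varepsilon_\delta:=\mu$ outright, so that $B'_\delta:=(B_\delta\cap(T\restriction D))\setminus\mu$ is cofinal in $\delta$ (since $\delta>\mu$). The recursive thinning then yields $\langle\gamma_i^\delta\mid i<\sigma_\delta\rangle$ along which $i\mapsto f(\gamma_{i+1}^\delta)$ is a strictly increasing map into $\mu$, so $\sigma_\delta\le\mu$ and hence $\otp(C_\delta)\le\mu$. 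The remaining clauses of the construction deliver $\otp(C_\delta)\le\mu$ by inspection: $\otp(C_\delta)=\omega$ for $\delta\in\acc(D)\cap E^\kappa_\omega$, and $\otp(C_\delta)=\cf(\delta)\le\mu$ for $\delta\in\acc(\kappa)\setminus\acc(D)$. Thus $\vec C$ is $\mu$-bounded. The weak coherence argument and the proof that $V'(\vec C)$ covers a club pass through verbatim (in the coherence step, since $\delta\mapsto\varepsilon_\delta$ is now automatically constant, the corresponding thinning is trivial), producing the desired $\square_\mu^*$-sequence with $V'(\vec C)$ covering a club.

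The single obstacle beyond Theorem~\ref{thm91} is replacing the range-$\kappa$ good colouring by a range-$\mu$ one---precisely what Fact~\ref{criticalcof} is engineered to deliver at the level of $\mu^+$; once this is in hand, the rest of the proof reduces to the observation that the Fodor-stabilized ordinal $\varepsilon_\delta$ can be chosen to be the single fixed ordinal $\mu$.
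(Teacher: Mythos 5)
Your proposal is correct and follows the paper's own route: the paper proves Theorem~\ref{thm92} exactly by rerunning the $(3)\implies(1)$ argument of Theorem~\ref{thm91} with the good colouring $f$ taken to have range $\mu$, as licensed by Fact~\ref{criticalcof}, so that the recursive thinning yields $\sigma_\delta\le\mu$ and hence a $\mu$-bounded (i.e.\ $\square^*_\mu$-) sequence. Your additional observations --- restricting $D$ to $[\mu,\mu^+)$ so that $f$ remains a good colouring, and replacing the Fodor step by the uniform choice $\varepsilon_\delta:=\mu$ --- are exactly the details the paper leaves implicit.
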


In view of Theorem~\ref{thm84},
at this point we are left with adding a layer of club-guessing. We start with the case that $\mu$ is regular.

\begin{thm}\label{thm93} Suppose that $\mu$ is a regular uncountable cardinal and $\square^*_\mu$ holds.
Then there exists a $\square^*_\mu$-sequence $\vec C$ such that:
\begin{itemize}
\item $V'(\vec C)$ covers a club;
\item $1\in\Theta_1^\mu(\vec C \restriction E^{\mu^+}_\mu,E^{\mu^+}_\mu)$;
\item if $\mu$ is non-ineffable, then $\omega\in\Theta_1^\mu(\vec C \restriction E^{\mu^+}_\mu,E^{\mu^+}_\mu)$.
\end{itemize}
\end{thm}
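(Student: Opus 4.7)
The plan is to refine the proof of Theorem~\ref{thm421} by starting from a $\square^*_\mu$-sequence provided by Theorem~\ref{thm92} whose $V'$ already covers a club, and then arranging the enrichment by $y$-blocks so as to preserve $V'$. Thus the first step is to invoke Theorem~\ref{thm92} to fix a $\square^*_\mu$-sequence $\vec C=\langle C_\delta\mid \delta<\mu^+\rangle$ such that $V'(\vec C)$ covers a club $E_0$, and, possibly after a further adjustment via Fact~\ref{weaksquarethm} and Proposition~4.24 of \cite{paper46}, also witnesses $\cg_\mu$ on a decomposition $E^{\mu^+}_\mu=\biguplus_{\nu<\mu}S_\nu$. (If $\mu$ is non-ineffable, use $\ubd(\ns_\mu,\omega)$ to also attach a witnessing sequence $\langle h_\delta:C_\delta\rightarrow\omega\mid \delta\in S_\nu\rangle$ for every $\nu<\mu$, as in the proof of Theorem~\ref{thm421}; otherwise take $h_\delta$ constantly $0$.)

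The second step is, verbatim to Theorem~\ref{thm421}, to fix surjections $g:\mu^+\rightarrow\mu$ and $h:\mu^+\rightarrow\omega$ with dense fibers and a family of closed sets $y_{\alpha,\iota,\nu,n}\s\acc(\mu^+)$ encoding the colouring data, and to define $\vec{C^\bullet}$ by inserting the corresponding $y$-block $y_{\delta_i,\otp(\bigcup_{i'<i}x^{i'}_\delta),\nu,h_\delta(\delta_{i+1})}$ between $\delta_i$ and $\delta_{i+1}$ whenever $\delta\in S_\nu$ and $\delta_{i+1}\in E^{\mu^+}_\mu$, and leaving $C^\bullet_\delta:=C_\delta$ for $\delta\notin E^{\mu^+}_\mu$. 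That $\vec{C^\bullet}$ is $\mu$-bounded, weakly coherent (hence a $\square^*_\mu$-sequence), and satisfies $1\in\Theta_1^\mu(\vec{C^\bullet}\restriction E^{\mu^+}_\mu,E^{\mu^+}_\mu)$, and moreover $\omega\in\Theta_1^\mu(\vec{C^\bullet}\restriction E^{\mu^+}_\mu,E^{\mu^+}_\mu)$ in the non-ineffable case, is established exactly as in Claims~4.33.2 and 4.32.1 in the proof of Theorem~\ref{thm421}.

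The genuinely new verification, and the step where the argument hangs, is to show that $V'(\vec{C^\bullet})$ still covers a club. The claim is that this club can be taken to be $E_0\cap\acc(\mu^+)$ (intersected with a further club that handles successor-like interference). Fix $\delta\in V'(\vec C)\cap E_0\cap E^{\mu^+}_\mu$ and $\alpha\in(\delta,\mu^+)$, and consider a point $\gamma\in\nacc(C^\bullet_\delta)\cap\nacc(C^\bullet_\alpha)$. Points of $\nacc(C^\bullet_\delta)$ are of two kinds: original points $\delta_i\in\nacc(C_\delta)$ that were not replaced by a $y$-block, and points in $\nacc(y_{\delta_i,\iota_i,\nu,n_i})$ for the inserted blocks. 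The first kind contributes only boundedly below $\delta$ by $V'(\vec C)$, since such a $\delta_i$ would also have to be a $\nacc$ point of $C_\alpha$ (modulo edge effects). For the second kind, if $\gamma$ lies in a $y$-block on both sides, then the parameter $\alpha^*$ of the block it sits in forces $\gamma\in[\delta_i,\delta_i+\mu)\cap[\alpha_j,\alpha_j+\mu)$; matching $\alpha^*$ forces $\delta_i=\alpha_j$ and matching $\iota$ forces $\otp(C^\bullet_\delta\cap\delta_i)=\otp(C^\bullet_\alpha\cap\alpha_j)$, which in turn transfers back to $\otp(C_\delta\cap\delta_i)=\otp(C_\alpha\cap\alpha_j)$; then the tree-like structure that gave weak coherence yields $C_\delta\cap\delta_i=C_\alpha\cap\alpha_j$, and in particular $\delta_i$ itself witnesses an element of $\nacc(C_\delta)\cap\nacc(C_\alpha)$. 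Hence such $\gamma$'s are bounded in $\delta$ by $V'(\vec C)$.

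The principal obstacle is the last paragraph: making precise the step ``matching $y$-blocks on an unbounded set forces a coincidence of the $\vec C$-pasts that contradicts $V'(\vec C)$,'' especially in view of the fact that a $\gamma$ lying in a $y$-block on one side could in principle be an \emph{original} point of $C^\bullet_\alpha$ rather than a $y$-block point. Handling this mixed case requires tightening the choice of the blocks $y_{\alpha,\iota,\nu,n}$ so that their $\nacc$ points avoid an auxiliary club that contains all original nodes of $\vec C$; a preliminary pass selecting the $y$-blocks inside a club $F\s\acc(\mu^+)$ disjoint from $\bigcup_{\delta<\mu^+}\nacc(C_\delta)$ should make the case analysis reduce to the pure block-on-block case treated above. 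Once this technicality is settled, $V'(\vec{C^\bullet})$ covers the intended club, completing the proof.
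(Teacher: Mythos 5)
Your overall strategy (run the Theorem~\ref{thm421} block-insertion construction on top of a sequence whose $V'$ already covers a club, then check that $V'$ survives) is not the paper's, and the step you yourself flag as unresolved is a genuine gap, not a technicality. Two concrete problems. First, your proposed repair for the mixed case --- choosing the $y$-blocks inside a club $F$ disjoint from $\bigcup_{\delta<\mu^+}\nacc(C_\delta)$ --- is impossible: every ordinal $\gamma$ belongs to $\nacc(C_{\gamma+1})$, so that union is all of $\mu^+$ and no such club exists. Without some such device, nothing prevents a single $\alpha>\delta$ with $\alpha\notin E^{\mu^+}_\mu$ (where $C^\bullet_\alpha=C_\alpha$ is untouched) from having $\nacc(C_\alpha)$ meet the $\nacc$-points of the inserted $y$-blocks of $C^\bullet_\delta$ cofinally in $\delta$; the hypothesis $\delta\in V'(\vec C)$ says nothing about these new points. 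Second, even in the pure block-on-block case your argument assumes that a point $\gamma$ determines ``the block it sits in'' and its parameters $(\alpha^*,\iota)$, but the family $\langle y_{\alpha,\iota,\nu,n}\rangle$ carries no disjointness guarantee (blocks anchored at distinct but close $\delta_i\neq\alpha_j$ can overlap), and the subsequent inference from $\otp(C_\delta\cap\delta_i)=\otp(C_\alpha\cap\alpha_j)$ to $C_\delta\cap\delta_i=C_\alpha\cap\alpha_j$ uses coherence, whereas $\square^*_\mu$ only gives weak coherence. There is also an unjustified step at the outset: you need your Theorem~\ref{thm92} sequence to \emph{simultaneously} witness the club-guessing of Fact~\ref{weaksquarethm}, but that fact only asserts the existence of \emph{some} sequence with the guessing property, not that an arbitrary given $\square^*_\mu$-sequence can be adjusted to acquire it while keeping $V'$ large.

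The paper sidesteps all of this by never trying to make one construction do both jobs. It takes the sequence $\vec{C^0}$ of Theorem~\ref{thm421} (which has $C^0_\delta\subseteq\acc(\delta)$ for $\delta\in S$, hence $\nacc(C^0_\delta)\subseteq\acc(\mu^+)$, and carries the $\Theta^\mu_1$ properties) and, separately, the sequence $\vec{C^1}$ of Theorem~\ref{thm92} (with $V'(\vec{C^1})$ covering a club), and glues them: $C_\delta:=C^0_\delta$ on the relevant part of $E^{\mu^+}_\mu$, and $C_\delta:=\{\beta+1\mid\beta\in\nacc(C^1_\delta)\}\cup\acc(C^1_\delta)$ elsewhere, so that the latter sets have $\nacc(C_\delta)\subseteq\nacc(\mu^+)$. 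Then mixed pairs have disjoint $\nacc$-sets outright, pairs from the $\vec{C^1}$-part inherit $V'$ from $\vec{C^1}$, and pairs from the $\vec{C^0}$-part are handled by $\mu$-boundedness ($C^0_\alpha\cap\delta$ is bounded in $\delta$ when $\cf(\delta)=\mu$). If you want to salvage your route, the realistic fix is essentially to import this $\acc$-versus-$\nacc$ separation, at which point you have reproduced the paper's proof.
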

\begin{proof} By Theorem~\ref{thm421}, we may fix a $\square_\mu^*$-sequence $\vec{C^0}=\langle C^0_\delta\mid \delta<\mu^+\rangle$
and a set $S\s E^{\mu^+}_\mu$ such that:
\begin{itemize}
\item for every $\delta\in S$, $C^0_\delta\s\acc(\delta)$.
\item $1\in\Theta_1^\mu(\vec C^0\restriction S,E^{\mu^+}_\mu)$, and if $\mu$ is non-ineffable, then moreover $\omega\in\Theta_1^\mu(\vec C^0\restriction S,E^{\mu^+}_\mu)$.
\end{itemize}

In addition, by Theorem~\ref{thm92}, let us fix a $\square_\mu^*$-sequence $\vec{C^1}=\langle C_\delta^1\mid \delta<\mu^+\rangle$ such that $V'(\vec{C^1})$ covers a club in $\mu^+$.
Define $\vec C=\langle C_\delta\mid\delta<\mu^+\rangle$, as follows:
\begin{itemize}
\item[$\br$] Let $C_0:=\emptyset$;
\item[$\br$] For every $\gamma<\mu^+$, let $C_{\gamma+1}:=\{\gamma\}$;
\item[$\br$] For every $\delta\in E^{\mu^+}_\mu$ such that $C_\delta^0\s\acc(\delta)$, let $C_\delta:=C^0_\delta$;
\item[$\br$] For every other $\delta<\mu^+$, let $C_\delta:=\{\beta+1 \mid \beta \in \nacc(C^1_\delta)\}\cup \acc(C_\delta^1)$, so that $\nacc(C_\delta)\s\nacc(\delta)$.
\end{itemize}
A moment's reflection makes it clear that $\vec C$ possesses all the required features.
\end{proof}

In the preceding, we constructed a $\mu$-bounded $C$-sequence $\vec C$ over $\mu^+$ for $\mu$ regular,
which outright ensures that $A(\vec C)$ covers the stationary set $E^{\mu^+}_\mu$.
As $A(\vec C)$ cannot be a reflecting stationary set,
and since, by \cite[Theorem~12.1]{cfm}, $\square^*_\mu$ may hold at a singular cardinal $\mu$,
and yet every stationary subset of $\mu^+$ reflects,
it is impossible to derive the setup of Theorem~\ref{thm84} just from $\square^*_\mu$.
In the next version of this paper, we will provide conditions for getting $C$-sequences
satisfying the hypotheses of Theorems \ref{thm52} and \ref{thm84}
for successors of singulars and for inaccessibles. This will prove Theorem~\ref{thmc},
as well as showing that if $\square_{<}(\kappa)$ holds,
then there is a $C$-sequence $\vec C$ over $\kappa$ such that $T(\rho_0^{\vec C})$ is a special $\kappa$-Aronszajn tree,
and $T(\rho_0^{\vec C})\nmeetarrow[\kappa]^{n,1}_\kappa$ holds for every positive integer $n$.
In particular, this is the case for every regular uncountable $\kappa$ that is not a Mahlo cardinal in $\mathsf{L}$.

\section{Connecting the dots}\label{sec10}

We are now ready to derive Theorem~\ref{thma}.

\begin{cor}\label{cor100} Suppose that $\kappa=\mu^+$ for a regular uncountable cardinal $\mu$ that is non-ineffable.
Then all of the following are equivalent:
\begin{enumerate}[(1)]
\item There exists a special $\kappa$-Aronszajn tree;
\item There exists a $\mu$-bounded $C$-sequence $\vec C$ over $\kappa$ such that $T(\rho_0^{\vec C})$ is a special $\kappa$-Aronszajn tree,
and $T(\rho_0^{\vec C})\nmeetarrow[\kappa]^{n,1}_{\kappa}$ holds for every positive integer $n$;
\item Any basis for the class of special $\kappa$-Aronszajn lines has size $2^\kappa$.
\end{enumerate}
\end{cor}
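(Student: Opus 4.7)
The plan is to establish the three-way equivalence by running the cycle $(1)\Rightarrow(2)\Rightarrow(3)\Rightarrow(1)$, of which only the first link has substance. I dispose of the cheap implications first.

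For $(2)\Rightarrow(3)$, I would feed $\vec C$ from (2) into Fact~\ref{ffact13} via the lexicographically ordered tree $\mathbf T=(T(\rho_0^{\vec C}),{\s},{<_{\lex}})$ with $n=2$ and $\theta=\kappa$; the chain of implications in that fact shows that the class of special $\kappa$-Aronszajn lines admits no basis of size less than $2^\kappa$, and the trivial upper bound $2^\kappa$ on the whole class yields (3). For $(3)\Rightarrow(1)$, I would argue contrapositively: the partition tree of a special $\kappa$-Aronszajn line is by definition a special $\kappa$-Aronszajn tree, so if (1) fails then the class is empty, $\emptyset$ is a basis, and (3) fails.

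The heart of the matter is $(1)\Rightarrow(2)$. Assuming (1), Theorem~\ref{thm92} delivers $\square^*_\mu$. Since $\mu$ is regular, uncountable and non-ineffable, Theorem~\ref{thm93} then supplies a $\square^*_\mu$-sequence $\vec C$ for which $V'(\vec C)$ covers a club and $\omega\in\Theta_1^\mu(\vec C\restriction E^{\mu^+}_\mu,E^{\mu^+}_\mu)$. As $\vec C$ is $\mu$-bounded, a one-line cofinality count shows that $E^{\mu^+}_\mu\s A(\vec C)$: if $\delta\in E^{\mu^+}_\mu\cap\acc(C_\alpha)$ with $\alpha>\delta$, then $\otp(C_\alpha\cap\delta)$ has cofinality $\mu$, while $\otp(C_\alpha)\le\mu$, so $C_\alpha\s\delta$, contradicting $\sup(C_\alpha)=\alpha$. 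Consequently $\omega\in\Theta_1^\mu(\vec C\restriction A(\vec C),\mu^+)$, and $V(\vec C)\supseteq V'(\vec C)$ covers a club. All hypotheses of Theorem~\ref{thm84} are thus in place, delivering $T(\rho_0^{\vec C})\nmeetarrow[\mu^+]^{n,1}_{\mu^+}$ for every positive integer $n$. Finally, $T(\rho_0^{\vec C})$ is a $\kappa$-tree by Fact~\ref{lemma41} (weak coherence) and is special by the argument recorded in the proof of Corollary~\ref{cor66} (via Lemma~\ref{lemma511} and Lemma~\ref{lemma57}, or directly by \cite[Theorem~6.1.14]{MR2355670}).

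At this level of assembly no real obstacle remains, since the weight of the corollary has been absorbed into Theorems~\ref{thm84} and~\ref{thm93}. The one subtle point worth flagging is that the non-ineffability hypothesis on $\mu$ is used exactly to upgrade the club-guessing output of Theorem~\ref{thm93} from the trivial ``$1\in\Theta_1^\mu(\ldots)$'' to the pigeonhole-strength ``$\omega\in\Theta_1^\mu(\ldots)$''; and it is precisely that stronger form which, when passed through Theorem~\ref{thm84}, yields $\mu^+$ rather than merely $\mu$ colours in the strong partition relation demanded by clause~(2).
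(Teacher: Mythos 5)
Your proposal is correct and follows essentially the same route as the paper: $(1)\Rightarrow(2)$ via Theorems~\ref{thm92} and \ref{thm93} (including the observation that $\mu$-boundedness gives $E^{\mu^+}_\mu\s A(\vec C)$) followed by Theorem~\ref{thm84}, $(2)\Rightarrow(3)$ via the entangledness machinery (your appeal to Fact~\ref{ffact13} is exactly the packaging of Lemma~\ref{getent} and Proposition~\ref{entsier} that the paper cites), and $(3)\Rightarrow(1)$ trivially. Your closing remark correctly identifies where non-ineffability enters.
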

\begin{proof} $(1)\implies(2)$: By Theorems \ref{thm92} and \ref{thm93},
there exists a $\square^*_\mu$-sequence $\vec C$ such that $V'(\vec C)$ covers a club and $\omega\in\Theta^\mu_1(\vec C \restriction E^{\mu^+}_\mu,E^{\mu^+}_\mu)$
(and note that since $\vec C$ is $\mu$-bounded, $A(\vec C)\supseteq E^{\mu^+}_\mu$), so that Clause~(2) follows from Theorem~\ref{thm84}. Indeed, .

$(2)\implies(3)$: By Lemma~\ref{getent} and Proposition~\ref{entsier}.

$(3)\implies(1)$: This is trivial.
\end{proof}

We now arrive at Theorem~\ref{corb}.
\begin{cor} For every regular uncountable cardinal $\mu$, if there is a $\mu^+$-Aronszajn line,
then there is one without a $\mu^+$-Countryman subline.
\end{cor}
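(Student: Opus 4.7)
The plan is to split on whether a special $\mu^+$-Aronszajn tree exists in $V$. In the easy subcase where no such tree exists, I would invoke Remark~\ref{rmk214} --- which extracts a special $\mu^+$-Aronszajn subtree from the middle-third nodes of a $3$-splitting partition tree of any $\mu^+$-Countryman line --- to conclude that no $\mu^+$-Countryman line exists at all in $V$, so the hypothesised $\mu^+$-Aronszajn line vacuously contains no $\mu^+$-Countryman subline.

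Henceforth suppose a special $\mu^+$-Aronszajn tree exists. If $\mu$ is non-ineffable, I would apply Corollary~\ref{cor100} to produce a $\mu$-bounded $C$-sequence $\vec C$ for which $T(\rho_0^{\vec C})$ is a special $\kappa$-Aronszajn tree satisfying $T(\rho_0^{\vec C})\nmeetarrow[\kappa]^{2,1}_\kappa$, which weakens to $T(\rho_0^{\vec C})\nmeetarrow[\kappa]^{2}_\kappa$ by inspecting Definition~\ref{meetarrow} (the $m=1$ constraint on the number of meet-heights is stronger than the default $m=n$). Lemma~\ref{getent} (with $n=2$, $\theta=\kappa$, and using that $\kappa=\mu^+$ is a successor cardinal) then delivers a $\kappa$-Aronszajn line containing no $\kappa$-Countryman subline.

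The remaining case is $\mu$ ineffable with a special $\mu^+$-Aronszajn tree present; here my plan is to invoke Theorem~\ref{easyway}. Ineffable cardinals are weakly compact, hence strongly inaccessible, so $\mu>\aleph_1$ and $\mu=\mu^{\aleph_1}$ (as $\mu$ is strong limit regular). By Theorem~\ref{thm92} a special $\mu^+$-Aronszajn tree gives $\square^*_\mu$, and then Theorem~\ref{thm93} produces a $\square^*_\mu$-sequence $\vec C$ with $V'(\vec C)\subseteq V(\vec C)$ covering a club and $1\in\Theta_1^\mu(\vec C\restriction E^{\mu^+}_\mu, E^{\mu^+}_\mu)$. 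Since $\vec C$ is $\mu$-bounded, every accumulation point of each $C_\alpha$ has cofinality below $\mu$, so $E^\kappa_\mu\subseteq A(\vec C)$; as $\mu>\omega_1$ this already shows that $A(\vec C)\cap E^\kappa_{>\omega_1}$ is stationary.

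The only hypothesis of Theorem~\ref{easyway} still requiring verification is $1\in\Theta_1^{\omega_1}(\vec C\restriction A(\vec C),\kappa)$, which I would obtain by composing the witness $g\colon\kappa\to\mu$ of $1\in\Theta_1^\mu$ with a fixed surjection $\pi\colon\mu\twoheadrightarrow\omega_1$: given a club $D\s\kappa$ and $\nu'<\omega_1$, choosing any $\nu\in\pi^{-1}\{\nu'\}$ and applying the guessing property at $\nu$ produces a $\delta$ that also guesses $\nu'$ via $\pi\circ g$. Theorem~\ref{easyway} then yields a special $\kappa$-Aronszajn line with no $\kappa$-Countryman subline, completing the proof. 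I do not anticipate a substantive obstacle in any of the three cases: the main work has been done in the preceding sections, and the present argument is essentially a bookkeeping match of hypotheses across Corollary~\ref{cor100}, Lemma~\ref{getent}, Theorems~\ref{thm92}, \ref{thm93}, and \ref{easyway}.
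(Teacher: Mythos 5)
Your proof is correct and follows essentially the same route as the paper: the same case split (your ``no special tree'' case is just the contrapositive of the paper's opening reduction via Remark~\ref{rmk214}), Corollary~\ref{cor100} together with Lemma~\ref{omitcm}/Lemma~\ref{getent} for non-ineffable $\mu$, and Theorems~\ref{thm92}, \ref{thm93} and \ref{easyway} for ineffable $\mu$. You even make explicit two steps the paper leaves implicit, namely the weakening from the $m=1$ to the $m=n$ partition relation and the downgrade from $1\in\Theta_1^{\mu}$ to $1\in\Theta_1^{\omega_1}$ by composing with a surjection $\mu\twoheadrightarrow\omega_1$.
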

\begin{proof} Let $\mu$ be regular and uncountable.
If there is no $\mu^+$-Countryman line, then we are done.
Otherwise, by Remark~\ref{rmk214}, there exists a $\mu^+$-special Aronszajn line.

$\br$ If $\mu$ is non-ineffable, then Corollary~\ref{cor100} yields a streamlined $\mu^+$-Aronszajn tree $T$
satisfying $T\nmeetarrow[\mu^+]^2_{\mu^+}$.
By Fact~\ref{flippedorder} and Lemma~\ref{omitcm}, then,
$T$ admits an ordering that makes it into a $\mu^+$-Aronszajn line with no $\mu^+$-Countryman line.

$\br$ If $\mu$ is ineffable, then in particular, $\mu^{\aleph_1}=\mu$.
In addition, by Theorems \ref{thm92} and \ref{thm93},
there exists a $\square^*_\mu$-sequence $\vec C$ such that $V'(\vec C)$ covers a club and $1\in\Theta^\mu_1(\vec C \restriction E^{\mu^+}_\mu,E^{\mu^+}_\mu)$.
Since $\vec C$ is $\mu$-bounded, $A(\vec C)\supseteq E^{\mu^+}_\mu$.
So, the conclusion follows from Theorem~\ref{easyway}.
\end{proof}

An inspection of all the ingredients that go into the proof of the results of this section
reveals that the part involving walks on ordinals is always carried out along a $\vec C$ for which $A(\vec C)$ is stationary,
in which case any assumption of the form $\theta\in \Theta_1^\mu(\vec C\restriction S,T)$
may be relaxed to $\theta\in \Theta_1^\mu(\vec C\restriction S,T,\half)$ in the sense of \cite[Definition~6.7]{paper46}.
We decided to avoid this generality, since Theorem~\ref{thm93} as well as \cite[Lemma~6.9]{paper46} suggest that this subtle difference is mostly of interest at the level of $\aleph_1$ which was not our focus.
For the record, we confirm that $\mho$ is indeed sufficient for Clause~(2) of Corollary~\ref{cor100} to hold true with $\mu=\aleph_0$.
We also mention that by using proxy principles \cite{paper65}, it is possible to resurrect many of the results of this paper by walking along a $\vec C$ for which $A(\vec C)\cup R(\vec C)$ is nonstationary.
This opens the door to constructing Aronszajn trees satisfying strong negative partition relations in the presence of stationary reflection and at the level of higher Mahlo cardinals.
The details will appear elsewhere.

\section*{Acknowledgments}
The first author is supported by the Israel Science Foundation (grant agreements 2066/18 and 665/20).
The second author is partially supported by the Israel Science Foundation (grant agreement 203/22)
and by the European Research Council (grant agreement ERC-2018-StG 802756).

We are grateful to Chen Meiri for stimulating discussions on the content of Subsection~\ref{sec32}.

\newcommand{\etalchar}[1]{$^{#1}$}

\begin{theindex}
\label{indexpage}

\item $C$-sequences
\subitem $\chi(\vec C)$, \hyperpage{20}, \hyperpage{43}
\subitem $\mu$-bounded, \hyperpage{21}
\subitem $\square^*_\mu$, \hyperpage{21}, \hyperpage{24},
\hyperpage{59}
\subitem $\square_\mu$, \hyperpage{21}
\subitem $\square_{<}^*(\kappa)$, \hyperpage{21}, \hyperpage{42},
\hyperpage{57}
\subitem Avoiding levels, $A(\vec C)$, \hyperpage{21},
\hyperpage{25}, \hyperpage{47}
\subitem Coherent, \hyperpage{21}
\subitem Lower regressive, \hyperpage{21}
\subitem Lower regressive levels, $R(\vec C)$, \hyperpage{21},
\hyperpage{41}
\subitem Regressive, \hyperpage{21}
\subitem Regressive levels, $R'(\vec C)$, \hyperpage{21},
\hyperpage{57}
\subitem Strongly avoiding levels, $A'(\vec C)$, \hyperpage{21},
\hyperpage{37}
\subitem Strongly vanishing levels, $V'(\vec C)$, \hyperpage{21},
\hyperpage{36}, \hyperpage{57}
\subitem Trivial, \hyperpage{20}
\subitem Vanishing levels, $V(\vec C)$, \hyperpage{21},
\hyperpage{31}
\subitem Weakly coherent, \hyperpage{21, 22}

\indexspace

\item Club guessing
\subitem $\Theta_1(\vec C,T)$, \hyperpage{24}
\subitem $\Theta_1^\mu(\vec C,T)$, \hyperpage{24}, \hyperpage{50},
\hyperpage{53}
\subitem $\cg_\xi(S, T)$, \hyperpage{23}

\indexspace

\item Linear orders
\subitem Aronszajn line, \hyperpage{14, 15}
\subitem Countryman line, \hyperpage{14}
\subitem Entangled, \hyperpage{13}
\subitem Far, \hyperpage{13}
\subitem Lexicographic ordering, \hyperpage{12}, \hyperpage{43}
\subitem Monotonically far, \hyperpage{13}
\subitem Special Aronszajn line, \hyperpage{14}

\indexspace

\item Strong colourings
\subitem $\U(\kappa,\lambda,\theta,\chi)$, \hyperpage{16},
\hyperpage{24}, \hyperpage{37}
\subitem $\mathbf T\nmeetarrow[\kappa]^{n,m}_\theta$,
\hyperpage{12}, \hyperpage{50}, \hyperpage{53}
\subitem $\mathbf T\nmeetarrow[\kappa]^{n}_\theta$, \hyperpage{12}
\subitem $\onto(\mathcal A,J,\theta)$, \hyperpage{16}
\subitem $\ubd(\mathcal A,J,\theta)$, \hyperpage{16}
\subitem Erd\H{o}s-Hajnal's Problem, \hyperpage{17}
\subitem Fiber maps, \hyperpage{16}
\subitem Projection map $\varphi:{}^{<\omega}\omega\rightarrow\mathbb Z$,
\hyperpage{47}
\subitem Projection map $f_n:\mathbb Z\rightarrow\omega$,
\hyperpage{19}

\indexspace

\item Trees
\subitem $S$-coherent, \hyperpage{11}, \hyperpage{34},
\hyperpage{37}, \hyperpage{40}
\subitem $T(c)$, \hyperpage{16}
\subitem $\kappa$-tree, \hyperpage{9}
\subitem $\varsigma$-splitting, \hyperpage{9}
\subitem Aronszajn, \hyperpage{9}
\subitem Ascending path, \hyperpage{9}, \hyperpage{43}
\subitem Good colouring, \hyperpage{10}, \hyperpage{41}
\subitem Hausdorff, \hyperpage{11}
\subitem Partition tree, \hyperpage{14}
\subitem Souslin, \hyperpage{9}, \hyperpage{13}
\subitem Special, \hyperpage{9}, \hyperpage{33}, \hyperpage{40},
\hyperpage{57}
\subitem Streamlined, \hyperpage{11}

\indexspace

\item Walks on ordinals
\subitem $\Tr$, \hyperpage{22}
\subitem $\lambda$, \hyperpage{22}, \hyperpage{26}
\subitem $\lambda_2$, \hyperpage{23}, \hyperpage{32, 33}
\subitem $\last{\beta}{\gamma}$, \hyperpage{23}, \hyperpage{41}
\subitem $\rho_0$, \hyperpage{22}, \hyperpage{31}
\subitem $\rho_1$, \hyperpage{22}, \hyperpage{37}
\subitem $\rho_2$, \hyperpage{22}, \hyperpage{40}
\subitem $\tr$, \hyperpage{22}

\end{theindex}
\end{document}